\documentclass[12pt]{amsart}
\usepackage{graphicx, amssymb, amsmath, amsfonts, amsthm, mathtools, tcolorbox,hyperref,cleveref,mathrsfs,aliascnt,tikz}
\usepackage[backend=bibtex]{biblatex}
\definecolor{lg}{RGB}{146, 224, 180}
\definecolor{dg}{RGB}{45, 194, 110}
\definecolor{db}{RGB}{36, 56, 212}
\definecolor{lb}{RGB}{149, 189, 237}
\definecolor{do}{RGB}{250, 116, 5}
\definecolor{lo}{RGB}{245, 176, 140}
\definecolor{dr}{RGB}{247, 47, 47}
\definecolor{lr}{RGB}{245, 161, 161}
\definecolor{dp}{RGB}{114, 15, 219}
\definecolor{lp}{RGB}{209, 170, 250}
\definecolor{dy}{RGB}{186, 160, 30}
\definecolor{ly}{RGB}{232, 207, 84}

\newtheoremstyle{defex}
  {}
  {}
  {\text}
  {}
  {\bfseries}
  {}
  {}
  {}

\usepackage{amsthm}
\makeatletter
\def\th@plain{%
  \thm@notefont{}
  \itshape 
}
\def\th@note{%
  \thm@notefont{}
  \itshape 
}
\makeatother

\newtheorem{theorem}{Theorem}[section]

\newaliascnt{lemma}{theorem}
\newtheorem{lemma}[lemma]{Lemma}
\aliascntresetthe{lemma}

\newaliascnt{corollary}{theorem}
\newtheorem{corollary}[corollary]{Corollary}
\aliascntresetthe{corollary}

\newaliascnt{proposition}{theorem}
\newtheorem{proposition}[proposition]{Proposition}
\aliascntresetthe{proposition}

\newaliascnt{conjecture}{theorem}
\newtheorem{conjecture}[conjecture]{Conjecture}
\aliascntresetthe{conjecture}

\newaliascnt{question}{theorem}
\newtheorem{question}[question]{Question}
\aliascntresetthe{question}

\theoremstyle{definition}

\newaliascnt{definition}{theorem}
\newtheorem{definition}[definition]{Definition}
\aliascntresetthe{definition}

\newaliascnt{example}{theorem}
\newtheorem{example}[example]{Example}
\aliascntresetthe{example}

\newaliascnt{remark}{theorem}
\newtheorem{remark}[remark]{Remark}
\aliascntresetthe{remark}

\newaliascnt{claim}{theorem}
\newtheorem{claim}[claim]{Claim}
\aliascntresetthe{claim}

\crefname{theorem}{theorem}{theorems}
\Crefname{theorem}{Theorem}{Theorems}

\crefname{lemma}{lemma}{lemmas}
\Crefname{lemma}{Lemma}{Lemmas}

\crefname{corollary}{corollary}{corollaries}
\Crefname{corollary}{Corollary}{Corollaries}

\crefname{proposition}{proposition}{propositions}
\Crefname{proposition}{Proposition}{Propositions}

\crefname{definition}{definition}{definitions}
\Crefname{definition}{Definition}{Definitions}

\crefname{example}{example}{examples}
\Crefname{example}{Example}{Examples}

\crefname{conjecture}{conjecture}{conjectures}
\Crefname{conjecture}{Conjecture}{Conjectures}

\crefname{question}{question}{questions}
\Crefname{question}{Question}{Questions}

\crefname{remark}{remark}{remarks}
\Crefname{remark}{Remark}{Remarks}

\crefname{claim}{claim}{claims}
\Crefname{claim}{Claim}{Claims}

\newcommand{\al}{\alpha}
\newcommand{\be}{\beta}

\newcommand{\dd}{\delta}

\newcommand{\si}{\sigma}

\newcommand{\Z}{\mathbb{Z}}
\newcommand{\N}{\mathbb{N}}
\newcommand{\Q}{\mathbb{Q}}

\newcommand{\mc}{\mathcal}

\newcommand{\fs}[2]{\mathrm{FS}({#1},{#2})}
\newcommand{\FS}{\mathrm{FS}(X,Y)}
\newcommand{\diam}{\mathrm{diam}}
\newcommand{\starp}{\prescript{+}{}{\mathrm{Star}}}
\newcommand{\st}{\mathrm{Star}}

\newcommand{\back}{\backslash}
\newcommand{\blue}[1]{\textcolor{blue}{\emph{#1}}}
\newcommand{\sq}{\mathbin{\square}}
\newcommand{\sm}{\setminus}
\usepackage{todonotes,enumitem}

\usepackage[margin=1in]{geometry}

\usepackage{fancyhdr}
\title{Connectivity and Diameter of Friends and Strangers Graphs Using Minimum Degree}
\author{Kai Mawhinney}

\begin{document}

\begin{abstract}
    Given graphs $X,Y$ on $n$ vertices, Defant and Kravitz defined the friends and strangers graph, denoted $\FS$. The vertices of $\FS$ are bijections $\si:V(X)\to V(Y)$, and two bijections $\si$ and $\si'$ are adjacent in $\FS$ if they differ by a transposition $(ab)$ such that $a$ and $b$ are adjacent in $X$ and $\si(a)$ and $\si(b)$ are adjacent in $Y$. 
Our main result resolves a conjecture of Bangachev that if $X,Y$ are connected graphs and $2\min\{\dd(X),\dd(Y)\}+3\max\{\dd(X),\dd(Y)\}\ge 3n$, then $\FS$ is connected. 
The main step in the proof is showing that there exists an absolute constant $C$ such that for any adjacent vertices $a,b\in V(X)$ and any vertex $\si\in V(\FS)$, $a$ and $b$ can swap positions in at most $C$ moves. 
Furthermore, we prove that if $X,Y$ are connected graphs satisfying the above minimum degree condition, then the diameter of $\FS$ is $O(n^2/\min\{\dd(X),\dd(Y)\})$, and that this is sharp as a bound on the maximum diameter of $\FS$ in this regime. This work improves the diameter results of Akella and Li, and resolves many of their questions. Moreover, we show that even for disconnected graphs $X$ and $Y$, the same minimum degree condition implies that the diameter of any connected component of $\FS$ is $O(n^2)$. 
\end{abstract}

\maketitle 

\vspace{-0.3in}

\section{Introduction}

Friends and strangers graphs were defined as follows in 2021 by Colin Defant and Noah Kravitz \cite{Defant}.
\begin{definition}[{\cite{Defant}}]\label{def:fs}
    Given simple graphs $X,Y$ on $n$ vertices, the \blue{friends and strangers graph} $\FS$ is the graph whose vertices are bijections $\si:V(X)\to V(Y)$. Two bijections $\si,\si'$ are adjacent in $\FS$ if there exist $a,b\in V(X)$ that satisfy the following conditions:
    \begin{enumerate}
        \item $\si = \si'\circ (ab)$;
        \item $\{a,b\}\in E(X)$;
        \item $\{\si(a),\si(b)\}\in E(Y)$.
    \end{enumerate}
\end{definition}
\subsection{An Intuitive Analogy}\label{sec:analogy}

The reader may find the following intuitive analogy useful. Suppose we have $n$ distinct people and $n$ distinct houses. Any two people can either be friends or not, and any two houses may either have a road between them or not. Thus:
\begin{align*}
    E(X) &\text{ encodes }\textbf{friendships} \text{ between }\textbf{people}.\\
    E(Y) &\text{ encodes }\textbf{roads} \text{ between }\textbf{houses}.
\end{align*}
Since each vertex of $\FS$ is a bijection $\si:V(X)\to V(Y)$, it corresponds to a configuration where each house is occupied by exactly one person. Two configurations are adjacent in $\FS$ if and only if one can be obtained from the other by swapping two people who are friends and who currently reside in houses connected by a road. When this interchanges $a,b\in V(X)$, we say we are \blue{swapping} $a$ and $b$, or \blue{performing the transposition} $(ab)$. We also use these terms if there exists a path in $\FS$ from $\si$ to $\si\circ (ab)$. Usually this refers to a single edge. When this swap is instead obtained via a path of length at most $k$, we will write the swap as $(ab)^{k}$, and say $(ab)$ can be performed in (at most) $k$ \blue{moves}. See that \Cref{def:fs} conditions (2) and (3) are exactly the requirements that the people are friends and that their houses have a road between them, respectively. We give an example to make this analogy clear.
\begin{example}
    Let $X,Y$ be graphs on $n=4$ vertices. Fix vertex $\si\in V(\FS)$ and recall $\si:V(X)\to V(Y)$ is a bijection. Let
    \begin{align*}
        E(X) &= \{\{a,b\},\{a,d\}\}\\
        E(Y) &= \{\{\si(a),\si(b)\},\{\si(b),\si(c)\},\{\si(c),\si(d)\}\}.
    \end{align*}
    \Cref{fig:people and houses} illustrates the configuration corresponding to $\si$. Because people $a$ and $b$ are friends and houses $\si(a),\si(b)$ have a road between them, we may swap $a$ and $b$. Formally, this means that the permutations $\si\circ (ab)$ and $\si$ are adjacent in $\FS$. Note that this swap requires a condition in both $X$ and $Y$. For example, friends $a$ and $d$ cannot be swapped in the current configuration since houses $\si(a)$ and $\si(d)$ do not have a road between them. Similarly, since people $b$ and $c$ are not friends they cannot be swapped, even though there is a road between houses $\si(b)$ and $\si(c)$. When using this analogy, the houses and roads are fixed but the people and friendships move. 
    \begin{figure}[h!]
    \centering
    \begin{tikzpicture}[line cap=round,line join=round,scale=1]

\newcommand{\house}[3]{
  \begin{scope}[shift={(#1,#2)}]
    \draw[thick] (-0.65,0) -- (-0.65,0.85) -- (-0.9,0.85)
      -- (0,1.5) -- (0.9,0.85) -- (0.65,0.85)
      -- (0.65,0) -- cycle;
    \node at (0,0.45) {\large $#3$};
  \end{scope}
}

\newcommand{\person}[3]{
  \begin{scope}[shift={(#1,#2)}]
    \draw[thick] (0,0.72) circle (0.16);
    \draw[thick] (-0.22,0.1) rectangle (0.22,0.55);
    \node at (0,0.32) {$#3$};
    \draw[thick] (-0.11,0.1) -- (-0.11,-0.2);
    \draw[thick] (0.11,0.1) -- (0.11,-0.2);
  \end{scope}
}

\draw[thick] (0.75,3.45) -- (4.25,3.45);
\draw[thick] (0.75,0.45) -- (4.25,0.45);
\draw[thick] (0.8,1) -- (4.25,2.9);

\house{0}{3}{\si(a)}
\house{5}{3}{\si(b)}
\house{0}{0}{\si(c)}
\house{5}{0}{\si(d)}

\person{0}{4.85}{a}
\person{5}{4.85}{b}
\person{0}{1.85}{c}
\person{5}{1.85}{d}

\draw[dashed,thick] (0.25,5.25) -- (4.75,5.25);
\draw[dashed,thick] (0.25,5.1) -- (4.75,2.25);

\end{tikzpicture}
    \caption{A vertex of $\FS$ viewed with the `people and houses' analogy.}
    \label{fig:people and houses}
\end{figure}
\end{example}

\subsection{Motivation}

Friends and strangers graphs generalize token swapping problems and sliding puzzles; the former are of interest in computer science and are the special case $X = K_n$ \cite{Wilson,Token} and the latter is the special case where $X=\st_n$ \cite{ChaoYang}. One famous sliding puzzle is the `15-puzzle.' In this game, the player is given a random configuration of 15 wooden squares labelled 1 through 15, arranged in a $4\times 4$ grid with one square left blank. The goal is to slide the tiles so they read $1,2,\dots,15$ from left to right, top to bottom. However, a tile can only be slid into the blank space in the grid. Thus, the missing 16th square must be used in every swap. The configurations of this sliding puzzle are represented by $\fs{\mathrm{Star}_{16}}{\mathrm{Grid}_{4\times 4}}$. Here $X=\mathrm{Star}_{16}$ indicates that the 16th square must be used in every swap, and $Y = \mathrm{Grid}_{4\times 4}$ reflects the constraints of the board. In general, sliding puzzles use $X = \mathrm{Star}$.

Friends and strangers graphs are closely related to Cayley graphs. Let $T$ be the set of transpositions in $S_n$. A friends and strangers graph is a subgraph of $\mathrm{Cay}_{T}(S_n)$. First note that $\fs{K_n}{K_n} = \mathrm{Cay}_{T}(S_n)$. Let $T(X)$ denote the set of transpositions corresponding to edges of $X$, so $(ab)\in T(X)$ if and only if $\{a,b\}\in E(X)$. Then $\fs{X}{K_n} \cong \mathrm{Cay}_{T(X),\text{right}}(S_n)$. Similarly, $\fs{K_n}{Y} \cong \mathrm{Cay}_{T(Y),\text{left}}(S_n)$. Moreover, $\FS$ is precisely their intersection:
\begin{align*}
    \FS &\cong \mathrm{Cay}_{T(X),\text{right}}(S_n)\cap \mathrm{Cay}_{T(Y),\text{left}}(S_n).
\end{align*}
In this way, any friends and strangers graph can be viewed as an intersection of Cayley graphs: one given by right multiplication using transpositions from $X$, the other given by left multiplication using transpositions from $Y$. Unfortunately, $\FS$ is not necessarily a Cayley graph. For this reason, friends and strangers graphs can be a difficult object to study.
\subsection{Existing Work}

Much of the previous work in this field examines properties of $\FS$ such as connectedness, diameter, and number of components. The results largely fall into three areas:
\begin{enumerate}
    \item Fixing $X$ to be a specific graph and varying $Y$.
    \item Using conditions on the minimum degrees of $X$ and $Y$.
    \item Probabilistic results when $X$ and $Y$ are Erd\H{o}s-R\'enyi random graphs.
\end{enumerate}
Significant work has been done in area (1), including results when $X$ is a cycle, path, star graph, or bipartite graph \cite{Defant,BipartiteJeong,Stanley,Bipartite,Wilson,Xinghui,Honglin}. In area (2), the most notable results are due to Bangachev \cite{Bangachev}, including sufficient conditions for connectivity of $\FS$ in terms of the \blue{minimum degrees} of $X$ and $Y$, denoted $\dd(X)$ and $\dd(Y)$. Following Bangachev's paper, Akella and Li produced some diameter bounds based on minimum degree \cite{FSDiameter}. Area (3) has been studied by Alon, Defant, and Kravitz \cite{NogaProb}, and the diameter has also been studied by Akella and Li \cite{FSDiameter}.

Outside of these areas, Jeong showed that a disconnected friends and strangers graph may have a component with diameter $e^{\Omega(n)}$ \cite{Jeong}. It is currently unknown if connected friends and strangers graphs can also have exponential diameter. Lastly, some authors have studied the $k$-connectedness of various friends and strangers graphs \cite{KConn,Xinghui}.

This paper will focus on (2). We improve upon a result of Bangachev by finding a weaker sufficient condition on $\dd(X)$ and $\dd(Y)$ for connectivity. We state Bangachev's result here, and include a diagram for reference; see \Cref{fig:connectivity diagram}. Note that all graphs $X$ and $Y$ plotted in \Cref{fig:connectivity diagram} are connected. If either $X$ or $Y$ is disconnected, then $\FS$ is disconnected as well, so there is no need to consider this case.
\begin{figure}
\centering
\begin{tikzpicture}[scale=8]

\draw[thick] (0,0) rectangle (1,1);


\def\RedPath{(1,0)}

\foreach \k in {34,33,...,5}{
  \pgfmathtruncatemacro{\kp}{\k+1}

  \xdef\RedPath{\RedPath
    -- ({(\k-2)/\k},{3/\kp})
    -- ({(\k-2)/\k},{3/\k})}
}

\foreach \k in {5,...,34}{
  \pgfmathtruncatemacro{\kp}{\k+1}

  \xdef\RedPath{\RedPath
    -- ({3/\kp},{(\k-2)/\k})
    -- ({3/\kp},{(\kp-2)/\kp})}
}

\fill[red!55,opacity=.55]
  (0,0) -- \RedPath -- (0,1) -- cycle;

\draw[ thick,red!80!black]
  \RedPath -- (0,1);

\fill[dg!30!black,opacity=.35]
  (1/2,2/3) -- (1/2,3/4) -- (2/3,2/3) -- (3/4,1/2) -- (2/3,1/2)
  -- (3/5,3/5)
  -- cycle;

\draw[ thick,dg!45!black]
  (1/2,2/3) -- (1/2,3/4) -- (2/3,2/3) -- (3/4,1/2) -- (2/3,1/2)
  -- (3/5,3/5)
  -- cycle;

\fill[db!60,opacity=.35]
  (0,1) -- (1,1) -- (1,0)
  -- (2/3,2/3)
  -- cycle;

\draw[ thick,db!70!black]
  (0,1) -- (2/3,2/3) -- (1,0);

\fill[green!60!black,opacity=.35]
  (0,1) -- (1/2,3/4) -- (1/2,2/3)
  -- cycle;

\fill[green!60!black,opacity=.35]
  (1,0) -- (3/4,1/2) -- (2/3,1/2)
  -- cycle;

\draw[ thick,green!45!black]
  (0,1) -- (1/2,3/4) -- (1/2,2/3)
  -- cycle;

\draw[ thick,green!45!black]
  (1,0) -- (3/4,1/2) -- (2/3,1/2)
  -- cycle;

\draw[thick,orange]
(1/2,5/8) -- (23/45,3/5);
\draw[thick,orange]
(5/8,1/2) -- (3/5,23/45);
\draw[dotted,thick,orange]
(7/13,7/13) -- (23/45,3/5);
\draw[dotted,thick,orange]
(7/13,7/13) -- (3/5,23/45);
\fill[orange, opacity = .35]
(1/2,5/8) -- (23/45,3/5) -- (1/2,3/5) -- cycle;
\fill[orange, opacity = .35]
(5/8,1/2) -- (3/5,23/45) -- (3/5,1/2) -- cycle;

\node[below] at (0.5,-0.1) {$\delta(X)/n$};
\node[rotate=90] at (-0.12,0.5) {$\delta(Y)/n$};

\node[below left] at (0,0) {$0$};
\node[below] at (1,0) {$1$};
\node[left] at (0,1) {$1$};

\node[below] at (3/5,0) {$\frac35$};
\node[left,font=\small] at (0,3/5-.01) {$\frac35$};

\node[below] at (2/3,0) {$\frac23$};
\node[left,font=\small] at (0,2/3+.01) {$\frac23$};

\node[below] at (1/2,0) {$\frac12$};
\node[left,font=\small] at (0,1/2) {$\frac12$};

\draw[dashed,gray] (3/5,0) -- (3/5,3/5) -- (0,3/5);
\draw[dashed,gray] (2/3,0) -- (2/3,2/3) -- (0,2/3);
\draw[dashed,gray] (1/2,0) -- (1/2,1);
\draw[dashed,gray] (0,1/2) -- (1,1/2);

\end{tikzpicture}
    \caption{Connectivity of $\FS$ for connected $X,Y$ in terms of the minimum degrees $\dd(X)$ and $\dd(Y)$. The graph $\FS$ is connected in the \textcolor{dg}{green}, \textcolor{dg!60!black}{dark green}, and \textcolor{blue}{blue} regions, however, $\FS$ may be disconnected in the \textcolor{red}{red} and \textcolor{orange}{orange} regions. It is unknown if $\FS$ in the white region are always connected.}
    \label{fig:connectivity diagram}
\end{figure}

The regions in \Cref{fig:connectivity diagram} are colored as follows:
\begin{enumerate}
     \item \textcolor{blue}{Blue}: $\min\{\dd(X),\dd(Y)\}+2\max\{\dd(X),\dd(Y)\}\ge 2n$;
    \item \textcolor{dg}{Green}: $2\min\{\dd(X),\dd(Y)\}+3\max\{\dd(X),\dd(Y)\}\ge 3n$;
    \item \textcolor{dg!50!black}{Dark Green}: $\min\{\dd(X),\dd(Y)\}>\frac{n}{2}$ and $2\min\{\dd(X),\dd(Y)\}+3\max\{\dd(X),\dd(Y)\}\ge~3n$;
    \item \textcolor{red}{Red}: Below the points $(3/k,(k-2)/k)$ or $((k-2)/k),3/k)$ for $k\ge 5$ an integer.
    \item \textcolor{orange}{Orange}: Below the lines from $(1/2,5/8)$ to $(7/13,7/13)$ and $(5/8,1/2)$ to $(7/13,7/13)$.
\end{enumerate}

If a minimum degree pair $(\dd(X)/n,\dd(Y)/n) = (\al,\beta)$ implies $\FS$ is connected, then it is clear that all pairs $(\al',\be')$ for $\al'\ge \al$ and $\be'\ge \be$ also imply $\FS$ is connected, as this is a strictly stronger condition. Similarly, if $(\dd(X)/n,\dd(Y)/n) =(\al,\be)$ is insufficient to guarantee that $\FS$ is connected, then any pair $(\al',\be')$ with $\al'\le \al$ and $\be'\le \be$ is also insufficient, as this is a strictly weaker condition. Thus, we use the \textcolor{dg}{green} region to refer to all points above the line $2\min + 3\max = 3n$, which is the union of the \textcolor{dg}{green}, \textcolor{dg!50!black}{dark green}, and \textcolor{blue}{blue} regions in \Cref{fig:connectivity diagram}. We continue with a summary of previous results.

Bangachev proved that all $\FS$ in the \textcolor{blue}{blue} region are connected \cite{Bangachev}.

\begin{theorem}[{\cite[Theorem 1.5]{Bangachev}}]
    Suppose $X,Y$ are connected graphs on $n$ vertices satisfying 
    \[\min\{\dd(X),\dd(Y)\}+2\max\{\dd(X),\dd(Y)\}\ge 2n.\]
    Then $\FS$ is connected.
\end{theorem}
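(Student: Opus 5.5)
By the symmetry $\FS\cong\fs{Y}{X}$ (given by $\si\mapsto\si^{-1}$), I will assume $\dd(X)=\max\{\dd(X),\dd(Y)\}=:b$ and $\dd(Y)=:a$, so that $a+2b\ge 2n$. Then $b\ge 2n/3$ and $2(n-b)\le a$. The plan is to show that for every $\si\in V(\FS)$ and every edge $\{u,v\}\in E(X)$, the bijection $\si$ is joined by a path in $\FS$ to $\si\circ(uv)$. Since $X$ is connected, the transpositions $\{(uv):\{u,v\}\in E(X)\}$ generate $S_n$. Any $\tau\in V(\FS)$ equals $\si\circ t_1\cdots t_m$ with each $t_i$ such a transposition, so applying the claim $m$ times joins $\si$ to $\tau$, and $\FS$ is connected.

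\textbf{Path-shuttle lemma.} Let $p=\si(u)$ and $q=\si(v)$. Suppose $p=y_0,y_1,\dots,y_k,y_{k+1}=q$ is a path in $Y$ such that every occupant $w_i=\si^{-1}(y_i)$ with $1\le i\le k$ is a common $X$-neighbour of $u$ and $v$. Then $(uv)$ can be performed as follows.
\begin{enumerate}
\item Walk $u$ forward by swapping it successively with $w_1,\dots,w_k$. Each swap is legal because $u\sim w_i$ in $X$ and $y_{i-1}y_i\in E(Y)$.
\item Swap $u$ (now at $y_k$) with $v$ at $q$, which is legal since $u\sim v$ and $y_kq\in E(Y)$.
\item Walk $v$ back from $y_k$ to $p$ by swapping it with $w_k,\dots,w_1$, legal since $v\sim w_i$.
\end{enumerate}
During step (1) each $w_i$ was shifted back to $y_{i-1}$, and step (3) shifts it forward again. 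So every $w_i$ returns home, $u$ ends at $q$ and $v$ at $p$, and we reach $\si\circ(uv)$ in $2k+1$ moves.

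\textbf{Bad set.} Call a house $h\notin\{p,q\}$ \emph{bad} if $\si^{-1}(h)$ is not adjacent in $X$ to both $u$ and $v$, and let $B$ be the set of bad houses. Each of $u,v$ has at most $n-1-b$ non-neighbours other than itself, so
\[
|B|\le 2(n-1-b)\le a-2 .
\]
Consequently $p$ and $q$ each keep at least $2$ neighbours in $Y-B$, and every vertex of $Y-B$ keeps degree at least $a-|B|\ge 2$. It therefore suffices to find a $p$--$q$ path in $Y-B$.

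\textbf{Main obstacle.} The real difficulty is that $Y-B$ need not connect $p$ to $q$. A degree bound of $a-|B|$ does not by itself force connectivity, so here I must use the full inequality $a+2b\ge 2n$ together with the connectivity of $Y$. I would try two things.
\begin{itemize}
\item \textbf{Counting components.} Each component of $Y-B$ containing $p$ or $q$ has more than $a-|B|$ vertices. If $p$ and $q$ lie in different components, then either those components are large and some $Y$-edge between them is forced, or many edges run into $B$. In the latter case the bad occupants should be relocatable.
\item \textbf{Relocating first.} Before shuttling, use that $b\ge 2n/3$ to move $u$ or $v$ to a better house. Because $X$ is so dense, $u$ can usually be swapped with any occupant of a $Y$-neighbouring house. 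I would use this to first move $u$ and $v$ into a common component of $Y-B$, or into a pair of houses with a common $Y$-neighbour whose occupant is a common friend, which gives the $k=1$ three-move case.
\end{itemize}
One then checks that the relocation moves can be undone, so that the net permutation is exactly $(uv)$. Making this relocation argument airtight, with the bookkeeping of how the bad set changes as people move, is where I expect most of the work to lie.
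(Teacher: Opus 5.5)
Your reduction to the moves $\sigma\mapsto\sigma\circ(uv)$, your shuttle lemma, and the bound $|B|\le 2(n-1-b)\le a-2$ are all correct. The argument stops, however, exactly where the content of the theorem lies. You never show that $p$ and $q$ lie in the same component of $Y-B$, and in general they do not.

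Here is an example. Take $n=2m\ge 10$, let $Y$ be two disjoint copies of $K_m$ joined by a single edge $xy$, and let $X$ be $K_n$ minus a perfect matching. Then $\delta(Y)=m-1$, $\delta(X)=n-2$, and $\delta(Y)+2\delta(X)\ge 2n$. Take friends $u,v$ with matching partners $u^*,v^*$. Choose $\sigma$ with $\sigma(u)\ne x$ in the first clique, $\sigma(v)$ in the second clique, and $\sigma(u^*)=x$. Then $x\in B$, and no $p$--$q$ path avoids $B$.

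So relocating bad occupants is unavoidable, and your two bullet points do not supply it. Nothing shows that a bad occupant sitting on every $p$--$q$ route can be traded for a common friend of $u$ and $v$. Nothing shows that several such trades do not interfere with each other or with the shuttle. Nor is it shown that they can be undone so that the net effect is exactly $(uv)$. As written, the argument is complete only for those $\sigma$ for which $Y-B$ happens to connect $p$ and $q$.

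The missing idea is to orient the problem the other way, which makes everything local. The paper's proof (\Cref{lem:x+2y finite} followed by \Cref{lem:friend reduction}) assumes $\delta(X)\le\delta(Y)$. The friends $a,b$ to be swapped are then adjacent in the sparse graph, while their houses $a',b'$ lie in the dense graph and have many common road-neighbours. In your orientation this amounts to using the moves $\sigma\mapsto(hk)\circ\sigma$ for roads $hk\in E(Y)$, i.e.\ exchanging the occupants of two adjacent houses; these suffice because $Y$ is connected.

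\emph{Common friend case.} Suppose $a,b$ have a common friend $c$. Inclusion--exclusion with $\delta(X)+2\delta(Y)\ge 2n$ yields some $d'\in\sigma(N_X(c))\cap N_Y(a')\cap N_Y(b')$. If $c'd'\notin E(Y)$, it also yields some $e'\in\sigma(N_X(c))\cap N_Y(c')\cap N_Y(d')$. Hence $c$ reaches $d'$ in at most two swaps. Your shuttle with $k=1$ then performs $(ac)(ab)(bc)$, and $c$ walks back, for at most $7$ moves.

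\emph{No common friend case.} At least $\delta(X)+2$ friends of $a$ or $b$ live in common road-neighbours of $a'$ and $b'$. So at least $\delta(X)/2+1$ of them are friends of one of the two, say $a$. The bound $\delta(Y)+\delta(X)/2\ge n$ then forces two of their houses $c',d'$ to be adjacent, and $(ac)(ab)(ad)(ac)(ab)(ac)(ad)=(ab)$ finishes.

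Your shuttle lemma survives only as the case $k=1$. The real work is placing a common friend next to both houses, and your choice of orientation is what makes that hard.
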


Bangachev also proved that all $\FS$ in the \textcolor{dg!50!black}{dark green} region are connected \cite{Bangachev}.

\begin{theorem}[{\cite[Theorem 1.4]{Bangachev}}]\label{thm:kiril}
    Suppose $X,Y$ are graphs on $n\ge 6$ vertices satisfying
    \begin{enumerate}
        \item $\min\{\dd(X),\dd(Y)\}>\frac{n}{2}$.
        \item $2\min\{\dd(X),\dd(Y)\}+3\max\{\dd(X),\dd(Y)\}\ge 3n$.
    \end{enumerate}
    Then $\FS$ is connected.
\end{theorem}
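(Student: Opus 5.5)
Since $\fs{X}{Y}\cong\fs{Y}{X}$ via $\si\mapsto\si^{-1}$, we may assume $\dd(X)\ge\dd(Y)$. Write $M=\dd(X)$ and $m=\dd(Y)$, so that $n/2<m\le M$ and $2m+3M\ge 3n$. The hypotheses force $X$ and $Y$ to be connected, since a graph with minimum degree $>n/2$ is connected.

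The plan is the standard reduction. The transpositions $(ab)$ with $\{a,b\}\in E(X)$ generate $S_n$ because $X$ is connected. So it suffices to show that for every $\si\in V(\FS)$ and every edge $\{a,b\}\in E(X)$, the vertices $\si$ and $\si\circ(ab)$ lie in the same component of $\FS$. Then any $\tau$ is reached from $\si$ by writing $\si^{-1}\tau$ as a product of $X$-transpositions and realizing them one at a time.

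Fix $\si$ and an $X$-edge $\{a,b\}$, and put $u=\si(a)$ and $v=\si(b)$. If $\{u,v\}\in E(Y)$ we swap directly, so assume not. Since $u,v$ are nonadjacent, they have at least $2m-n+2\ge 2$ common $Y$-neighbours. Let $W=\si^{-1}(N_Y(u)\cap N_Y(v))$ be the people living in these houses. The basic gadget is a triangle. If some $c\in W$ is an $X$-neighbour of both $a$ and $b$, then with $w=\si(c)$ the houses $u,w,v$ form a $Y$-path and the people $a,c,b$ are pairwise friends. The sequence $(ac),(cb),(ac)$, each step along an available road, realizes $(ab)$ in $3$ moves and returns $c$ home.

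So the first step is a counting argument. The set $W$ has size $\ge 2m-n$, and $a,b$ have at least $2M-n$ common $X$-neighbours. These meet as soon as $2m+2M>3n-2$ or so, which is not implied by our hypothesis.

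The main step, and the one I expect to be the obstacle, is the case where no common friend exists in $W$. Here the extra $M$ in $2m+3M\ge 3n$ must be used. I would first try the following. Each $c\in W$ is still a friend of $a$ or of $b$ for most choices, since $|N_X(a)\cup N_X(b)|$ is large. So pick $c\in W$ with $c\sim_X a$, say, but $c\not\sim_X b$. Perform $(ac)$ across the road $uw$, which puts $a$ at $w$, adjacent to $v$, and then swap $a,b$ across $wv$. Now $b$ is at $w$, $c$ at $u$ and $a$ at $v$, and $c$ must be exchanged back with $b$, but $c,b$ are not friends. To repair this, find a fourth person $d$ who is a common $X$-neighbour of $b$ and $c$ and lives in a house adjacent to both $u$ and $w$ (or to a suitable pair). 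Then apply the triangle gadget from the previous paragraph to swap $b$ and $c$ in $3$ moves.

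Counting: $b$ and $c$ have at least $2M-n$ common friends, and $u,w$ (which are $Y$-adjacent) have at least $2m-n$ common $Y$-neighbours. A vertex in both sets must exist unless $2M+2m-2n\le n-O(1)$. If this still fails, I would iterate once more. I would choose $c$ to maximize $|N_X(c)\cap N_X(b)|$ among the $\ge 2m-n$ candidates in $W$, and use that the sets $N_X(a)$, $N_X(b)$, $N_X(c)$ each have size $\ge M$ while the relevant house sets have size $\ge m$. A union bound of the shape $3M+2m-3n\ge 0$ should then guarantee a valid intermediary, which is exactly where the hypothesis $2m+3M\ge 3n$ enters. The condition $m>n/2$ is used throughout to guarantee that common $Y$-neighbours exist, and $n\ge 6$ is used to absorb the additive constants in these counts.
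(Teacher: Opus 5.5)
\textbf{There is a genuine gap: the case you flag as the obstacle is never handled.} Your reduction to swapping adjacent friends is fine. The three-move gadget is also fine in substance, though as written $(ac),(cb),(ac)$ is not a legal sequence: after $(ac)$, $c$ sits at $u$ and $b$ at $v$, which are not adjacent. The legal order is $(ac)(ab)(bc)$. The problem is that every counting step you offer in the hard case fails in the range the hypotheses allow. Take $\dd(X)=\dd(Y)=3n/5$, which satisfies (1) and (2).

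\begin{itemize}
\item \emph{No friend of $a$ or $b$ need lie in $W$.} Inclusion--exclusion only gives $|W\cap(N_X(a)\cup N_X(b))|\ge 2m+M-2n+3$. This is negative once $n>15$, so a $c\in W$ with $c\sim_X a$ need not exist.
\item \emph{The repair reproduces the original obstruction.} Even when such a $c$ exists, the person $d$ must lie in $N_X(b)\cap N_X(c)$, of size $\ge 2M-n+2$, and in $\si^{-1}(N_Y(u)\cap N_Y(w))$, of size $\ge 2m-n$. These are forced to meet only when $2M+2m>3n-2$. That is exactly the inequality that already failed for the triangle.
\item \emph{The final union bound does not have the claimed shape.} A person lying in the three $X$-neighbourhoods and two $Y$-neighbourhoods you list is guaranteed by inclusion--exclusion only when $3M+2m>4n$, not when $3M+2m\ge 3n$. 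Maximizing over $c$ does not obviously recover the missing $n$.
\end{itemize}

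Heuristics cannot carry this step. The blow-ups of $\mathrm{Cycle}_5$ behind \Cref{prop:bangachev counterexamples} have $\dd(X)=\dd(Y)=3n/5-1>n/2$ and $2\min+3\max=3n-5$, yet $\FS$ is disconnected. So any valid argument must be sharp up to an additive constant near this point.

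\textbf{What is missing is a way to move many people inside a whole neighbourhood,} rather than hunting for one intermediary.

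In Bangachev's proof, take $U=\si(N[v])$, the image of a closed neighbourhood in the graph of smaller minimum degree, and restrict the other graph to $U$. The ingredients of this proof appear here as \Cref{lem:bangachev structure}, \Cref{lem:structure} and \Cref{lem:bangachev with P=1}. The hypothesis $2\min+3\max\ge 3n$ enters precisely as $2|U|+3\dd\ge 3n+2$, where $\dd$ is the larger minimum degree. This forces the restricted graph either to be almost-Wilsonian, or to split (possibly at a cut vertex) into at most two pieces, each of minimum degree more than half its size. Because $v$ is a star centre in its own neighbourhood, connectivity of $\fs{\st}{\cdot}$ and $\fs{\starp}{\cdot}$ (Wilson; Defant--Kravitz) lets one realize any transposition inside the relevant piece. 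The exchange mechanism of \cite[Section 4.4]{Bangachev}, where $\min>n/2$ supplies common friends, then handles $a'$ and $b'$ lying in different pieces or separated by the cut vertex.

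Alternatively, the statement follows from \Cref{thm:main}, since (1) forces $X$ and $Y$ to be connected. There the hypothesis enters through the averaging bound $|\mc C_3|\ge 2\dd(X)+2\dd(Y)-2n+2$ on pairs having three of the four adjacencies to $a,b,a',b'$. This is strictly better than any direct intersection of neighbourhoods. It is followed by the long analysis of $\mc Q_a,\mc Q_b,\mc R_a,\mc R_b,\mc S$. Without one of these tools, your argument does not establish the theorem.
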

The case $n<6$ holds by casework on small $n$. Condition (1) and (2) outline the \textcolor{dg!50!black}{dark green} region. The proof of \Cref{thm:kiril} uses that condition (1) is sufficient to ensure that $X$ and $Y$ are connected, and furthermore than $\diam(X),\diam(Y)\le 2$. The proof in this paper will instead have to use the assumption that $X$ and $Y$ are connected in our argument, as it does not follow from condition (2) alone. Bangachev also conjectured that $\FS$ are connected in the entirety of the \textcolor{dg}{green} region, which is equivalent to removing condition (1) from \Cref{thm:kiril}.
\begin{conjecture}[{\cite[Conjecture 8.1]{Bangachev}}]\label{conj:kiril}
    Suppose $X,Y$ are connected graphs on $n$ vertices satisfying
\[2\min\{\dd(X),\dd(Y)\}+3\max\{\dd(X),\dd(Y)\}\ge 3n.\]
    Then $\FS$ is connected.
\end{conjecture}

Bangachev also found a nice family of disconnected friends and strangers graphs where $X$ and $Y$ have large minimum degrees \cite{Bangachev}; these examples form the \textcolor{red}{red} `staircase' region in \Cref{fig:connectivity diagram}.

\begin{proposition}[{\cite[Proposition 1.6] {Bangachev}}]\label{prop:bangachev counterexamples}
    Suppose that $n\ge k\ge 5$ are integers. Then there exist connected graphs $X,Y$ on $n$ vertices such that $\dd(X)\ge \frac3kn-4$, $\dd(Y)\ge \frac{k-2}{k}n-3$, and $\FS$ is disconnected.
\end{proposition}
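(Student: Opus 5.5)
We prove the statement by an explicit block construction. Both graphs are blow-ups of cycles on $k$ blocks, chosen so that a single set of bijections is closed under all moves. Write $n = qk + r$ with $0\le r<k$. Partition $V(X)$ into blocks $X_1,\dots,X_k$ and $V(Y)$ into blocks $Y_1,\dots,Y_k$ with $|X_i| = |Y_i|\in\{q,q+1\}$. Since $n\ge k$, we have $q\ge 1$, so every block is nonempty. Indices are read modulo $k$.

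\textbf{Construction.}
\begin{enumerate}
    \item In $X$, make each block $X_i$ a clique, and join every vertex of $X_i$ to every vertex of $X_{i-1}$ and of $X_{i+1}$. So $X$ is a blow-up of the cycle $C_k$ with cliques as blobs.
    \item In $Y$, make each block $Y_i$ a clique, and join $Y_i$ completely to every $Y_j$ with $j\notin\{i-1,i+1\}$. So the complement of $Y$ is the complete blow-up of $C_k$ with independent blobs.
\end{enumerate}

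\textbf{Connectivity.} $X$ is connected because $C_k$ is connected. $Y$ is connected because $k\ge 5$: block $Y_i$ is adjacent to $Y_{i+2}$, and stepping by $2$ reaches all blocks when $k$ is odd. When $k$ is even we additionally use $Y_i\sim Y_{i+3}$, which is available since $k\ge 5$ forces $k\ge 6$ in the even case, so $3\not\equiv \pm1 \pmod k$.

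\textbf{Degrees.} A vertex of $X_i$ has degree
\[|X_{i-1}|+|X_i|+|X_{i+1}|-1\ \ge\ 3q-1\ \ge\ 3\frac{n}{k}-4,\]
using $q > n/k - 1$. A vertex of $Y_i$ has degree
\[n-1-|Y_{i-1}|-|Y_{i+1}|\ \ge\ n-1-2(q+1)\ \ge\ n-\frac{2n}{k}-3,\]
using $q\le n/k$. These are exactly the claimed bounds $\dd(X)\ge \frac3k n-4$ and $\dd(Y)\ge\frac{k-2}{k}n-3$.

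\textbf{Disconnectedness.} Let $S\subseteq V(\FS)$ be the set of bijections $\si$ with $\si(X_i)=Y_i$ for all $i$. This set is nonempty, and it is a proper subset of $V(\FS)$ whenever $k\ge 2$ and $n\ge 2$. We show no edge of $\FS$ leaves $S$.

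Take $\si\in S$ and an edge $\{a,b\}\in E(X)$. There are two cases.
\begin{enumerate}
    \item If $a,b$ lie in the same block $X_i$, then swapping them keeps $\si(X_i)=Y_i$. So the result, if the swap is legal at all, stays in $S$.
    \item Otherwise $a\in X_i$ and $b\in X_{i\pm1}$. Then $\si(a)\in Y_i$ and $\si(b)\in Y_{i\pm1}$, which are non-adjacent in $Y$ by construction. So this swap is not an edge of $\FS$.
\end{enumerate}
Hence $S$ is a union of connected components of $\FS$, and $\FS$ is disconnected.

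\textbf{Main obstacle.} The main point is choosing the invariant: blocks of $X$ may only be adjacent to consecutive blocks, and exactly those block pairs are non-roads in $Y$. This is what balances $\dd(X)\approx 3n/k$ against $\dd(Y)\approx (k-2)n/k$. The remaining care lies in the rounding constants $-4$ and $-3$ and in the connectivity of $Y$ for even $k$, both handled above.
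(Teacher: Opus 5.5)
Your proof is correct and takes essentially the paper's approach: the paper notes that Bangachev's examples are its weighted clique construction with $G=\mathrm{Cycle}_k$ and uniform weights, where $X$ blows up $G$ and $Y$ blows up $G^C$ into cliques, and disconnectedness comes from the invariant that each block stays in its matching house-block. Your block sizes $q$ and $q+1$ are exactly what handles $k\nmid n$, which the paper's general construction sidesteps by assuming $nw(i)\in\mathbb{Z}$. Your degree bounds and your connectivity check for $Y$ (using $k\ge 5$) are also correct.
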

The \textcolor{red}{red} region is open, meaning that none of the examples from this proposition lie on the boundary itself. Moreover, the examples from this proposition yield only the `corners' of the staircase; by removing edges from $X$ and $Y$, we can decrease the minimum degrees $\dd(X)$ and $\dd(Y)$ as we wish while keeping $\FS$ disconnected.

It is a natural question to ask for a bound on the diameter in the regions where $\FS$ is known to be connected, such as the \textcolor{blue}{blue} and \textcolor{dg!50!black}{dark green} regions. Using the proofs of Bangachev, results on the diameter of $\FS$ were obtained by Akella and Li \cite{FSDiameter}. Their main result is that the diameter of $\FS$ in the \textcolor{blue}{blue} region of \Cref{fig:connectivity diagram} is $O(n^5)$.
\begin{theorem}[{\cite[Theorem 1.3]{FSDiameter}}]\label{thm:ALIn5}
    Let $X$ and $Y$ be connected graphs on $n$ vertices such that 
    \[\min\{\dd(X),\dd(Y)\}+2\max\{\dd(X),\dd(Y)\}\ge 2n.\]
    Then $\FS$ is connected and has diameter $O(n^5)$.
\end{theorem}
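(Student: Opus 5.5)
Throughout, assume without loss of generality that $\dd(X)\ge \dd(Y)$. This is allowed because $\fs{X}{Y}\cong\fs{Y}{X}$ via $\si\mapsto\si^{-1}$. Then the hypothesis gives $3\dd(X)\ge \dd(Y)+2\dd(X)\ge 2n$, so $\dd(X)\ge 2n/3$.

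The plan is to follow Bangachev's connectivity argument and count the moves it uses. The proof splits into a \emph{local} step and a \emph{global} step:
\begin{enumerate}
\item the local step shows that for every edge $\{a,b\}\in E(X)$ and every $\si$, the transposition $(ab)$ can be performed in $O(n^3)$ moves;
\item the global step writes the passage from an arbitrary $\si$ to an arbitrary $\si'$ as a product of $O(n^2)$ transpositions along edges of $X$.
\end{enumerate}

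For the global step, fix a spanning tree $T$ of $X$, which exists because $X$ is connected. Any permutation of $V(X)$ is a product of $O(n^2)$ transpositions along edges of $T$. To see this, repeatedly pick a leaf $\ell$ of $T$ and move the correct token into $\ell$ by transpositions along the tree path, which has length $\le n$. Then delete $\ell$ and recurse. Composing with the local step gives diameter $O(n^2)\cdot O(n^3)=O(n^5)$, which is the claimed bound.

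For the local step, use a conjugation trick. Let $N=N_X(a)\cap N_X(b)$ be the set of common friends of $a$ and $b$. Then $|N|\ge 2\dd(X)-n\ge n/3$, and the same computation shows that any two vertices of $X$ have at least $n/3$ common neighbours. The proof has three stages:
\begin{enumerate}
\item Find a walk $P$ in $\FS$ from $\si$ to some $\tau$ with $\{\tau(a),\tau(b)\}\in E(Y)$, in which $b$ never moves and every move involving $a$ swaps $a$ with a vertex of $N$.
\item Perform the single swap $(ab)$ at $\tau$.
\item Run $P$ backwards with the labels $a$ and $b$ interchanged.
\end{enumerate}
Each reversed move is legal. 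Moves not involving $a$ or $b$ are unchanged. Moves swapping $a$ with some $c\in N$ become swaps of $b$ with $c$, which are allowed in $X$ because $c\in N$, and allowed in $Y$ because the houses are the same as before. The net effect of the three stages is exactly $\si\mapsto\si\circ(ab)$, using $2|P|+1$ moves.

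To build $P$, take a shortest path in $Y$ from $\si(a)$ to a $Y$-neighbour of $\si(b)$; it has length $O(n)$. Advance $a$ along this path one house at a time. Suppose $a$ is at house $u$, the next house is $v$, and $v$ is occupied by $c$. If $c\in N$ we simply swap $a$ and $c$. Otherwise, we first make $c$'s house hold a member of $N$, or route $a$ through an intermediary, using the large common neighbourhoods in $X$ together with $\dd(Y)+2\dd(X)\ge 2n$. This degree condition guarantees many houses adjacent to $v$ (or to $u$) that are occupied by suitable friends. Any helper moves that displace other people are applied and then undone symmetrically, using the same conjugation idea one level down. The goal is that each advance of $a$ costs $O(n^2)$ moves, so $|P|=O(n^3)$. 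Throughout, $b$ is left in place, and no move of $a$ uses a non-member of $N$.

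The main obstacle is this single-advance gadget. We must certify, purely from $\dd(Y)+2\dd(X)\ge 2n$, that $a$ can always be moved one step along a $Y$-edge with a polynomially bounded and \emph{structurally clean} sequence of moves. Here clean means that $b$ is fixed and $a$ only ever meets members of $N$, so that the conjugation reversal stays valid. The approach is to mimic the case analysis in Bangachev's proof of the blue-region theorem. That proof exhibits exactly such a short local exchange via a common neighbour, such as a triangle in $Y$ whose occupants are pairwise friends. Each case should cost $O(n)$ moves to set up and $O(n)$ moves to undo. If that bound fails, a cruder $O(n^3)$ per-advance bound would still give polynomial diameter, but it would not give $n^5$.
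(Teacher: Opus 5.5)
Your global step and the conjugation device are sound. Since $b$ never moves and $a$ only swaps with members of $N=N_X(a)\cap N_X(b)$, replaying $P$ backwards with $a$ and $b$ interchanged is legal, and the net effect is $\si\mapsto\si\circ(ab)$. But the whole content of the theorem lies in constructing $P$, and that is not carried out. The ``single-advance gadget'' is only described as ``make $c$'s house hold a member of $N$, or route $a$ through an intermediary'', with helper moves undone by ``the same conjugation idea one level down''. No case analysis is given. You do not say what plays the role of $N$ one level down, or why that recursion terminates at polynomial cost. The costs ($O(n)$ per case, $O(n^2)$ per advance) are asserted rather than derived, and as you acknowledge, the $n^5$ bound rests entirely on them.

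The heuristic you offer is also doubtful under your normalization $\dd(X)\ge\dd(Y)$. There $\dd(Y)$ can be as small as $2n-2\dd(X)$, so $v$ need not have ``many'' neighbours. Moreover, bringing a member of $N$ into house $v$ without touching $a$ or $b$ means working in $\fs{X-\{a,b\}}{Y-\{u,\si(b)\}}$. For that residual instance the degree condition only gives $\dd(Y-\{u,\si(b)\})+2\dd(X-\{a,b\})\ge 2(n-2)-2$, which falls short of the hypothesis on $n-2$ vertices. Also, $Y-\{u,\si(b)\}$ need not even be connected.

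The missing idea is to normalize the other way. With $X$ dense and $Y$ sparse, swapping two friends is intrinsically non-local. For example, $X=K_n$ and $Y=\mathrm{Cycle}_n$ satisfy the hypothesis, and exchanging two people in antipodal houses needs at least $\lfloor n/2\rfloor$ moves, so you are forced into routing. The paper cites this theorem without proof, but proves the stronger \Cref{prop:x+2y diam}. It assumes $\dd(X)\le\dd(Y)$, so $\dd(X)+2\dd(Y)\ge 2n$ makes the house graph the dense one. Then \Cref{lem:x+2y finite} swaps any two friends in at most $7$ moves by inclusion--exclusion alone:
\begin{itemize}
\item If $a,b$ have a mutual friend $c$, there is $d'\in\si(N_X(c))\cap N_Y(a')\cap N_Y(b')$. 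If $\{c',d'\}\notin E(Y)$, there is a further $e'\in\si(N_X(c))\cap N_Y(c')\cap N_Y(d')$.
\item If they have no mutual friend, $|\mc{R}_a|+|\mc{R}_b|\ge\dd(X)+2$, which forces a $Y$-edge inside $\mc{R}_a^Y$ or $\mc{R}_b^Y$.
\end{itemize}
Feeding $t=7$ into \Cref{lem:friend reduction} uses paths of length $O(n/\dd(X))$ in the sparse graph $X$, times $n-1$ transpositions. This gives $\diam(\FS)\le 42n^2/\min\{\dd(X),\dd(Y)\}=O(n^2)$, far better than $O(n^5)$. With that orientation, your own global step plus an $O(1)$ local step already proves the theorem, and no routing or conjugation is needed.
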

Akella and Li ask the following question, which furthers \Cref{conj:kiril}.
\begin{question}[{\cite[Question 6.5]{FSDiameter}}]\label{q:ALI}
    Is it true that when 
    \[2\min\{\dd(X),\dd(Y)\}+3\max\{\dd(X),\dd(Y)\}\ge 3n,\]
    the diameter of $\FS$ is polynomially bounded in $n$?
\end{question}
If either $X$ or $Y$ is not connected, then $\FS$ is not connected. However, we can still ask for a bound on the diameter of any connected component of $\FS$. We will answer this question in the affirmative, both if $X,Y$ are connected, and if we look at any component of $\FS$ for disconnected $X$ or $Y$. 

We resolve \Cref{conj:kiril} and \Cref{q:ALI} in the affirmative and find a sharp diameter bound in the \textcolor{dg}{green} region.

\subsection{Main Results}

The main result of this paper proving that all $\FS$ in the \textcolor{dg}{green} region are connected, and proving a sharp diameter bound in the \textcolor{dg}{green} region. This answers \Cref{conj:kiril} in the affirmative.
\begin{theorem}\label{thm:main}
    Suppose $X,Y$ are connected and
    \[2\min\{\dd(X),\dd(Y)\}+3\max\{\dd(X),\dd(Y)\}\ge 3n.\]
    Then $\FS$ is connected. Furthermore,
    \[\diam(\FS) = O\!\left(\frac{n^2}{\min\{\dd(X),\dd(Y)\}}\right).\]
\end{theorem}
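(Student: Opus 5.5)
The plan follows the route indicated in the abstract. First prove a local swapping lemma: there is an absolute constant $C$ such that for every $\si\in V(\FS)$ and every edge $\{a,b\}\in E(X)$, the transposition $(ab)$ can be performed in at most $C$ moves, i.e.\ $\si$ and $\si\circ(ab)$ lie at distance at most $C$ in $\FS$. Then deduce connectivity and the diameter bound from it.

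Throughout, write $\dd_1=\min\{\dd(X),\dd(Y)\}$ and $\dd_2=\max\{\dd(X),\dd(Y)\}$. The condition $2\dd_1+3\dd_2\ge 3n$ gives $\dd_2\ge 3n/5$ and $\dd_2\ge n-\tfrac23\dd_1$. The roles of $X$ and $Y$ are symmetric, since $\FS\cong\fs{Y}{X}$ via $\si\mapsto\si^{-1}$, so we may assume $\dd(Y)=\dd_2$.

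\textbf{Swapping lemma.} Fix $\si$ and an edge $\{a,b\}\in E(X)$, and set $u=\si(a)$, $v=\si(b)$.
\begin{enumerate}
\item If $uv\in E(Y)$, we are done in one move.
\item Otherwise, try to find a short path $u=w_0,w_1,\dots,w_k=v$ in $Y$ with $k\le 2$ or $3$. The degree bound $\dd_2>n/2$ guarantees that common neighbourhoods are large: $|N_Y(u)\cap N_Y(v)|\ge 2\dd_2-n$.
\item Perform $(ab)$ by conjugation. If $c=\si^{-1}(w)$ sits on a common neighbour $w$ of $u,v$ and $c$ is an $X$-friend of both $a$ and $b$, then the sequence $(ac)(bc)(ac)$ realises $(ab)$ in $3$ moves.
\item In general, count: a typical occupant $c$ of $N_Y(u)\cap N_Y(v)$ is adjacent in $X$ to both $a$ and $b$ unless the $X$-non-neighbourhoods of $a$ and $b$, of total size at most $2(n-\dd_1)$, cover all of $N_Y(u)\cap N_Y(v)$.
\item If that happens, route through longer gadgets, using two intermediaries $c,d$ and a $4$-cycle or triangle in $Y$ occupied by mutual friends. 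The inequality $2\dd_1+3\dd_2\ge 3n$ should be exactly what makes some such gadget of bounded length exist.
\end{enumerate}
This is a finite case analysis in which each configuration is killed by an inclusion–exclusion count of neighbourhoods. I expect this to be the main obstacle, particularly in the cases where $\dd_1\le n/2$. There Bangachev's argument fails because $X$ need not have diameter $2$, and the proof must use that $a,b$ are adjacent rather than arbitrary. I would first try to reduce to Bangachev's ``exchangeable pair'' framework and redo his counting with the weaker hypothesis. I would also handle the near-extremal cases $\dd_2\approx 3n/5$ separately.

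\textbf{Connectivity.} Given the lemma, every $X$-edge transposition is available from every configuration within $C$ moves. Since $X$ is connected, the transpositions $T(X)$ generate $S_n$. Writing $\tau^{-1}\si$ as a product of $X$-edge transpositions shows that any $\si$ is connected to any $\tau$.

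\textbf{Diameter.} To get $O(n^2/\dd_1)$ we need to be efficient. A connected graph with minimum degree $\dd(X)$ has diameter $O(n/\dd(X))$, so $\diam(X)=O(n/\dd_1)$. We then sort people into their target houses one at a time:
\begin{enumerate}
\item Order the targets along a spanning tree of $X$ (or $Y$), choosing a leaf each time.
\item Move a person to its target along a shortest path, which has length $O(n/\dd_1)$. Each step costs at most $C$ moves, so each person costs $O(n/\dd_1)$ moves.
\item Freeze a leaf of the remaining structure once it is placed. We must check that deleting placed vertices preserves enough of the degree condition, or alternatively that paths may pass through placed vertices while restoring them at constant cost, as the lemma permits.
\end{enumerate}
Summing over $n$ people gives $O(n^2/\dd_1)$.

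Routing through already placed positions requires care. I would use the lemma as a black box for arbitrary $X$-edges, and pick the path in $X$ between the person's current position and its target. Moving the person along this path by successive transpositions shifts the other people on the path by one position each. This is fixed by the standard trick of transporting with a cyclic shift followed by its reverse, which stays within $O(\text{path length})$ transpositions. Hence each placement costs $O(n/\dd_1)$ moves, and the total is $O(n^2/\dd_1)$.
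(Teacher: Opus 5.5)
Your deduction of connectivity and of the diameter bound from the swapping lemma is fine. It is essentially the paper's \Cref{lem:friend reduction}: write $\si^{-1}\tau$ as at most $n-1$ transpositions, each a product of $2\ell-1\le 2\diam(X)-1$ edge transpositions along an $X$-path, and use $\diam(X)\le 3n/(\dd(X)+1)-1$. No freezing or deletion of placed vertices is needed, since the swap bound holds from every configuration; the paths must be in $X$, not $Y$.

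\textbf{The gap is the swapping lemma itself}, which is the entire content of the theorem, and what you sketch for it does not work. The one explicit gadget is invalid: after $(ac)$, $c$ sits in $u$ and $b$ in $v$ with $\{u,v\}\notin E(Y)$, so $(bc)$ is not a move. The correct gadget is $(ac)(ab)(bc)$. The count in step 4 needs roughly $2\dd_1+2\dd_2>3n$, which fails throughout the new regime $\dd_1\le n/2$.

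Most importantly, step 5 expects $2\dd_1+3\dd_2\ge 3n$ to force a bounded gadget. No argument using only this inequality can succeed, because the local statement is false without connectivity of $X$. Take $n=20$ and $X=K_{10}\oplus K_{10}$ on $V_1\sqcup V_2$. Let $Y$ consist of cliques on $A$, $B$, $W$ with $|A|=|B|=5$, $|W|=10$, plus all edges between $W$ and $A\cup B$. Then:
\begin{itemize}
\item $\dd(X)=9$ and $\dd(Y)=14$, so $2\cdot 9+3\cdot 14=3n$, and $Y$ is connected.
\item Suppose $\si(V_1)=A\cup B$ with $\si(a)\in A$, $\si(b)\in B$ for adjacent $a,b\in V_1$. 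People of $V_1$ only swap among themselves, so they always occupy $A\cup B$, and no road joins $A$ to $B$.
\item Hence $a$ never leaves $A$, and $\si\circ(ab)$ is unreachable.
\end{itemize}
So connectivity of $X$ must be used inside the swapping argument, not only afterwards to generate $S_n$.

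This is where the paper's argument is non-local. After a chain of reductions, it introduces the set $\mc{S}$ of people whose houses have at least two roads into $\mc{Q}_a^{*Y}$ and at least one into $\mc{Q}_b^{*Y}$. Here $\mc{Q}_a$ consists of common friends of $a,b$ living next to $a'$, and $\mc{Q}_b$ similarly for $b'$. The reductions are: no pair with all four adjacencies, $|\mc{R}_a|,|\mc{R}_b|\le 1$, and no road between $\mc{Q}_a^{*Y}$ and $\mc{Q}_b^{*Y}$. The paper shows every person outside $\mc{Q}_a^{*X}\sqcup\mc{S}^X$ has friends inside it (\Cref{lem:c at least 3 neighbors}). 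It then uses connectivity of $X$ to get an $X$-path of length at most $3$ from $\mc{Q}_a^X$ to $\mc{S}^X$, along which people are shuttled. In the example above, $\mc{S}^X=V_2$ and no such path exists.

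The other ingredient your plan lacks is a mechanism that makes rearrangements inside a neighbourhood cost $O(1)$. For any person $v$, $U=\si(N_X[v])$ satisfies $2|U|+3\dd(Y)\ge 3n+2$. By \Cref{lem:bangachev structure}, $Y|_U$ is then almost-Wilsonian, or splits (possibly at a cut vertex) into pieces of minimum degree more than half their size. This is combined with:
\begin{itemize}
\item Wilson's and Defant--Kravitz's results for $\st$ and $\starp$;
\item the Kornhauser--Miller--Spirakis $O(m^3)$ diameter bound;
\item a Menger-type lemma giving a bounded-size almost-Wilsonian subgraph through prescribed vertices.
\end{itemize}
Together these let $v$ act as a star vertex and permute its friends within a block in boundedly many moves. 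The absolute constant $C$ comes from this machinery, not from triangles or $4$-cycles found by counting, and your proposal has no substitute for it.
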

We simultaneously establish the connectivity of $\FS$ in the \textcolor{dg}{green} region and bound the diameter here. \Cref{thm:main} improves on the bounds given by \Cref{thm:ALIn5} and establishes new diameter bounds in the \textcolor{dg}{green} region of \Cref{fig:connectivity diagram} where none existed previously. In addition, we answer \Cref{q:ALI} when $X$ and $Y$ are connected in the affirmative. When $\min\{\dd(X),\dd(Y)\}$ is at least a fixed portion of $n$, we have the following corollary.
\begin{corollary}\label{cor:diam}
    Suppose $X,Y$ are connected graphs on $n$ vertices such that  $\min\{\dd(X),\dd(Y)\} = \Omega(n)$ and
    \[2\min\{\dd(X),\dd(Y)\}+3\max\{\dd(X),\dd(Y)\}\ge 3n.\]
    Then $\diam (\FS) = \Theta(n)$.
\end{corollary}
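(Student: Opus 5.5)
The upper bound is immediate from \Cref{thm:main}. If $\min\{\dd(X),\dd(Y)\}\ge cn$ for some constant $c>0$, then
\[\diam(\FS)=O\!\left(\frac{n^2}{cn}\right)=O(n).\]
So it remains to prove a matching lower bound $\diam(\FS)=\Omega(n)$, and this lower bound will in fact hold for every pair of graphs on $n$ vertices whose friends and strangers graph is connected.

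For the lower bound I will use a parity (cycle-count) argument. Fix any bijection $\si_0\in V(\FS)$. I will compare it with a bijection $\si_1$ such that $\si_0^{-1}\circ\si_1$ is a permutation of $V(X)$ with as few cycles as possible, namely a single $n$-cycle. Such a $\si_1$ always exists: take $\si_1=\si_0\circ\pi$ for any $n$-cycle $\pi$ on $V(X)$.

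Each edge of $\FS$ changes the current bijection by right multiplication by a transposition $(ab)$. For a permutation, multiplying by a transposition changes the number of cycles by exactly $\pm1$. The identity has $n$ cycles and an $n$-cycle has $1$, so any path in $\FS$ from $\si_0$ to $\si_1$ must have length at least $n-1$. Since $\FS$ is connected by \Cref{thm:main}, such a path exists, so
\[d_{\FS}(\si_0,\si_1)\ge n-1,\qquad\text{hence}\qquad \diam(\FS)\ge n-1=\Omega(n).\]
Note that this bound uses only that $\FS\subseteq\mathrm{Cay}_T(S_n)$, where $T$ is the set of all transpositions, and that $\FS$ is connected.

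Combining the two bounds gives $\diam(\FS)=\Theta(n)$. The only substantive input is \Cref{thm:main} itself, which provides both the connectivity and the upper bound. I expect no real obstacle here, beyond checking that the constant implicit in $\Omega(n)$ for $\min\{\dd(X),\dd(Y)\}$ passes through to the $O(\cdot)$ in \Cref{thm:main}; it does, since that bound is uniform in $n$.
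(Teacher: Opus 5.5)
Your proposal is correct and follows the paper's proof: the upper bound comes from \Cref{thm:main}, and the lower bound comes from $\FS$ being a spanning subgraph of $\fs{K_n}{K_n}=\mathrm{Cay}_T(S_n)$, which has diameter $n-1$. Your cycle-count argument simply proves that last fact explicitly, where the paper takes it as known.
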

The diameter bound in this regime is sharp in the following sense: 
\begin{proposition}\label{prop:diam lower bound}
    For all positive integers $n$ and $k$ with $n\ge 2k+1$, there exist graphs $X$ and $Y$ on $n$ vertices such that $\dd(X) = 2k$ and $2\min\{\dd(X),\dd(Y)\}+3\max\{\dd(X),\dd(Y)\}\ge 3n$, and
    \[\diam(\FS)\ge \frac{n^2-n}{2\min\{\dd(X),\dd(Y)\}}.\]
    If $n$ is even, then 
    \[\diam(\FS)\ge \frac{n^2}{2\min\{\dd(X),\dd(Y)\}}.\]
\end{proposition}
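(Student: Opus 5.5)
The plan is to take $Y=K_n$ and $X$ a ``thick cycle'' (a circulant graph). Then $\FS$ is the Cayley graph of $S_n$ generated by the transpositions of $X$, and we lower-bound its diameter with a displacement potential.

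\textbf{Construction.} Let $V(X)=\Z_n$, and join $x$ to $x\pm 1,\dots,x\pm k$. Since $n\ge 2k+1$, these $2k$ neighbours are distinct, so $X$ is $2k$-regular and $\dd(X)=2k$. Let $Y=K_n$, so $\dd(Y)=n-1\ge 2k$. Then $\min\{\dd(X),\dd(Y)\}=2k$ and $\max\{\dd(X),\dd(Y)\}=n-1$. The degree condition reads $4k+3(n-1)\ge 3n$, which holds because $k\ge 1$. Both graphs are connected. Because $Y$ is complete, condition (3) of \Cref{def:fs} is automatic. Hence $\si\sim\si'$ in $\FS$ exactly when $\si'=\si\circ(ab)$ for some $\{a,b\}\in E(X)$, i.e.\ for some $a,b$ at cyclic distance at most $k$.

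\textbf{Potential.} Let $d$ denote the cyclic distance on $\Z_n$. Fix a target bijection $\tau$, and for $\si\in V(\FS)$ set
\[\Phi(\si)=\sum_{y\in V(Y)} d\big(\si^{-1}(y),\tau^{-1}(y)\big).\]
In the analogy, $\Phi$ sums, over all houses, how far the current occupant is from the target occupant. A move $\si\mapsto\si\circ(ab)$ replaces $\si^{-1}$ by $(ab)\circ\si^{-1}$. This changes exactly two terms: the occupant $a$ of one house becomes $b$, and vice versa. By the triangle inequality, each of these terms changes by at most $d(a,b)\le k$. So one move changes $\Phi$ by at most $2k$. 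Since $\Phi(\tau)=0$, we get $\mathrm{dist}_{\FS}(\si,\tau)\ge \Phi(\si)/(2k)$.

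\textbf{Choice of endpoints.} Let $r(x)=x+\lfloor n/2\rfloor$ on $\Z_n$, and take $\tau=\si\circ r$ for any $\si$. Then $\tau^{-1}(y)=r^{-1}(\si^{-1}(y))$, which lies at cyclic distance exactly $\lfloor n/2\rfloor$ from $\si^{-1}(y)$. Hence $\Phi(\si)=n\lfloor n/2\rfloor$, and
\[\diam(\FS)\ge \frac{n\lfloor n/2\rfloor}{2k}=\frac{2n\lfloor n/2\rfloor}{2\cdot 2k}=\frac{2n\lfloor n/2\rfloor}{2\min\{\dd(X),\dd(Y)\}}.\]
When $n$ is odd, $2n\lfloor n/2\rfloor=n^2-n$; when $n$ is even, it equals $n^2$. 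Since $n^2\ge n^2-n$, both claimed bounds follow.

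The only genuinely delicate step is bookkeeping: composing with $(ab)$ on the right acts on $\si^{-1}$ from the left. One must check that the potential is taken with respect to positions in $X$, where the cyclic metric lives, so that a swap moves two tokens by distance $\le k$ each. Once that is set up correctly, the rest is immediate.
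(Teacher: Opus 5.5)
Your proposal is correct and is essentially the paper's proof: the same construction $X=(\mathrm{Cycle}_n)^k$, $Y=K_n$, endpoints differing by the rotation $x\mapsto x+\lfloor n/2\rfloor$, and the same total-displacement potential. The only difference is scaling: you measure displacement in cyclic distance, so each swap changes it by at most $2k$. The paper measures it in $d_X$, where each swap changes it by at most $2$ and each token must travel at least $\lfloor n/2\rfloor/k$.
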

This proposition demonstrates that the diameter bound
\[O\!\left(\frac{n^2}{\min\{\dd(X),\dd(Y)\}}\right)\]
is asymptotically sharp in the \textcolor{dg}{green} and \textcolor{blue}{blue} regions of \Cref{fig:connectivity diagram}. In the proof of \Cref{thm:main}, we will extract an even stronger statement, which is the following.
\begin{theorem}\label{thm:2x+3y in finite time}
    There exists an absolute constant $C$ that satisfies the following. Let $X,Y$ be connected graphs with $\dd(X)\le \dd(Y)$ and
    \[2\dd(X)+3\dd(Y) \ge 3n.\]
    Then for all $a,b\in V(X)$ with $\{a,b\}\in E(X)$ and for all $\si\in V(\FS)$, the distance between $\si$ and $\si\circ (ab)$ in $\FS$ is bounded by
    \[d_{FS}(\si,\si\circ(ab))\le C.\]
\end{theorem}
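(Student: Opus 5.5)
Set $\alpha=\dd(X)/n$ and $\beta=\dd(Y)/n$, so $\alpha\le\beta$ and $2\alpha+3\beta\ge 3$; in particular $\beta\ge 3/5$. Fix an edge $\{a,b\}\in E(X)$ and a configuration $\si$, and write $u=\si(a)$, $v=\si(b)$. The idea is that in constantly many moves we bring $a,b$ onto a pair of adjacent houses, swap them there, and then undo the preparatory moves. Preparatory moves that do not involve $a$ or $b$ are undone by the same transpositions in reverse order: a swap $(cd)$ with $c,d\notin\{a,b\}$ is legal in $\si$ if and only if it is legal in $\si\circ(ab)$, since only the houses $u,v$ change occupants. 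Moves involving $a$ or $b$ are handled by the usual conjugation trick: if $(ac)$ and then $(bc)$ and then $(ac)$ are all legal, their composition is $(ab)$. So it suffices to find, inside an $O(1)$-size neighborhood of $u,v$, a small gadget that permits such a word.

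\textbf{Step 1 (easy case).} If $uv\in E(Y)$ we swap directly. Otherwise, because $\beta\ge 3/5$, any two houses have at least $(2\beta-1)n\ge n/5$ common road-neighbors; hence $\diam(Y)\le 2$ and there are many houses $w\in N_Y(u)\cap N_Y(v)$. A three-swap sequence through such a $w$ works as soon as the occupant $c=\si^{-1}(w)$ is a friend of both $a$ and $b$: perform $(ac)$ along $uw$, then $(bc)$, then $(ac)$ again, using the roads $uw$ and $wv$. So we need a house $w$ in $N_Y(u)\cap N_Y(v)$ whose occupant lies in $N_X(a)\cap N_X(b)$. Counting gives $|N_Y(u)\cap N_Y(v)|\ge(2\beta-1)n$ and $|N_X(a)\cap N_X(b)|\ge(2\alpha-1)n$. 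These sets must intersect (after pulling back by $\si$) when $(2\beta-1)+(2\alpha-1)>1$, that is, when $\alpha+\beta>3/2$. This fails in the part of the green region where $\alpha$ is small; there $\alpha$ may be below $1/2$, and $a,b$ need not have any common friend at all.

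\textbf{Step 2 (main obstacle: small $\alpha$).} Here I would use longer constant-length words. We may first rearrange other people near $u,v$: any person $c$ in house $w$ can be exchanged with a person $d$ who is a friend of $c$ and sits in a road-neighbor house. Because $Y$ is so dense, this lets us bring a suitable friend of $a$ and of $b$ into a common neighbor house in $O(1)$ moves, as long as those moves avoid $a$ and $b$. Concretely, I would look for a path $a=x_0,x_1,\dots,x_k=b$ in $X$ with $k\le 3$ (for example $a,c,d,b$ with $c\in N_X(a)$, $d\in N_X(b)$, $cd\in E(X)$). Such a short path should follow from $\alpha>0$ together with $X$ connected and dense in the relevant regime; more robustly, we use the edge $ab$ itself plus one extra friend. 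We then try to place $x_1,\dots,x_{k-1}$ in houses so that the roads needed for the conjugation word $(ab)=(a x_1)\cdots$ exist. With $k$ bounded, we need $O(1)$ houses that are pairwise adjacent in the required pattern. Since $\beta\ge 3/5$, the house graph $Y$ contains many triangles and $K_4$'s through any edge, with counts of order $(3\beta-2)n^2$ and more. The inequality $2\alpha+3\beta\ge 3$ is what should guarantee that a weighted count of good (house-pattern, person-pattern) incidences is positive. I would set this up as an inclusion–exclusion count over $w\in N_Y(u)$, $w'\in N_Y(v)\cap N_Y(w)$ and their occupants, with $X$-degree losses weighted by $1-\alpha$ and $Y$-losses by $1-\beta$. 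I expect the coefficients $2$ and $3$ to come out of exactly such a count, with the count needing a case split. In the residual subcase, where the count degenerates because the occupants of $N_Y(u)$ are all non-friends of $a$, I would first perform preliminary swaps of non-$\{a,b\}$ people to rearrange them. Connectivity of $X$ is invoked only when $\alpha$ is tiny, to guarantee the existence of the short friendship path.

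\textbf{Step 3.} Finally, combine: every case uses a word of bounded length consisting of preparatory moves, the core conjugation, and the reversed preparatory moves, so that $d_{FS}(\si,\si\circ(ab))\le C$. The hardest part is Step 2, specifically showing that the counting inequality closes precisely under $2\alpha+3\beta\ge 3$ rather than under a stronger condition.
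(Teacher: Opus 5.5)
\textbf{There is a genuine gap.} Your Step 2 is where the whole theorem lives, and it states an expectation rather than an argument. No count is actually set up, and nothing shows that an inclusion--exclusion over houses near $u,v$ and their occupants closes under $2\alpha+3\beta\ge 3$.

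More seriously, no argument of that kind can work, because the statement is false for disconnected $X$ already at $\alpha\approx 1/2$. Take $n\ge 20$ divisible by $4$. Let $X=K_{n/2}\oplus K_{n/2}$ on people $P_1\sqcup P_2$. Let $V(Y)=A\sqcup B\sqcup C$ with $|A|=|B|=n/4$ and $|C|=n/2$, and let $Y$ be $K_n$ with all edges between $A$ and $B$ deleted. Then $Y$ is connected, $\delta(X)=n/2-1\le\delta(Y)=3n/4-1$, and $2\delta(X)+3\delta(Y)=13n/4-5\ge 3n$.

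Seat $P_1$ in $A\cup B$ and $P_2$ in $C$, and take friends $a,b\in P_1$ with $u\in A$, $v\in B$. Every move exchanges two people of the same $P_i$ in adjacent houses, so the set of $P_1$-people seated in $A$ is invariant. Hence $\sigma\circ(ab)$ is unreachable. Note that $a$ and $b$ have many common friends, but every common road-neighbor of $u$ and $v$ lies in $C$ and is occupied by a stranger, and no preparatory moves can change that.

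This configuration meets every degree bound your count could use. So connectivity of $X$ must be used in an essential, non-local way, and not only ``when $\alpha$ is tiny''. The use you propose for it, a short friendship path from $a$ to $b$, is vacuous, since $ab$ is already an edge.

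The paper's proof contains exactly this missing global step. After disposing of small gadgets, it works with the sets $\mathcal{Q}_a,\mathcal{Q}_b$ of common friends of $a,b$ housed next to $u$, respectively $v$. It reduces to the case that $\mathcal{Q}_a^{*Y}$ and $\mathcal{Q}_b^{*Y}$ are nonadjacent, and introduces the set $\mathcal{S}$ of houses with two roads into $\mathcal{Q}_a^{*Y}$ and one into $\mathcal{Q}_b^{*Y}$. Counting gives $|\mathcal{S}|\ge 2\delta(Y)-n+1$, and then connectivity of $X$ forces $d_X(\mathcal{Q}_a^X,\mathcal{S}^X)\le 3$. In the example above $\mathcal{S}^X=P_2$, so this distance is infinite.

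People are then walked along such a path. Swaps inside a neighborhood $\sigma(N_X[c])$ are controlled by Bangachev's structure lemma (almost-Wilsonian, two dense components, or a cut vertex). In the almost-Wilsonian case they rely on Wilson/Defant--Kravitz-type results applied to a bounded subgraph assembled from short Menger paths; this is also why the paper's $C$ is not explicit. None of this structure appears in your plan.

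Finally, a small slip in Step 1: the sequence $(ac)$, $(bc)$, $(ac)$ needs the road $uv$ at its second move. The correct gadget is $(ac)(ab)(bc)$, using the roads $uw$, $wv$, $wu$ in turn.
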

We answer \Cref{q:ALI} in the case where $X,Y$ are not necessarily connected, but still satisfy the minimum degree condition of the \textcolor{dg}{green} region in \Cref{fig:connectivity diagram}. By an inductive argument that reduces larger cases to the connected components of $X$ and $Y$, \Cref{thm:main} allows us to obtain the following bound even when $X$ and $Y$ may not be connected.
\begin{corollary}\label{cor:diam of 2x+3y component is n2}
    Let $X,Y$ be graphs on $n$ vertices such that
    \[2\min\{\dd(X),\dd(Y)\}+3\max\{\dd(X),\dd(Y)\}\ge 3n.\]
    Then the diameter of any connected component of $\FS$ is $O(n^2)$.
\end{corollary}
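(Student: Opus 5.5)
We need to prove \Cref{cor:diam of 2x+3y component is n2}: components of $\FS$ have diameter $O(n^2)$ even when $X$ or $Y$ is disconnected. The plan is to reduce to \Cref{thm:main} applied to pairs of connected pieces, by induction on $n$.

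First, the degree condition forces each component of $X$ and $Y$ to be large. Write $\dd_X=\dd(X)$ and $\dd_Y=\dd(Y)$. If, say, $\dd_X\le\dd_Y$, then $2\dd_X+3\dd_Y\ge 3n$ gives $\dd_Y\ge 3n/5$, so $Y$ is connected with diameter at most $2$, since two vertices with degree above $n/2$ share a neighbor. Also $\dd_X\ge (3n-3\dd_Y)/2\ge 0$, but $X$ may be disconnected. Each component $X_i$ of $X$ has at least $\dd_X+1$ vertices.

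Fix a component of $\FS$ containing $\si$. Any move preserves the set $\si(V(X_i))$ for each $i$, since swaps only exchange people who are friends, hence in the same $X_i$. So the component of $\si$ lives inside the product over $i$ of configurations of $X_i$ on $U_i=\si(V(X_i))$. However, moves of $X_i$ are restricted by edges of $Y[U_i]$ and not by the other components. Thus the component of $\si$ is the Cartesian product of components of $\fs{X_i}{Y[U_i]}$. Since $\diam$ of a Cartesian product is the sum of the factor diameters and $\sum |X_i|^2\le n^2$, it suffices to bound each factor component by $O(|X_i|^2)$.

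The key step, and the main obstacle, is checking that $(X_i, Y[U_i])$ again satisfies a suitable condition on $m=|X_i|$ vertices. We have $\dd(X_i)\ge\dd_X$, and $\dd(Y[U_i])\ge \dd_Y-(n-m)$. I would verify
\[2\min\{\dd(X_i),\dd(Y[U_i])\}+3\max\{\dd(X_i),\dd(Y[U_i])\}\ge 3m\]
using $2\dd_X+3\dd_Y\ge 3n$ and $m\ge \dd_X+1$.

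The easy case is when $\dd(X_i)\le\dd(Y[U_i])$. Then the left side is at least $2\dd_X+3\dd_Y-3(n-m)\ge 3m$.

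The other case is when $\dd(X_i)>\dd(Y[U_i])$, where the roles swap and the inequality is delicate. Here I would try the bound $2(\dd_Y-n+m)+3\dd_X\ge 3m$. Equivalently, we need $2\dd_Y+3\dd_X-2n\ge m$. If this fails, I would fall back on the weaker connectivity bounds on $Y[U_i]$, which has minimum degree above $m/2$ when $n-m$ is small. Alternatively, I would recurse: if $Y[U_i]$ is disconnected, apply the same splitting to $Y$'s side and induct on $n$.

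Once the condition holds and both pieces are connected, \Cref{thm:main} gives diameter $O(m^2/\min\{\cdot\})=O(m^2)$. If a piece is disconnected, induction on $m<n$ gives $O(m^2)$ with the same absolute constant. The constant does not compound, since the product step sums $O(m_i^2)$ over disjoint parts.
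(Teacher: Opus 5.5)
There is a gap, and it sits exactly at the step you flag as the main obstacle. Your overall route is the paper's: induct on $n$, use \Cref{prop:components} to write the component of $\si$ as a Cartesian product of components of $\fs{X_i}{Y|_{U_i}}$ with $U_i=\si(V(X_i))$, check that each pair $(X_i,Y|_{U_i})$ again satisfies the degree condition, and sum $C|V(X_i)|^2\le Cn^2$. But you leave the case $\dd(X_i)>\dd(Y|_{U_i})$ unresolved, and your attack on it fails. By replacing $\dd(X_i)$ with the global bound $\dd(X)$, you discard the case hypothesis and end up needing $3\dd(X)+2\dd(Y)-2n\ge m$, which is false in general. Take $n=14$, $X=K_{10}\sqcup K_4$, and $Y=K_{14}$ minus a perfect matching. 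Then $2\cdot 3+3\cdot 12=3n$. Any $U_1=\si(V(K_{10}))$ contains a matched pair, so $\dd(Y|_{U_1})=8<9=\dd(X_1)$, yet $3\cdot 3+2\cdot 12-28=5<10$.

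Your fallbacks do not rescue this. The bound $\dd(Y|_{U_i})>m/2$ requires $n-m$ to be small, which is not given, and it would not by itself give connectivity or an $O(m^2)$ bound. Recursing on $Y$'s side presupposes the degree condition for the subpair, which is exactly what is in question.

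The missing idea is the one-line inequality
\[2\min\{p,q\}+3\max\{p,q\}=2(p+q)+\max\{p,q\}\ge 2p+3q,\]
valid for all reals $p,q$. Apply it with $p=\dd(X_i)$ and $q=\dd(Y|_{U_i})$. Your ``easy case'' computation $2\dd(X_i)+3\dd(Y|_{U_i})\ge 2\dd(X)+3\dd(Y)-3(n-m)\ge 3m$ then covers both orderings at once. Equivalently, in your second case, $3\dd(X_i)+2\dd(Y|_{U_i})>2\dd(X_i)+3\dd(Y|_{U_i})$ follows directly from $\dd(X_i)>\dd(Y|_{U_i})$. This is how the paper handles it (``regardless of which subgraph has the smaller minimum degree'').

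With this inserted, the rest of your argument goes through. When $X$ is disconnected, each $(X_i,Y|_{U_i})$ satisfies the hypothesis on $m<n$ vertices, so induction gives $O(m^2)$ with a uniform constant. When $X$ is connected, $Y$ is too (since $\dd(Y)\ge 3n/5$), and \Cref{thm:main} applies directly.
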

We extend the region of \Cref{fig:connectivity diagram} that is known to be insufficient for connectivity of $\FS$ by demonstrating that $\FS$ may be disconnected in the \textcolor{orange}{orange} region. We introduce weighted clique pairs, which are responsible for the disconnected examples in the \textcolor{orange}{orange} region. These techniques likely yield improvements to the \textcolor{red}{red} region beyond this result in the \textcolor{orange}{orange} region, but we give this result as proof of concept.
\begin{proposition}\label{prop:weighted clique pair}
    If $(\dd^+_w[G],\dd_w^+[G^C])$ is a weighted clique pair, then any pair $(\al,\be)\in[0,1]^2$ with $\al<\dd^+_w[G]$ and $\be<\dd_w^+[G^C]$ does not imply $\FS$ is connected. In particular, there exist disconnected $\FS$ with minimum degrees in the \textcolor{orange}{orange} region.
\end{proposition}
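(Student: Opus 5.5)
The plan is to realize a weighted clique pair by a blow-up construction and find an invariant of the moves that splits $\FS$. I read the (not yet displayed) definition as follows. $G$ is a graph on vertex set $[k]$ with $G$ and $G^C$ both connected. There are positive weights $w_1,\dots,w_k$ with $\sum_i w_i=1$. The quantities $\dd^+_w[G]$ and $\dd^+_w[G^C]$ are the minimum over $v$ of the weighted closed-neighborhood sums $w_v+\sum_{u\sim v}w_u$, taken in $G$ and in $G^C$ respectively. If the paper's definition differs in details, the same scheme should adapt.

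Fix $(\al,\be)$ with $\al<\dd^+_w[G]$ and $\be<\dd^+_w[G^C]$. Take $n$ large and partition $[n]$ into blocks $V_1,\dots,V_k$ with $|V_i|=\lfloor w_i n\rfloor$ or $\lceil w_i n\rceil$, so each $|V_i|$ is nonempty. Use a second copy $W_1,\dots,W_k$ with $|W_i|=|V_i|$.
\begin{itemize}
\item Let $X$ be the blow-up of $G$: each $V_i$ is a clique, and $V_i,V_j$ are completely joined exactly when $ij\in E(G)$.
\item Let $Y$ be the blow-up of $G^C$ on the $W_i$: each $W_i$ is a clique, and $W_i,W_j$ are completely joined exactly when $ij\notin E(G)$.
\end{itemize}
A vertex of $V_v$ has degree $|V_v|-1+\sum_{u\sim v}|V_u|\ge \dd^+_w[G]\,n-O(k)$. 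Hence $\dd(X)\ge \al n$ once $n$ is large, since the inequality is strict, and likewise $\dd(Y)\ge\be n$. Connectivity of $X$ and $Y$ follows from connectivity of $G$ and $G^C$ together with each block being a nonempty clique.

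For disconnectedness, let $\mathcal S$ be the set of bijections $\si$ with $\si(V_i)=W_i$ for every $i$. I claim $\mathcal S$ is a union of components of $\FS$.
\begin{itemize}
\item Take $\si\in\mathcal S$ and a legal swap $(ab)$. Since $\{a,b\}\in E(X)$, either $a,b$ lie in the same block, or $a\in V_i$, $b\in V_j$ with $ij\in E(G)$.
\item In the second case $\si(a)\in W_i$ and $\si(b)\in W_j$ with $ij\notin E(G^C)$, so $\{\si(a),\si(b)\}\notin E(Y)$. The swap is therefore illegal.
\item So every legal move from $\si$ exchanges two people in the same block, and $\si\circ(ab)$ stays in $\mathcal S$.
\end{itemize}
Because $k\ge2$ and all blocks are nonempty, some bijection sends a vertex of $V_1$ into $W_2$. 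Thus $\mathcal S$ is a proper nonempty subset and $\FS$ is disconnected. By the monotonicity remark before the figure, removing edges lowers degrees, which covers all pairs below $(\al,\be)$.

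For the ``in particular'' part, I would exhibit explicit weighted clique pairs whose points $(\dd^+_w[G],\dd^+_w[G^C])$ reach the corners $(1/2,5/8)$ and $(5/8,1/2)$ and trace the segments to $(7/13,7/13)$. As a first attempt I would take $G=P_4$ (self-complementary, and both it and its complement are connected) with weights $(x,y,y,x)$. Closed-neighborhood sums in $G$ are $x+y$ and $x+2y$, and in $G^C$ they are $2x+y$ and $x+y$. I would then optimize, possibly over slightly larger $G$ such as $P_5$ or the bull, and interpolate weights linearly so the resulting points sweep the orange segments.

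The main obstacle is this last step: finding weights whose minima attain exactly those boundary lines. Checking that each point of the orange region lies strictly below some achieved pair is then routine linear algebra.
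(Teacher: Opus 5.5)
Your first part is correct and is the paper's own argument. $X$ is the clique blow-up of $G$ and $Y$ the clique blow-up of $G^C$. Every cross-block friendship lies on a non-road, so $\{\si:\si(V_i)=W_i\text{ for all }i\}$ is a proper union of components. The paper uses exact block sizes $mn\,w(i)$ where you round, which is harmless given the strict inequalities.

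\textbf{The gap: the ``in particular'' clause is not established.} It needs an actual weighted clique pair with $\dd^+_w[G]>1/2$ and $\dd^+_w[G^C]>3/5$ (or the mirror image). You leave exactly this as the ``main obstacle'', and the graphs you propose cannot supply it. If $d_G(u,v)\ge 3$, then $N_G[u]\cap N_G[v]=\emptyset$, so $w(N_G[u])+w(N_G[v])\le 1$ and $\dd^+_w[G]\le 1/2$. Since both coordinates must exceed $1/2$, both $G$ and $G^C$ must have diameter $2$. But $P_4$, $P_5$ and the bull each have diameter at least $3$. For $P_4$, the two leaf neighborhoods $\{1,2\}$ and $\{3,4\}$ partition $[4]$, which is why your own computation gives $x+y=1/2$ in both coordinates. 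So whichever of $G$, $G^C$ plays the role of $X$, one coordinate is at most $1/2$, and no choice or interpolation of weights on these graphs reaches the orange triangle.

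\textbf{What the paper supplies.} It uses a concrete $7$-vertex graph: a triangle on $\{2,3,4\}$, vertex $1$ joined to $5,6,7$, and the matching $\{2,7\},\{3,6\},\{4,5\}$. Here $G$ and $G^C$ are both connected of diameter $2$. The weights are $w(1)=2A$, $w(2)=w(3)=w(4)=A$, $w(5)=w(6)=w(7)=B$, with $5A+3B=1$. The closed neighborhoods give $\dd^+_w[G]=\min\{2A+3B,3A+B\}$ and $\dd^+_w[G^C]=\min\{5A,3A+2B,2A+3B\}$. For $B\le A\le 2B$ these equal $3A+B$ and $2A+3B$.
\begin{itemize}
\item $A=B=1/8$ gives $(1/2,5/8)$.
\item $A=2B=2/13$ gives $(7/13,7/13)$.
\item Hence this single family traces exactly the segment you were aiming for.
\item The explicit choice $A=17/130$, $B=15/130$ gives $(66/130,79/130)$, with $66/130>1/2$ and $79/130>3/5$.
\end{itemize}
Feeding this pair into your first part yields disconnected $\FS$ whose normalized minimum degrees lie inside the orange triangle. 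The statement needs only one such point; tracing the whole boundary is extra.
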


\section{Preliminaries}

In this section we will make our definitions precise and state useful existing facts about friends and strangers graphs.

\subsection{Notation and Terminology}\label{sec:notation and terminology}

We let $[n] = \{1,\dots,n\}$, and we use $(\cdot)^C$ to denote the complement of a set. We will use $\st_n$ to refer to the star graph on $n$ vertices, with edges $\{1,2\},\{1,3\},\cdots,\{1,n\}$. We will use $\starp_n$ to refer to $\st_n$ with one additional edge. The graph $\starp_n$ is depicted in \Cref{fig:starp}. In both cases, we may refer to the vertex with degree $n-1$ as the \blue{star vertex} (vertex 1 in the diagram). We may write $\st,\starp$ if $n$ is clear from context. 

\begin{figure}[h!]
    \centering
    \begin{tikzpicture}[
    vertex/.style={circle, draw, fill=white, minimum size=18pt, inner sep=1pt},
    edge/.style={thick}
]

\node[vertex] (1) at (0,2) {$1$};

\node[vertex] (2) at (-2.5,0) {$2$};
\node[vertex] (3) at (-1.5,0) {$3$};
\node[vertex] (4) at (-0.5,0) {$4$};
\node[vertex] (5) at (0.5,0) {$5$};
\node[vertex] (6) at (2.5,0) {$n$};
\node at (1.5,0) {$\cdots$};

\foreach \v in {1,2,3,4,5,6}
  \draw[edge] (1) -- (\v);

\draw[edge] (2) -- (3);

\end{tikzpicture}
    \caption{$\starp_n$.}
    \label{fig:starp}
\end{figure}

We will use $\mathrm{Cycle}_n$ to denote the cycle graph on $n$ vertices, with edges $\{1,2\},\{2,3\},\dots,\{n-1,n\},\{n,1\}$, and $K_n$ to denote the complete graph on $n$ vertices. For graphs $G_1$ and $G_2$, the notations $G_1\subset G_2$ and $G_1\subsetneq G_2$ will denote that $G_1$ is a (not necessarily induced) subgraph or proper subgraph of $G_2$, respectively. We will use the notations $d_X,d_Y,d_{FS}$ to denote the distance between two vertices in $X,Y,$ or $\FS$ respectively, which is the length of the shortest path between the two vertices. We extend this definition to subsets of vertices, where $d_X(S,T)$ denotes the minimum length of a path in $X$ from a vertex in $S$ to a vertex in $T$. The \blue{diameter} of a connected graph $G$ is the longest distance between any two vertices:
\[\diam(G) = \max_{x,y\in V(G)}d(x,y).\]

For a graph $G$ and $k\ge 1$, we denote the $k$-th power of $G$ as $G^k$. The graph $G^k$ has the same vertex set as $G$, and has edges defined by:
\[E(G^k) = \{\{x,y\}:1\le d_G(x,y)\le k\}.\]

The \blue{disjoint union} of graphs $G_1$ and $G_2$, denoted $G_1\oplus G_2$ has vertex set $V(G_1)\sqcup V(G_2)$ and edge set $E(G_1)\sqcup E(G_2)$. The \blue{graph Cartesian product} of graphs $G_1,\dots,G_k$, denoted $G_1\sq \cdots \sq G_k$ has vertex set $V(G_1)\times\cdots \times V(G_k)$, and $(g_1,\dots,g_k)$ and $(g_1',\dots,g_k')$ are adjacent if there exists an index $i\in [k]$ such that $g_j = g_j'$ for all $j\neq i$, and $\{g_i,g_i'\}\in E(G_i)$.

This paper will use the convention that lowercase letters $a,b,c,d,\dots$ will denote vertices in $V(X)$, and that primed letters $a' = \si(a),b' = \si(b),c' = \si(c),d' = \si(d),\dots$ will denote their image in $V(Y)$ under $\si$. In the language of the analogy in \Cref{sec:analogy}, $a,b,c,d,\dots $ are people, and $a',b',c',d'\dots$ are the houses they start in.

For $v\in V(X)$, we write $N_X(v)$ to refer to the set of vertices in $X$ that share an edge with $v$, not including $v$ itself (the open neighborhood of $v$). $N_X[v] = N_X(v)\cup \{v\}$ will refer to the closed neighborhood of $v$. Lastly, if $S\subset V(X)$, then $N_X(S)$ and $N_X[S]$ denote the open and closed neighborhoods of $S$ respectively; that is, $N_X(S)$ consists of all vertices that share an edge with any vertex in $S$, and $N_X[S] = N_X(S)\cup S$. When there is no ambiguity, we may use the notation $N(S)$.

The \blue{deletion} of vertex $v\in V(G)$ from $G$ is the graph obtained from removing $v$ from the vertex set and removing all edges containing $v$ from the edge set. We call $G'\subset G$ an \blue{induced subgraph} of $G$ if $G'$ can be obtained from $G$ after a sequence of vertex deletions. We say $G$ is \blue{connected} if there exists a path between any two vertices of $G$, and we say $G$ is \blue{biconnected} if $G$ remains connected after the deletion of any vertex. When $U\subset V(G)$, we let $G|_U$ denote the restriction of $G$ to $U$, and when $W\subset V(G)$, we let $G|_W$ denote the subgraph of $G$ induced by $W$. We say $\{v,w\}$ is an \blue{internal edge} of $S\subset V(G)$ if $v,w\in S$.

\begin{remark}[On Notation]\label{rk:to the X notation}
   For graphs $X$ and $Y$, we may refer to subsets $\mc{S}\subset V(X)\times V(Y)$. For convenience, we denote the projection of $\mc{S}$ onto $X$ or $Y$ by
   \begin{align*}
       \mc{S}^X &= \{v\in V(X):\text{there exists } w\in V(Y) \text{ such that }(v,w)\in \mc{S}\}\\
       \mc{S}^Y &= \{w\in V(Y):\text{there exists } v\in V(X) \text{ such that }(v,w)\in \mc{S}\}.
   \end{align*}
\end{remark}

\begin{remark}[On Diagrams]\label{rk:on diagrams}
    The diagrams throughout the paper will have vertices, which correspond to pairs $(v,\si(v))\in V(X)\times V(Y)$. Dashed edges in diagrams represent edges in $X$ (or friendships), and solid edges represent edges in $Y$ (or roads). Even though the capital letters denoting the houses will not be labelled in most diagrams, note that person $a$ generally begins in house $a'$, and so forth. 
    
    As the reader interprets a sequence of swaps in the context of a diagram, they should imagine the lowercase letters (people) swapping positions, and the dashed lines (friendships) moving with them. By contrast, the nodes in the figures (houses) and the solid lines between them (roads) will stay in place as swaps occur. At any intermediate position, we judge whether or not a swap can occur between two people based on the houses they reside in at that intermediate configuration. Note that many cases will list sequences of swaps. 
        \begin{figure}[h!]
        \centering
        \begin{tikzpicture}[
    vertex/.style={circle, draw, inner sep=1.5pt},
    scale=1.3
]

\node[vertex] (a) at (-1,1) {$a$};
\node[vertex] (b) at ( 1,1) {$b$};
\node[vertex] (c) at (-1,-1) {$c$};
\node[vertex] (d) at (1,-1) {$d$};

\draw[thick] (a) -- (c);
\draw[thick] (a) -- (d);
\draw[thick] (b) -- (d);
\draw[thick] (c) -- (d);

\draw[dashed, thick] (a) -- (b);

\draw[dashed, thick] (a) to[out=-110,in=110] (c);
\draw[dashed, thick] (b) -- (c);
\draw[dashed, thick] (a) to[out=-25,in=115] (d);
\draw[dashed, thick] (b) to[out=-70,in=70] (d);

\node at (2,0) {$\xrightarrow{(ac)}$};

\node[vertex] (a1) at (3,1) {$c$};
\node[vertex] (b1) at ( 5,1) {$b$};
\node[vertex] (c1) at (3,-1) {$a$};
\node[vertex] (d1) at (5,-1) {$d$};

\draw[thick] (a1) -- (c1);
\draw[thick] (a1) -- (d1);
\draw[thick] (b1) -- (d1);
\draw[thick] (c1) -- (d1);

\draw[dashed, thick] (a1) -- (b1);

\draw[dashed, thick] (a1) to[out=-110,in=110] (c1);
\draw[dashed, thick] (b1) -- (c1);
\draw[dashed, thick] (b1) to[out=-70,in=70] (d1);
\draw[dashed, thick] (c1) to[out=-20,in=-160] (d1);

\node at (6,0) {$\xrightarrow{(ad)}$};

\node[vertex] (a2) at (7,1) {$c$};
\node[vertex] (b2) at ( 9,1) {$b$};
\node[vertex] (c2) at (7,-1) {$d$};
\node[vertex] (d2) at (9,-1) {$a$};

\draw[thick] (a2) -- (c2);
\draw[thick] (a2) -- (d2);
\draw[thick] (b2) -- (d2);
\draw[thick] (c2) -- (d2);

\draw[dashed, thick] (a2) -- (b2);

\draw[dashed, thick] (b2) -- (c2);
\draw[dashed, thick] (a2) to[out=-25,in=115] (d2);
\draw[dashed, thick] (b2) to[out=-70,in=70] (d2);
\draw[dashed, thick] (c2) to[out=-20,in=-160] (d2);

\node at (0,-3) {$\xrightarrow{(ab)}$};

\node[vertex] (a3) at (1,-2) {$c$};
\node[vertex] (b3) at (3,-2) {$a$};
\node[vertex] (c3) at (1,-4) {$d$};
\node[vertex] (d3) at (3,-4) {$b$};

\draw[thick] (a3) -- (c3);
\draw[thick] (a3) -- (d3);
\draw[thick] (b3) -- (d3);
\draw[thick] (c3) -- (d3);

\draw[dashed, thick] (a3) -- (b3);

\draw[dashed, thick] (b3) -- (c3);
\draw[dashed, thick] (a3) to[out=-25,in=115] (d3);
\draw[dashed, thick] (b3) to[out=-70,in=70] (d3);
\draw[dashed, thick] (c3) to[out=-20,in=-160] (d3);

\node at (4,-3) {$\xrightarrow{(bd)}$};

\node[vertex] (a4) at (5,-2) {$c$};
\node[vertex] (b4) at (7,-2) {$a$};
\node[vertex] (c4) at (5,-4) {$b$};
\node[vertex] (d4) at (7,-4) {$d$};

\draw[thick] (a4) -- (c4);
\draw[thick] (a4) -- (d4);
\draw[thick] (b4) -- (d4);
\draw[thick] (c4) -- (d4);

\draw[dashed, thick] (a4) -- (b4);

\draw[dashed, thick] (a4) to[out=-110,in=110] (c4);
\draw[dashed, thick] (b4) -- (c4);
\draw[dashed, thick] (b4) to[out=-70,in=70] (d4);
\draw[dashed, thick] (c4) to[out=-20,in=-160] (d4);

\node at (8,-3) {$\xrightarrow{(bc)}$};

\node[vertex] (a5) at (9,-2) {$b$};
\node[vertex] (b5) at (11,-2) {$a$};
\node[vertex] (c5) at (9,-4) {$c$};
\node[vertex] (d5) at (11,-4) {$d$};

\draw[thick] (a5) -- (c5);
\draw[thick] (a5) -- (d5);
\draw[thick] (b5) -- (d5);
\draw[thick] (c5) -- (d5);

\draw[dashed, thick] (a5) -- (b5);

\draw[dashed, thick] (a5) to[out=-110,in=110] (c5);
\draw[dashed, thick] (b5) -- (c5);
\draw[dashed, thick] (a5) to[out=-25,in=115] (d5);
\draw[dashed, thick] (b5) to[out=-70,in=70] (d5);

\end{tikzpicture}
\caption{An example of how figures will be drawn. We do not label the houses $a',b',c'$, but will always draw with the convention that person $a$ begins in house $a'$, and so on. Note that the friendships move with the people, but the houses and roads never move.}
        \label{fig:example diagram}
    \end{figure}
    \begin{align*}
        \textbf{Unlike standard permutation}&\textbf{ notation, transpositions}\\
        \textbf{ are meant to be carri}&\textbf{ed out left to right}
    \end{align*}
    so the reader may imagine people actually swapping places in this natural order. See \Cref{fig:example diagram} for an example diagram with all steps displayed. In future diagrams, we will omit the intermediate diagrams, but this is how the reader should verify the correctness of a given sequence of swaps.
To accompany the diagram, we will write the equality
\[(ac)(ad)(ab)(bd)(bc) = (ab).\]
Thus, friends $a$ and $b$ can be swapped in 5 moves from this configuration. Thus, if we could build this configuration, we would conclude $d_{FS}(\si,\si\circ(ab))\le 5$. We say ``$c$ and $d$ are adjacent,'' or ``$e'$ and $f'$ are adjacent'' without specifying the underlying graph. It is assumed that all nonprimed letters are vertices in $X$, all primed letters are vertices in $Y$, and any adjacencies refer to edges of corresponding graph, either $E(X)$ or $E(Y)$.
\end{remark}

\subsection{Results on Friends and Strangers Graphs}

Here we state useful existing results on friends and strangers graphs.

\begin{lemma}[\cite{Defant}]\label{lem:XY=YX}
    $\FS\cong \fs{Y}{X}$ is a graph isomorphism.
\end{lemma}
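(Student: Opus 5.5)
The plan is to exhibit an explicit isomorphism. The natural candidate is inversion of bijections:
\[\Phi:V(\FS)\to V(\fs{Y}{X}),\qquad \Phi(\si)=\si^{-1}.\]
Since each $\si:V(X)\to V(Y)$ is a bijection, $\si^{-1}:V(Y)\to V(X)$ is a bijection. Hence $\Phi$ is a well-defined map on vertex sets, and it is itself a bijection, being its own inverse up to swapping the roles of $X$ and $Y$. What remains is to show that $\Phi$ preserves both adjacency and non-adjacency.

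First take an edge $\{\si,\si'\}$ of $\FS$. Then there are $a,b$ with $\{a,b\}\in E(X)$, $\{\si(a),\si(b)\}\in E(Y)$, and $\si'=\si\circ(ab)$. Set $a'=\si(a)$ and $b'=\si(b)$. The key computation is
\[\si'^{-1}=(ab)\circ\si^{-1}=\si^{-1}\circ\bigl(\si\,(ab)\,\si^{-1}\bigr)=\si^{-1}\circ(a'b'),\]
using that conjugating a transposition relabels its support. So $\si^{-1}$ and $\si'^{-1}$ differ by right multiplication by $(a'b')$. For $(a'b')$ to give an edge of $\fs{Y}{X}$ we need two conditions: $\{a',b'\}\in E(Y)$, which holds by condition (3) for the original edge, and $\{\si^{-1}(a'),\si^{-1}(b')\}=\{a,b\}\in E(X)$, which holds by condition (2). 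Therefore $\Phi(\si)$ and $\Phi(\si')$ are adjacent in $\fs{Y}{X}$.

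The converse direction is the same argument with the roles of $X$ and $Y$ exchanged, applied to $\Phi^{-1}$, which is inversion again. Thus $\Phi$ preserves non-adjacency as well, so it is a graph isomorphism. There is no substantial obstacle here. The only point needing care is the bookkeeping of composition order: $(ab)$ acts on the domain side for $\si$ but on the codomain side for $\si^{-1}$, and the conjugation identity above is exactly what converts one into the other.
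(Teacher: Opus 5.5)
Your proof is correct and uses the same map as the paper, which simply observes that $\si\mapsto\si^{-1}$ is an isomorphism from $\FS$ to $\fs{Y}{X}$. Your identity $\si'^{-1}=\si^{-1}\circ(a'b')$, together with applying the argument to the inverse map, supplies exactly the verification the paper leaves implicit.
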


Observe that $\si\in V(\FS)\mapsto \si^{-1}\in V(\fs{Y}{X})$ is an isomorphism. Defant and Kravitz also state:

\begin{proposition}[{\cite[Proposition 2.1]{Defant}}]\label{prop:FS subset}
    If $X\subset \tilde X$ and $Y\subset\tilde Y$, then $\FS\subset \fs{\tilde X}{\tilde Y}$.
\end{proposition}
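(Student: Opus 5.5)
The statement to prove is \Cref{prop:FS subset}: if $X\subset\tilde X$ and $Y\subset\tilde Y$, then $\FS\subset\fs{\tilde X}{\tilde Y}$. I will argue directly from \Cref{def:fs}. Implicitly $X$ and $\tilde X$ share the vertex set $V(X)$, and $Y$ and $\tilde Y$ share $V(Y)$; this is the setting in which the statement is meant. I would make it explicit, since if the vertex sets differed the bijections could not be compared.

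\textbf{Vertices.} A vertex of $\FS$ is a bijection $\si:V(X)\to V(Y)$. Because $V(\tilde X)=V(X)$ and $V(\tilde Y)=V(Y)$, the same map is a bijection $V(\tilde X)\to V(\tilde Y)$. So $V(\FS)=V(\fs{\tilde X}{\tilde Y})$; in fact both vertex sets are the set of all such bijections.

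\textbf{Edges.} Suppose $\si$ and $\si'$ are adjacent in $\FS$. Then there are $a,b\in V(X)$ with $\si=\si'\circ(ab)$, $\{a,b\}\in E(X)$, and $\{\si(a),\si(b)\}\in E(Y)$. Since $E(X)\subset E(\tilde X)$ and $E(Y)\subset E(\tilde Y)$, the same $a,b$ satisfy conditions (1)--(3) of \Cref{def:fs} for the pair $(\tilde X,\tilde Y)$. Hence $\{\si,\si'\}\in E(\fs{\tilde X}{\tilde Y})$, and therefore $E(\FS)\subset E(\fs{\tilde X}{\tilde Y})$.

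Together these two steps show that $\FS$ is a spanning subgraph of $\fs{\tilde X}{\tilde Y}$. There is no real obstacle here; the only care needed is the observation above about the shared vertex sets.
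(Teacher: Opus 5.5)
Your proof is correct. The vertex sets coincide, and each adjacency condition of \Cref{def:fs} for $(X,Y)$ carries over to $(\tilde X,\tilde Y)$ because $E(X)\subset E(\tilde X)$ and $E(Y)\subset E(\tilde Y)$. The shared vertex set is automatic here, since all the graphs have $n$ vertices and an $n$-vertex subgraph of an $n$-vertex graph must be spanning. The paper gives no proof of its own and simply cites Defant and Kravitz; your direct check from the definition is the standard immediate argument.
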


The same authors provide an interpretation of $\FS$ when $X$ is disconnected.

\begin{proposition}[{\cite[Proposition 2.4]{Defant}}]\label{prop:components}
    Let $X,Y$ be graphs on $n$ vertices, and let $X_1,\dots,X_r$ denote the connected components of $X$. For $i\in [r]$, let $n_i = |V(X_i)|$. Let $\mc{OP}_{n_1,\dots,n_r}(Y)$ denote the collection of ordered set partitions $(V_1,\dots,V_r)$ of $V(Y)$ such that $|V_i| = n_i$ for all $i\in [r]$. Then
    \[\FS\cong \bigoplus_{(V_1,\dots,V_r)\in \mc{OP}_{n_1,\dots,n_r}(Y)} (\fs{X_1}{Y|_{V_1}}\square \cdots \square \fs{X_r}{Y|_{V_r}}) \]
\end{proposition}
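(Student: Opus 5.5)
The plan is to build the isomorphism explicitly. Every edge of $\FS$ preserves a natural invariant, namely the ordered partition of houses occupied by the various components of $X$. Inside each invariant class, the bijection splits into independent pieces, one for each component.

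\emph{Step 1 (the invariant).} For $\si\in V(\FS)$, set $V_i=\si(V(X_i))$. Since $\si$ is a bijection and the $V(X_i)$ partition $V(X)$, the tuple $(V_1,\dots,V_r)$ is an ordered set partition of $V(Y)$ with $|V_i|=n_i$, i.e.\ an element of $\mc{OP}_{n_1,\dots,n_r}(Y)$. If $\si'=\si\circ(ab)$ is adjacent to $\si$, then $\{a,b\}\in E(X)$, so $a$ and $b$ lie in the same component $X_j$. Hence $\si'(V(X_i))=\si(V(X_i))$ for every $i$. Therefore no edge of $\FS$ joins bijections with different partitions, and $\FS$ is the disjoint union, over $(V_1,\dots,V_r)\in\mc{OP}_{n_1,\dots,n_r}(Y)$, of the induced subgraphs $G_{(V_1,\dots,V_r)}$ on $\{\si:\si(V(X_i))=V_i \text{ for all } i\}$.

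\emph{Step 2 (the bijection).} For a fixed partition, define
\[\Phi(\si)=(\si|_{V(X_1)},\dots,\si|_{V(X_r)}),\]
where $\si|_{V(X_i)}:V(X_i)\to V_i$ is a bijection and hence a vertex of $\fs{X_i}{Y|_{V_i}}$. The map $\Phi$ is a bijection onto the vertex set of $\fs{X_1}{Y|_{V_1}}\sq\cdots\sq\fs{X_r}{Y|_{V_r}}$, since gluing the pieces back together recovers $\si$.

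\emph{Step 3 (edges correspond).} First suppose $\si\sim\si\circ(ab)$ in $\FS$, with $a,b\in V(X_j)$. Then $\si(a),\si(b)\in V_j$ and $\{\si(a),\si(b)\}\in E(Y)$, so this is an edge of $Y|_{V_j}$. Also $\{a,b\}\in E(X_j)$. So $\Phi$ changes only the $j$-th coordinate, by an edge of $\fs{X_j}{Y|_{V_j}}$, which is exactly the Cartesian-product adjacency.

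Conversely, suppose two tuples differ only in coordinate $j$, by a transposition $(ab)$ that is valid in $\fs{X_j}{Y|_{V_j}}$. Then $\{a,b\}\in E(X)$, and $\{\si(a),\si(b)\}$ is an edge of the induced subgraph $Y|_{V_j}$, hence of $Y$. So the glued bijections are adjacent in $\FS$.

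Combining Steps 1–3 gives the claimed isomorphism. There is no real obstacle; the only point needing care is in Step 3, where adjacency in $Y|_{V_j}$ must agree with adjacency in $Y$. This holds precisely because $Y|_{V_j}$ is the \emph{induced} subgraph on $V_j$.
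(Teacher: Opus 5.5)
Your proof is correct and complete. The occupied-house partition $(\sigma(V(X_1)),\dots,\sigma(V(X_r)))$ is invariant along edges because every edge of $X$ lies in a single component, and within each class the restriction map is a vertex bijection onto the Cartesian product under which edges correspond coordinatewise (using that $X_j$ and $Y|_{V_j}$ are induced subgraphs). The paper gives no proof of its own here, simply quoting Defant and Kravitz, and your argument is the standard direct verification of this fact.
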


In Bangachev's proof of \Cref{thm:kiril}, the author used heavily the results of \cite{Wilson} and Defant and Kravitz \cite{Defant}, which tackle the case $X = \st$. To parse the following definition, \Cref{fig:theta0} depicts the graph $\theta_0$.
\begin{figure}[h!]
    \centering
    \begin{tikzpicture}[
    vertex/.style={circle, draw, fill=white, inner sep=1.5pt, minimum size=10pt},
    edge/.style={thick}
]

\node[vertex] (1) at (-0.5,1) {};
\node[vertex] (2) at (0.5,1) {};
\node[vertex] (5) at (1,0) {};
\node[vertex] (7) at (0.5,-1) {};

\node[vertex] (3) at (-1,0) {};
\node[vertex] (4) at (0,0) {};
\node[vertex] (6) at (-0.5,-1) {};

\draw[edge] (1) -- (2);
\draw[edge] (2) -- (5);
\draw[edge] (5) -- (7);
\draw[edge] (7) -- (6);
\draw[edge] (6) -- (3);
\draw[edge] (3) -- (1);
\draw[edge] (3) -- (4);
\draw[edge] (4) -- (5);

\end{tikzpicture}
    \caption{The graph $\theta_0$.}
    \label{fig:theta0}
\end{figure}

\begin{definition}[Wilsonian, Almost-Wilsonian]
    We say that a graph $G$ is \blue{Wilsonian} if it satisfies all of the following conditions:
    \begin{enumerate}
        \item $G$ is biconnected;
        \item $G$ is not bipartite;
        \item $G$ is not a cycle graph on $n\ge 4$ vertices;
        \item $G$ is not the graph $\theta_0$.
    \end{enumerate}
    If $G$ satisfies (1), (3), and (4), we say $G$ is \blue{almost-Wilsonian} ($G$ may be bipartite).
\end{definition}

Note that if $\dd(Y)>\frac{n}{2}$, then all these conditions are satisfied. 

\begin{lemma}[{\cite[Lemma 2.4]{NogaProb}}]\label{lem:delta Y is n/2}
    If $\dd(Y)>\frac{n}{2}$, then $Y$ is Wilsonian.
\end{lemma}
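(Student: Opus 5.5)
Assuming $\dd(Y)>\frac n2$, we need to verify the four Wilsonian conditions in turn. Each will follow from the fact that any two vertices of $Y$ have a common neighbour: for $u,v\in V(Y)$ we have $|N(u)|+|N(v)|\ge 2\dd(Y)>n$, and both neighbourhoods lie in the $n$-vertex set $V(Y)$, so $N(u)\cap N(v)\neq\emptyset$. (When $u\neq v$, the sets $N(u)$ and $N(v)$ actually sit in $V(Y)$, so this bound is enough.)

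\emph{Biconnected.} Delete any vertex $w$. For $u,v\ne w$, the sets $N(u)\setminus\{w\}$ and $N(v)\setminus\{w\}$ each have size at least $\dd(Y)-1$ and lie in a set of $n-1$ vertices. Since $2(\dd(Y)-1)>n-2$, they can still fail to meet. The right count uses $N(u)\cap N(v)$, which has size at least $2\dd(Y)-(n-2)>2$ when $u\ne v$, because $N(u)\cup N(v)\subseteq V(Y)$ and, if $u\not\sim v$, both $u,v$ lie outside the union. I will treat the two cases separately.
\begin{itemize}[label={}]
\item If $u\sim v$, then $u,v$ remain adjacent after deleting $w$.
\item If $u\not\sim v$, then $N(u)\cup N(v)\subseteq V(Y)\setminus\{u,v\}$, so $|N(u)\cap N(v)|\ge 2\dd(Y)-(n-2)\ge 3$. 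At least one common neighbour survives the deletion of $w$.
\end{itemize}
Hence $Y-w$ is connected.

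\emph{Not bipartite.} Take any edge $uv$. The same counting with $u,v\in N(u)\cup N(v)$ gives $N(u)\subseteq V\setminus\{u\}$ and $N(v)\subseteq V\setminus\{v\}$, so $|N(u)\cap N(v)|\ge 2\dd(Y)-n>0$. A common neighbour $x$ then yields the triangle $uvx$.

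\emph{Not a cycle on $n\ge 4$ vertices, and not $\theta_0$.} A cycle has minimum degree $2\le\frac n2$ when $n\ge4$. The graph $\theta_0$ has $n=7$ and minimum degree $2<\frac72$. Both are therefore excluded by the degree hypothesis.

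The only delicate step is biconnectivity, where the count of common neighbours has to be done correctly. I also need to rule out the degenerate case $n\le 2$. Here $\dd(Y)>\frac n2$ forces $Y=K_n$, and for these sizes the conventions on biconnectivity and on being non-bipartite must be checked by hand. For $n=1,2$ a complete graph is bipartite, so the lemma presumably intends $n\ge3$, where $K_3$ and larger cases fall under the general argument.
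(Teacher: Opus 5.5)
The paper does not prove this lemma; it is quoted from \cite{NogaProb}, so your argument has to stand on its own, and it does. For nonadjacent $u,v$ the bound $|N(u)\cap N(v)|\ge 2\dd(Y)-(n-2)\ge 3$ is exactly what makes $Y-w$ connected. A common neighbour of the two ends of an edge gives a triangle. $\mathrm{Cycle}_n$ for $n\ge 4$ and $\theta_0$ are excluded because their minimum degree is $2$.

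One sentence needs cleaning up. Before your bullets you state the bound $2\dd(Y)-(n-2)$ for all $u\ne v$, but it holds only for nonadjacent pairs. Your case split then treats the adjacent case correctly, so nothing breaks.

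Your closing paragraph, however, is mistaken. There is no degenerate case, and the lemma needs no extra hypothesis $n\ge 3$. Since $\dd(Y)\le n-1$ always, the assumption $\dd(Y)>\frac n2$ forces $n-1>\frac n2$, that is, $n\ge 3$. Concretely, $\dd(K_1)=0$ and $\dd(K_2)=1$, and neither exceeds $\frac n2$. So the statement is vacuously true for $n\le 2$; it does not fail there. For $n=3$ the hypothesis forces $Y=K_3$, which your general argument already covers. Replace that paragraph with this one-line observation.
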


A key result of Wilson is the following.

\begin{proposition}[{\cite[Theorem 1]{Wilson}}]\label{prop:star wilson}
    If $Y$ is a Wilsonian graph on $n$ vertices, then $\fs{\st_n}{Y}$ is connected.
\end{proposition}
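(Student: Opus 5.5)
The plan is to follow Wilson's original group-theoretic argument, phrased in the people-and-houses language of \Cref{sec:analogy}. In $\fs{\st_n}{Y}$ every swap involves the star vertex $1$, so we think of person $1$ as a \emph{blank}: a configuration is a placement of the blank and of the $n-1$ labelled people $2,\dots,n$ on the houses of $Y$, and a move slides a labelled person along a road into the blank's house. Since $Y$ is connected (it is biconnected), the blank can be walked to any house. Hence connectivity of $\fs{\st_n}{Y}$ reduces to the following: fix a house $v\in V(Y)$, and let $G_v$ be the group of permutations of the $n-1$ labelled people induced by closed walks of the blank from $v$ back to $v$. Then $\fs{\st_n}{Y}$ is connected if and only if $G_v=S_{n-1}$.

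Here $G_v$ is viewed as a group acting on the $n-1$ houses other than $v$. The goal is to show first that $G_v\supseteq A_{n-1}$, and then that $G_v$ contains an odd permutation.

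\textbf{Step 1 (cycles).} If the blank traverses a cycle $C$ of $Y$ through $v$ of length $\ell$, the people on $C$ rotate cyclically one step. This contributes an $(\ell-1)$-cycle to $G_v$. Conjugating by walks that carry the blank elsewhere and back lets us move these cycles around.

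\textbf{Step 2 (ear decomposition).} Since $Y$ is biconnected and not a cycle, it has an ear decomposition beginning with a $\theta$-graph: two vertices joined by three internally disjoint paths. We treat this base case first, then add ears one at a time.

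\textbf{Step 3 (base case).} For a $\theta$-graph other than $\theta_0$, compose the two rotations from Step 1 around two of the three cycles, with suitable commutators. This should produce a $3$-cycle. The group is also $2$-transitive on the houses away from $v$, since the rotations already move people freely along the cycles. A $2$-transitive (in particular primitive) group containing a $3$-cycle contains the alternating group, by Jordan's theorem. So $G_v\supseteq A$ on the $\theta$-part.

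\textbf{Step 4 (adding ears).} Adding an ear introduces a new cycle sharing at least an edge with the current graph. The enlarged group is generated by the old alternating group together with a cycle that overlaps it. Such a group is still primitive and contains a $3$-cycle, so it contains the alternating group on the larger set. Repeating over all ears gives $G_v\supseteq A_{n-1}$.

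\textbf{Step 5 (parity).} If $Y$ is non-bipartite, it has an odd cycle. Conjugating the blank to it, the rotation from Step 1 is an even-length cycle, hence an odd permutation. Together with Step 4 this gives $G_v=S_{n-1}$. (Conversely, in the bipartite case the parity of the blank's position forces $G_v$ to be even, which is why condition (2) in the definition of Wilsonian is needed.)

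The main obstacle is Step 3. For small $\theta$-graphs one must verify explicitly that a $3$-cycle is generated. The exceptional graph $\theta_0$ genuinely fails: there the group is a copy of $PGL(2,5)$ acting on $6$ points, which is $2$-transitive but contains no $3$-cycle. So the argument must be a careful case analysis on the three path lengths rather than a uniform one. I would first handle the case where some path has length $1$ (a chorded cycle), where an explicit commutator of the two rotations is a $3$-cycle. I would then reduce longer paths to this case by conjugation, checking directly that the only failure is the $(2,2,3)$-type graph $\theta_0$.
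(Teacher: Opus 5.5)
\textbf{The gap: the chordless $\theta$ base case.} The paper does not prove this statement; it imports it as Wilson's Theorem~1, so your outline has to stand on its own. Much of it works: the reduction to the puzzle group $G_v$, Jordan's theorem (primitive plus a $3$-cycle gives $\supseteq A_{n-1}$), the ear induction, and the odd-cycle parity step. The chorded case of Step~3 also works: put the blank at one end of the chord, and the two rotations have supports meeting only at the other end, so their commutator is a $3$-cycle. The genuine gap is the base case for $\theta$-graphs $\theta(p,q,r)$ with $p,q,r\ge 2$, which is where all the content of Wilson's theorem lies. There you only assert that suitable commutators ``should'' give a $3$-cycle, and that longer paths can be ``reduced by conjugation'' to the chorded case. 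Neither is an argument. Conjugation only relabels supports and cannot create a chord. With the blank at a branch vertex, any two of the three cycle rotations share the interior of a common path plus the other branch vertex, i.e.\ at least two points, so the one-point commutator trick does not apply. Whether some product of conjugates does yield a $3$-cycle depends on the actual path lengths. In fact $\theta_0$ shows that no uniform construction can work: every one of these moves is available there, the group is even $3$-transitive, and yet it contains no $3$-cycle. What is missing is an explicit mechanism producing a $3$-cycle, or some other route to $A_{n-1}$, for every chordless $\theta\neq\theta_0$. This must be an argument that visibly breaks exactly at $\theta_0$.

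\textbf{Smaller points.} You misidentify the exceptional graph. In the paper's figure, $\theta_0$ is a hexagon with a length-$2$ path joining antipodal vertices, i.e.\ $\theta(2,3,3)$ on $7$ vertices; this matches $\mathrm{PGL}(2,5)$ acting on $6$ points. By contrast, $\theta(2,2,3)$ has $6$ vertices and is Wilsonian, so your planned ``direct check'' targets the wrong graph. The ear induction must also start from a $\theta$-subgraph not isomorphic to $\theta_0$. If $Y\supsetneq\theta_0$, you need to show such a subgraph exists, or treat $\theta_0$ plus one ear directly, because Step~4 as stated assumes the old group already contains the alternating group. Finally, ``the rotations move people freely along the cycles'' gives transitivity, not $2$-transitivity. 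What Jordan's theorem needs is primitivity. It does hold, but it requires a short block argument, both for the $\theta$ base case and in Step~4.
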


Recall the definition of $\starp$ from \Cref{sec:notation and terminology}. If we assume $X = \starp$ instead of $\st$, then Defant and Kravitz show that we may allow $Y$ to be bipartite and still have $\FS$ connected.

\begin{proposition}[{\cite[Remark 2.8]{Defant}}]\label{prop:star defant}
    If $Y$ is almost-Wilsonian, then $\fs{\starp_n}{Y}$ is connected.
\end{proposition}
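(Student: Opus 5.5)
The last statement is \Cref{prop:star defant}: if $Y$ is almost-Wilsonian, then $\fs{\starp_n}{Y}$ is connected. The plan is to reduce it to Wilson's theorem, \Cref{prop:star wilson}. The only gap between the two statements is bipartiteness. The extra edge of $\starp_n$ is what removes the parity obstruction that bipartite graphs create for $\st_n$.

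\textbf{Step 1: non-bipartite $Y$.} Write $\starp_n = \st_n$ plus the edge $\{2,3\}$. If $Y$ is not bipartite, then $Y$ is Wilsonian. Since $\st_n\subset\starp_n$, \Cref{prop:FS subset} combined with \Cref{prop:star wilson} gives that $\fs{\st_n}{Y}$ is a connected spanning subgraph of $\fs{\starp_n}{Y}$, so we are done in this case.

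\textbf{Step 2: bipartite $Y$, structure of $\fs{\st_n}{Y}$.} Now assume $Y$ is bipartite, biconnected, not a cycle, and not $\theta_0$. Here we use the bipartite part of Wilson's theorem \cite{Wilson}, as quoted in \cite{Defant}: for such $Y$, $\fs{\st_n}{Y}$ has exactly two components. These components are distinguished by a parity invariant. Take the sign of the permutation induced on the non-star people, suitably normalized relative to a fixed reference bijection, times the parity of the position of the star vertex $1$ in the bipartition of $Y$. Each star move, swapping $1$ with a neighbor, changes both factors, so this product is preserved.

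\textbf{Step 3: bipartite $Y$, bridging the two components.} It now suffices to find one edge of $\fs{\starp_n}{Y}$ that uses the extra edge $\{2,3\}$, because that edge connects the two parity classes. A swap of $2$ and $3$ is an odd transposition of non-star people and leaves the position of vertex $1$ fixed, so it flips the invariant. To produce such an edge, pick any edge $\{u,v\}\in E(Y)$. Inside the component containing a given $\si$, move to some $\si'$ with $\si'(2)=u$ and $\si'(3)=v$. Within a single Wilson component every arrangement of the correct parity is reachable. The condition that $2$ sits at $u$ and $3$ at $v$ constrains only two people, so we can adjust the remaining people to achieve the right parity, provided $n\ge 4$. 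Then applying $(23)$ is a legal move, and it lands in the other component.

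\textbf{Main obstacle.} The delicate step is Step 2: stating the bipartite case of Wilson's theorem precisely, namely that there are exactly two components separated by exactly this invariant. Once that is in place, Step 3 is routine. Small cases such as $n\le 3$, and $Y=K_2$ where no cycle exists, should be checked directly.
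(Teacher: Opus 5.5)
Your proof is correct and is essentially the argument behind the cited \cite[Remark 2.8]{Defant}; the paper gives no proof of its own, only that citation together with \Cref{rk:2 component star}. For bipartite almost-Wilsonian $Y$, the (at most two) components of $\fs{\st_n}{Y}$ are exactly the two classes of your sign-times-bipartition-side invariant, and any move across the extra edge $\{2,3\}$ flips that invariant.

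One simplification: in Step 3 you do not need to first steer inside a chosen component. Any $\si$ with $\{\si(2),\si(3)\}\in E(Y)$ already gives an edge from $\si$ to $\si\circ(23)$ joining the two classes. This also sidesteps your parity-adjustment claim, which actually needs $n\ge 5$ rather than $n\ge 4$. That is harmless here, since the only bipartite biconnected graph on $4$ vertices is $\mathrm{Cycle}_4$, which is excluded.
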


The following remark states that the only obstruction to a connected friends and strangers graph (under the above conditions on $X,Y$) is parity.

\begin{remark}[{\cite[Remark 2.8]{Defant}}]\label{rk:2 component star}
    If $Y$ is almost-Wilsonian, then $\fs{\mathrm{Star}_n}{Y}$ has at most 2 components, and they are of equal size.
\end{remark}

It is a natural question to ask for the diameter of $\fs{\st}{Y}$ when it is connected. Kornhauser, Miller, and Spirakis proved a diameter estimate for any connected component of $\fs{\st}{Y}$ \cite{Kornhauser}.
\begin{lemma}[{\cite[Theorem 4]{Kornhauser}}]\label{lem:star On3}
    Let $Y$ be a graph on $n$ vertices. Then all connected components of $\fs{\st_n}{Y}$ have diameter $O(n^3)$.
\end{lemma}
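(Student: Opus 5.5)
We first reduce to the case where $Y$ is connected. In $\fs{\st_n}{Y}$ every move swaps the star vertex (the ``blank'') with a person in an adjacent house. So the blank never leaves the connected component of $Y$ containing its house, and people in the other components of $Y$ never move. A connected component of $\fs{\st_n}{Y}$ is therefore isomorphic to a component of $\fs{\st_m}{Y_0}$, where $Y_0$ is the component of $Y$ containing the blank and $m=|V(Y_0)|\le n$. From now on we assume $Y$ is connected.

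It suffices to show that every configuration $\si$ can be moved in $O(n^3)$ moves to a \emph{canonical} configuration $\kappa(\si)$ of its component. Two configurations in the same component must then have the same canonical form, since the canonical form should be defined only through invariants of the component. Their distance is therefore at most twice this bound.

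The basic cost accounting is as follows. Moving the blank between any two houses costs at most $n$ moves, along a shortest path in $Y$. Advancing a single person $p$ one step along an edge $\{u',v'\}$ of $Y$ costs $O(n)$ moves: bring the blank to $v'$ along a path avoiding $u'$, then swap. If every such path passes through $u'$, then $u'$ is a cut vertex, and this case has to be handled in the block structure described next. Transporting one person to a target house at distance at most $n$ then costs $O(n^2)$ moves. Placing the $n-1$ people one at a time therefore costs $O(n^3)$ in total, provided each placement is possible without disturbing the people already placed.

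To make the placements compatible, we fill houses in an order $v_1',v_2',\dots$ such that the set of still-unfilled houses induces a connected subgraph at every stage. Reverse BFS order from a chosen root works, so that later routing happens inside the unfilled region. Inside a biconnected block that is not a cycle, and not $\theta_0$ or bipartite where parity obstructions arise, \Cref{prop:star wilson} and \Cref{rk:2 component star} guarantee that arbitrary rearrangements are available. In that setting we would realise them by short ``rotate along a cycle through the blank'' $3$-cycle gadgets, each of length $O(n)$. Greedily placing each person, and fixing the last few by $O(1)$ such gadgets, keeps each placement within $O(n^2)$ moves.

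The main obstacle is the general graph, where $Y$ has cut vertices, bridges, long induced paths, or cycle blocks. There people cannot freely pass each other, the component is a proper subset of all arrangements, and greedy placement can get stuck. The plan is to decompose $Y$ into its block-cut tree and treat long paths (the ``corridors'') as rigid queues whose cyclic or linear order of people is an invariant. The canonical form is then defined by pushing people into blocks in a fixed order, subject to these order invariants and to the parity invariant inside each bipartite block or cycle. The technical heart is showing two things. First, every person that \emph{can} reach a given block can be sent there in $O(n^2)$ moves, because a corridor of length $\ell$ is traversed by a person with $O(\ell)$ blank round trips, each of length $O(n)$. Second, the resulting normal form depends only on the component. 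This step is where I expect most of the work, and I would first try to establish it when $Y$ is a tree, where the invariants are cleanest, before gluing in cycle and biconnected blocks.
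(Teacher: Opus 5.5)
\textbf{Genuine gap.} The paper gives no proof of this statement. It is imported from Kornhauser--Miller--Spirakis \cite[Theorem 4]{Kornhauser} and used only as a black box in \Cref{lem:starp On3}. Judged on its own, your proposal is a plan whose decisive step is left open, and the framework around that step is not the right one.

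Your reduction to connected $Y$ is fine. The canonical-form step, however, is circular. A $\kappa(\si)$ computed from $\si$ by greedy placement is constant on components only if you already have a complete set of invariants for the components of $\fs{\st_n}{Y}$, and that is essentially what is to be proved. You can avoid canonical forms altogether:
\begin{itemize}
\item Every move is left multiplication by the transposition of the two houses involved, independent of the configuration.
\item Sending a configuration to the house of the star vertex is a covering map from each component onto $Y$.
\item Hence two configurations in one component with the blank at $v$ differ by an element of the puzzle group $G_v$, the group of permutations induced by closed blank walks at $v$. The diameter is then at most $2n+\max_{g\in G_v}\ell(g)$, where $\ell(g)$ is the length of a shortest closed walk inducing $g$.
\end{itemize}
Moreover, with a single blank no person ever passes through a cut vertex from one side to the other. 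If $x$ is a cut vertex and $D$ is a component of $Y-x$, the set of people in $D$ is the same every time the blank is outside $D$. One checks from this that $G_v$ is the direct product of the puzzle groups of the blocks of $Y$. Bridges contribute nothing, cycle blocks are cyclic, and $\theta_0$ is bounded; for a tree every component is just a copy of $Y$. So the corridors, the question of which people can reach a block, and the tree warm-up are beside the point. The whole difficulty is a single $2$-connected block that is not a cycle.

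That case is exactly where the argument you do give fails. Your reverse BFS order keeps the unfilled region connected but not $2$-connected, and connectivity is not enough. Note that the cut vertex that matters is one of the unfilled region, not of $Y$.
\begin{itemize}
\item On a long cycle with a single chord, the unfilled region is a tree on $\Theta(n)$ houses for a long stretch. Since the blank stays in the unfilled region, nothing inside a tree region can be rearranged. Greedy placement therefore stalls with $\Theta(n)$ people out of place, far more than $O(1)$ gadgets can repair.
\item No vertex-by-vertex order avoids this in general. In a theta graph made of three long paths, deleting any single vertex already creates a cut vertex of the unfilled region, and the invariant above then freezes the people on the far side.
\end{itemize}
In addition, \Cref{prop:star wilson} and \Cref{rk:2 component star} are purely qualitative. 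Your claim that a $3$-cycle gadget on an arbitrary triple costs $O(n)$ is asserted, not derived.

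What is missing is the quantitative statement that every element of the puzzle group of a $2$-connected graph on $m$ vertices is induced by a closed blank walk of length $O(m^3)$. Proving it requires, for instance, filling whole ears of an ear decomposition at once and bounding the cost of $3$-cycles explicitly. That statement is precisely the content of \cite[Theorem 4]{Kornhauser}.
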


We use \Cref{rk:2 component star} to extend this estimate.

\begin{lemma}\label{lem:starp On3}
    Suppose $Y$ is almost-Wilsonian or $\mathrm{Cycle}_4$. Then:
    \[\diam(\fs{\starp}{Y}) \text{ is } O(n^3).\]
\end{lemma}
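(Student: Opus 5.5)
We need to prove: if $Y$ is almost-Wilsonian or $\mathrm{Cycle}_4$, then $\fs{\starp}{Y}$ is connected with diameter $O(n^3)$. The plan is to combine the parity structure of \Cref{rk:2 component star} with the component diameter bound of \Cref{lem:star On3}. Write $X=\starp_n$ with star vertex $1$ and extra edge $\{2,3\}$, and set $X_0=\st_n\subset X$. By \Cref{prop:FS subset}, $\fs{X_0}{Y}$ is a spanning subgraph of $\fs{X}{Y}$. So any two bijections lying in the same component of $\fs{X_0}{Y}$ are at distance $O(n^3)$ in $\fs{X}{Y}$, by \Cref{lem:star On3}.

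In the almost-Wilsonian case, \Cref{rk:2 component star} says $\fs{X_0}{Y}$ has at most two components $\mathcal{C}_1,\mathcal{C}_2$. If there is one, we are done. Otherwise, \Cref{prop:star defant} says $\fs{X}{Y}$ is connected, so some edge of $\fs{X}{Y}$ joins $\mathcal{C}_1$ to $\mathcal{C}_2$. Every edge coming from a transposition $(1j)$ stays inside a component of $\fs{X_0}{Y}$, so this bridging edge must be a swap $(23)$ at some $\tau\in\mathcal{C}_1$, with $\tau\circ(23)\in\mathcal{C}_2$.

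Now take arbitrary $\si,\si'$. If they lie in the same $\mathcal{C}_i$, their distance is $O(n^3)$. If $\si\in\mathcal{C}_1$ and $\si'\in\mathcal{C}_2$, route $\si\to\tau$ inside $\mathcal{C}_1$, cross the edge to $\tau\circ(23)$, then go to $\si'$ inside $\mathcal{C}_2$. This path has length $O(n^3)+1+O(n^3)=O(n^3)$. No uniformity issue arises here, because \Cref{lem:star On3} bounds the diameter of every component by a single $O(n^3)$ function of $n$.

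The case $Y=\mathrm{Cycle}_4$ is a fixed finite graph with $n=4$, so the diameter statement is trivially $O(n^3)$ once connectivity is known. Connectivity of $\fs{\starp_4}{\mathrm{Cycle}_4}$ has to be checked directly, since $\mathrm{Cycle}_4$ is not almost-Wilsonian. I would do this by hand: the $24$ bijections are split into orbits by the star moves, which rotate the empty-like token around the $4$-cycle. One then verifies that the $(23)$ swap links the parity classes. Both $\fs{\st_4}{\mathrm{Cycle}_4}$ and the effect of the $(23)$ edges are small enough to enumerate explicitly.

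The main obstacle is this small verification. The general case is a routine combination of the earlier results. The only subtle point there is noticing that the bridge between the two star-components must come from the extra edge, and that is exactly what connectivity from \Cref{prop:star defant} supplies.
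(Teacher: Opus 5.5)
Your proposal is correct and follows the paper's proof: it combines the at-most-two-component structure of $\fs{\st_n}{Y}$ with the $O(n^3)$ component-diameter bound and the connectivity of $\fs{\starp_n}{Y}$, and it handles $\mathrm{Cycle}_4$ by direct check. Your explicit argument that the edge joining the two star-components must be a $(23)$-swap, which gives a path of length at most $2\cdot O(n^3)+1$, spells out the step the paper summarizes as ``the conclusion follows.''
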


\begin{proof}
    A direct check shows that $\fs{\starp_4}{\mathrm{Cycle}_4}$ is connected. If $Y$ is almost-Wilsonian, then by \Cref{rk:2 component star} and \Cref{lem:star On3}, $\fs{\st}{Y}$ has at most 2 components, each of diameter $O(n^3)$. If there is only one component, the conclusion follows, so suppose there are exactly 2 components. We find that $\fs{\starp}{Y}\supset \fs{\st}{Y}$ and $\fs{\starp}{Y}$ has one component by \Cref{prop:star defant}, so the conclusion follows.
\end{proof}

We introduce a reduction tool.

\begin{lemma}[{\cite[Lemma 2.9]{NogaProb}}]\label{lem:friends in same component}
    Suppose that for all $a,b\in V(X)$ with $\{a,b\}\in E(X)$ and all $\si \in V(\FS)$, the vertices $\si,\si\circ (ab)$ are in the same component of $\FS$. Then if $X$ is connected, $\FS$ is connected.
\end{lemma}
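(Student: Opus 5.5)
The plan is to prove the statement by showing that every vertex of $\FS$ lies in the same component as a fixed base vertex. Equivalently, the subgroup of permutations reachable from $\si$ must be large enough that $\si\circ(ab)$ is reachable.

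Fix a connected component $\mc{C}$ of $\FS$ and a vertex $\si\in\mc{C}$. Let $R$ be the set of transpositions $(cd)$ with $\{c,d\}\in E(X)$ such that $\si\circ(cd)\in\mc{C}$ for every $\si\in\mc{C}$. The first step is to show that $R$ generates a group $G$, and that $\si\circ\pi\in\mc{C}$ whenever $\si\in\mc{C}$ and $\pi\in G$. This is immediate by composing: if $\si\circ\tau_1\in\mc{C}$, then applying the hypothesis to the vertex $\si\circ\tau_1$ gives $\si\circ\tau_1\tau_2\in\mc{C}$, and so on.

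Under the hypothesis of \Cref{lem:friends in same component}, every edge transposition $(ab)$ of $X$ lies in $R$. So $G$ contains the group generated by the transpositions of the edges of $X$. It is a standard fact that the transpositions along the edges of a connected graph on $n$ vertices generate all of $S_n$. One proves this by induction on a spanning tree: for a path $c=v_0,v_1,\dots,v_k=d$, the conjugation $(v_0v_1)(v_1v_2)\cdots$ produces $(cd)$, and all transpositions generate $S_n$.

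Hence $G=S_n$ when $X$ is connected. It follows that for any two bijections $\si,\tau$ we have $\tau=\si\circ(\si^{-1}\tau)$ with $\si^{-1}\tau\in S_n=G$, so $\tau\in\mc{C}$. Thus $\FS$ has a single component.

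The only subtlety, and the place to be careful, is the order of composition. The composite $\si\circ\tau_1\circ\tau_2$ must be realized by applying the hypothesis at the intermediate vertex $\si\circ\tau_1$, not at $\si$. This is exactly why the hypothesis is required for \emph{all} $\si$ in the component rather than a single one. Beyond that, the argument is purely group-theoretic and short.
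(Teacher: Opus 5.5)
Your proof is correct. The lemma is only cited in this paper (from \cite{NogaProb}), but your argument is the qualitative version of the one the paper gives for \Cref{lem:friend reduction}: write each transposition as a conjugate of an edge transposition along a path in $X$, write $\si^{-1}\tau$ as a product of transpositions, and apply the hypothesis at each intermediate configuration. The detour through $R$ and $G$ is harmless but unnecessary, since the hypothesis already places every edge transposition in $R$.
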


We will repeatedly write permutations as compositions of transpositions, and need the following elementary result. We assume the identity is the empty product.

\begin{lemma}\label{lem:n-1 trans}
    Any $\si\in S_n$ can be written as the product of at most $n-1$ transpositions.
\end{lemma}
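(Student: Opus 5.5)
Every permutation $\si\in S_n$ is a product of at most $n-1$ transpositions, and I will prove this by induction on $n$, peeling off one fixed point at a time. The base case $n=1$ is trivial, since $S_1$ contains only the identity, which is the empty product of $0=n-1$ transpositions.

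For the inductive step, let $n\ge 2$ and $\si\in S_n$. If $\si(n)=n$, then $\si$ restricts to a permutation of $[n-1]$. By induction this restriction is a product of at most $n-2$ transpositions, and the same product works in $S_n$. If $\si(n)=m\neq n$, set $\tau=(m\,n)\circ\si$. Then $\tau(n)=(m\,n)(m)=n$, so $\tau$ fixes $n$, and by the previous case $\tau$ is a product of at most $n-2$ transpositions. Since $(m\,n)^{-1}=(m\,n)$, we have $\si=(m\,n)\circ\tau$, which is a product of at most $n-1$ transpositions.

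The only point needing care is the composition convention. \Cref{rk:on diagrams} reads products left to right, while here I compose as functions. This does not affect the count: reversing the order of a product of transpositions gives the inverse permutation, and the inverse of $\si$ is also covered by the same argument. So the bound of $n-1$ holds under either convention.

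There is no real obstacle here. An alternative argument decomposes $\si$ into its disjoint cycles. A $k$-cycle $(c_1\,c_2\,\cdots\,c_k)$ is a product of $k-1$ transpositions, for instance $(c_1\,c_k)(c_1\,c_{k-1})\cdots(c_1\,c_2)$ with suitable ordering. Summing $k-1$ over all cycles (fixed points being $1$-cycles) gives $n-(\#\text{cycles})\le n-1$ transpositions in total.
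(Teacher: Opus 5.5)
Your proof is correct. The paper gives no proof of this lemma; it is cited as an elementary fact, with only the convention that the identity is the empty product. Your induction, or equally your cycle-decomposition alternative, is the standard argument one would supply. The cycle version has the small bonus of giving the sharper count: $n$ minus the number of cycles of $\sigma$, counting fixed points as cycles. That count is in fact optimal.
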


In the next section, we will adapt these existing tools to the setting of \Cref{thm:main}.

\section{Tools for Connectivity and Diameter}

This section is devoted to results that will be used in the proof of \Cref{thm:main}. 

\subsection{Results from Structural Graph Theory}

We give results from structural graph theory on the diameter of a graph given the minimum degree. 

\begin{lemma}[{\cite[Theorem 1]{Erdos}}]\label{lem:diam from delta(X)}
    Suppose $X$ is a connected graph on $n$ vertices with minimum degree $\dd(X)$. Then $\diam(X)\le \frac{3n}{\dd(X)+1}-1$.
\end{lemma}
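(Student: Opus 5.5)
The plan is the standard Erdős–Pach–Pollack–Tuza ball-packing argument. Write $\dd=\dd(X)$ and let $D=\diam(X)$. Fix vertices $u,v$ with $d_X(u,v)=D$ and a shortest path $u=x_0,x_1,\dots,x_D=v$. The distance layers
\[L_i=\{w\in V(X): d_X(u,w)=i\},\qquad 0\le i\le D,\]
partition $V(X)$, and each $L_i$ is nonempty because $x_i\in L_i$. Since a neighbor of a vertex in $L_i$ lies in $L_{i-1}\cup L_i\cup L_{i+1}$, every closed neighborhood $N_X[x_i]$ is contained in $L_{i-1}\cup L_i\cup L_{i+1}$.

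Next, group the layers into consecutive triples $T_j=L_{3j}\cup L_{3j+1}\cup L_{3j+2}$ for $0\le j\le \lfloor D/3\rfloor$. The last triple may be truncated. For each full or interior triple, the closed neighborhood of the middle path vertex $x_{3j+1}$ lies inside $T_j$, so $|T_j|\ge \dd+1$. The triples are disjoint, and summing gives roughly $n\ge \frac{D}{3}(\dd+1)$, which is the desired inequality up to boundary terms.

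The main obstacle is handling the endpoints to get exactly the constant $\frac{3n}{\dd+1}-1$ rather than something off by an additive constant. I would handle it as follows.

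At the start, use $N_X[x_0]\subseteq L_0\cup L_1$. So the first block can be taken as only the two layers $L_0\cup L_1$, and it still contains at least $\dd+1$ vertices.

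At the end, similarly, $N_X[x_D]\subseteq L_{D-1}\cup L_D$, which gives another two-layer block of size at least $\dd+1$.

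The layers in between, $L_2,\dots,L_{D-2}$, are then grouped into triples centered at path vertices, each of size at least $\dd+1$. Any leftover one or two layers still contribute at least $1$ vertex each.

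Counting blocks then gives the bound. Let $D+1=2+3k+r+2$ with $0\le r\le 2$. This yields
\[n\ge (k+2)(\dd+1)+r.\]
Solving for $D$ should give $D+1\le \frac{3n}{\dd+1}$ after checking the three residues of $r$ separately. For the check, use $r\le 2\le \frac{3r}{\dd+1}\cdot\frac{\dd+1}{3}$ when $\dd\ge 2$, and dispatch the degenerate cases $\dd\le 1$ directly. For $\dd=1$ the bound is $D\le \frac{3n}{2}-1$, which holds trivially since $D\le n-1$.
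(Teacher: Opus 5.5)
Your layer-counting argument is the standard proof of this bound. The paper gives no proof of the lemma; it only cites Erd\H{o}s--Pach--Pollack--Tuza. Your count $n\ge (k+2)(\dd+1)+r$ for $D+1=3k+r+4$ is correct.

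The closing check, however, is wrong as written. Since $\frac{3r}{\dd+1}\cdot\frac{\dd+1}{3}=r$, your chain reads $r\le 2\le r$, which fails for $r=0,1$. No case split on $r$ or on $\dd$ is needed: because $r\le 2$,
\[
\frac{3n}{\dd+1}\ \ge\ 3(k+2)+\frac{3r}{\dd+1}\ \ge\ 3k+6\ \ge\ 3k+r+4 \;=\; D+1,
\]
and this holds for every value of $\dd$. So the separate treatment of $\dd\le 1$ is superfluous. You should also say that the two end blocks $L_0\cup L_1$ and $L_{D-1}\cup L_D$ are disjoint only when $D\ge 3$. For $D\le 2$ the bound follows from $n\ge \dd+1$, which gives $\frac{3n}{\dd+1}\ge 3\ge D+1$.

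A shorter route avoids layers and endpoint bookkeeping altogether, and the paper itself uses it in the proof of \Cref{lem:diam from delta(X)>n/k}. On a shortest path $x_0,\dots,x_D$, the closed neighborhoods $N_X[x_0],N_X[x_3],\dots,N_X[x_{3\lfloor D/3\rfloor}]$ are pairwise disjoint. Hence $n\ge (\lfloor D/3\rfloor+1)(\dd+1)\ge \frac{D+1}{3}(\dd+1)$, which is exactly the claimed inequality.
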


When $\dd(X)\ge \frac{n}{k}$ for some fixed integer $k$, the following will also be useful:

\begin{lemma}\label{lem:diam from delta(X)>n/k}
    Suppose $X$ is a connected graph on $n$ vertices with minimum degree $\dd(X)\ge \frac{n}{k}$ for some integer $k\ge 2$. Then $\diam(X)\le 3k-4$.
\end{lemma}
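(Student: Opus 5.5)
The plan is the standard disjoint-neighborhoods packing argument along a shortest path; alternatively one could try to plug $\dd(X)\ge n/k$ into \Cref{lem:diam from delta(X)}, but that only gives roughly $3k-1$, so I will run the packing directly and keep track of the integrality.

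Let $D=\diam(X)$, which is finite since $X$ is connected. Choose vertices $u,w$ with $d_X(u,w)=D$ and a shortest path $u=v_0,v_1,\dots,v_D=w$.

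\textbf{Step 1: the selected neighborhoods are disjoint.} Consider the indices $i\in\{0,3,6,\dots,3\lfloor D/3\rfloor\}$; there are $\lfloor D/3\rfloor+1$ of them. For two such indices $i<j$ we have $j-i\ge 3$. Since the path is a shortest path, $d_X(v_i,v_j)=j-i\ge 3$. Hence the closed neighborhoods $N_X[v_i]$ and $N_X[v_j]$ are disjoint, because a common vertex would give $d_X(v_i,v_j)\le 2$.

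\textbf{Step 2: counting.} Each closed neighborhood has size at least $\dd(X)+1>\frac nk$. Summing over the disjoint neighborhoods from Step 1 gives
\[\left(\left\lfloor \tfrac D3\right\rfloor+1\right)\frac nk<\left(\left\lfloor \tfrac D3\right\rfloor+1\right)(\dd(X)+1)\le n.\]
Therefore $\lfloor D/3\rfloor+1<k$. Since both sides are integers, $\lfloor D/3\rfloor\le k-2$, and so $D\le 3(k-2)+2=3k-4$.

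The only delicate point is getting the constant exactly. The strict inequality $\dd(X)+1>n/k$ is what lets us pass from $\lfloor D/3\rfloor+1\le k$ to $\lfloor D/3\rfloor+1\le k-1$. The bound $D\le 3\lfloor D/3\rfloor+2$ then gives $3k-4$ rather than $3k-1$. No other obstacle is expected; the hypothesis $k\ge 2$ only ensures the statement is nonvacuous.
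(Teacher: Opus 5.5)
Your proof is correct and is essentially the paper's argument. The paper assumes $d_X(x,y)\ge 3k-3$ and gets a contradiction from the $k$ pairwise disjoint closed neighborhoods $N_X[x_0],N_X[x_3],\dots,N_X[x_{3k-3}]$ along a shortest path. You run the same packing count directly with $\lfloor D/3\rfloor+1$ neighborhoods and use integrality to reach $D\le 3k-4$.
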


\begin{proof}
    Suppose for contradiction that there exist $x,y\in V(X)$ with $d_X(x,y)\ge 3k-3$. Pick a path of minimal length $x=x_0,x_1,\dots,x_{d_X(x,y)} = y$ from $x$ to $y$. However, each of the $k$ disjoint neighborhoods $N_X[x_0],N_X[x_3],\dots,N_X[x_{3k-3}]$ has size at least $ \dd(X)+1>\frac{n}{k}$. The sum of their sizes is larger than $n$, so two of them intersect.  However, this would create a shorter path from $x$ to $y$, contradicting our assumption.
\end{proof}

We say two paths are \blue{internally vertex disjoint} if their starting point and endpoint are the same, yet they do not share any intermediate vertices. The following three results show we can always find two short, internally vertex disjoint paths in a biconnected graph with high minimum degree. 

\begin{theorem}[Menger's Theorem \cite{Menger1927}]\label{thm:menger}
    Let $G$ be a biconnected graph. For any $x,y\in V(G)$, there exist two internally vertex disjoint paths between $x$ and $y$.
\end{theorem}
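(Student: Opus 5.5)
We prove the statement by induction on $d_G(x,y)$, following the classical argument of Whitney. This avoids the general flow or separator form of Menger's theorem. We may assume $x\neq y$ and $G$ connected, since biconnected graphs are connected. There is one degenerate case: if $G=K_2$, the single edge $xy$ must be regarded as both paths. So assume $|V(G)|\ge 3$.

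\emph{Base case $d_G(x,y)=1$.} One path is the edge $xy$, and it suffices to find an $x$--$y$ path in $G$ minus the edge $\{x,y\}$. Suppose no such path exists. Then deleting the edge splits $G$ into exactly two components, $A\ni x$ and $B\ni y$. Since $|V(G)|\ge 3$, one of them, say $A$, contains a vertex $w\neq x$. Every path from $w$ to $y$ in $G$ must pass through $x$, so deleting $x$ disconnects $G$. This contradicts biconnectivity.

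\emph{Inductive step $d_G(x,y)=k\ge 2$.} Let $w$ be the vertex preceding $y$ on a shortest $x$--$y$ path, so $d_G(x,w)=k-1$ and $\{w,y\}\in E(G)$. By induction there are internally vertex disjoint $x$--$w$ paths $P$ and $Q$.

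If $y$ lies on $P$ (the case of $Q$ is symmetric), take the initial segment of $P$ from $x$ to $y$ together with $Q$ followed by the edge $wy$. Their interiors are disjoint because the interior of $Q$ avoids $P$, and $w\notin P[x,y]$ since $w$ comes after $y$ on $P$.

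Otherwise $y\notin P\cup Q$. Since $G-w$ is connected, there is an $x$--$y$ path $R$ avoiding $w$. Let $z$ be the last vertex of $R$ lying in $V(P)\cup V(Q)$; it exists because $x\in R$. Without loss of generality $z\in P$, and $z\neq w$. The two new paths are:
\begin{itemize}
\item $P[x,z]$ followed by $R[z,y]$;
\item $Q$ followed by the edge $wy$.
\end{itemize}
These are internally disjoint for three reasons. The part $R(z,y]$ avoids $P\cup Q\cup\{w\}$ by the choice of $z$. The interiors of $P$ and $Q$ are disjoint. Finally, $w$ is not on $P[x,z]$ since $z\neq w$ and $w$ is the endpoint of $P$. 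This completes the induction.

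The main obstacle is the inductive step. A naive choice of the bypass path $R$ could intersect both $P$ and $Q$ repeatedly. Taking $z$ to be the \emph{last} intersection point of $R$ with $P\cup Q$ is exactly what makes the rerouting clean.
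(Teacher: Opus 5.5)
Your proof is correct. It is the standard proof of Whitney's two-path theorem by induction on $d_G(x,y)$, and the key steps check out: choosing $z$ as the \emph{last} vertex of $R$ on $P\cup Q$ makes $R(z,y]$ miss $P\cup Q\cup\{w\}$, and $z\neq w$ keeps $w$ off $P[x,z]$. The paper gives no proof of this statement. It cites Menger's theorem as a black box, i.e.\ the general duality between minimum vertex separators and maximum families of internally disjoint paths, valid for every $k$. Your route gives a short self-contained argument for exactly the case $k=2$ that the paper uses, with no separator or flow machinery. The citation adds only generality that is never needed here. Neither approach controls path lengths, since your rerouting passes through an arbitrary path $R$ in $G-w$. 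That is why the paper invokes the separate \Cref{thm:mengerpathoriginal} for large $n$ and uses \Cref{thm:menger} only in the small-$n$ branch of \Cref{lem:finite menger}, where the trivial length bound $n$ suffices; so your proof fully covers the paper's use. Your treatment of degenerate cases is also the right reading of the statement. Under the paper's literal definition (connected after deleting any vertex), $K_2$ and even two isolated vertices count as biconnected, so the statement implicitly needs $|V(G)|\ge 3$; with that assumption your claim that $G$ is connected is justified.
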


To leverage a large minimum degree, we use the following variant of Menger's theorem.

\begin{theorem}[{\cite[Theorem 2]{MengerPathSystems}}]\label{thm:mengerpathoriginal}
    Let $G$ be a biconnected graph on $n$ vertices. For every positive integer $d$ there exists a constant $N(d)$ satisfying the following. If $n\ge N(d)$ and
    \[\dd(G)\ge \frac{4n}{d+2}+d+12,\]
    then for every pair $x,y\in V(G)$ there exist two internally vertex disjoint paths between $x$ and $y$, each of length $\le d$. 
\end{theorem}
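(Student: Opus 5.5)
The plan is to take, among all pairs of internally vertex disjoint $x$–$y$ paths (such pairs exist by \Cref{thm:menger}), a pair $P_1,P_2$ whose total length $|P_1|+|P_2|$ is minimal, and to show by a degree-counting argument that each has length at most $d$. Write $C=P_1\cup P_2$; this is a cycle through $x$ and $y$ of length $L=|P_1|+|P_2|$. The idea is that minimality forces every vertex of $G$ to see only a bounded "window" of $C$. Summing degrees over the vertices of $C$ then bounds $L\,\dd(G)$ by roughly a constant times $n$. With $\dd(G)\ge \frac{4n}{d+2}+d+12$, this should give the required bound on $L$.

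\emph{Step 1: local structure forced by minimality.} A vertex $z\notin C$ cannot be adjacent to two vertices $u,w$ of the same $P_i$ that are at distance $\ge 3$ along $P_i$, since otherwise the detour $u\,z\,w$ shortens $P_i$ without meeting the other path. Similarly, a chord of $C$ joining two vertices of the same $P_i$ at distance $\ge 2$ along $P_i$ shortens $P_i$. When $z$ (or a chord) joins $u\in P_1$ to $w\in P_2$, I would use the standard rerouting exchange: the new paths are $x\cdots u\,z\,w\cdots y$ and $x\cdots w\cdots$ (the appropriate pieces), with roles swapped. Minimality of the total length then restricts how such cross-adjacencies can be placed. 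Concretely, I expect to show that every $z\notin C$ has at most $3$ neighbours on each $P_i$, and that the cross-neighbours of $z$ can only occur at matching positions. The aim is a bound of about $4$ on $|N(z)\cap C|$ on average, and a bounded number of chords per vertex of $C$.

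\emph{Step 2: counting.} Double counting edges between $C$ and $V(G)\setminus C$, together with the chords inside $C$, gives
\[L\,\dd(G)\le \sum_{v\in C}\deg(v)\le 4n+O(L).\]
Hence $L\le \frac{4n}{\dd(G)-O(1)}$. Plugging in the hypothesis gives $L\lesssim d+2$ once $n\ge N(d)$ absorbs the lower-order terms, which is what the additive $d+12$ and the threshold $N(d)$ are for. For the individual lengths, I would apply the same count restricted to the longer path $P_1$, since a vertex off $C$ sees at most a bounded window of $P_1$. If that only yields $|P_1|\lesssim 2d$, I would instead run a secondary exchange: take the minimal total pair and, subject to that, minimise $\max|P_i|$, which balances the two paths.

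\emph{Main obstacle.} The delicate part is Step 1 for vertices adjacent to both paths. There, a naive shortcut creates a pair of paths that is no longer disjoint, or not shorter in total. Getting the sharp constant $4$, rather than something like $6$, requires a careful case analysis of these crossing configurations. If the sharp constant resists, I would first prove the weaker bound with $6$ in place of $4$ and then refine the cross-edge analysis.
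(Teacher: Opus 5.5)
The paper gives no proof of this statement. It is quoted from \cite{MengerPathSystems} and used only as a black box to obtain \Cref{lem:finite menger}, so your argument has to stand on its own, and as written it does not. The cross-adjacency analysis in Step 1 and the target inequality in Step 2 are both false.

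A single cross connection yields no exchange. Suppose $z\notin C$ is adjacent to $u\in P_1$ and $w\in P_2$, or $uw$ is a chord. Once you use $P_1[x,u]\,z\,P_2[w,y]$, the second path must pass from $P_2[x,w)$ into $P_1(u,y]$, and that needs a \emph{second} cross connection crossing the first. So minimality gives nothing per vertex beyond ``at most $3$ consecutive neighbours on each $P_i$'' (your bound of $6$). It also gives no bound on chords at a single vertex of $C$, since two chords sharing an endpoint never cross.

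No finer case analysis can reach $L\,\dd(G)\le 4n+O(L)$, as the following example shows.
\begin{itemize}
\item Take positions $0,\dots,m$ and paths $x=p_0,\dots,p_m=y$ and $x=q_0,\dots,q_m=y$.
\item At each position $k$ put a clique $Z_k$ of size $s$. Join each $z\in Z_k$ to $p_{k-1},p_k,p_{k+1},q_{k-1},q_k,q_{k+1}$, and make $Z_k\cup Z_{k+1}$ complete.
\item Pad the ends with cliques of size $2s$ at positions $0$ and $m$ so that $x,y$ have large degree.
\end{itemize}
Every edge changes position by at most $1$, so every $x$--$y$ path has length at least $m$. Hence $(P_1,P_2)$ is a minimum-total pair with $L=2m$. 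Yet almost every vertex off $C$ has $6$ neighbours on $C$, $\dd(G)=3s+2$ and $n\approx ms$, so $L\,\dd(G)\approx 6n$. Separately, even a valid bound $L\le d+2$ would not give $|P_i|\le d$ when the other path is the edge $xy$.

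The framework can be repaired by making your ``restricted'' per-path count the main one. Along a single path $P_i$ of length $\ell_i$ the local constant is $3$, which already suffices. Each vertex off $C$ has at most $3$ neighbours on $P_i$, and each vertex of $P_i$ has at most $2$ neighbours on $P_i$. Hence
\[(\ell_i+1)\,\dd(G)\le 3n+2(\ell_i+1)+e(P_1,P_2)+2,\]
where $e(P_1,P_2)$ counts edges between the interiors of the two paths.

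This cross term is the only place a global argument is needed, namely the two-chord exchange. Suppose $p_iq_a$ and $p_{i'}q_b$ are edges with $i<i'$ and $b<a$. Then $P_1[x,p_i]\,q_a\,P_2[q_a,y]$ and $P_2[x,q_b]\,p_{i'}\,P_1[p_{i'},y]$ are internally disjoint of total length $L+2-(i'-i)-(a-b)$. Minimality forces $i'=i+1$ and $a=b+1$. So, viewing cross edges as grid points under the product order, there is no antichain of size $3$, and Dilworth gives $e(P_1,P_2)<2L$.

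Now finish the count:
\begin{itemize}
\item Adding the two path inequalities gives $L\le 6n/(\dd(G)-6)\le \tfrac32(d+2)$.
\item Substituting back gives $(\ell_i+1)(\dd(G)-2)\le 3n+3d+8\le (d+1)(\dd(G)-2)$ for $d\ge 2$; the case $d=1$ is vacuous.
\end{itemize}
Thus $\ell_i\le d$, and no threshold $N(d)$ is even needed.
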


We will adapt this theorem slightly by letting $\dd(G)\ge n/k$ and showing the existence of a constant $D_k$ that works for all $n$.

\begin{lemma}\label{lem:finite menger}
    Let $G$ be a biconnected graph on $n$ vertices. Fix $k\in\N$, and suppose $\dd(G)>\frac{n}{k}$. Then there exists a constant $D_k\in \N$ such that for every pair $x,y\in V(G)$, there exist two internally vertex disjoint paths between $x$ and $y$, each of length $\le D_k$. 
\end{lemma}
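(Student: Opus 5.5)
We reduce the statement to \Cref{thm:mengerpathoriginal} for large $n$, and handle the finitely many small $n$ separately. The one subtlety is that the constant $D_k$ may depend only on $k$, not on $n$.

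\textbf{Large $n$.} Choose $d=d(k)$ so that the linear term of \Cref{thm:mengerpathoriginal} falls strictly below $n/k$. The threshold there is $\frac{4n}{d+2}+d+12$. Take $d = 8k$, so that
\[\frac{4n}{d+2}\le \frac{4n}{8k}=\frac{n}{2k}.\]
The threshold is then at most $\frac{n}{2k}+8k+12$. This is at most $\frac{n}{k}$ as soon as
\[\frac{n}{2k}\ge 8k+12, \quad\text{i.e.}\quad n\ge 16k^2+24k.\]
Set $M_k=\max\{N(8k),\,16k^2+24k\}$, where $N(\cdot)$ is the constant from \Cref{thm:mengerpathoriginal}. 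For $n\ge M_k$ we have $\dd(G)>\frac nk\ge \frac{4n}{d+2}+d+12$, and $G$ is biconnected. So \Cref{thm:mengerpathoriginal} gives, for every $x,y$, two internally vertex disjoint $x$–$y$ paths of length at most $8k$.

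\textbf{Small $n$.} For $n<M_k$, apply Menger's theorem (\Cref{thm:menger}) to the biconnected graph $G$. It gives two internally vertex disjoint $x$–$y$ paths. Any path in $G$ has length at most $n-1<M_k$, so both paths have length less than $M_k$.

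\textbf{Conclusion.} Taking $D_k=\max\{8k,\,M_k\}$ covers all $n$. Since $M_k$ depends only on $k$, so does $D_k$.

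One degenerate case needs a convention. If $x=y$, or if $x$ and $y$ are adjacent, the "two paths" in \Cref{thm:menger} should be read in the standard way, for instance allowing the edge $xy$ itself as one of the paths. The bound $n-1$ still applies in that reading.

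This argument has no real obstacle. The only point needing care is checking that the linear term $\frac{4n}{d+2}$ can be made strictly smaller than $\frac nk$ by a choice of $d$ depending only on $k$, so that the additive $d+12$ is absorbed once $n$ exceeds a threshold depending only on $k$. The small cases are then handled trivially by bounding path lengths by $n$.
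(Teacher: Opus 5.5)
Your proposal is correct and is essentially the paper's own proof. Both apply the path-system variant of Menger's theorem with a fixed $d$ once $n$ exceeds a threshold depending only on $k$: you make this explicit with $d=8k$ and $n\ge 16k^2+24k$, while the paper just picks some $d$ with $4/(d+2)<1/k$. For the finitely many smaller $n$, both fall back on ordinary Menger with path length bounded by $n$, and take $D_k$ to be the maximum of the resulting constants.
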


\begin{proof}
    Let $N$ be the constant from \Cref{thm:mengerpathoriginal}. Pick $d$ such that $\frac{1}{k}> \frac{4}{d+2}$. Choose $M\in\N$ such that $n\ge M$ implies $\frac{n}{k}\ge \frac{4n}{d+2}+d+12$. Thus, if $n\ge \max\{N,M\}$, we see that $\dd(G)>\frac{n}{k}$ implies we can find two internally vertex disjoint paths of length $\le d$. Otherwise, we can use \Cref{thm:menger} to find two internally vertex disjoint paths, and clearly each has length at most $ n\le \max\{N,M\}$. Pick $D_k = \max\{N,M,d\}$, and the lemma follows.
\end{proof}

\subsection{Tools for Friends and Strangers Graphs}

The next two lemmas are modifications of \Cref{prop:star wilson} and \Cref{prop:star defant}, and are tailored to more specific cases. They tackle the cases where $X = \st$ and $X=\starp$ respectively. We require $\dd(Y)>n/2$ to have the necessary connectivity when $X = \st$, but if $X = \starp$, then the weaker condition $\dd(Y)\ge (n+2)/3$ will suffice.

\begin{lemma}\label{lem:star bound}
    Suppose $Y$ is a graph on $n$ vertices with $\dd(Y)>\frac{n}{2}$. Set $X = \st$, and let $a\in V(X)$ be a star vertex of $X$. For any $\si\in V(\fs{\st_n}{Y})$ and any $b\in V(X)$ we have
    \[d_{FS}(\si,\si\circ (ab))\le 15.\]
    It follows that for any vertices $c,d\in V(X)$, we have $d_{FS}(\si,\si\circ (cd))\le 45$. Thus, $\fs{\st_n}{Y}$ is connected and
    $\diam( \fs{\st}{Y})\le 45(n-1)$.
\end{lemma}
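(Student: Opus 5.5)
The plan is to treat $\fs{\st_n}{Y}$ as a sliding puzzle. The star vertex $a$ plays the role of the blank. Every move swaps $a$ with the person in a house $Y$-adjacent to $a$'s current house. The facts used repeatedly come from $\dd(Y)>\frac n2$:
\begin{itemize}
\item Any two distinct houses $x',y'$ have a common neighbour, since $|N_Y(x')|+|N_Y(y')|>n$ while both sets lie in $V(Y)\setminus\{x',y'\}$.
\item Every edge of $Y$ lies in a triangle, for the same reason.
\end{itemize}
We will also use the basic puzzle fact that if the blank travels once around a cycle of $Y$ of length $k$, the $k-1$ other people on that cycle rotate by one position and nobody else moves. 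In particular, traversing a triangle (3 moves) transposes the two non-blank people on it.

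For the first claim, fix $\si$ and $b$, and write $a'=\si(a)$, $b'=\si(b)$. If $\{a',b'\}\in E(Y)$ we are done in one move. Otherwise, pick a common neighbour $c'$ of $a',b'$ and let $c=\si^{-1}(c')$. Perform $(ac)$ and then $(ab)$: now $a$ sits at $b'$, $b$ at $c'$ and $c$ at $a'$. It remains to exchange $b$ and $c$ (houses $c'$ and $a'$) while returning the blank to $b'$.

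For this, choose a triangle $c'a'e'$ through the edge $\{c',a'\}$; note $e'\neq b'$, since $b'\not\sim a'$. Route the blank from $b'$ onto the triangle, go once around it to transpose the two people in $c'$ and $a'$, and then retrace the route back to $b'$. Retracing the route undoes the side effects on the houses it passed through. The total is a bounded number of moves, and careful bookkeeping (possibly exploiting a common neighbour of $b'$ and $e'$ to keep the route short) should give at most $15$.

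The main obstacle is this bookkeeping. When the blank enters the triangle through $c'$, it displaces the very person we want to move. So the route must be chosen so that the two target people are on the triangle's non-blank vertices at the moment of traversal, and so that retracing leaves everyone else fixed. I would first try entering via $e'$ along a path $b'\to f'\to e'$, where $f'$ is a common neighbour of $b'$ and $e'$, and check the degenerate coincidences such as $f'\in\{a',c'\}$ separately.

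The remaining claims follow formally. For arbitrary $c,d$ with $c,d\neq a$ we have $(cd)=(ac)(ad)(ac)$ as a product of transpositions, so $d_{FS}(\si,\si\circ(cd))\le 3\cdot 15=45$. Given $\si,\tau$, write $\si^{-1}\tau$ as a product of at most $n-1$ transpositions by \Cref{lem:n-1 trans} and apply them one at a time. This yields a path from $\si$ to $\tau$ of length at most $45(n-1)$, so $\fs{\st_n}{Y}$ is connected and $\diam(\fs{\st}{Y})\le 45(n-1)$.
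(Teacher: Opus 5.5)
Your strategy is sound and genuinely different from the paper's, but as written it is not yet a proof. The bound is only asserted (``should give at most $15$''), and the step the argument depends on is deferred as a ``degenerate coincidence''. It is not degenerate: $c'$ is \emph{always} a common neighbour of $b'$ and $e'$. Routing the blank through $c'$ would displace $b$, and then your claim that retracing the route undoes all side effects fails.

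The fix is the quantitative form of your first bullet. Your justification of that bullet actually only works for \emph{nonadjacent} pairs: if $x'\not\sim y'$ then $N_Y(x'),N_Y(y')\subseteq V(Y)\setminus\{x',y'\}$, so $|N_Y(x')\cap N_Y(y')|\ge 2\dd(Y)-(n-2)\ge 3$. For adjacent pairs you need the separate count $|N_Y(x')\setminus\{y'\}|+|N_Y(y')\setminus\{x'\}|\ge n-1>n-2$, which is your triangle bullet.

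So if $b'\not\sim e'$, the pair $b',e'$ has at least three common neighbours. None of them is $a'$, because $a'\not\sim b'$, so you can pick $f'\notin\{a',c'\}$. If $b'\sim e'$, route directly. Either way the route avoids both target houses, and your conjugated triangle rotation works. Explicitly,
\[(ac)(ab)(af)(ae)(ab)(ac)(ab)(ae)(af)=(ab)\]
takes $9$ moves, and when $b'\sim e'$ the sequence $(ac)(ab)(ae)(ab)(ac)(ab)(ae)=(ab)$ takes $7$ moves. Your deduction of the $45$ and $45(n-1)$ bounds is the same as the paper's.

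Compared with the paper: the paper picks two common neighbours $c',d'$ of $a',b'$ and splits on whether $c'\sim d'$. In each case it exhibits a hand-checked sequence: $7$ moves in the first case, and $15$ moves in the second, using an extra common neighbour of $b'$ and $c'$. Your argument replaces the case split with one structural principle: conjugate a triangle rotation by a short path that avoids the triangle. Correctness then follows from the cycle-rotation fact rather than move-by-move tracking. It also gives the sharper constant $9$, hence $27$ and $27(n-1)$ downstream. Once you add the choice of $f'$, your version is both shorter and stronger; the paper's only advantage is that its sequences are explicit from the outset.
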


\begin{proof}
    Since $\dd(Y)>\frac{n}{2}$, by \Cref{lem:delta Y is n/2}, $Y$ satisfies the hypothesis of \Cref{prop:star wilson}, so $\fs{\st_n}{Y}$ is connected. Recall the convention $a' = \si(a),b' = \si(b)$, and let $a$ and $b$ be as given above. If $\{a',b'\}\in E(Y)$, then we can perform the swap $(ab)$ and $d_{FS}(\si,\si\circ (ab))=1$. Suppose instead $\{a',b'\}\notin E(Y)$. Because $\dd(Y)>\frac{n}{2}$, we have $\deg_Y(a')+\deg_Y(b')\ge n+1$. However, there are only $n-2$ vertices in $Y\back \{a',b'\}$, so at least 3 other vertices are adjacent to both $a'$ and $b'$. Label two of these vertices $c'$ and $d'$. 

    Suppose $\{c',d'\}\in E(Y)$. Then $Y$ has the edges shown in \Cref{fig:k4le}.
    \begin{figure}[h!]
        \centering
      
        \begin{tikzpicture}[
    every node/.style={circle, draw, inner sep=1.5pt},
    scale=1.2
]

\node (a) at (-1,1) {$a$};
\node (b) at ( 1,1) {$b$};
\node (c) at (-1,-1) {$c$};
\node (d) at ( 1,-1) {$d$};

\draw[thick] (a) -- (c);
\draw[thick] (b) -- (c);
\draw[thick] (a) -- (d);
\draw[thick] (b) -- (d);
\draw[thick] (c) -- (d);

\draw[dashed, thick] (a) -- (b);
\draw[dashed, thick] (a) to[out=-110,in=110] (c);
\draw[dashed, thick] (a) to[out=-30,in=120] (d);

\end{tikzpicture}
  \caption{A subgraph of $Y$ in the case where $\{c',d'\}\in E(Y)$.}
  \label{fig:k4le}
    \end{figure}
The sequence of 7 swaps $(ac)(ab)(ad)(ac)(ab)(ac)(ad)=(ab)$ is a path from $\si$ to $\si\circ (ab)$, so $d_{FS}(\si,\si\circ (ab))\le 7$.

Finally, suppose instead that $\{c',d'\}\notin E(Y)$. Because $\dd(Y)>\frac{n}{2}$, there exists $e'\in V(Y)$ such that $\{b',e'\},\{c',e'\}\in E(Y)$. Because $\{a',b'\},\{c',d'\}\notin E(Y)$, it must be that $e'\neq a',d'$. Then $Y$ has edges as shown in \Cref{fig:C4 and a bridge}.
\begin{figure}[h!]
        \centering
      
        \begin{tikzpicture}[
    every node/.style={circle, draw, inner sep=1.5pt},
    scale=1.2
]

\node (a) at (-1,1) {$a$};
\node (b) at ( 1,1) {$b$};
\node (c) at (-1,-1) {$c$};
\node (d) at ( 1,-1) {$d$};
\node (e) at ( 0,-2) {$e$};

\draw[thick] (a) -- (c);
\draw[thick] (b) -- (c);
\draw[thick] (a) -- (d);
\draw[thick] (b) -- (d);
\draw[thick] (c) -- (e);
\draw[thick] (b) -- (e);

\draw[dashed, thick] (a) -- (b);
\draw[dashed, thick] (a) -- (e);
\draw[dashed, thick] (a) to[out=-110,in=110] (c);
\draw[dashed, thick] (a) to[out=-30,in=120] (d);

\end{tikzpicture}
  \caption{The graph $Y$ in the case where $\{c',d'\}\notin E(Y)$.}
  \label{fig:C4 and a bridge}
    \end{figure}
The sequence of 15 swaps:
\[(ac)(ae)(ab)(ae)(ac)(ad)(ae)(ac)(ab)(ac)(ae)(ad)(ab)(ac)(ae) = (ab).\]
is a path from $\si$ to $\si\circ (ab)$, so $d_{FS}(\si,\si\circ (ab))\le 15$.

For any $f,g\in V(X)$, we find that $(fg) = (af)(ag)(af)$. The upper bound  $d_{FS}(\si,\si\circ (ab))\le 15$ holds for all $\si$, so
\begin{align*}
    &d_{FS}(\si,\si\circ (fg))\\
    &\le d(\si,\si\circ(af))+d(\si\circ(af),\si\circ(af)(ag))+d(\si\circ(af)(ag),\si\circ(af)(ag)(af))\le 45.
\end{align*}
Thus for any transposition $(fg)$ and any vertex $\si$, the vertices $\si$ and $\si\circ(fg)$ are connected by a path of length at most $ 45$ in $\fs{\st}{Y}$. By \Cref{lem:n-1 trans}, any permutation can be written as a product of at most $ n-1$ transpositions, so $d_{FS}(\si,\tau)\le 45(n-1)$ for all $\si,\tau\in V(\fs{\st}{Y})$. 
\end{proof}

We state a similar lemma for $X = \starp$.

\begin{lemma}\label{lem:starp bound}
     Suppose $Y$ is a biconnected graph on $n\ge 5$ vertices with $\dd(Y)\ge \frac{n+2}{3}$. Set $X = \starp_n$, and let $a\in V(X)$ be a star vertex of $X$. There exists an absolute constant $\al$ such that for any $\si\in V(\fs{\starp_n}{Y})$ and any $b\in V(X)$, we have
    \[d_{FS}(\si,\si\circ (ab))\le \al.\]
    It follows that for any vertices $c,d\in V(X)$:
    \[d_{FS}(\si,\si\circ (cd))\le 3\al.\]
    Thus, $\fs{\starp_n}{Y}$ is connected and
    \[\diam( \fs{\starp_n}{Y})\le 3\al(n-1).\]
\end{lemma}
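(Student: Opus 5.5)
The plan is to mimic the proof of \Cref{lem:star bound}. Everything outside a bounded-size subgraph of $Y$ stays frozen, and inside it we invoke the qualitative connectivity results (\Cref{prop:star defant}, \Cref{prop:star wilson}). Finiteness of the relevant friends-and-strangers graph then gives a uniform bound. Let $\{u,v\}$ be the extra edge of $\starp_n$, where $u,v$ are leaves, and write $u'=\si(u)$, $v'=\si(v)$, and so on.

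\textbf{Step 1: find a bounded core $H$.} The aim is an induced subgraph $H=Y|_W$ with the following properties:
\begin{enumerate}
\item $a',b',u',v'\in W$;
\item $|W|\le M$ for an absolute constant $M$;
\item $H$ is almost-Wilsonian or equal to $\mathrm{Cycle}_4$.
\end{enumerate}
Since $\dd(Y)\ge \frac{n+2}{3}>\frac n3$ and $Y$ is biconnected, \Cref{lem:finite menger} with $k=3$ gives two internally vertex disjoint paths of length at most $D_3$ between $a'$ and $b'$, hence a cycle $C_1$ of length at most $2D_3$ through both.

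Next attach $u'$ and $v'$ by ears. Apply \Cref{lem:finite menger} between $u'$ and $a'$. Follow the two resulting disjoint paths from $u'$ only until each first meets $C_1$. If they first meet $C_1$ at distinct vertices, we have an ear through $u'$. If they meet at the same vertex, reroute using a third short path; its existence comes from the degree bound via \Cref{lem:diam from delta(X)>n/k}, which gives $\diam(Y)\le 5$, together with the disjoint neighborhoods counting argument of that lemma. Repeat the construction for $v'$.

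A cycle with ears, and hence the induced subgraph on its vertex set, is biconnected. If the result is a long cycle or $\theta_0$, the bound $\dd(Y)\ge \frac{n+2}{3}\ge 3$ for $n\ge 5$ lets us add a chord or one more short ear so that it is neither. All path lengths are bounded by $D_3$ or $5$, so $|W|\le M$ with $M$ absolute.

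\textbf{Step 2: solve the problem inside $H$.} Configurations agreeing with $\si$ off $W$ form a copy of $\fs{X|_{\si^{-1}(W)}}{H}$ inside $\fs{\starp_n}{Y}$, since restricting both graphs only removes edges (\Cref{prop:FS subset}). Because $a,u,v\in \si^{-1}(W)$, we have $X|_{\si^{-1}(W)}\cong \starp_{|W|}$. Both $\si$ and $\si\circ(ab)$ lie in this copy. By \Cref{prop:star defant}, or by direct check in the $\mathrm{Cycle}_4$ case as in \Cref{lem:starp On3}, $\fs{\starp_{|W|}}{H}$ is connected. There are only finitely many pairs $(X|_{\si^{-1}(W)},H)$ with $|W|\le M$, so its diameter is at most some constant. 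Take $\al$ to be the maximum of these diameters; then $d_{FS}(\si,\si\circ(ab))\le \al$.

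\textbf{Step 3: conclude.} As in \Cref{lem:star bound}, write $(cd)=(ac)(ad)(ac)$ to get $d_{FS}(\si,\si\circ(cd))\le 3\al$. Then \Cref{lem:n-1 trans} gives connectivity and $\diam\le 3\al(n-1)$.

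The main obstacle is Step 1: guaranteeing a bounded-size biconnected subgraph containing the four prescribed vertices $a',b',u',v'$ that is not a cycle and not $\theta_0$. Attaching the ears for $u'$ and $v'$ with distinct endpoints needs care. If this becomes messy, the first fallback is to use only $a',b'$ and make $H$ non-bipartite, using a short odd cycle guaranteed by the degree condition. Then \Cref{prop:star wilson} applies with $X$ restricted to a star, so $u'$ and $v'$ need not be included at all. That leaves only the bipartite-$Y$ case, where $u',v'$ must be brought into $H$.
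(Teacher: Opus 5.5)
Your proposal is correct and follows essentially the paper's route. You build a bounded-size induced subgraph of $Y$ containing $a',b'$ and the houses of the extra edge out of short cycles from \Cref{lem:finite menger}, correct it to be almost-Wilsonian or $\mathrm{Cycle}_4$, and apply \Cref{prop:star defant}. Your ear-attachment replaces the paper's casework on $|V(Z_1)\cap V(Z_2)|$, and your finiteness argument replaces the $\kappa|G|^3$ bound from \Cref{lem:starp On3}.

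One small fix is needed. The ``third short path'' (and likewise the correction ears) must avoid the common vertex $w'$, and $\diam(Y)\le 5$ alone does not give this. Take instead whichever of the two internally disjoint paths from \Cref{lem:finite menger} misses $w'$. Alternatively, apply \Cref{lem:diam from delta(X)>n/k} to $Y-w'$, which is connected and has minimum degree at least $(n-1)/3$.
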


\begin{proof}
    Let $a$ be the star vertex and $a' = \si(a)$. Suppose we intend to swap $a,b$, and that $\{c,d\}\in E(X)$ is the extra edge guaranteed by $X = \starp$. If any of $a,b,c,d$ are equal, we will add other vertices in $V(X)$ until we have $4$ distinct vertices, $\{a,b,c,d\}$.
    
    We will use our results on almost-Wilsonian $Y$ from \Cref{prop:star defant} and \Cref{lem:starp  On3} to show that we can swap $a$ and $b$ in constant time. We construct an almost-Wilsonian subgraph $G\subset Y$ of size $O(1)$ that includes $a',b',c',d'$. By \Cref{lem:finite menger}, there exists a cycle of size at most $2D_3$ containing $\{a',b'\}$. Similarly, there exists such a cycle containing $\{c',d'\}$. Let us denote these cycles as $Z_1$ and $Z_2$, respectively, and note that $V(Z_1\cup Z_2)$ contains $\{a',b',c',d'\}$. We perform casework on the size of $|V(Z_1)\cap V(Z_2)|$.
    
    If $|V(Z_1)\cap V(Z_2)|\ge 2$, then the graph induced by $Z_1\cup Z_2$ is biconnected. In this case, let $H$ be the subgraph of $Y$ induced by $Z_1\cup Z_2$, and note $|V(G)|\le 4D_3$. We describe a `correction process' when $H$ is a cycle graph on $n\ge 5$ vertices or $H\cong \theta_0$. Pick vertex $e'\in V(H)$ such that $\deg_H(e') = 2$. Since $\dd(Y)\ge 3$, there exists some $f'\in V(Y)\sm V(H)$ such that $\{e',f'\}\in E(Y)$. Pick vertex $g'\in V(H)$ with $g'\neq e'$. By \Cref{lem:finite menger}, let us choose a path $P$ of length at most $ D_3$ from $f'$ to $g'$ which is vertex disjoint from $e'$, as depicted in \Cref{fig:theta0 ear}.
\begin{figure}[h!]
    \centering
    \begin{tikzpicture}[
    vertex/.style={circle, draw, fill=white, inner sep=1.5pt},
    edge/.style={thick}
]

\node[vertex] (1) at (-0.5,1) {};
\node[vertex] (2) at (0.5,1) {};
\node[vertex] (5) at (1,0) {};
\node[vertex] (7) at (0.5,-1) {};

\node[vertex] (3) at (-1,0) {$e'$};
\node[vertex] (4) at (0,0) {};
\node[vertex] (6) at (-0.5,-1) {$g'$};

\node[vertex] (8) at (-2,0) {$f'$};
\node[vertex] (9) at (-2.3,-0.7) {};
\node[vertex] (10) at (-2,-1.25) {};
\node[vertex] (11) at (-1.3,-1.5) {};

\node at (-2.5,-1.2) {$P$};

\draw[edge] (1) -- (2);
\draw[edge] (2) -- (5);
\draw[edge] (5) -- (7);
\draw[edge] (7) -- (6);
\draw[edge] (6) -- (3);
\draw[edge] (3) -- (1);
\draw[edge] (3) -- (4);
\draw[edge] (4) -- (5);

\draw[edge] (3) -- (8);
\draw[edge] (8) -- (9);
\draw[edge] (9) -- (10);
\draw[edge] (10) -- (11);
\draw[edge] (11) -- (6);

\end{tikzpicture}
    \caption{Adding a path $P$ to the graph $H$. In this picture, $H = \theta_0$, though we also apply this `correction process' when $H$ is a cycle graph.}
    \label{fig:theta0 ear}
\end{figure}
Let $H'$ be the subgraph of $Y$ induced by $V(H)\cup V(P)$. If $H \cong \theta_0$, then $H'$ is almost-Wilsonian. If $H$ is a cycle graph, then $H'$ cannot be a cycle graph, but it is possible that $H'\cong \theta_0$. In this case, applying this `correction process' twice yields an almost-Wilsonian graph $H''$. Let $G$ be the almost-Wilsonian graph after correction, where the `correction process' happens either $0$, $1$, or $2$ times. Every time the `correction process' is necessary, the number of vertices increases by at most $D_3$, the length of $P$. Therefore, $|V(G)|\le 6D_3$ in this case.

If $|V(Z_1)\cap V(Z_2)|=1$, let $e'$ be their common vertex. Pick $f'\in Z_1\back \{e'\},g'\in Z_2\back\{e'\}$. By \Cref{lem:finite menger}, we can pick a path $P'$ from $f'$ to $g'$ that is vertex disjoint from $e'$, and of length at most $ D_3$. In this case, let $G$ be the induced subgraph of $V(Z_1\cup Z_2\cup P)$. $G$ is not a cycle graph or $\theta_0$ because its edge set contains two disjoint cycles. We will show $G$ is connected after deletion of any vertex $v'$. If $v'\in V(P)$, then because $e'\in V(G)\sm \{v'\}$, the graph $G\sm \{v'\}$ is connected. Similarly, if $v'\notin V(P)$, then the path $P$ connects the cycles $V(Z_1)\sm \{v'\}$ and $V(Z_2)\sm \{v'\}$, each of which are connected after deletion. Thus, $G$ is also biconnected, and since we have shown that all other conditions are satisfied, $G$ is almost-Wilsonian. 

If $|V(Z_1)\cap V(Z_2)|=0$, then choose distinct $j',k'\in Z_1$ and $\ell',m'\in Z_2$. We apply \Cref{lem:finite menger} to $j',\ell'$ and to $k',m'$, and we call the cycles obtained $Z_3,Z_4$, respectively, each of which has at most $2D_3$ vertices. This construction is pictured in \Cref{fig:C1C2C3C4}. In this case, let $G$ be the subgraph induced by $V(Z_1\cup Z_2\cup Z_3\cup Z_4)$. Note that $|V(G)|\le 8D_3$.

    \begin{figure}[h!]
        \centering
        \begin{tikzpicture}[
    vertex/.style={circle, draw, inner sep=1.5pt},
    scale=1
]
\node[vertex] (a) at (-5,2) {$a'$};
\node[vertex] (b) at ( -5,-2) {$b'$};
\node[vertex] (j) at (-3.6,1.4) {$j'$};
\node[vertex] (k) at ( -3.6,-1.4) {$k'$};
\node[vertex] (c) at (5,2) {$c'$};
\node[vertex] (d) at ( 5,-2) {$d'$};
\node[vertex] (l) at (3.6,1.4) {$\ell'$};
\node[vertex] (m) at ( 3.6,-1.4) {$m'$};
\node[vertex] (e) at (-6.4,1.4) {};
\node[vertex] (f) at ( -6.4,-1.4) {};
\node[vertex] (g) at (-7,0) {};
\node[vertex] (h) at ( -3,0) {};
\node[vertex] (i) at (6.4,1.4) {};
\node[vertex] (n) at ( 6.4,-1.4) {};
\node[vertex] (o) at (7,0) {};
\node[vertex] (p) at ( 3,0) {};

\node[vertex] (q) at (-1,1.8) {};
\node[vertex] (r) at ( -1,-1.8) {};
\node[vertex] (s) at (1,1.8) {};
\node[vertex] (t) at ( 1,-1.8) {};
\node[vertex] (u) at (-1,1) {};
\node[vertex] (v) at ( -1,-1) {};
\node[vertex] (w) at (1,1) {};
\node[vertex] (x) at ( 1,-1) {};

\node at (-5,0) {$Z_1$};
\node at (5,0) {$Z_2$};
\node at (0,1.4) {$Z_3$};
\node at (0,-1.4) {$Z_4$};

\draw[thick] (a) -- (j);
\draw[thick] (j) -- (h);
\draw[thick] (h) -- (k);
\draw[thick] (k) -- (b);
\draw[thick] (b) -- (f);
\draw[thick] (f) -- (g);
\draw[thick] (g) -- (e);
\draw[thick] (e) -- (a);

\draw[thick] (c) -- (i);
\draw[thick] (i) -- (o);
\draw[thick] (o) -- (n);
\draw[thick] (n) -- (d);
\draw[thick] (d) -- (m);
\draw[thick] (m) -- (p);
\draw[thick] (p) -- (l);
\draw[thick] (l) -- (c);

\draw[thick] (j) -- (q);
\draw[thick] (j) -- (u);
\draw[thick] (l) -- (s);
\draw[thick] (l) -- (w);
\draw[thick] (q) -- (s);
\draw[thick] (u) -- (w);

\draw[thick] (k) -- (r);
\draw[thick] (k) -- (v);
\draw[thick] (m) -- (t);
\draw[thick] (m) -- (x);
\draw[thick] (r) -- (t);
\draw[thick] (v) -- (x);

\end{tikzpicture}
  \caption{Construction of cycles $Z_1,Z_2,Z_3,Z_4$.}
  \label{fig:C1C2C3C4}
    \end{figure}
Let us show that $G$ is biconnected by demonstrating that $G\back \{v'\}$ is connected for all $v'\in V(G)$. Suppose $v'\notin V(Z_1\cup Z_2)$. Then every vertex $w'\in V(G)\back \{v'\}$ is in $V(Z_1\cup Z_2)$ or is joined by a path to $V(Z_1\cup Z_2)$. Because the two paths in $Z_3$ from $j'$ to $\ell'$ are internally vertex disjoint, at least one path remains in $V(G)\back \{v'\}$, so $V(Z_1),V(Z_2)$ are connected. Thus, $G\back \{v'\}$ is connected in this case.

Suppose instead that $v'\in V(Z_1\cup Z_2)$, so take $v'\in V(Z_1)$ without loss of generality. It is clear that $Z_1\sm\{v'\}$ is still connected to $Z_2$ in $V(G)\sm \{v'\}$ because $j',k'$ are distinct, and thus all vertices are connected to $Z_2$, which is unchanged by the deletion of $v'$. Thus, $G$ is biconnected.
    
In each case, we obtain an almost-Wilsonian or $\mathrm{Cycle}_4$ subgraph $G\subset Y$ such that $\{a',b',c',d'\}~\subset~V(G)$. Moreover, $|V(G)|\le 8D_3$ in all cases. By \Cref{prop:star defant} in the almost-Wilsonian case, $\fs{\starp}{Y|_G}$ is connected. By \Cref{lem:starp On3} there is an absolute constant $\kappa$ such that $\diam(\fs{\starp}{G})\le \kappa |G|^3 \le \kappa(8D_3)^3=: \alpha$. It follows that
\[d_{FS}(\si,\si\circ(ab)) \le \alpha.\]
Thus, any vertices of $\FS$ that differ by a transposition involving $a$ are at most a distance $\alpha$ apart in $\FS$. Thus, any vertices that differ by a transposition (not necessarily involving $a$) are at most a distance $3\al$ apart, since $(ab)(ac)(ab) = (bc)$. By \Cref{lem:n-1 trans}, $\diam(\fs{\starp}{Y})\le 3\al(n-1)$.
\end{proof}

We use $\al$ throughout the paper to refer to the absolute constant given by \Cref{lem:starp bound}. Recall that by \Cref{lem:friends in same component}, we know that $\FS$ is connected if $\si$ and $\si\circ(ab)$ always lie in the same component of $\FS$. We introduce a reduction tool that adds a diameter bound to this result.

\begin{lemma}\label{lem:friend reduction}
    Suppose $X$ is connected and for all $\si\in V(\FS)$ and $a,b\in V(X)$ such that $\{a,b\}\in E(X)$, then $d_{FS}(\si,\si\circ(ab))\le t$. Then $\FS$ is connected and
    \[\diam(\FS)\le \left(\frac{6n}{\dd(X)+1}-3\right)(n-1)t<\frac{6n^2t}{\dd(X)}.\]
\end{lemma}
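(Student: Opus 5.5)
The plan is to reduce an arbitrary transposition $(cd)$, with $c,d\in V(X)$ not necessarily adjacent, to a bounded number of swaps along edges of $X$. Then we use \Cref{lem:n-1 trans} to go from transpositions to arbitrary permutations.

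First, fix $\si\in V(\FS)$ and any $c,d\in V(X)$. Since $X$ is connected, choose a shortest path $c=x_0,x_1,\dots,x_m=d$ in $X$. By \Cref{lem:diam from delta(X)} its length satisfies $m\le \frac{3n}{\dd(X)+1}-1$. Each $\{x_{i-1},x_i\}$ is an edge of $X$. The standard conjugation identity expresses $(x_0x_m)$ as a product of $2m-1$ adjacent transpositions, namely
\[(x_0x_1)(x_1x_2)\cdots(x_{m-1}x_m)\cdots(x_1x_2)(x_0x_1),\]
read in the appropriate order under the paper's left-to-right convention.

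Next, we apply the hypothesis one factor at a time. If $\tau$ is the partial product so far, the hypothesis gives $d_{FS}(\si\circ\tau,\si\circ\tau\circ(x_{i-1}x_i))\le t$, since it holds for every vertex of $\FS$. By the triangle inequality,
\[d_{FS}(\si,\si\circ(cd))\le (2m-1)t\le \left(\frac{6n}{\dd(X)+1}-3\right)t.\]

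Finally, any $\pi\in V(\FS)$ can be written as $\pi=\si\circ\rho$ for a permutation $\rho$ of $V(X)$. By \Cref{lem:n-1 trans}, $\rho$ is a product of at most $n-1$ transpositions. Applying them successively and using the previous bound each time gives
\[d_{FS}(\si,\pi)\le \left(\frac{6n}{\dd(X)+1}-3\right)(n-1)t.\]
So $\FS$ is connected and has the claimed diameter bound. The strict inequality $<6n^2t/\dd(X)$ follows by bounding $n-1<n$ and $\frac{6n}{\dd(X)+1}-3<\frac{6n}{\dd(X)}$.

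The only point needing care is the bookkeeping of the path-conjugation identity under the paper's left-to-right convention, including the count of $2m-1$ factors. There is no real obstacle beyond that.
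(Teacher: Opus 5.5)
Your proposal is correct and is essentially the paper's own proof. You write each transposition as a product of $2\,\diam(X)-1$ edge-transpositions along a shortest path, apply the hypothesis factor by factor, extend to arbitrary permutations via \Cref{lem:n-1 trans}, and bound $\diam(X)$ using \Cref{lem:diam from delta(X)}. Your remark about the ordering convention is harmless, because the word is a palindrome, so it represents $(x_0x_m)$ under either reading.
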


\begin{proof}
    We first show that we can perform any transposition in at most $(2\diam(X)-1)t$ moves. Let $a,b\in V(X)$. Because $X$ is connected, there exists a path $a,a_1,\dots,a_{\ell-1},b$ in $X$ from $a$ to $b$ of length $\ell\le \diam(X)$. Observe that
    \[(ab) = (aa_1)(a_1a_2)\cdots (a_{\ell-1}b)(a_{\ell-1}a_{\ell-2})\cdots(a_2a_1)(a_1a)\]
    is a product of $2\ell-1\le 2\diam(X)-1$ transpositions, all of which correspond to edges in $X$. By assumption, each of these transpositions in $X$ corresponds to a path in $\FS$ of length $\le t$. It follows that
    \[d_{FS}(\si,\si\circ(ab))\le (2\diam(X)-1)t.\]
    Now fix arbitrary vertices $\si,\tau\in V(\FS)$. By \Cref{lem:n-1 trans}, $\si^{-1}\tau$ can be written as a product of $n-1$ transpositions, so
    \begin{equation}\label{eq:dist in starp proof si tau}
        d_{FS}(\si,\tau)\le (2\diam(X)-1)(n-1)t.
    \end{equation}
    Since $X$ is connected, $\dd(X)\ge 1$. Combining \eqref{eq:dist in starp proof si tau} and \Cref{lem:diam from delta(X)} gives the desired bound.
\end{proof}

The above lemma allows us to bound the diameter of $\FS$ in terms of the time to swap $a,b$ for $\{a,b\}\in E(X)$, that is, the time to `swap' two friends, and the minimum degree $\dd(X)$. We state a structural lemma proven by Bangachev.  

\begin{lemma}[{\cite[Lemma 4.1]{Bangachev}}]\label{lem:bangachev structure}
    Let $G$ be a graph on $n$ vertices with minimum degree $\dd(G)$. Let $U\subset V(G)$ with $|U|\ge 5$, and $2|U|+3\dd(G)\ge 3n+2$. Then $G|_U$ has at most two connected components. Furthermore, there are three options for the structure of $G|_U$.
    \begin{enumerate}
        \item[\emph{(C1)}] $G|_U$ has 1 biconnected component. Then $G|_U$ is almost-Wilsonian.
        
        \item[\emph{(C2)}] $G|_U$ has 2 connected components $F_1$ and $F_2$. Then
        \begin{align}
            \label{eq:2comp vertices}\dd(G)+1+|U|-n&\le |V(F_i)|\le n-\dd(G)-1\\
            \label{eq:2comp degree}\text{ and }\quad\dd(G|_{F_i})\ge \dd(G)&+|U|-n> |V(F_i)|/2
        \end{align}
        holds for $i = 1,2$. In particular, $F_1$ and $F_2$ are Wilsonian.

        \item[\emph{(C3)}] $G|_U$ has 1 connected component with a cut vertex $v$. Then $G|_{U\backslash \{v\}}$ has two connected components $F_1',F_2'$ and
        \begin{align}
            \label{eq:cut vertices}\dd(G)+|U|-n&\le |V(F_i')|\le n-\dd(G)-1\\
            \label{eq:cut degree}\text{ and }\quad\dd(G_{F_i'})\ge \dd(G)&+|U|-n-1 >|V(F_i)|/2
        \end{align}
        holds for $i=1,2$. In particular, $F_1'$ and $F_2'$ are Wilsonian. Additionally, if $|N(v)\cap V(F_i')|\ge 2$, then $G|_{F_i'\cup \{v\}}$ is Wilsonian.
    \end{enumerate}
\end{lemma}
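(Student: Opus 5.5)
The whole lemma follows from one computation: a lower bound on the minimum degree of $G|_U$, followed by counting. Write $u=|U|$ and $m=\dd(G)+u-n$. Every vertex of $U$ has at least $\dd(G)$ neighbours in $G$, and at most $n-u$ of them lie outside $U$. Hence
\[\dd(G|_U)\ge m.\]
The hypothesis $2u+3\dd(G)\ge 3n+2$ rewrites as $3m\ge u+2$, so $m\ge (u+2)/3>u/3$, and $u\ge 5$ gives $m\ge 3$.

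\emph{At most two components, and case (C2).} Each component of $G|_U$ contains a vertex together with its $\ge m$ neighbours in $U$, so it has at least $m+1>u/3$ vertices. Three components would need more than $u$ vertices, so there are at most two. If there are exactly two, $F_1$ and $F_2$, then $|V(F_i)|\ge m+1=\dd(G)+1+u-n$. Consequently $|V(F_i)|\le u-(m+1)=n-\dd(G)-1$, which is \eqref{eq:2comp vertices}. The degree bound \eqref{eq:2comp degree} requires $m>|V(F_i)|/2$. Since $|V(F_i)|\le u-m-1$, it suffices that $3m>u-1$, which holds because $3m\ge u+2$. Finally, \Cref{lem:delta Y is n/2} applied to each $F_i$ shows that $F_1$ and $F_2$ are Wilsonian.

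\emph{Case (C1).} Suppose $G|_U$ is biconnected. It is not a cycle on $\ge 4$ vertices and not $\theta_0$, because both of those graphs have vertices of degree $2$ while $\dd(G|_U)\ge m\ge 3$. Hence $G|_U$ is almost-Wilsonian.

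\emph{Case (C3).} Suppose $G|_U$ is connected with a cut vertex $v$. In $G|_{U\setminus\{v\}}$ every vertex has degree at least $m-1$, so every component has at least $m$ vertices.
\begin{itemize}
\item Number of components: three components would need $3m\le u-1$, contradicting $3m\ge u+2$. Since $v$ is a cut vertex there are at least two, so there are exactly two, $F_1'$ and $F_2'$.
\item Size bounds: $|V(F_i')|\ge m=\dd(G)+u-n$, and therefore $|V(F_i')|\le u-1-m=n-\dd(G)-1$. This is \eqref{eq:cut vertices}.
\item Degree bound: \eqref{eq:cut degree} requires $m-1>|V(F_i')|/2$. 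Using $|V(F_i')|\le u-1-m$, this reduces to $3m>u+1$, again implied by $3m\ge u+2$.
\end{itemize}
As before, \Cref{lem:delta Y is n/2} makes each $F_i'$ Wilsonian.

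For the final claim of (C3), assume $|N(v)\cap V(F_i')|\ge 2$ and let $H=G|_{F_i'\cup\{v\}}$. We check the Wilsonian conditions one by one.
\begin{itemize}
\item Biconnected: $F_i'$ is biconnected, and $v$ attaches to it by at least two edges, so deleting any single vertex of $H$ leaves it connected.
\item Not bipartite: $H$ contains $F_i'$, which is non-bipartite. Indeed, a graph whose minimum degree exceeds half its order has two adjacent vertices with a common neighbour, hence a triangle.
\item Not a cycle and not $\theta_0$: the vertices of $F_i'$ have degree $>|V(F_i')|/2$ in $H$. Comparing against the degree sequences of cycles and $\theta_0$ (at most one vertex of $H$, namely $v$, has low degree) rules both out.
\end{itemize}

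The only delicate point is this last verification for small $|V(F_i')|$, for instance when $|H|=7$ and one must exclude $H\cong\theta_0$. I expect a short direct check to suffice: $\theta_0$ has maximum degree $3$ and only two vertices of degree $3$, which is incompatible with six vertices of degree $>3$. Everything else follows from the single inequality $3m\ge u+2$.
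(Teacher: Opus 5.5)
The paper does not prove this lemma; it quotes it from Bangachev, so there is no in-paper argument to compare against. Your proof is correct and self-contained. The single inequality $3\dd(G|_U)\ge |U|+2$ is the same computation the paper records in \Cref{lem:ddG|U}. From it, your counting gives the bound of at most two components and all the size and degree bounds in (C2) and (C3), and \Cref{lem:delta Y is n/2} then supplies the Wilsonian conclusions.

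One small imprecision is in the last claim of (C3). Your degree comparison ("at most one vertex of $H$, namely $v$, has low degree") does not literally hold when $|V(F_i')|=3$. In that case $F_i'=K_3$, and a vertex of $F_i'$ not adjacent to $v$ has degree $2$ in $H$. This case does occur, with $m=3$, $|U|=7$, and both sides of size $3$. The conclusion is still true, and there is a cleaner uniform fix. The graph $H$ contains a triangle, coming from $F_i'$. Every cycle graph on at least $4$ vertices is triangle-free, and so is $\theta_0$, which has girth $5$. Hence $H$ is neither, with no degree-sequence check needed.
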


The three cases of Bangachev's structural lemma will come up often, and we refer to them as $(C1),(C2),$ and $(C3)$, respectively, throughout the paper. It is not difficult to check that under the hypothesis $2\dd(X)+3\dd(Y)\ge 3n$, taking $U = \si(N_X[v])$ for some $v\in V(X)$ and $G = Y$ satisfies the hypothesis $2|U|+3\dd(G)\ge 3n+2$ of \Cref{lem:bangachev structure}. In the cases $(C1),(C2),$ and $(C3)$, we obtain bounds on the distance between two vertices of $\FS$ that differ by a transposition. Recall that we use the convention $v' = \si(v)$, where $v\in V(X)$ and $v'\in V(Y)$.


\begin{lemma}\label{lem:structure}
    Let $X$ and $Y$ be graphs on $n$ vertices with $2\dd(X)+3\dd(Y)\ge 3n$ and $\dd(X)\ge 4$. Fix a vertex $v\in V(X)$ and a bijection $\si:V(X)\to V(Y)$. We apply \Cref{lem:bangachev structure} to $Y|_{\si(N_X[v])}$, and examine the cases $(C1),(C2),(C3)$.

    Suppose $(C1)$ holds. Then there exists an absolute constant $\al$ such that if $X|_{N_X[v]}\supset \starp$, then for any $w',x'\in V(Y|_{\si(N_X[v])})$ we have
    \begin{align*}
        d_{FS}(\si,\si\circ(vw))&\le \al\quad\text{ and }\quad d_{FS}(\si,\si\circ(wx))\le 3\al.
    \end{align*}

    Suppose $(C2)$ holds. Then for any $w',x'\in V(Y|_{\si(N_X[v])})$ in the same component of $Y|_{\si(N_X[v])}$ as $v'$, we have
    \begin{align*}
        d_{FS}(\si,\si\circ(vw))&\le 15\quad\text{ and }\quad
        d_{FS}(\si,\si\circ(wx))\le 45.
    \end{align*}

    Suppose $(C3)$ holds. Let $z'$ be the cut vertex of $Y|_{\si(N_X[v])}$. For any $w',x'\in V(Y|_{\si(N_X[v])})$ in the same component of $Y|_{\si(N_X[v])\back \{z'\}}$ as $v'$, we have
    \begin{align*}
        d_{FS}(\si,\si\circ(vw))&\le 15\quad\text{ and }\quad
        d_{FS}(\si,\si\circ(wx))\le 45.
    \end{align*}
\end{lemma}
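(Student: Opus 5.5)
The plan is to localize: restrict attention to the people in $N_X[v]$ and the houses $\si(N_X[v])$, and use \Cref{prop:FS subset} together with \Cref{prop:components}. Every swap performed in $\fs{X|_{N_X[v]}}{Y|_{\si(N_X[v])}}$, or in any subgraph of it, is also a swap in $\FS$, with the people outside $N_X[v]$ left fixed. So a path of length $\ell$ in the local friends and strangers graph gives a path of length $\ell$ in $\FS$ between the corresponding bijections. First I will check the hypothesis of \Cref{lem:bangachev structure} with $U=\si(N_X[v])$ and $G=Y$. We have $|U|\ge \dd(X)+1\ge 5$. Also $2|U|+3\dd(Y)\ge 2\dd(X)+2+3\dd(Y)\ge 3n+2$. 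Hence exactly one of (C1), (C2), (C3) holds.

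\emph{Case (C1).} Here $Y|_U$ is almost-Wilsonian. Since $X|_{N_X[v]}\supset \starp$ with star vertex $v$, \Cref{prop:FS subset} lets us pass to $\fs{\starp_{|U|}}{Y|_U}$. I would apply the argument of \Cref{lem:starp bound} there, rather than the lemma as stated: that argument only needs a bounded-size almost-Wilsonian (or $\mathrm{Cycle}_4$) subgraph containing $a',b',c',d'$, followed by \Cref{lem:starp On3}. Here this is immediate if $|U|$ is bounded. If $|U|$ is large, then $\dd(Y|_U)\ge \dd(Y)+|U|-n$ is at least a constant fraction of $|U|$. This follows from $2|U|+3\dd(Y)\ge 3n+2$ and $|U|\le n$, which give $\dd(Y)\ge n/3$. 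So \Cref{lem:finite menger} again yields short internally disjoint paths, and the construction of $G$ with $|V(G)|=O(1)$ goes through verbatim. This gives $d_{FS}(\si,\si\circ(vw))\le\al$, after possibly enlarging the absolute constant $\al$. The bound for $(wx)$ then follows from $(wx)=(vw)(vx)(vw)$, each factor costing at most $\al$, so the total is at most $3\al$.

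\emph{Cases (C2) and (C3).} Let $F$ be the component of $Y|_U$ containing $v'$, or in (C3) the component of $Y|_{U\sm\{z'\}}$ containing $v'$. Let $S=\si^{-1}(V(F))\subset N_X[v]$. The people in $S$ are all adjacent to $v$, so $X|_S\supset \st_{|S|}$ with star vertex $v$. By \eqref{eq:2comp degree} or \eqref{eq:cut degree}, $\dd(Y|_F)>|V(F)|/2$. Then \Cref{lem:star bound}, applied to $\fs{\st_{|S|}}{Y|_F}$ and lifted via \Cref{prop:FS subset}, gives $d_{FS}(\si,\si\circ(vw))\le 15$ and $d_{FS}(\si,\si\circ(wx))\le 45$ for $w',x'\in V(F)$.

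The main obstacle is (C1). \Cref{lem:starp bound} as stated demands $\dd\ge(n+2)/3$ for the \emph{local} graph $Y|_U$, which may fail. I expect instead to extract the uniform constant from the proof, namely the bounded almost-Wilsonian subgraph plus \Cref{lem:finite menger} with a suitable fixed $k$. I will also need to handle degenerate small cases, such as $|U|<5$ or the extra edge of $\starp$ touching $v$, exactly as in that proof.
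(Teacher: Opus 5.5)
Your proposal is correct and follows the paper's proof. In (C2)/(C3) you apply \Cref{lem:star bound} to the component containing $v'$ with $v$ as star vertex, and in (C1) you reduce via \Cref{prop:FS subset} to $\fs{\starp}{Y|_U}$. The obstacle you raise in (C1) does not exist, however. Substituting $3\dd(Y)\ge 3n+2-2|U|$ into $\dd(Y|_U)\ge \dd(Y)+|U|-n$ gives exactly $\dd(Y|_U)\ge (|U|+2)/3$; the intermediate bound $\dd(Y)\ge n/3$ you cite would not suffice on its own. Together with $|U|\ge 5$ and biconnectivity from (C1), this lets you apply \Cref{lem:starp bound} to $Y|_U$ verbatim, which is what the paper does, so you need not re-run its proof or enlarge $\alpha$.
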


\begin{proof}
    Suppose $(C1)$ holds and $X|_{N_X[v]}\supset \starp$. Since $|N_x[v]|\ge \dd(X)+1$, we have $2|N_X[v]|+3\dd(Y)\ge 3n+2$. We may rearrange this equation and use inclusion-exclusion to find that 
    \[\dd(Y|_{\si(N_X[v])})\ge \dd(Y)+|N_X[v]|-n\ge |N_X[v]|+\frac{2-2|N_X[v]|}{3} =  \frac{|N_X[v]|+2}{3}.\]
    Applying \Cref{lem:starp bound} to $Y|_{\si(N_X[v])}$ shows the desired result.

    If $(C2)$ or $(C3)$ holds, then by \Cref{lem:bangachev structure} the minimum degree in each component of $Y|_{\si(N_X[v])}$ or $Y|_{\si(N_X[v])}\sm\{z'\}$ is more than half the size of the component. An application of \Cref{lem:star bound} to the component of $Y|_{\si(N_X[v])}$ containing $v',w',x'$ yields the desired result.
\end{proof}

We often leverage the following simple observation.

\begin{lemma}\label{lem:ddG|U}
    Under the hypotheses of \Cref{lem:bangachev structure}, $\dd(G|_U)>\frac13|U|$ and the diameter of every component is at most $5$.
\end{lemma}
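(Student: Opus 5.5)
We must prove: under the hypotheses of \Cref{lem:bangachev structure}, i.e. $|U|\ge 5$ and $2|U|+3\dd(G)\ge 3n+2$, we have $\dd(G|_U)>\frac13|U|$, and every connected component of $G|_U$ has diameter at most $5$. The plan is a direct degree count followed by a neighborhood-packing argument in the style of the proof of \Cref{lem:diam from delta(X)>n/k}.

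\emph{Minimum degree.} For $u\in U$, at most $n-|U|$ neighbours of $u$ in $G$ lie outside $U$, so $\deg_{G|_U}(u)\ge \dd(G)-(n-|U|)$. The hypothesis gives $3\dd(G)\ge 3n+2-2|U|$, hence
\[3\deg_{G|_U}(u)\ge 3\dd(G)-3n+3|U|\ge |U|+2>|U|.\]
Therefore $\dd(G|_U)>\frac13|U|$; in fact $\dd(G|_U)\ge \frac{|U|+2}{3}$.

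\emph{Diameter.} Let $F$ be a component of $G|_U$ and suppose two vertices $x,y\in F$ have $d_F(x,y)\ge 6$. Take a shortest path $x=x_0,x_1,\dots$ in $F$. The closed neighbourhoods $N_F[x_0]$ and $N_F[x_3]$ are disjoint, since an intersection would give a walk of length at most $2$ between $x_0$ and $x_3$, contradicting minimality. Likewise $N_F[x_0]$ and $N_F[x_6]$ are disjoint, and so are $N_F[x_3]$ and $N_F[x_6]$. Neighbours of a vertex of $F$ inside $U$ all lie in $F$, so each of these three sets has size at least $\dd(G|_U)+1\ge \frac{|U|+5}{3}$. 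Their total size then exceeds $|U|\ge |V(F)|$, a contradiction. Hence $\diam(F)\le 5$.

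\emph{Remaining check.} The only point needing care is that the neighbourhood bound is applied inside the component $F$ rather than in $G|_U$; this holds because $F$ is a union of components of $G|_U$, so $\deg_F(u)=\deg_{G|_U}(u)$. No use of the (C1)--(C3) case split is needed.
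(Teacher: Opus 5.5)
Your proof is correct and follows the paper's argument. You use the same inclusion--exclusion count to get $3\dd(G|_U)\ge |U|+2$, and your neighbourhood-packing step is the $k=3$ case of \Cref{lem:diam from delta(X)>n/k}; the paper cites that lemma for each component instead of reproving it inline.
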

\begin{proof}
    Recall that $\dd(G|_U)\ge \dd(G)+|U|-n$ by inclusion exclusion, so
    \begin{align*}
        3\dd(G|_U)&\ge 3\dd(G)+3|U|-3n\ge |U|+2.
    \end{align*}
    This implies $\dd(G|_U)>\frac13|U|$, and we may apply \Cref{lem:diam from delta(X)>n/k} to any connected component of $G|_U$.
\end{proof}

To conclude the proof of \Cref{thm:kiril}, Bangachev uses a specific mechanism to swap two friends \cite[Section 4.4]{Bangachev}. We provide a mild strengthening of the techniques in \cite[Section 4.4]{Bangachev} so that the proof is amenable to the setting $\dd(X)\le n/2$. The setup is largely the same; indeed, vertices $u,v,x,$ and $w$ in \cite[Section 4.4]{Bangachev} correspond to vertices $a,b,c$, and $d$ in our setup. The proof in \cite{Bangachev} uses $\dd(X)> n/2$ to ensure that $c$ and $d$ have a mutual friend, and in fact shows that $c$ and $d$ have at least $3$ mutual friends. Since we cannot assume $\dd(X)>n/2$, we do not have this luxury. Thus, we take the existence of mutual friend $e$ of $c$ and $d$ as an assumption, and much of the technical work of the following lemma is in showing that one mutual friend ($e$) is sufficient to swap $a$ and $b$, even when $\dd(X)\le n/2$. Recall that after fixing $\si\in V(\FS)$, we may use pairs $(a,\si(a)) = (a,a')$ to refer to people/house pairs under $\si$, as discussed in \Cref{rk:on diagrams}. Let $\al$ denote the constant from \Cref{lem:starp bound}.

\begin{lemma}\label{lem:bangachev with P=1}
    Suppose $X$ and $Y$ are connected graphs satisfying $2\dd(X)+3\dd(Y)\ge 3n$ and $\dd(X)\ge 4$. Let $\si\in V(\FS)$. If there exist distinct pairs $(a,a'),(b,b'),(c,c'),(d,d'),(e,e')$ satisfying
    \begin{align*}
        \{a,b\},\{a,c\},\{b,c\},\{c,e\},\{d,e\}&\in E(X),\\
        \text{ and}\quad\{a',d'\},\{b',d'\},\{c',d'\}&\in E(Y),
    \end{align*}
    then $d_{FS}(\si,\si\circ(ab))\le \max\{3\al,219\}$.
\end{lemma}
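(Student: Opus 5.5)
The plan is a conjugation argument. First note what happens if person $c$ sits in house $d'$ while $a$ and $b$ sit in $a'$ and $b'$. Since $\{a',d'\},\{b',d'\}\in E(Y)$ and $a,b,c$ are pairwise friends, the three swaps $(ca)(cb)(ca)$ are all legal when carried out left to right. First $c$ and $a$ exchange houses $d'$ and $a'$. Then $a$ (now in $d'$) and $b$ (in $b'$) exchange. Then $b$ (now in $d'$) and $c$ (now in $a'$) exchange. The net effect is that $a$ and $b$ have traded houses and $c$ is back in $d'$.

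So suppose we find a path in $\FS$ of length $L$ from $\si$ to $\si\circ\pi$, where $\pi$ fixes $a$ and $b$ and $(\si\circ\pi)(c)=d'$. Then we can travel to $\si\circ\pi$, perform the three swaps to reach $\si\circ\pi\circ(ab)$, and run the reverse path (right-multiplied by $(ab)$) to reach $\si\circ\pi\circ(ab)\circ\pi^{-1}=\si\circ(ab)$. This gives $d_{FS}(\si,\si\circ(ab))\le 2L+3$. The whole task is therefore to move $c$ into house $d'$ in $O(1)$ moves without disturbing the net positions of $a$ and $b$.

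The natural candidate is $\pi=(cd)$, using $e$ as the mutual friend of $c$ and $d$. I would apply \Cref{lem:bangachev structure} to $Y|_{\si(N_X[e])}$, which contains $c'$, $d'$ and $e'$; its hypothesis holds since $2|N_X[e]|+3\dd(Y)\ge 3n+2$. Then I would use \Cref{lem:structure}, which realizes any transposition $(wx)$ with $w',x'$ in the appropriate component in at most $45$ or $3\al$ moves. The net permutation of such a path is exactly $(wx)$, so $a$ and $b$ return to their houses even if they are friends of $e$ and move along the way. This handles:
\begin{enumerate}
\item case (C1), provided $X|_{N_X[e]}\supset\starp$;
\item case (C2), when $c'$, $d'$ and $e'$ lie in one component;
\item case (C3), when $c'$, $d'$ and $e'$ lie in one component of the graph with the cut vertex $z'$ removed.
\end{enumerate}
In these cases $L\le\max\{3\al,45\}$, which fits inside the bound $\max\{3\al,219\}$.

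The main obstacle is the degenerate configurations, where I would expect the explicit constant $219$ to come from:
\begin{enumerate}
\item In (C1), $N_X(e)$ may contain no edge, so $X|_{N_X[e]}$ is only a star and \Cref{lem:structure} does not apply.
\item In (C2) or (C3), $c'$ and $d'$ may be separated, or $e'$ may lie in a different component from them.
\end{enumerate}
My first attempt would be to relocate $e$ in boundedly many moves and then rerun the argument. I would use the triangle $a,b,c$ together with the star-type moves of \Cref{lem:structure} around $c$; these are legal because $\si(N_X[c])\ni a',b',c'$ also satisfies the hypothesis of \Cref{lem:bangachev structure}. With these moves I would swap $e$ with people inside $\si(N_X[c])$ until $e$ occupies a house in the same component as $d'$. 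By \Cref{lem:ddG|U}, every component has diameter at most $5$ and minimum degree above a third of $|U|$, and the component sizes in (C2) and (C3) are bounded below. Together these should guarantee a suitable house, since otherwise $c'$ and $d'$ would have too few common neighbors for the degree condition $2\dd(X)+3\dd(Y)\ge 3n$. When $N_X(e)$ has no internal edge, I would instead use the edge $\{a,c\}$ or $\{b,c\}$ to supply the extra edge of $\starp$, by first bringing $e$ adjacent in $Y$ to the relevant houses.

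Each such preparatory move is itself a conjugation by a bounded product of transpositions that fixes $a$ and $b$ in net. Chaining at most a few of them (each costing at most $45$), plus the final three swaps, should keep the total below $219$. If some subcase resists, my fallback is to write an explicit swap sequence on the bounded configuration, as was done in \Cref{lem:star bound}.
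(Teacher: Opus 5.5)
The conjugation framework is fine as far as it goes: with $L=1$ it is exactly the paper's $5$-move case $(cd)(ac)(ab)(bc)(cd)$. But the proposal only treats configurations in which \Cref{lem:structure} on $\si(N_X[e])$ hands you $(cd)$ directly. Every remaining configuration, which is the real content of the lemma, is left as a hope (``should guarantee a suitable house'', ``should keep the total below $219$''). You never derive any constraint on where $e'$ sits, so nothing shows that relocating $e$ terminates in $O(1)$ moves.

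The missing idea is exactly such a constraint on $e'$.
\begin{itemize}
\item The paper first removes, by explicit short sequences, the cases $\{c,d\}\in E(X)$; $a,b\in N_X(d)$; a common friend $f\neq a$ of $c,d$ with $f'\sim d'$ (this forces $\{d',e'\}\notin E(Y)$); and two pairs adjacent to $d$, $d'$ and $e'$.
\item What survives is that $e'$ has at least $\dd(X)+\dd(Y)-n$ non-neighbours in $\si(N_X[d])\cap N_Y[d']$. This set meets $U=\si(N_X[c])$ in at most $a'$.
\item It then applies \Cref{lem:bangachev structure} to $U=\si(N_X[c])$, not $\si(N_X[e])$. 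You notice this set qualifies, but only use it auxiliarily. The triangle $a,b,c$ gives $\starp$ for free, so (C1) costs $3\al$ for $(ab)$ itself.
\item Inclusion--exclusion against the non-neighbour count rules out (C2). It also rules out (C3) unless $e'$ is the cut vertex of $Y|_U$ with at least two neighbours on each side.
\item That last configuration is finished by moving $c$ to $e'$ or into the side containing $a',b'$ when these share a side. Otherwise it uses the explicit $15$-factor sequence $(aj)^{45}(ck)^{15}(ce)(cl)(bm)^{45}\cdots$, which is where $219$ comes from.
\end{itemize}
Centring at $e$ is also genuinely weaker. If $N_X(e)$ is independent and $Y|_{\si(N_X[e])}$ is bipartite, then $(cd)$ with $e$ returning to $e'$ is unreachable inside $X|_{N_X[e]}$ by the parity obstruction for stars. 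You would need moves outside $N_X[e]$ that you have not described.

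There are also two errors in the part you do carry out.

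First, running the reverse path right-multiplied by $(ab)$ is legal only if that path never moves $a$ or $b$. Right-multiplication turns a step $(ax)$ into $(bx)$, which needs $\{b,x\}\in E(X)$. You explicitly allow $a,b$ to move when they are friends of $e$, and star moves around $c$ would certainly move them. Re-applying \Cref{lem:structure} from $\si\circ(cd)\circ(ab)$ is guaranteed to work only when both or neither of $a,b$ lie in $N_X(e)$; otherwise $\si(N_X[e])$ changes. The paper conjugates only by paths avoiding $a$ and $b$. Otherwise it re-applies \Cref{lem:structure} at each intermediate configuration, which is valid because every swap stays inside $N_X[c]$, so $U$ is invariant.

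Second, your count is $2L+3$, so in (C1) you get $6\al+3$. This exceeds $\max\{3\al,219\}$ as soon as $3\al\ge 219$. So your easy case does not meet the stated bound, whereas swapping $a,b$ directly inside $\si(N_X[c])$ does.
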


 \begin{figure}[h!]
        \centering
        
        \begin{tikzpicture}[
    every node/.style={circle, draw, inner sep=1.5pt},
    scale=1.2
]

\node (a) at (-1.2,2) {$a$};
\node (b) at ( 1.2,2) {$b$};
\node (c) at (-1.2,0) {$c$};
\node (d) at ( 1.2,0) {$d$};
\node (e) at ( 0,-1.4) {$e$};

\draw[thick] (a) -- (d);
\draw[thick] (b) -- (d);
\draw[thick] (c) -- (d);

\draw[dashed, thick] (a) -- (b);
\draw[dashed, thick] (a) -- (c);
\draw[dashed, thick] (c) -- (b);
\draw[dashed, thick] (d) -- (e);
\draw[dashed, thick] (c) -- (e);

\end{tikzpicture}
        \caption{Expanding upon Bangachev's setup \cite[Section 4.4]{Bangachev}.}
        \label{fig:bangachev extension}

    \end{figure}

\begin{proof}
    \Cref{fig:bangachev extension} depicts our starting configuration. If $\{c,d\}\in E(X)$, then the sequence $(cd)(ac)(ab)(bc)(cd) = (ab)$ would guarantee $d_{FS}(\si,\si\circ(ab))\le 5$, so we may assume $\{c,d\}\notin E(X)$. Note also that if $a,b\in N_X(d)$, then $(ad)(ab)(bd) = (ab)$ would show $d_{FS}(\si,\si\circ(ab))\le 3$, so we may assume $\{b,d\}\notin E(X)$ without loss of generality. If there exists $f\neq a$ such that $f\in N_X(c)\cap N_X(d)$ and $f'\in N_Y(d')$, then $f\neq b$ as we have assumed $\{b,d\}\notin E(X)$, so we would have the setup given by the left side of \Cref{fig:PR}. The sequence $(df)(cf)(ac)(ab)(bc)(cf)(df) = (ab)$ shows $d_{FS}(\si,\si\circ(ab))\le 7$ in this case. Observe that if $d'$ and $e'$ were adjacent, $e=f$ would satisfy these hypotheses. Thus, we may assume $d'$ and $e'$ are nonadjacent. 
 \begin{figure}[h!]
        \centering
        \begin{tikzpicture}[
    every node/.style={circle, draw, inner sep=1.5pt},
    scale=1.2
]

\node (a) at (-1.2,2) {$a$};
\node (b) at ( 1.2,2) {$b$};
\node (c) at (-1.2,0) {$c$};
\node (d) at ( 1.2,0) {$d$};
\node (f) at ( 0,-1.4) {$f$};

\draw[thick] (a) -- (d);
\draw[thick] (b) -- (d);
\draw[thick] (c) -- (d);
\draw[thick] (f) -- (d);

\draw[dashed, thick] (a) -- (b);
\draw[dashed, thick] (a) -- (c);
\draw[dashed, thick] (c) -- (b);
\draw[dashed, thick] (d) to[out=-110,in=30] (f);
\draw[dashed, thick] (c) -- (f);

\node (a') at (-1.2+5,2) {$a$};
\node (b') at ( 1.2+5,2) {$b$};
\node (c') at (-1.2+5,0) {$c$};
\node (d') at ( 1.2+5,0) {$d$};
\node (e') at ( 0+5,-1.4) {$e$};
\node (f') at ( 2.5+5,-1) {$f_1$};
\node (g') at ( 2.5+5,-1.8) {$g$};

\draw[thick] (a') -- (d');
\draw[thick] (b') -- (d');
\draw[thick] (c') -- (d');
\draw[thick] (g') -- (d');
\draw[thick] (f') -- (d');
\draw[thick] (g') -- (e');
\draw[thick] (f') -- (e');

\draw[dashed, thick] (a') -- (b');
\draw[dashed, thick] (a') -- (c');
\draw[dashed, thick] (c') -- (b');
\draw[dashed, thick] (d') -- (e');
\draw[dashed, thick] (c') -- (e');
\draw[dashed, thick] (d') to[out=-20,in=120] (f');
\draw[dashed, thick] (d') to[out=-65,in=140] (g');

\end{tikzpicture}
        \caption{Left: $(f,f')$ adjacent to $c,d$, and $d'$. Right $(f_1,f_1')$ and $(g,g')$ are both adjacent to $d,d'$, and $e'$.}
        \label{fig:PR}
\end{figure}
Note that $e\in N_X(c)\cap N_X(d)$ by definition. Identical to the step in \cite[Lemma 4.4]{Bangachev}, let us suppose that there exist pairs $(f_1,f_1')$ and $(g,g')$ that are both adjacent to $d,d'$, and $e'$, and $f_1,g\neq a$. The setup is pictured in the right side of \Cref{fig:PR}. The sequence:
\[(df_1)(de)(dg)(df_1)(de)(ec)(ac)(ab)(bc)(ec)(de)(df_1)(dg)(de)(df_1) = (ab)\]
demonstrates that $d_{FS}(\si,\si\circ(ab))\le 15$ in this case. Thus, there is at most one pair in addition to $(a,a')$ that could be adjacent to $d,d'$, and $e'$, so we may assume
\begin{equation}\label{eq:bound on dd'e}
    \left| \si(N_X(d))\cap N_Y(d')\cap N_Y(e')\right|\le 2.
\end{equation}
Recall that $b$ and $d$ are nonadjacent, so $(b,b')$ cannot have these adjacencies and we need not account for $b$ in \eqref{eq:bound on dd'e}. Since $d'\notin N_Y(d')\cup \si(N_X(d))$, we have $|N_Y(d')\cap \si(N_X(d))|\ge \dd(X)+\dd(Y)-n+1$ by inclusion-exclusion. Recall that $d'$ and $e'$ are nonadjacent, so we may extend the bound in \eqref{eq:bound on dd'e} to include $d'$:
\begin{equation}\label{eq:bound on dd'e updated}
    \left| \si(N_X[d])\cap N_Y[d']\cap N_Y(e')\right|\le 2.
\end{equation}
By our earlier lower bound on its open neighborhood counterpart,
\begin{equation}\label{eq:Ndd'}
    |\si(N_X[d])\cap N_Y[d']|\ge \dd(X)+\dd(Y)-n+2.
\end{equation}
Combining \eqref{eq:bound on dd'e updated} and \eqref{eq:Ndd'}, $e'$ is nonadjacent to at least $\dd(X)+\dd(Y)-n$ vertices in $N_Y[d']\cap \si(N_X[d])$. We conclude this proof by using \Cref{lem:structure} with $U = \si(N_X[c])$. Note that $a,b,c,e\in N_X[c]$ but $d\notin N_X[c]$. Since we have reduced to the case that avoids the configuration in the left side of \Cref{fig:PR}, we find
\begin{equation}\label{eq:cdd' in a'}
    U\cap \si(N_X[d])\cap N_Y[d'] = \si(N_X[c])\cap \si(N_X[d])\cap N_Y[d'] \subset \{a'\}.
\end{equation}
$b$ is absent from this intersection since $b$ and $d$ are nonadjacent. Moreover, since $a,b$ and $c$ are pairwise adjacent, $X|_{N_X[c]}\supset \starp$. We perform casework on $(C1)$,$(C2)$, and $(C3)$.

\textbf{If $(C1)$ holds}, $d_{FS}(\si,\si\circ(ab))\le 3\al$.

\textbf{If $(C2)$ holds}, let $F_1$ and $F_2$ denote the biconnected components of $Y|_{U}$. Thus, $e'\in F_1$ (without loss of generality) is nonadjacent to the other component, whose size is bounded below by \eqref{eq:2comp vertices}:
\[|V(F_2)| \ge |U|+\dd(Y)-n+1\ge \dd(X)+\dd(Y)-n+2\]
Recall that $e'$ is nonadjacent to at least $\dd(X)+\dd(Y)-n$ vertices in $\si(N_X[d])\cap N_Y[d']$ as a consequence of \eqref{eq:bound on dd'e updated} and \eqref{eq:Ndd'}, and see that \eqref{eq:cdd' in a'} implies $U$ and $\si(N_X[d])\cap N_Y[d']$ have small intersection. By the principle of inclusion-exclusion on $V(F_2)\subset Y|_U$ and $\si(N_X[d])\cap N_Y[d']$, it follows that $e'$ is nonadjacent to at least
\[(\dd(X)+\dd(Y)-n+2)+(\dd(X)+\dd(Y)-n)-1 \ge n+1-\dd(Y).\]
vertices in $Y$. This contradicts that any vertex has at most $n-1-\dd(Y)$ nonadjacencies. Thus, we may swap $a$ and $b$ using the methods from one of our reductions, and $d_{FS}(\si,\si\circ(ab))\le 15$.

\textbf{If $(C3)$ holds}, let $F_1',F_2'$ be the components after deleting the cut vertex of $Y|_U$. By \eqref{eq:cut vertices}, 
\[|V(F_1')|,|V(F_2')|\ge|U|+\dd(Y)-n\ge \dd(X)+\dd(Y)-n+1.\] 
If $e'$ was not the cut vertex, let $e'\in F_1'$ without loss of generality. Thus, $e'$ is nonadjacent to $V(F_2')$, so $e'$ is nonadjacent to at least $ \dd(X)+\dd(Y)-n$ vertices in $\si(N_X[d])\cap N_Y[d']$ and is nonadjacent to at least $\dd(X)+\dd(Y)-n+1$ vertices in $V(F_2')\subset U$. Using \eqref{eq:cdd' in a'}, the principle of inclusion-exclusion on $V(F_2')\subset Y|_U$ and $\si(N_X[d])\cap N_Y[d']$ implies that $e'$ is nonadjacent to at least
\begin{equation}\label{eq:PIE for e'}
    (\dd(X)+\dd(Y)-n+1)+(\dd(X)+\dd(Y)-n)-1 \ge n-\dd(Y).
\end{equation}
vertices in $Y$, a contradiction.

\textbf{Thus, we may assume $e'$ is the cut vertex.} We will reduce to the case where $e'$ has at least two adjacencies to each of $V(F_1')$ and $V(F_2')$. If this does not hold, $e'$ has at most one adjacency to $V(F_1')$, without loss of generality. By \eqref{eq:cut vertices}, $|V(F_1')|\ge \dd(X)+\dd(Y)-n\ge \frac13\dd(X)\ge \frac43$, so $|V(F_1')|\ge 2$, and there exists $h'\in V(F_1')$ nonadjacent to $e'$. Since $h'$ is nonadjacent to the cut vertex $e'$, $N_Y[h']\cap Y|_U\subset V(F_1')$ are in $F_1'$. By inclusion-exclusion, $\dd(Y|_U)\ge \dd(Y)+|U|-n$, so 
\begin{equation}\label{eq:VF1 and NYh'}
    |V(F_1')|\ge|N_Y[h']\cap V(Y|_U)|\ge \dd(Y)+|U|-n+1\ge \dd(X)+\dd(Y)-n+2.
\end{equation}
Since $e'$ is adjacent to at most one vertex in $V(F_1')$, \eqref{eq:VF1 and NYh'} implies that $e'$ is nonadjacent to at least $\dd(X)+\dd(Y)-n+1$ vertices in $V(F_1')\subset Y|_U$. Again, we use the inclusion-exclusion calculation in \eqref{eq:PIE for e'} to deduce that $e'$ has more than $n-1-\dd(Y)$ nonadjacencies, which is a contradiction.

\textbf{We may assume that $e'$ has at least two adjacencies in each of $V(F_1')$ and $V(F_2')$}. We show that if $a'$ and $b'$ are in the same component of $Y|_{U\back\{e'\}}$, then we can swap $a$ and $b$. Let $a',b'\in V(F_1')$ without loss of generality, and see that if $c'\in V(F_1')$ as well, then because $c$ is the star vertex of $X|_{N_X[c]}$, case $(C3)$ of \Cref{lem:structure} implies $d_{FS}(\si,\si\circ(ab))\le 45$. Thus, we may assume $c'\in V(F_2')$. 

\begin{figure}[h!]
        \centering
        \begin{tikzpicture}[
    vertex/.style={circle, draw, inner sep=1.5pt},
    scale=1
]
\node[vertex] (a) at (0,2) {$e$};
\node[vertex] (c) at (-1,1.5) {$a$};
\node[vertex] (d) at (-1,0.5) {$b$};
\node[vertex] (l) at (1,1.5) {};
\node[vertex] (l1) at (0.7,0.5) {};
\node[vertex] (l2) at (1.3,-0.4) {};
\node[vertex] (b') at (2,0) {$c$};
\draw (-1.5,0.5) ellipse [x radius=1.3cm, y radius=1.6cm];
\draw (1.5,0.5) ellipse [x radius=1.3cm, y radius=1.6cm];

\node at (-2.2,1) {$F_1'$};
\node at (2.2,1) {$F_2'$};

\draw[thick] (a) -- (c);
\draw[thick] (a) -- (d);
\draw[thick] (a) -- (l);
\draw[thick] (l1) -- (l);
\draw[thick] (l1) -- (l2);
\draw[thick] (b') -- (l2);

\node[vertex] (a') at (7,2) {$e$};
\node[vertex] (c') at (6,1.5) {$h$};
\node[vertex] (l') at (8,1.5) {};
\node[vertex] (l1') at (7.7,0.5) {};
\node[vertex] (l2') at (8.3,-0.4) {};
\node[vertex] (b') at (5,0) {$a$};
\node[vertex] (c'') at (5.7,-0.5) {$b$};
\node[vertex] (b'') at (9,0) {$c$};
\draw (5.5,0.5) ellipse [x radius=1.3cm, y radius=1.6cm];
\draw (8.5,0.5) ellipse [x radius=1.3cm, y radius=1.6cm];

\node at (4.8,1) {$F_1'$};
\node at (9.2,1) {$F_2'$};

\draw[thick] (a') -- (c');
\draw[thick] (a') -- (l');
\draw[thick] (l1') -- (l');
\draw[thick] (l1') -- (l2');
\draw[thick] (b'') -- (l2');

\end{tikzpicture}
        \caption{Swapping $(ab)$ in the case where $Y|_U$ has a cut vertex $e'$, and $a'$ and $b'$ lie in the same component of $Y|_{U\back\{e'\}}$. The star vertex is $c$, since $U = \si(N_X[c])$. Left: $e'$ is only adjacent to $a',b'$ in $V(F_1')$. Right: $e'$ is adjacent to $h'\in V(F_1')\back\{a',b'\}$.}
        \label{fig:a' and b' same component}
    \end{figure}

Suppose first that $a'$ and $b'$ are the only vertices in $V(F_1')$ adjacent to $e'$; the setup is pictured on the left side of \Cref{fig:a' and b' same component}. By \Cref{lem:ddG|U}, $\diam(Y|_U)\le 5$, so there exists a path of length at most $5$ in $Y|_U$ from $c'$ to $e'$. Moreover, this path can be chosen to avoid the vertices in $V(F_1')$ since $c'\in V(F_2')$ and $e'$ is the cut vertex. Since $c$ is the star vertex of $X|_{\si^{-1}(U)}$, we may move $c$ along this path to $e'$ using at most $5$ swaps. With $c$ in $e'$, we perform $(ac)(ab)(ac) = (ab)$. To conclude, we move $c$ back to $c'$ using the same path from $e'$ to $c'$, returning all vertices of the path to their initial positions in $\si$. In total, $d_{FS}(\si,\si\circ(ab))\le 5+3+5 = 13$ in this case.

Suppose instead that $e'$ is adjacent to some vertex $h'\in V(F_1')\back\{a',b'\}$; the setup is pictured on the right side of \Cref{fig:a' and b' same component}. By \Cref{lem:ddG|U}, $\diam(Y|_U)\le 5$, so there exists a path of length at most $5$ in $Y|_U$ from $c'$ to $h'$. Moreover, we may choose this path to have no internal vertices in $V(F_1')$ by letting the path begin $h',e'$ and continue in $V(F_2')$. Similar to the previous paragraph, we move $c$ to $h'$ along this path in at most $5$ moves. Once $c$ is in $h'$, we find that $a,b,c\in V(F_1')$ under the current configuration, which we refer to as $\tau$. Since we have only performed swaps inside $X|_{\si^{-1}(U)}$, we have $\si(N_X[c]) = \tau(N_X[c])$, so $U$ is unchanged. We apply $(C3)$ of \Cref{lem:structure} since $a,b,c\in V(F_1')$ and find that $d_{FS}(\tau,\tau\circ(ab))\le 45$. Lastly, we follow the same path to return $c$ to $c'$, returning all vertices in the path to their initial positions in $\si$. In total, $d_{FS}(\si,\si\circ(ab))\le 5+45+5 = 55$ in this case.

\textbf{We may assume $a'$ and $b'$ are in separate components of $Y|_{U\back\{e'\}}$.} Without loss of generality, let $a',c'\in V(F_1'), b'\in V(F_2')$ and let $e'$ be the cut vertex. Pick $j',k'\in V(F_1')$ and $\ell',m'\in V(F_2')$ such that each of $j',k',\ell',$ and $m'$ are adjacent to $e'$. It may be that $j'$ or $k'$ is $a'$ or $c'$, or that $\ell'$ or $m'$ is $b'$, but this is not a problem. 
  \begin{figure}[h!]
        \centering
        \begin{tikzpicture}[
    vertex/.style={circle, draw, inner sep=1.5pt},
    scale=1
]
\node[vertex] (a) at (0,2) {$e$};
\node[vertex] (c) at (-1,1.5) {$j$};
\node[vertex] (d) at (-1,0.5) {$k$};
\node[vertex] (l) at (1,1.5) {$l$};
\node[vertex] (m) at (1,0.5) {$m$};
\node[vertex] (b) at (-2,0) {$a$};
\node[vertex] (c') at (-1.3,-0.5) {$c$};
\node[vertex] (b') at (2,0) {$b$};
\draw (-1.5,0.5) ellipse [x radius=1.3cm, y radius=1.6cm];
\draw (1.5,0.5) ellipse [x radius=1.3cm, y radius=1.6cm];

\node at (-2.2,1) {$F_1'$};
\node at (2.2,1) {$F_2'$};

\draw[thick] (a) -- (c);
\draw[thick] (a) -- (d);
\draw[thick] (a) -- (l);
\draw[thick] (a) -- (m);

\end{tikzpicture}
        \caption{Swapping $(ab)$ in the case where $Y|_U$ has a cut vertex $e'$. The star vertex is $c$, since $U = \si(N_X[c])$.}
        \label{fig:E cut vertex}
    \end{figure}

All swaps will be between two vertices in $X|_{\si^{-1}(U)}$, so under any intermediate configuration $\si_0$, we find $\si(N_X[c]) = \si_0(N_X[c])$, and thus $U$ is unchanged. Therefore, we may apply \Cref{lem:structure} case $(C3)$ at any intermediate configuration. By \Cref{lem:structure}, if $c$ is currently in the same component of $Y|_{U\back\{e'\}}$ as two vertices we wish to swap, then they can be swapped in $45$ moves. Moreover, to swap $c$ with another vertex in the same component of $Y|_{U\back\{e'\}}$ only requires $15$ moves.
Recall that the notation $(aj)^{45}$ indicates that they can be swapped in $45$ moves. If $j=a$ we omit both $(aj)^{45}$ and $(bj)^{45}$, if $k=c$ we omit both occurences of $(ck)^{15}$, and if $m=b$ we omit both $(bm)^{45}$ and $(am)^{45}$. With these caveats, we use the sequence
\[(aj)^{45}(ck)^{15}(ce)(cl)(bm)^{45}(cl)(bc)(ab)(ca)(cl)(am)^{45}(cl)(ce)(bj)^{45}(ck)^{15} =(ab).\]
Thus, $d_{FS}(\si,\si\circ(ab))\le 4\cdot 45+2\cdot 15+9 = 219$. In all cases, $d_{FS}(\si,\si\circ(ab))\le\max\{3\al,219\}$.
\end{proof}

\section{Outline of the Proof of \Cref{thm:main}}

Before beginning the proof of \Cref{thm:main}, we will provide an overview of the proof strategy. We will have a few standing assumptions, which we will state here. We assume $\dd(X)\le \dd(Y)$ by \Cref{lem:XY=YX}. We fix adjacent vertices $a,b\in V(X)$ and a vertex $\si\in V(\FS)$. We then bound the distance $d_{FS}(\si,\si\circ(ab))$. Provided that we can perform any such swap $(ab)$, the graph $\FS$ is connected by the connectivity of $X$. By demonstrating that $d_{FS}(\si,\si\circ(ab))$ is $O(1)$ and leveraging \Cref{lem:friend reduction}, we find our desired diameter bound. Here is the proof outline:

\begin{enumerate}
    \item We settle the case $\dd(X)\le 3$ using the fact that $Y$ is nearly the complete graph in this case. [\Cref{lem:d(X)le 3}]
    \item We examine the set of people/house pairs that have 3 adjacencies (in $X$ and $Y$ combined) to people $a,b$ and houses $a',b'$. For example, pair $(c,c')$ has 3 relations if $c$ is a friend of both $a$ and $b$ and $c'$ has a road to $a'$. If a pair has 4 adjacencies, we can easily swap $a$ and $b$. Thus, we reduce to the case where no pair has all 4 relations, so there exists at least a small number ($\sim 2\dd(X)/3$) of pairs with $3$ relations. [\Cref{lem:d(X)le 3}]
    \item We partition the pairs with $3$ adjacencies by the type of  adjacencies they have. This yields the sets $\mc{Q}_a,\mc{Q}_b,\mc{R}_a,\mc{R}_b$, where $(c,c')$ is in $\mc{Q}_a$ or $\mc{Q}_b$ if $c$ is friends with $a$ and $b$, and if $c'$ has a road to $a'$ or $b'$ respectively. Similarly, $\mc{R}_a$ and $\mc{R}_b$ consist of pairs $(c,c')$ where $c'$ has a road to both $a'$ and $b'$, and $c$ is friends with either $a$ or $b$ respectively. In the case where $|\mc{R}_a|\ge 2$ or $|\mc{R}_b|\ge 2$, we can swap $a$ and $b$ by focusing only on the friends of $a$ and using our results on star graphs from \Cref{lem:star bound}. Thus, we reduce to the case where $|\mc{R}_a|,|\mc{R}_b|\le 1$. [\Cref{lem:Ra}]
    \item We reduce to the case where $\mc{Q}_a,\mc{Q}_b$ are nonadjacent in $Y$ by showing that $a$ and $b$ could easily be swapped if such an edge existed. This swap would use the edge as a `bridge' between $\mc{Q}_a$ and $\mc{Q}_b$. [\Cref{lem:ban to S}]
    \item We introduce the set $\mc{S}$ of houses with at least $2$ roads to $\mc{Q}_a^*$ and at least $1$ road to $\mc{Q}_b^*$ (Note that $\mc{Q}_a^* = \mc{Q}_a\cup \{(a,a')\},\mc{Q}_b^* = \mc{Q}_b\cup \{(b,b')\}$). If pair $(c,c')$ with $c'\in \mc{S}$ had a friendship and a road to some pair $(d,d')\in \mc{Q}_a$, then we could swap the two. After doing so, we find that $d$ is friends with $a$ and $b$, and $c'$ has roads to $a'$ and $b'$, so we could swap $a$ and $b$, and then return $c$ and $d$ to their original positions. Thus, we reduce to the case where this configuration does not occur. [\Cref{lem:ban to S}]
    \item We show that if either $\mc{Q}_a$ or $\mc{Q}_b$ was empty, the other would be very large and allow us to swap $a$ and $b$. Thus, we reduce to the case where $\mc{Q}_a$ and $\mc{Q}_b$ are nonempty. [\Cref{lem:Qb nonempty}]
    \item We prove a lower bound on the size of $\mc{S}$ in terms of $\dd(Y)$. [\Cref{lem:size of S}]
    \item We show that the friends and strangers graph on $\mc{Q}_a$ is highly connected. [\Cref{lem:Qa swap} and \Cref{lem:Qa internal edge}, and \Cref{lem:swap cd in qa}]
    \item We examine the shortest path in $X$ between $\mc{Q}_a^*$ and $\mc{S}$. If there is a minimal length path which avoids $a$ and $b$, we may perform a sequence of preliminary swaps and obtain a setup that allows us to swap $a$ and $b$. [\Cref{lem:dX=1 or 2} and \Cref{lem:dX=3}]. If instead all minimal length paths include $a$ or $b$, we use an ad hoc argument to show that $a$ and $b$ may still be swapped. [\Cref{lem:ab must be in the path}] 
\end{enumerate}

\section{Proof of the Main Theorem: \Cref{thm:main}}

This section is devoted to proving the main theorem: \Cref{thm:main}. We will use the following setup for the remainder of the section. By \Cref{lem:XY=YX} we may assume $\dd(X)\le \dd(Y)$, so we rewrite our hypothesis as:
\[2\dd(X)+3\dd(Y)\ge 3n.\]
The techniques in the main proof, specifically \Cref{lem:structure}, require $\dd(X)\ge 4$. We will now prove the case $\dd(X)\le 3$.

\begin{lemma}\label{lem:d(X)le 3}
    If $X,Y$ are connected graphs on $n$ vertices with $\dd(X)\le 3$ and 
    \[2\dd(X)+3\dd(Y)\ge 3n,\]
    then $\FS$ is connected, and for any $a,b\in V(X)$ such that $\{a,b\}\in E(X)$ and any $\si\in V(\FS)$:
    \[d_{FS}(\si,\si\circ(ab))\le 9.\]
\end{lemma}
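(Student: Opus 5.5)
With $\dd(X)\le 3$, the hypothesis gives $3\dd(Y)\ge 3n-2\dd(X)\ge 3n-6$, so $\dd(Y)\ge n-2$. Every vertex of $Y$ therefore misses at most one other vertex, and $Y$ is $K_n$ with a matching removed. The plan is to fix adjacent $a,b\in V(X)$ and $\si$, and bound $d_{FS}(\si,\si\circ(ab))$ by a short explicit swap sequence. Connectivity and the diameter statement then follow from \Cref{lem:friend reduction}, since $X$ is connected.

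If $\{a',b'\}\in E(Y)$ we swap directly in one move. Otherwise $\{a',b'\}$ is one of the removed matching edges, and every other house is adjacent in $Y$ to both $a'$ and $b'$.

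The key step is to find a third person to route through. If $a$ or $b$ has a neighbor $c\ne a,b$ in $X$, say $\{a,c\}\in E(X)$, then $c'\ne a',b'$, so $c'$ is adjacent to both $a'$ and $b'$. The sequence $(ac)(ab)(bc)$, read left to right as in \Cref{rk:on diagrams}, then swaps $a$ and $b$:
\begin{enumerate}
\item $(ac)$ is legal because $c'\sim a'$, and it moves $a$ into $c'$ and $c$ into $a'$;
\item $(ab)$ is legal because $a$, now in $c'$, and $b$, in $b'$, are friends and $c'\sim b'$, so $a$ moves to $b'$ and $b$ to $c'$;
\item $(bc)$ is legal only if $\{b,c\}\in E(X)$, which we do not know.
\end{enumerate}
So we use the conjugation $(ac)(bc)(ac)=(ab)$ only when $c$ is a common friend, and in general a longer word.

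The remaining case is that $\{a,b\}$ is an isolated edge of $X$, which is impossible unless $n=2$ by connectivity. So when $n\ge3$, without loss of generality $a$ has a neighbor $c\ne b$. Since $X$ is connected, we also get a path from $c$ or $b$ outward.

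If $n$ is small, say $n\le 4$, a direct check suffices. For example, $n=3$ gives $\dd(Y)\ge1$, so $Y$ is a path and $X$ contains a path; these cases can be handled by hand.

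The main obstacle is producing a bounded word using only the $X$-edges $\{a,b\},\{a,c\}$, and possibly one further friendship, that is valid given the house adjacencies. The house adjacencies are nearly complete; the only unknowns are the single missing edge at each house. I would first try $(ac)(ab)(ac)$. Starting from $a\to a'$, $b\to b'$, $c\to c'$:
\begin{enumerate}
\item $(ac)$ sends $a$ to $c'$ and $c$ to $a'$;
\item $(ab)$ sends $a$ to $b'$ and $b$ to $c'$;
\item $(ac)$ requires $a$ in $b'$ and $c$ in $a'$, but $b'\not\sim a'$.
\end{enumerate}
This fails, so I would instead enlarge to a fourth vertex. If $c$ has another neighbor $d$, or $a$ or $b$ has a second neighbor $d$, we choose $d'$ adjacent to $a',b',c'$, avoiding at most one matched partner. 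Then case analysis on the $X$-edges among $\{a,b,c,d\}$ (a path $b\text{-}a\text{-}c\text{-}d$, a star at $a$, a triangle, and so on) gives explicit words of length at most $9$. The triangle case gives length $3$. The star at $a$ on $Y\supseteq K_4$ minus an edge is handled like \Cref{fig:k4le}, with a $7$-move word. The path case needs the longest word, which I expect to be $9$; I would verify each word by tracking positions as in \Cref{rk:on diagrams}.
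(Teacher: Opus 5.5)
There is a genuine gap. You weaken the hypothesis to $\dd(Y)\ge n-2$ and then try to build the swap from connectivity of $X$, but these two facts alone cannot prove the lemma.

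\textbf{A counterexample to the weakened setup.} Take $X=\mathrm{Path}_n$ and $Y=K_n$ minus a single edge $\{a',b'\}$, with $a,b$ consecutive on the path. Read the houses in path order. Every legal move transposes two consecutive entries of this sequence that are adjacent in $Y$. So the occupants of $a'$ and $b'$ can never pass each other, and $\si$ and $\si\circ(ab)$ lie in different components. This example satisfies everything your argument uses; it only violates $2\dd(X)+3\dd(Y)\ge 3n$.

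\textbf{Consequences for your cases.} The same obstruction kills your planned path case $b$--$a$--$c$--$d$: with only those friendships, no word in those four people swaps $a$ and $b$, so there is no $9$-move word to find. Your $n=3$ remark has the same flaw. There the hypothesis actually forces $Y=K_3$, and if $Y$ were a path as you say, $\fs{\mathrm{Path}_3}{\mathrm{Path}_3}$ would be disconnected. The star case at $a$ with leaves $c,d$ also fails when $c'\not\sim d'$, since the four houses then form $\mathrm{Cycle}_4$ and the cyclic order of the leaves is invariant. Your step ``choose $d'$ adjacent to $a',b',c'$'' presupposes a choice of friend that need not exist.

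\textbf{The missing idea: keep the full strength of the hypothesis.} If $\dd(X)\le 2$, then $3\dd(Y)\ge 3n-4$ forces $\dd(Y)=n-1$. So $Y=K_n$ and one move suffices. Otherwise $\dd(X)=3$, so every person has at least three friends. This degree bound, not connectivity, is what supplies the auxiliary vertices. The argument then runs as follows.
\begin{enumerate}
\item Person $a$ has friends $c,d\neq b$, and their houses are adjacent to both $a'$ and $b'$.
\item If $c'\sim d'$, use the $7$-move word of \Cref{fig:k4le}.
\item If $c'\not\sim d'$ but $\{b,c\}\in E(X)$, the word $(ac)(ab)(bc)$ works.
\item Otherwise $\deg_X(c)\ge 3$ gives a friend $e\notin\{a,b,c,d\}$ of $c$. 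Each of $a',b',c',d'$ has its unique non-neighbor inside $\{a',b',c',d'\}$, so $e'$ is adjacent to all four. Conjugating the $7$-move word by $(ce)$ gives $(ce)(ac)(ab)(ad)(ac)(ab)(ac)(ad)(ce)=(ab)$, which is where the bound $9$ comes from.
\end{enumerate}
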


\begin{proof}
    If $\dd(X)\le 2$, then $2\dd(X)+3\dd(Y)\ge 3n$ forces $\dd(Y) = n-1$ and $Y = K_n$. For any $\{a,b\}\in E(X)$, we have $\{a',b'\}\in E(Y)$ because $Y$ is complete, so $d_{FS}(\si,\si\circ(ab))= 1$. We henceforth assume $\dd(Y) < n-1$.

    If $\dd(X) = 3$, then $2\dd(X)+3\dd(Y)\ge 3n$ forces $\dd(Y)\ge n-2$, so we assume $\dd(Y) = n-2$. Fix $\{a,b\}\in E(X)$ and assume $\{a',b'\}\notin E(Y)$, since if $\{a',b'\}\in E(Y)$ then we could immediately swap $a$ and $b$. Since $\dd(X) =3$, there exist $c,d\in V(X)$ such that $\{a,c\},\{a,d\}\in E(X)$. Because $\dd(Y) = n-2$ and $\{a',b'\}\notin E(Y)$, $a'$ is adjacent to all vertices except $b'$. If $\{c',d'\}\in E(Y)$, then we arrive at the setup in \Cref{fig:star and c4}. The swaps $(ac)(ab)(ad)(ac)(ab)(ac)(ad) = (ab)$ demonstrate $d_{FS}(\si,\si\circ(ab))\le 7$ here. 
    \begin{figure}[h!]
        \centering
        \begin{tikzpicture}[
    every node/.style={circle, draw, inner sep=1.5pt},
    scale=1.2
]

\node (a) at (-1,1) {$a$};
\node (b) at ( 1,1) {$b$};
\node (c) at (-1,-1) {$c$};
\node (d) at ( 1,-1) {$d$};

\draw[thick] (a) -- (c);
\draw[thick] (b) -- (c);
\draw[thick] (a) -- (d);
\draw[thick] (b) -- (d);
\draw[thick] (c) -- (d);

\draw[dashed, thick] (a) -- (b);

\draw[dashed, thick] (a) to[out=-105,in=105] (c);
\draw[dashed, thick] (a) to[out=-30,in=120] (d);

\end{tikzpicture}
\caption{Exchanging $a,b$ using $c,d$. Dashed lines are edges in $X$ and solid lines are edges in $Y$.}
        \label{fig:star and c4}
    \end{figure}
    Let us instead assume $\{c',d'\}\notin E(Y)$. Note that if $\{c,b\}\in E(X)$, then the swaps $(ac)(ab)(bc)$ show $d_{FS}(\si,\si\circ(ab))\le 3$, so we assume $\{c,b\}\notin E(X)$. Since $\dd(X) = 3$, there exists $e\in V(X)$ with $e\neq a,b,c,d$ and $\{c,e\}\in E(X)$. Recall that $a',b',c',d'$ are each nonadjacent to some vertex in $Y$, so they are all adjacent to $e'$; see \Cref{fig:dX=3 adding E} for the setup. The swaps
    \[(ce)(ac)(ab)(ad)(ac)(ab)(ac)(ad)(ce) = (ab)\]
    show that $d_{FS}(\si,\si\circ(ab))\le 9$ here.
    \begin{figure}[h!]
        \centering
        \begin{tikzpicture}[
    every node/.style={circle, draw, inner sep=1.5pt},
    scale=1.2
]

\node (a) at (-1,1) {$a$};
\node (b) at ( 1,1) {$b$};
\node (c) at (-1,-1) {$c$};
\node (d) at ( 1,-1) {$d$};
\node (e) at ( 0,-1.5) {$e$};

\draw[thick] (a) -- (c);
\draw[thick] (b) -- (c);
\draw[thick] (a) -- (d);
\draw[thick] (b) -- (d);
\draw[thick] (c) -- (e);
\draw[thick] (a) -- (e);
\draw[thick] (b) -- (e);
\draw[thick] (d) -- (e);

\draw[dashed, thick] (a) -- (b);

\draw[dashed, thick] (a) to[out=-105,in=105] (c);
\draw[dashed, thick] (a) to[out=-30,in=120] (d);
\draw[dashed, thick] (c) to[out=-45,in=170] (e);

\end{tikzpicture}
\caption{Swapping $a$ and $b$ using $c,d,$ and $e$.}
        \label{fig:dX=3 adding E}
    \end{figure}
\end{proof}

\textbf{We thus assume $\dd(X)\ge 4$ for the remainder of this section.} Closed neighborhoods $\si(N_X[c])$ for any vertex $c\in V(X)$ will satisfy the hypotheses of \Cref{lem:structure}.

To show connectivity, it suffices to demonstrate that $\si,\si\circ (ab)$ are in the same component of $\FS$ for all $\si\in V(\FS)$ and all $\{a,b\}\in E(X)$. Additionally, we will find an upper bound on $d_{FS}(\si,\si\circ(ab))$, and by \Cref{lem:friend reduction} this will be sufficient to obtain a diameter bound on $\FS$. We fix an arbitrary $\si \in V(\FS)$ and friends $a,b\in V(X)$. We use the primed notation as in \Cref{sec:notation and terminology} (ex. $a'= \si(a)$).

For each $c\in V(X)$, we are interested in the pair $(c,\si(c)) = (c,c')$. In particular, we examine how many of the following hold:
\begin{equation}\label{eq:adjacencies of c,c'}
    c\in N_X(a),c\in N_X(b), c'\in N_Y(a'), c'\in N_Y(b').
\end{equation}
Note that if $\{a',b'\}\in E(Y)$, then we can perform the swap $(ab)$ and $d_{FS}(\si,\si\circ(ab))=1$. Thus, we will assume $\{a',b'\}\notin E(Y)$. Observe that
\[|N_X(a)|+|N_X(b)|+|N_Y(a')|+|N_Y(b')|\ge 2\dd(X)+2\dd(Y)\]
by the definition of minimum degree. Because $\{a,b\}\in E(X)$ and $\{a',b'\}\notin E(Y)$, $a'$ and $b'$ are in none of the  neighborhoods in \eqref{eq:adjacencies of c,c'}, and $a$ and $b$ are each in exactly one. Thus, the pairs $(c,c')$ for $c\neq a,b$ account for at least $2\dd(X)+2\dd(Y)-2$ elements in these neighborhoods. Suppose $(c,c')$ satisfies all $4$ conditions of \eqref{eq:adjacencies of c,c'}. The adjacencies between $(a,a'),(b,b'),$ and $(c,c')$ are depicted in \Cref{fig:4 conditions}.
\begin{figure}[h!]
    \centering
    \begin{tikzpicture}[
    every node/.style={circle, draw, inner sep=1.5pt},
    scale=1.8
]
\node (a) at (-1,1) {$a$};
\node (b) at ( 1,1) {$b$};
\node (c) at (0,0) {$c$};

\draw[thick] (a) -- (c);
\draw[thick] (b) -- (c);

\draw[dashed, thick] (a) -- (b);

\draw[dashed, thick] (a) to[out=-60,in=150] (c);
\draw[dashed, thick] (b) to[out=-120,in=30] (c);
\end{tikzpicture}
    \caption{The case when $(c,c')$ satisfies all $4$ conditions of \eqref{eq:adjacencies of c,c'}.}
    \label{fig:4 conditions}
\end{figure}
The swaps $(ac)(ab)(bc)$ demonstrate that $d_{FS}(\si,\si\circ(ab))\le 3$, so we are done in this case.

Thus, we may assume that no pair $(c,c')$ lies in all $4$ neighborhoods of \eqref{eq:adjacencies of c,c'}. Define $\mc C_1,\mc C_2,\mc C_3$ to be the subsets of pairs $(c,c')$ for $c\neq a,b$ that lie in at least $1,2,$ or $3$ of the neighborhoods $N_X(a),N_X(b),N_Y(a'),N_Y(b')$, respectively. Note that $|\mc C_i|\le n-2$ for each $i\in \{1,2,3\}$, and 
\begin{align}\label{eq:C1+C2+C3}
    |\mc C_1|+|\mc C_2|+|\mc C_3|&\ge 2\dd(X)+2\dd(Y)-2;\\
    \nonumber|\mc C_3|&\ge 2\dd(X)+2\dd(Y)-2n+2.
\end{align}
Observe that there are four possible ways for a pair to be in $\mc C_3$, and they are as follows:
\begin{align}\label{eq:QaQbRaRb}
    \mc{Q}_a &= \{(c,c'):c\in N_X(a),c\in N_X(b),c'\in N_Y(a')\};\\
    \nonumber\mc{Q}_b &= \{(c,c'):c\in N_X(a),c\in N_X(b),c'\in N_Y(b')\};\\
    \nonumber\mc{R}_a &= \{(c,c'):c\in N_X(a),c'\in N_Y(a'),c'\in N_Y(b')\};\\
    \nonumber\mc{R}_b &= \{(c,c'):c\in N_X(b),c'\in N_Y(a'),c'\in N_Y(b')\}.
\end{align}
Clearly, $\mc C_3 = \mc{Q}_a\sqcup \mc{Q}_b\sqcup \mc{R}_a\sqcup \mc{R}_b$. 
Recall the convention from \Cref{rk:to the X notation}. It defines the set $\mc{Q}_a^X$ to be the projection of the tuples in $\mc{Q}_a$ onto $X$. We define $\mc{Q}_b^X,\mc{R}_a^X,\mc{R}_b^X$ and $\mc{Q}_a^Y$ and its counterparts similarly.

To use \Cref{lem:structure} with $U = \si(N_X[a])$, we would like two neighbors of $a$ to be adjacent, which would ensure $X|_{N_X[a]}\supset \starp$. We must first rule out the case where we do not have $\starp\subset X|_{N_{X}[a]}$. This can only happen when $N_X(a)$ and $N_X(b)$ are disjoint, which leads us to the following lemma.

\begin{lemma}\label{lem:no mutual friends}
    In the present setting, if $N_X(a)\cap N_X(b)$ then $d_{FS}(\si,\si\circ(ab))\le 13$. 
\end{lemma}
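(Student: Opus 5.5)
The plan is to use the disjointness of $N_X(a)$ and $N_X(b)$ to force $\dd(Y)$ very large, and then to build a small gadget of people/house pairs around $a,b$ that swaps them in a bounded number of moves. The target count $13=5+3+5$ suggests the shape: an excursion of at most $5$ moves along a short path in $Y$, a $3$-move swap, and the reverse excursion. (As the surrounding text intends, I read the hypothesis as $N_X(a)\cap N_X(b)=\emptyset$.)

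\textbf{Step 1 (degree consequences).} Since $\{a,b\}\in E(X)$ and the two open neighbourhoods are disjoint, we have $|N_X(a)|+|N_X(b)|\le n$. Hence $\dd(X)\le n/2$, and therefore $\dd(Y)\ge n-\tfrac23\dd(X)\ge \tfrac{2n}{3}$. Moreover $\mc Q_a=\mc Q_b=\emptyset$, so $\mc C_3=\mc R_a\sqcup\mc R_b$. By \eqref{eq:C1+C2+C3},
\[|\mc R_a|+|\mc R_b|\ge 2\dd(X)+2\dd(Y)-2n+2\ge \tfrac{2}{3}\dd(X)+2\ge 4,\]
so, by symmetry between $a$ and $b$, we may assume $|\mc R_a|\ge 2$. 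Also $|N_Y(a')\cap N_Y(b')|\ge 2\dd(Y)-n+2\ge n/3+2$. Since $\dd(Y)>n/2$, any two houses have many common neighbours, and by \Cref{lem:diam from delta(X)>n/k} the graph $Y$ has diameter at most $2$.

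\textbf{Step 2 (the 3-move core).} The basic move $(ac)(ab)(bc)=(ab)$ needs a person who is a friend of both $a$ and $b$. Disjointness forbids this in $\si$ itself, so I would look for a friend $c$ of $b$ and, using $a$ as a star vertex of $X|_{N_X[a]}$, relocate. Concretely, I would choose $c\in N_X(b)$ and a house $h'\in N_Y(a')\cap N_Y(b')$ such that $c$ can be walked to $h'$ (or $a$ walked next to $b$) along a path of length at most $2$--$5$ in $Y$ that uses only swaps between $a$ and its friends. The large common neighbourhood $N_Y(a')\cap N_Y(b')$, together with the pairs of $\mc R_a$, are the raw material for these paths. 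Once a configuration $\tau$ is reached in which the houses of $a,b$ and a common friend-type partner form the pattern of \Cref{fig:4 conditions}, or the pattern of \Cref{fig:k4le} restricted to $a$'s star, I perform the short swap and then undo the excursion in reverse, which returns every other person to their original house. This gives $d_{FS}(\si,\si\circ(ab))\le 5+3+5=13$.

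\textbf{Step 3 (where the difficulty lies).} The main obstacle is making Step 2 actually produce $(ab)$ rather than a conjugate of it. Naive gadgets built from two pairs $(c,c')\in\mc R_a$ and $(d,d')\in\mc R_b$ with $\{c',d'\}\in E(Y)$ compose to the transposition $(cd)$, not $(ab)$. So the excursion must genuinely carry $a$ (or $b$) to a new house, and the return trip must be made along the same path using the same star vertex. I would first try moving $a$ itself, as the star vertex of $X|_{N_X[a]}$, along a path in $Y|_{\si(N_X[a])}$ to a house adjacent to $b'$, swap with $b$ there, and bring $b$ home. If this fails because $b$ is not a friend of the path's occupants, I would fall back on the case analysis $(C1)$–$(C3)$ of \Cref{lem:bangachev structure} for $U=\si(N_X[a])\cup\{b'\}$ to locate a suitable path of length at most $5$ via \Cref{lem:ddG|U}.
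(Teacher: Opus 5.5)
Your Step 1 is correct, but Steps 2 and 3 give no workable mechanism, and the proposed $5+3+5$ shape cannot be realized.

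\textbf{The 3-move core never exists.} The swap $(ac)(ab)(bc)$, i.e.\ the pattern of \Cref{fig:4 conditions}, needs a person who is a friend of both $a$ and $b$. Friendships are edges of $X$ and travel with the people. So under $N_X(a)\cap N_X(b)=\emptyset$ no such person exists in \emph{any} configuration, and no excursion can produce one.

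\textbf{The excursion cannot be undone.} The same invariance makes your Step 3 fallback fail every time, not just occasionally. Every person $a$ displaces while walking lies in $N_X(a)$, and $N_X(b)\cap N_X[a]=\{a\}$. So after $a$ and $b$ swap, $b$ can never retrace the path.

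\textbf{The (C1)--(C3) fallback cannot give $13$.} Note first that $\si(N_X[a])\cup\{b'\}=\si(N_X[a])$ already. Case (C1) of \Cref{lem:structure} needs $\starp\subset X|_{N_X[a]}$, which is not guaranteed here, and would only give the large constant $\al$. Cases (C2) and (C3) give $15$ via \Cref{lem:star bound}.

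\textbf{What is missing.} The idea is to never need a common friend. Use $a$ as the blank of a small star puzzle on a few houses containing an odd cycle, with $b$ swapping only with $a$ (or, secondarily, with its own friends). The set $\mc{R}_a$ supplies this, in two stages.
\begin{itemize}
\item If two pairs $(c,c'),(d,d')\in\mc{R}_a$ have $\{c',d'\}\in E(Y)$, you are in \Cref{fig:star and c4}, and $(ac)(ab)(ad)(ac)(ab)(ac)(ad)=(ab)$ takes $7$ moves. The same holds for $\mc{R}_b$ with $b$ as the star.
\item Otherwise $\mc{R}_a^Y$ and $\mc{R}_b^Y$ are independent. By inclusion-exclusion this forces $|\mc{R}_a|,|\mc{R}_b|\le 2\dd(X)+2\dd(Y)-2n$, so both are at least $2$. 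Now compare with the set $\mc{M}_a$ (or $\mc{M}_b$) of pairs whose only relation is a friendship with $a$ (resp.\ $b$). Counting $N_Y(c')\cap N_Y(d')\cap(\si(N_X[a])\sqcup\mc{R}_b^Y)$ yields a fifth pair $(e,e')$ with $e'$ adjacent to $c'$ and $d'$. Either $e\in N_X(a)$ with $e'$ adjacent to $a'$ or $b'$, or $(e,e')\in\mc{R}_b$. Each case has an explicit $13$-move sequence; the last one also uses $(be)$.
\end{itemize}
This counting and the resulting gadgets are the substance of the lemma, and your proposal does not contain them.
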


\begin{proof}
    If $N_X(a)\cap N_X(b) = \emptyset$, then $|(N_X(a)\cup N_X(b))\back \{a,b\} |\ge 2\dd(X)-2$. Moreover, $\mc{Q}_a=\mc{Q}_b = \emptyset$, and by \Cref{eq:QaQbRaRb} we have 
    \begin{equation}\label{eq:Ra+Rb}
        |\mc{R}_a|+|\mc{R}_b| = |\mc{C}_3|\ge 2\dd(X)+2\dd(Y)-2n+2.
    \end{equation}
    If there exist two pairs $(c,c'),(d,d')\in \mc{R}_a$ with $c'$ and $d'$ adjacent, then we would find the setup pictured in \Cref{fig:star and c4}. From this position, we can swap $a,b$ in 7 moves, by $(ac)(ab)(ad)(ac)(ab)(ac)(ad) = (ab)$. If instead $(c,c'),(d,d')\in \mc{R}_b$ with $c'$ and $d'$ adjacent, the argument is identical. Thus, we will assume $\mc{R}_a^Y\subset V(Y)$ and $\mc{R}_b^Y\subset V(Y)$ have no internal edges. Moreover, observe that if $|\mc{R}_a|\ge 2\dd(X)+2\dd(Y)-2n+1$, then we may fix any $(c,c')\in \mc{R}_a$ and find that
    \begin{align*}
        |\mc{R}_a^Y\cap N_Y(c')| &\ge |\mc{R}_a^Y|+|N_Y(c')|-n \ge 2\dd(X)+3\dd(Y)-3n+1 \ge 1.
    \end{align*}
    This implies there exists $(d,d')\in \mc{R}_a$ such that $c'$ and $d'$ are adjacent, so $\{c',d'\}$ is an internal edge of $\mc{R}_a^Y\subset V(Y)$. By an identical argument for $\mc{R}_b$, we find that
    \begin{equation}\label{eq:lem 5.3 bound on Ra and Rb}
        |\mc{R}_a|,|\mc{R}_b|\le 2\dd(X)+2\dd(Y)-2n.
    \end{equation}
    By \eqref{eq:Ra+Rb}, we find $|\mc{R}_a|,|\mc{R}_b|\ge 2$. Without loss of generality, we may assume $|\mc{R}_a|\ge 2$ for the remainder of the proof, since $|\mc{R}_b|\ge 2$ is symmetric. We introduce the following two sets of pairs $(c,c')$ that satisfy only one condition from \eqref{eq:adjacencies of c,c'}, and such that this condition is an adjacency in $X$. We define:
    \begin{align*}
        \mc{M}_a &= \{(c,c'):c\in N_X(a),c\notin N_X(b),c'\notin N_Y(a'),c'\notin N_Y(b'), c\neq a,b\};\\
        \mc{M}_b &= \{(c,c'):c\notin N_X(a),c\in N_X(b),c'\notin N_Y(a'),c'\notin N_Y(b'), c\neq a,b\}.
    \end{align*}
    Thus, any pair in $\mc{M}_a$ or $\mc{M}_b$ cannot be in $\mc{C}_2$. Observe that $\mc{M}_a$ and $\mc{M}_b$ are disjoint. By rearranging \eqref{eq:C1+C2+C3} and using the bound $|\mc{C}_1|\le n-2$, we find:
    \[-|\mc C_2|\le |\mc C_3|+n-2\dd(X)-2\dd(Y).\]
    Recall that $\mc C_3 = \mc{R}_a\sqcup \mc{R}_b$. Since pairs in $\mc{M}_a$ or $\mc{M}_b$ cannot be in $\mc{C}_2$, we have
    \begin{align*}
        |\mc{M}_a|+|\mc{M}_b|&\le n-2-|\mc C_2|\\
        &\le |\mc C_3|+2n-2\dd(X)-2\dd(Y)-2\\
        &= |\mc{R}_a|+|\mc{R}_b|+2n-2\dd(X)-2\dd(Y)-2.
    \end{align*}
    The following inequality must hold for $\mc{M}_a$ and $\mc{R}_a$ or for $\mc{M}_b$ and $\mc{R}_b$, so suppose without loss of generality that the former holds:
    \begin{equation}\label{eq:Ma le Ra}
        |\mc{M}_a|\le |\mc{R}_a|+n-\dd(X)-\dd(Y)-1.
    \end{equation}
    Recall that $|\mc{R}_a|\ge 2$, so we may pick distinct $(c,c'),(d,d')\in \mc{R}_a$. Since there are no internal edges in $\mc{R}_a^Y$, we have $N_Y(c')\cap \mc{R}_a^Y=\emptyset$ and $N_Y(d')\cap \mc{R}_a^Y=\emptyset$. By inclusion-exclusion,
    \[|N_Y(c')\cap N_Y(d')|\ge |N_Y(c')|+|N_Y(d')|-|N_Y(c')\cup N_Y(d')|\ge 2\dd(Y)-n+|\mc{R}_a|.\]
    Let us consider the intersection of $N_Y(c')\cap N_Y(d')$ with $\si(N_X[a])\sqcup \mc{R}_b^Y$. We use the disjoint union and the fact that $|\mc{R}_b|\ge 2$ to produce the lower bound:
    \[|\si(N_X[a])\sqcup \mc{R}_b^Y| = |\si(N_X[a])|+| \mc{R}_b^Y| \ge \dd(X)+3.\]
    We may now lower bound the intersection of $N_Y(c')\cap N_Y(d')$ with $\si(N_X[a])\sqcup \mc{R}_b^Y$. We use \eqref{eq:Ma le Ra} to lower bound $|\mc{R}_a|$ in terms of $|\mc{M}_a|$:
    \begin{align*}
        \big|\big(\si(N_X[a])\sqcup \mc{R}_b^Y\big)\cap N_Y(c')\cap N_Y(d')\big|&\ge  \dd(X)+2\dd(Y)+|\mc{R}_a|+3-2n\\
        &\ge 2\dd(X)+3\dd(Y)+|\mc{M}_a|+4-3n\\
        &\ge |\mc{M}_a|+4.
    \end{align*}
    Since this intersection is larger than $|\mc{M}_a|+2$, it must contain $e'\notin \mc{M}_a^Y$ and $e'\neq a',b'$. As a consequence of lying in this intersection, $e'$ is adjacent to $c'$ and $d'$, and $e$ is adjacent to one of $a$ and $b$.

    \textbf{Case 1: $e'\in \si(N_X(a))$.}\\
     Because $e$ is adjacent to $a$ but $e'\notin \mc{M}_a^Y$, we have that $(e,e')$ satisfies at least two conditions from \eqref{eq:adjacencies of c,c'}, so $(e,e')\in \mc C_2$. Since $N_X(a)\cap N_X(b) = \emptyset$, $e$ and $b$ are nonadjacent. Combined, $(e,e')\in\mc{C}_2$ and $\{b,e\}\notin E(X)$ imply that $e'$ is adjacent to either $a'$ or $b'$. In either case, we show that we can swap $a$ and $b$ in at most 13 moves. If $\{a',e'\}\in E(Y)$, then we find the setup on the left side of \Cref{fig:AE or BE}. We can perform the swaps
    \[(ac)(ae)(ad)(ac)(ae)(ad)(ae)(ac)(ab)(ad)(ac)(ab)(ad)=(ab).\]
    so $d_{FS}(\si,\si\circ(ab))\le 13$ in this case. 
    \begin{figure}[h!]
        \centering
        \begin{tikzpicture}[
    every node/.style={circle, draw, inner sep=1.5pt},
    scale=0.75
]

\node (a) at (-3,2) {$a$};
\node (b) at ( 3,2) {$b$};
\node (c) at (-3,-2) {$c$};
\node (d) at ( 3,-2) {$d$};
\node (e) at ( 0,-3) {$e$};

\draw[thick] (a) -- (c);
\draw[thick] (b) -- (d);
\draw[thick] (a) -- (e);
\draw[thick] (c) -- (e);
\draw[thick] (d) -- (e);
\draw[thick] (c) -- (b);
\draw[thick] (d) -- (a);

\draw[dashed, thick] (a) -- (b);

\draw[dashed, thick] (a) to[out=-105,in=105] (c);
\draw[dashed, thick] (a) to[out=-20,in=140] (d);
\draw[dashed, thick] (e) to[out=130,in=-70] (a);

\node (a') at (7,2) {$a$};
\node (b') at ( 13,2) {$b$};
\node (c') at (7,-2) {$c$};
\node (d') at ( 13,-2) {$d$};
\node (e') at ( 10,-3) {$e$};

\draw[thick] (a') -- (c');
\draw[thick] (b') -- (d');
\draw[thick] (b') -- (e');
\draw[thick] (c') -- (e');
\draw[thick] (d') -- (e');
\draw[thick] (c') -- (b');
\draw[thick] (d') -- (a');

\draw[dashed, thick] (a') -- (b');

\draw[dashed, thick] (a') to[out=-105,in=105] (c');
\draw[dashed, thick] (a') to[out=-20,in=140] (d');
\draw[dashed, thick] (e') -- (a');
\end{tikzpicture}
        \caption{Setup when $\{a',e'\}\in E(Y)$ on the left, and when $\{b',e'\}\in E(Y)$ on the right.}
        \label{fig:AE or BE}
    \end{figure}

    Suppose instead $\{b',e'\}\in E(Y)$. Then we have the setup given by the right side of \Cref{fig:AE or BE}. We can perform the swaps:
    \[(ac)(ab)(ad)(ac)(ab)(ad)(ac)(ae)(ad)(ac)(ae)(ad)(ae)=(ab),\]
    so $d_{FS}(\si,\si\circ(ab))\le 13$ in this case as well.

    \textbf{Case 2: $e'\in \mc{R}_b^Y$.\\}
    Recall that in this case $e'$ is adjacent to $a',b',c',$ and $d'$, and $e$ is adjacent to $b$. The setup is shown in \Cref{fig:mutual in Rb}. In this case, we can perform the swaps:
    \[(ac)(ab)(ad)(ac)(ab)(be)(ab)(ac)(ad)(ab)(ac)(ab)(be) = (ab).\]
    This demonstrates that $d_{FS}(\si,\si\circ(ab))\le 13$ here.
    \begin{figure}[h!]
        \centering
        \begin{tikzpicture}[
    every node/.style={circle, draw, inner sep=1.5pt},
    scale=0.75
]

\node (a) at (-3,2) {$a$};
\node (b) at ( 3,2) {$b$};
\node (c) at (-3,-2) {$c$};
\node (d) at ( 3,-2) {$d$};
\node (e) at ( 0,-3) {$e$};

\draw[thick] (a) -- (c);
\draw[thick] (b) -- (d);
\draw[thick] (a) -- (e);
\draw[thick] (c) -- (e);
\draw[thick] (d) -- (e);
\draw[thick] (c) -- (b);
\draw[thick] (d) -- (a);
\draw[thick] (e) -- (b);

\draw[dashed, thick] (a) -- (b);

\draw[dashed, thick] (a) to[out=-105,in=105] (c);
\draw[dashed, thick] (a) to[out=-20,in=140] (d);
\draw[dashed, thick] (e) to[out=70,in=-130] (b);

\end{tikzpicture}
        \caption{Setup when $e'\in \mc{R}_b^Y$.}
        \label{fig:mutual in Rb}
    \end{figure}
    We have now shown that $d_{FS}(\si,\si\circ(ab))\le 13$ in all cases.
\end{proof}
\textbf{\Cref{lem:no mutual friends} allows us to reduce to the case where $N_X(a)\cap N_X(b)\neq \emptyset$, and we carry this assumption for the remainder of the section.} As a consequence, $X|_{N_X[a]}\supset \starp$, and we can use \Cref{lem:structure} more effectively. 

Recall that vertices in $\mc{R}_a^Y$ and $\mc{R}_b^Y$ are mutually adjacent to $a'$ and $b'$. Thus, if $\mc{R}_a$ or $\mc{R}_b$ is too large, it forces $a'$ and $b'$ to be in the same biconnected component of $Y|_{\si(N_X[a])}$ or $Y|_{\si(N_X[b])}$ respectively. By leveraging this idea, the following lemma reduces the problem to when $\mc{R}_a$ and $\mc{R}_b$ have at most $1$ element. The proof of this lemma will proceed by casework. Let $\al$ be the constant obtained in \Cref{lem:starp bound}.

\begin{lemma}\label{lem:Ra}
    In the present setting, if $|\mc{R}_a|\ge 2$ or $|\mc{R}_b|\ge 2$, then $d_{FS}(\si,\si\circ (ab))\le \max\{\al,21\}$.
\end{lemma}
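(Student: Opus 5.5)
By symmetry between $a$ and $b$ (the roles of $\mc{R}_a$ and $\mc{R}_b$ are interchanged by swapping the names $a,b$), we may assume $|\mc{R}_a|\ge 2$. We then work entirely inside $U=\si(N_X[a])$, in which $a$ is a star vertex. Since we now assume $N_X(a)\cap N_X(b)\neq\emptyset$, there are two adjacent neighbours of $a$, namely $b$ and any common friend. Hence $X|_{N_X[a]}\supset\starp$. As $\dd(X)\ge 4$ and $2\dd(X)+3\dd(Y)\ge 3n$, \Cref{lem:bangachev structure} applies to $Y|_U$, and \Cref{lem:structure} gives the case analysis (C1), (C2), (C3).

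Pick distinct $(c,c'),(d,d')\in\mc{R}_a$. Then $c,d\in N_X(a)$, so $c',d'\in U$. Also $c',d'$ are each adjacent to both $a'$ and $b'$, and $b'\in U$ since $b\in N_X(a)$. Thus $a',c',b',d'$ form a $4$-cycle $a'c'b'd'$ inside $Y|_U$.

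\textbf{(C1):} \Cref{lem:structure} gives $d_{FS}(\si,\si\circ(ab))\le\al$ directly, since $b'\in U$.

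\textbf{(C2):} $a'$ and $b'$ share a neighbour, so they lie in the same component. \Cref{lem:structure} then gives $d_{FS}(\si,\si\circ(ab))\le 15$.

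\textbf{(C3):} let $z'$ be the cut vertex. A $4$-cycle cannot be split by deleting a single vertex, so $a',b'$ remain in one component of $Y|_{U\sm\{z'\}}$ unless $z'\in\{a',b'\}$. If $z'\ne a',b'$, this holds even when $z'\in\{c',d'\}$, because the other vertex of $\{c',d'\}$ still joins $a'$ to $b'$. So \Cref{lem:structure} gives at most $15$ moves.

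The remaining subcase is where I expect the main obstacle: $z'\in\{a',b'\}$, i.e.\ the cut vertex is the current position of $a$ or $b$.

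\emph{If $z'=b'$:} then $a',c',d'$ lie in a common component $F_1'$, since $a'$ is adjacent to $c'$ and $d'$. If $\{c',d'\}\in E(Y)$, the pattern of \Cref{fig:star and c4} gives $(ab)$ in $7$ moves. Otherwise I will use the $15$-move sequence of \Cref{lem:star bound}. That sequence needs a vertex $e'$ adjacent to $b'$ and $c'$, with $e\in N_X(a)$. Such an $e'$ is found by applying \Cref{lem:ddG|U} to $F_1'$ together with the degree bound $\dd(Y|_U)>|U|/3$: these force $c'$ and $b'$ to have a common neighbour in $U$ other than $a',d'$, and all of $U$ consists of friends of $a$.

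\emph{If $z'=a'$:} I will instead first move $a$ out of the cut position, using the star structure at $a$ (each swap of $a$ with a neighbour's occupant is a single $Y$-edge move). Concretely, swap $a$ with $c$; now $a$ sits at $c'$, which is adjacent to $b'$ and not a cut vertex. Then apply the previous argument in the new configuration $\tau$. The set $\tau(N_X[a])$ equals $U$, so the structure is unchanged; and $c$ at $a'$ is still adjacent to $b'$'s neighbours via $d'$. Finally conjugate back with $(ac)$.

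Tallying these cases, with the conjugation adding $2$ moves and the worst direct sequence costing at most $19$, should give the bound $\max\{\al,21\}$. Checking that the conjugated configuration still satisfies the hypotheses, especially that $\{a,c\}$ gives a legal first move and that the cut vertex moves with the structure rather than with the people, is the delicate step.
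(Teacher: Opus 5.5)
Everything through the subcase $z'\notin\{a',b'\}$ of $(C3)$ is correct and matches the paper. The gap is in the two subcases $z'\in\{a',b'\}$, which is exactly the step you flagged as delicate.

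\textbf{Case $z'=b'$.} Any common neighbour $e'$ of $b'$ and $c'$ in $U$ must lie in $V(F_1')$, because $c'\in V(F_1')$ and $b'$ is the cut vertex. The bounds $\dd(Y|_{F_1'})>|V(F_1')|/2$ and $\dd(Y|_U)>|U|/3$ control neighbourhoods of vertices of $F_1'$, and the total degree of $b'$ in $U$. They do not control how $b'$'s neighbours split between $F_1'$ and $F_2'$. Nothing prevents $N_Y(b')\cap V(F_1')=\{c',d'\}$, and then, with $\{c',d'\}\notin E(Y)$, your $e'$ does not exist. In general, two vertices of degree $>|U|/3$ in $Y|_U$ need not share a neighbour.

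\textbf{Case $z'=a'$.} The conjugation does not produce $\si\circ(ab)$. After $(ac)$, the people $a$ and $b$ sit at $c'$ and $b'$, which are adjacent, so $(ab)$ is a single move. This leaves $a$ at $b'$, $b$ at $c'$, $c$ at $a'$. The closing $(ac)$ would need $\{a',b'\}\in E(Y)$, and even formally $(ac)(ab)(ac)=(bc)$. From $\tau=\si\circ(ac)$ you actually need the $3$-cycle $\tau\circ(ac)(ab)$. Here $b$ and $c$ are not friends, since otherwise $(c,c')$ would satisfy all four conditions of \eqref{eq:adjacencies of c,c'}. Also, in $\tau$ neither $a$ nor $b$ occupies the cut vertex, so ``the previous argument'' is not the relevant tool.

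\textbf{The paper's fix.} It handles both subcases at once. Whichever of $a',b'$ is not the cut vertex is adjacent to $c'$ and $d'$, so $c',d'\in V(F_1')$.
\begin{itemize}
\item Run $(ac)(ab)(ad)(ac)(ab)$ around the $4$-cycle $a'c'b'd'$; each step uses one of its edges. This yields $a$ at $c'$, $b$ at $a'$, $c$ at $d'$, $d$ at $b'$.
\item All swaps are inside $N_X[a]$, so $U$ and its $(C3)$ decomposition are unchanged. Now $a$ and $c$ both sit in $F_1'$, so \Cref{lem:structure} performs $(ac)$ in $15$ moves.
\item A final $(ad)$ across $\{d',b'\}$ gives $\si\circ(ab)$, for $21$ moves in total.
\end{itemize}

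\textbf{Repairing your route.} Your star-sequence idea also works if the common-neighbour argument is placed where it holds.
\begin{itemize}
\item When $z'=a'$: $b'$ and $c'$ both lie in $F_1'$, so they share a neighbour there. It is not $d'$ when $\{c',d'\}\notin E(Y)$. The $15$-move sequence of \Cref{lem:star bound} then applies directly, with no conjugation.
\item When $z'=b'$: take a common neighbour $e'\in V(F_1')$ of $a'$ and $c'$ instead. Then run that $15$-move sequence in reverse order. This works because the configuration is the image of the one in \Cref{fig:C4 and a bridge} under exchanging the houses $a'$ and $b'$.
\end{itemize}
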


\begin{proof}
    Note that $a,b$ are symmetric because we have only assumed that $\{a,b\}\in E(X)$. Thus, assume $|\mc{R}_a|\ge 2$ without loss of generality. In the present setting, we know $X|_{N_X[a]}\supset \starp$.
    
    Let us apply \Cref{lem:structure} to $U = \si(N_X[a])\subset V(Y)$. For each $(c,c')\in \mc{R}_a$, we have $c'\in V(Y|_U)$ and $\{a',c'\},\{b',c'\}\in E(Y|_U)$. Thus, $a',b'$ lie in the same component of $Y|_U$. We will now analyze the cases in \Cref{lem:bangachev structure} and the results from \Cref{lem:structure}.

    If $(C1)$ holds, then $d_{FS}(\si,\si\circ(ab))\le \al$.

    If $(C2)$ holds, then $a'$ and $b'$ must lie in the same component of $Y|_U$, so $d_{FS}(\si,\si\circ(ab))\le 15$.

    If $(C3)$ holds, then $Y|_U$ has a cut vertex $v'$. Label the components of $Y|_{U\back \{v'\}}$ as $F_1'$ and $F_2'$. Since $|\mc{R}_a|\ge 2$, there exists another pair $(d,d')\in \mc{R}_a$. Observe that $a'$ and $b'$ are mutually adjacent to both $c'$ and $d'$. Thus, $a',b'$ cannot be in separate components of $Y|_{U\back \{v'\}}$, however, $a'$ or $b'$ could be the cut vertex.

    If $a'$ and $b'$ are in the same component of $Y|_{U\back \{v'\}}$, then \Cref{lem:structure} implies $d_{FS}(\si,\si\circ(ab))\le 15$. 

    If $a'$ is the cut vertex, let $b'\in V(F_1')$ without loss of generality. Our setup is given by the left side of \Cref{fig:5.3 cut vertex}. Since $c'$ and $d'$ are adjacent to $b'$, we have that $c',d'\in V(F_1')$ as well. First perform the sequence:
    \[(ac)(ab)(ad)(ac)(ab).\]
    Then perform $(ac)$ in at most $15$ moves using \Cref{lem:structure} because $a,c$ are currently in $c',d'\in V(F_1')$. Note that we apply \Cref{lem:structure} to $\si\circ (ac)(ab)(ad)(ac)(ab)$, but case (C3) still holds, providing the desired bound. Then performing $(ad)$ completes the swap $(ab)$, so $d_{FS}(\si,\si\circ(ab))\le 21$.
    \begin{figure}
        \centering
        \begin{tikzpicture}[
    vertex/.style={circle, draw, inner sep=1.5pt},
    scale=1
]
\node[vertex] (a) at (0,2) {$a$};
\node[vertex] (c) at (-1,1.5) {$c$};
\node[vertex] (d) at (-1,0.5) {$d$};
\node[vertex] (b) at (-2,0) {$b$};
\node[vertex] (e) at (1,1.5) {};
\draw (-1.5,0.5) ellipse [x radius=1.3cm, y radius=1.6cm];
\draw (1.5,0.5) ellipse [x radius=1.3cm, y radius=1.6cm];

\node at (-2.2,1) {$F_1'$};
\node at (2.2,1) {$F_2'$};

\draw[thick] (a) -- (c);
\draw[thick] (a) -- (d);
\draw[thick] (a) -- (e);
\draw[thick] (b) -- (c);
\draw[thick] (b) -- (d);

\node[vertex] (a') at (7,2) {$b$};
\node[vertex] (c') at (6,1.5) {$c$};
\node[vertex] (d') at (6,0.5) {$d$};
\node[vertex] (b') at (5,0) {$a$};
\node[vertex] (e') at (8,1.5) {};
\draw (5.5,0.5) ellipse [x radius=1.3cm, y radius=1.6cm];
\draw (8.5,0.5) ellipse [x radius=1.3cm, y radius=1.6cm];

\node at (4.8,1) {$F_1'$};
\node at (9.2,1) {$F_2'$};

\draw[thick] (a') -- (c');
\draw[thick] (a') -- (d');
\draw[thick] (a') -- (e');
\draw[thick] (b') -- (c');
\draw[thick] (b') -- (d');

\end{tikzpicture}
        \caption{Swapping $(ab)$ in the case where $Y|_U$ has a cut vertex, and it is either $a'$ or $b'$. The cut vertex is $a'$ on the left, and $b'$ on the right.}
        \label{fig:5.3 cut vertex}
    \end{figure}
    
If $b'$ is the cut vertex of $Y|_U$, we find the setup on the right side of \Cref{fig:5.3 cut vertex}. Perform the swaps $(ac)(ab)(ad)(ac)(ab)$. As in the previous paragraph, \Cref{lem:structure} allows us to swap $(ac)$ in at most $15$ moves because $a$ and $c$ are in $c'$ and $d'$, respectively, which are both in $V(F_1')$ at the time of the swap. Then $(ad)$ completes the swap, and $d_{FS}(\si,\si\circ(ab))\le 21$ in this case as well. 

In each case, $d_{FS}(\si,\si\circ(ab))\le \max\{\al,21\}$. A symmetric argument establishes the same result when $|\mc{R}_b|\ge 2$.
\end{proof}

\textbf{We henceforth assume $|\mc{R}_a|,|\mc{R}_b|\le 1$.} Thus,
\begin{equation}\label{eq:Qa+Qb}
    |\mc{Q}_a|+|\mc{Q}_b|\ge |\mc C_3| -2\ge 2\dd(X)+2\dd(Y)-2n.
\end{equation}
Note that if equality holds in $2\dd(X)+3\dd(Y) = 3n$, then $\dd(X) + \dd(Y) - n = \frac13\dd(X)$. Thus, the reader should view a set of size $\dd(X) + \dd(Y) - n$ as being the size of one third of the minimum number of friends of any person. \textbf{For the remainder of the proof, we take $|\mc{Q}_a|\ge |\mc{Q}_b|$ without loss of generality.} We define
\begin{align*}
    \mc{Q}_a^* &= \mc{Q}_a \cup \{(a,a')\};\\
    \mc{Q}_b^* &= \mc{Q}_b \cup \{(b,b')\}.
\end{align*}
It follows that
\begin{align}\label{eq:bound on Qa, Qa*}
    |\mc{Q}_a|&\ge \frac12(|\mc{Q}_a|+|\mc{Q}_b|)\ge \dd(X)+\dd(Y)-n;\\
    \nonumber|\mc{Q}_a^*|&\ge \dd(X)+\dd(Y)-n+1.
\end{align}
Now define the set $\mc{S}$ to consist of tuples $(c,c')$ where $c'$ is adjacent to at least two vertices in $\mc{Q}_a^{*Y}$ and at least one vertex in $\mc{Q}_b^{*Y}$. Explicitly:
\begin{align}\label{eq:def of S}
    \mc{S} = \{(c,c'):\text{there exist distinct } d',e'\in \mc{Q}_a^{*Y} \text{ and } f'\in \mc{Q}_b^{*Y} \text{ adjacent to }c'\}.
\end{align}
The following structural lemma will allow us to reduce to the case where $\mc{Q}_a^{*Y}$ and $\mc{Q}_b^{*Y}$ are nonadjacent, and where a pair in $\mc{Q}_a$ and a pair in $\mc{S}$ cannot be adjacent in both $X$ and $Y$. 
\begin{lemma}\label{lem:ban to S}
    In the present setting, the following hold:
    \begin{enumerate}
        \item If there exist $c'\in \mc{Q}_a^Y,d'\in \mc{Q}_b^Y$ such that $\{c',d'\}\in E(Y)$, then
        \[d_{FS}(\si,\si\circ(ab))\le 5.\]
        \item If there exist $(c,c')\in \mc{Q}_a,(d,d')\in \mc{S}$ such that $\{c,d\}\in E(X)$ and $\{c',d'\}\in E(Y)$, then
        \[d_{FS}(\si,\si\circ(ab))\le 9.\]
    \end{enumerate}
\end{lemma}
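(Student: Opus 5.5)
\textbf{Part (1).} We write down an explicit sequence of five swaps. Let $(c,c')\in\mc{Q}_a$ and $(d,d')\in\mc{Q}_b$ with $\{c',d'\}\in E(Y)$. These pairs are distinct, since a pair lying in both $\mc{Q}_a$ and $\mc{Q}_b$ would satisfy all four conditions of \eqref{eq:adjacencies of c,c'}, and that case has been excluded. By definition, $c$ and $d$ are both friends of $a$ and of $b$; moreover $c'\sim a'$ and $d'\sim b'$. We perform
\[(ac)(ad)(ab)(bd)(bc)=(ab)\]
and check each step against the current houses:
\begin{itemize}
\item $(ac)$ uses the road $a'c'$, after which $a$ sits in $c'$.
\item $(ad)$ uses the road $c'd'$, moving $a$ to $d'$ and $d$ to $c'$.
\item $(ab)$ uses the road $d'b'$, moving $a$ to $b'$ and $b$ to $d'$.
\item $(bd)$ uses the road $d'c'$, moving $b$ to $c'$ and $d$ back to $d'$.
\item $(bc)$ uses the road $c'a'$, moving $b$ to $a'$ and $c$ back to $c'$.
\end{itemize}
Each swapped pair is a pair of friends. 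The net effect is $\si\circ(ab)$, so $d_{FS}(\si,\si\circ(ab))\le 5$.

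\textbf{Part (2).} The idea is to move $c$ into $d'$ and then bring $a$ and $b$ next to it. Let $(c,c')\in\mc{Q}_a$ and $(d,d')\in\mc{S}$ with $c\sim d$ and $c'\sim d'$. By the definition of $\mc{S}$, $d'$ is adjacent to two distinct vertices of $\mc{Q}_a^{*Y}$. Since $c'$ is one of them at most, we may choose $e'\in\mc{Q}_a^{*Y}$ with $e'\ne c'$ and $e'\sim d'$. Also choose $f'\in\mc{Q}_b^{*Y}$ with $f'\sim d'$. All of these houses differ from $d'$, so the people $c,d,e,f$ are distinct, apart from the possibilities $e=a$ or $f=b$.

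Now perform the following swaps:
\begin{itemize}
\item $(cd)$, after which $c$ sits in $d'$.
\item $(ae)$, allowed since $e\sim a$ and $e'\sim a'$; after it $a$ sits in $e'$.
\item $(bf)$, allowed since $f\sim b$ and $f'\sim b'$; after it $b$ sits in $f'$.
\end{itemize}
At this point $c$ is a friend of both $a$ and $b$, and its house $d'$ has roads to the houses $e'$ and $f'$ of $a$ and $b$. This is the configuration of \Cref{fig:4 conditions}, so $(ac)(ab)(bc)$ exchanges $a$ and $b$, leaving $a$ in $f'$ and $b$ in $e'$, with $c$ back in $d'$.

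To finish, we undo the preliminary moves:
\begin{itemize}
\item $(af)$, allowed since $f$ is a friend of $a$ (every member of $\mc{Q}_b$ is) and $f'\sim b'$; it moves $a$ to $b'$.
\item $(be)$, allowed since $e$ is a friend of $b$ and $e'\sim a'$; it moves $b$ to $a'$.
\item $(cd)$, which returns $c$ to $c'$ and $d$ to $d'$.
\end{itemize}
If $e=a$ we omit both swaps involving $e$, and if $f=b$ we omit both swaps involving $f$. In all cases the total is at most $9$ moves.

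The only delicate point is choosing $e'\ne c'$. This matters because $d$ occupies $c'$ during the middle of the sequence, and it is exactly why $\mc{S}$ demands two neighbours in $\mc{Q}_a^{*Y}$.
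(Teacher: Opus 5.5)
Your proposal is correct and follows the paper's proof essentially verbatim. Part (1) uses the same five-swap sequence $(ac)(ad)(ab)(bd)(bc)$, and part (2) uses the paper's nine-swap sequence $(cd)(ae)(bf)(ac)(ab)(bc)\cdots(cd)$, with the same choice of $e'\neq c'$ and the same omission rule when $e=a$ or $f=b$; the only difference is that you perform the two commuting undo-swaps in the order $(af)(be)$ rather than $(be)(af)$.
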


\begin{proof}
    First suppose there exist $c'\in \mc{Q}_a^Y,d'\in \mc{Q}_b^Y$ such that $\{c',d'\}\in E(Y)$. Then we have the setup given by the left side of \Cref{fig:QaQbY}. The sequence $(ac)(ad)(ab)(bd)(bc)$ demonstrates that $d_{FS}(\si,\si\circ(ab))\le 5$, establishing (1).
    \begin{figure}[h!]
        \centering
        \begin{tikzpicture}[
    every node/.style={circle, draw, inner sep=1.5pt},
    scale=1
]

\node (a) at (-2,2) {$a$};
\node (b) at ( 2,2) {$b$};
\node (c) at (-2,-2) {$c$};
\node (d) at ( 2,-2) {$d$};

\draw[thick] (a) -- (c);
\draw[thick] (b) -- (d);
\draw[thick] (c) -- (d);

\draw[dashed, thick] (a) -- (b);

\draw[dashed, thick] (a) to[out=-105,in=105] (c);
\draw[dashed, thick] (b) to[out=-75,in=75] (d);
\draw[dashed, thick] (a) -- (d);
\draw[dashed, thick] (b) -- (c);

\node (a') at (5,2) {$a$};
\node (b') at ( 9,2) {$b$};
\node (c') at ( 5,0) {$c$};
\node (e') at (5.8,0) {$e$};
\node (g') at ( 9,0) {$f$};
\node (d') at ( 7,-2) {$d$};

\draw[thick] (a') -- (c');
\draw[thick] (a') -- (e');
\draw[thick] (b') -- (g');
\draw[thick] (c') -- (d');
\draw[thick] (d') -- (e');
\draw[thick] (d') -- (g');

\draw[dashed, thick] (a') -- (b');
\draw[dashed, thick] (a') -- (g');
\draw[dashed, thick] (a') to[out=-100,in=100] (c');
\draw[dashed, thick] (a')  to[out=-80,in=120]  (e');
\draw[dashed, thick] (b') to[out=-80,in=80] (g');
\draw[dashed, thick] (b') -- (c');
\draw[dashed, thick] (b') -- (e');
\draw[dashed, thick] (c') to[out=-55,in=145] (d');

\end{tikzpicture}
\caption{Left: an edge in $Y$ from $c'\in \mc{Q}_a^Y$ to $d'\in \mc{Q}_b^Y$. Right: $(c,c')\in \mc{Q}_a$ and $(d,d')\in \mc{S}$ are adjacent in $X$ and $Y$.}
        \label{fig:QaQbY}
    \end{figure}

    We now show (2). Suppose there exist $(c,c')\in \mc{Q}_a,(d,d')\in \mc{S}$ such that $\{c,d\}\in E(X)$ and $\{c',d'\}\in E(Y)$. By construction of $\mc{S}$, we can pick $(e,e')\in \mc{Q}_a^*$ with $e\neq c$ and $(f,f')\in \mc{Q}_b^*$. This setup is depicted by the right side of \Cref{fig:QaQbY}.
    Note that this setup assumes $e\neq a,f\neq b$. If $e=a$ or $f=b$, the same sequence of swaps works, we can simply omit those that contain $e$ or $f$, respectively. The sequence:
\[(cd)(ae)(bf)(ac)(ab)(bc)(be)(af)(cd)=(ab)\]
implies that $d_{FS}(\si,\si\circ(ab))\le 9$. This establishes (2).
\end{proof}
Since $a'$ is nonadjacent to $\mc{Q}_b^{*Y}$ by construction, and similarly $b'$ is nonadjacent to $\mc{Q}_a^{*Y}$, part (1) of \Cref{lem:ban to S} allows us to \textbf{assume that $\mc{Q}_a^{*Y}$ and $\mc{Q}_b^{*Y}$ are nonadjacent.}

Next, we reduce to the case where $\mc{Q}_a,\mc{Q}_b$ each have at least two elements. Recall that we assume $|\mc{Q}_a|\ge|\mc{Q}_b|$, so it suffices to show that $|\mc{Q}_b|\ge 2$.
\begin{lemma}\label{lem:Qb nonempty}
    In the present setting, $|\mc{Q}_a|,|\mc{Q}_b|\ge 2$.
\end{lemma}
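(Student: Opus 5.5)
The plan is to show that if either set has fewer than two elements, then $a$ and $b$ can be swapped in a bounded number of moves, so that case is absorbed into the bound of \Cref{thm:2x+3y in finite time}. This matches how the earlier reductions are phrased.

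\textbf{Step 1: $|\mc{Q}_a|\ge 2$ is immediate.} By \eqref{eq:bound on Qa, Qa*},
\[|\mc{Q}_a|\ge \dd(X)+\dd(Y)-n.\]
The hypothesis $2\dd(X)+3\dd(Y)\ge 3n$ together with $\dd(Y)\ge \dd(X)$ gives $\dd(X)+\dd(Y)-n\ge \frac13\dd(X)\ge \frac43$, since $\dd(X)\ge 4$. As $|\mc{Q}_a|$ is an integer, $|\mc{Q}_a|\ge 2$. The real content is therefore $|\mc{Q}_b|\ge 2$, and since $|\mc{Q}_a|\ge|\mc{Q}_b|$ is only a labelling convention, this is the case to treat.

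\textbf{Step 2: setup when $|\mc{Q}_b|\le 1$.} Suppose $|\mc{Q}_b|\le 1$. Then \eqref{eq:Qa+Qb} gives
\[|\mc{Q}_a|\ge 2\dd(X)+2\dd(Y)-2n-1,\]
which is roughly twice the generic bound. I would apply \Cref{lem:structure} with $U=\si(N_X[b])$. Every $c\in\mc{Q}_a^X$ is a friend of $b$, and also of $a$. So $a$ together with $\mc{Q}_a^X$ lies in $N_X(b)$, and $X|_{N_X[b]}\supset\starp$ via a triangle $a,b,c$. In $Y|_U$, all of $\mc{Q}_a^Y$ is adjacent to $a'$. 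Hence the component (or block) containing $a'$ has at least $|\mc{Q}_a|+1$ vertices.

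\textbf{Step 3: locating $b'$.} On the other side, $b'$ has at least $\dd(Y)+|U|-n\ge \dd(X)+\dd(Y)-n+1$ neighbours in $U$. By the standing assumption coming from \Cref{lem:ban to S}(1), none of these lies in $\mc{Q}_a^{*Y}$. The case analysis is then:
\begin{itemize}
\item In case $(C1)$, \Cref{lem:structure} swaps $b$ with $a$ in at most $\al$ moves.
\item In case $(C2)$, if $a'$ and $b'$ lie in the same component, at most $15$ moves suffice.
\item Otherwise $a'$ and $b'$ are separated. The component of $b'$ then has at least $\dd(X)+\dd(Y)-n+2$ vertices, all nonadjacent to $a'$.
\end{itemize}
In the separated case I would count nonadjacencies of $a'$, in the style of the inclusion–exclusion arguments in \Cref{lem:bangachev with P=1}. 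On one hand $a'$ misses everything in the component of $b'$. On the other hand, the large set $\mc{Q}_a^Y$ lies in $a'$'s component and is disjoint from it. Adding these should force more than $n-1-\dd(Y)$ nonadjacencies at $a'$, which is a contradiction. I would use the enlarged lower bound on $|\mc{Q}_a|$ here to beat $|U|$.

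\textbf{Step 4: the cut-vertex case $(C3)$.} Here $a'$ and $b'$ are either in the same piece (at most $15$ moves), or one of them is the cut vertex. If $a'$ or $b'$ is the cut vertex, I would imitate the argument in \Cref{lem:Ra}. Two elements $c,d$ of $\mc{Q}_a$ sit in the same piece as $a'$'s neighbours. A short explicit sequence, combined with a $15$-move swap from \Cref{lem:structure} performed while $U$ is unchanged, then completes $(ab)$. If the cut vertex is some other $v'$ separating $a'$ from $b'$, the same counting as in Step 3, now with the $-1$ loss from \eqref{eq:cut vertices}, should again give a contradiction.

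\textbf{Main obstacle.} I expect the hardest part to be the counting in the separated case of Step 3 and its $(C3)$ analogue. The available slack is only about $2\dd(X)+3\dd(Y)-3n\ge 0$, so the doubled bound on $|\mc{Q}_a|$ must be used sharply. If it falls short, I would first try $U=\si(N_X[a])$ instead, where $b'$'s neighbourhood is controlled via $\mc{R}_a$ (of size at most $1$) and $\mc{Q}_b$ (of size at most $1$).
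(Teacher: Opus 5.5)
Your Step 1 is fine, but Steps 2--4 do not prove the claim. The decisive count in the separated case is only asserted (``should force''). As sketched, it is also done at the wrong vertex. Since $\mc{Q}_a^Y\subset N_Y(a')$, that set contributes nothing to the nonadjacencies of $a'$. The component of $b'$ alone gives $a'$ only about $\dd(X)+\dd(Y)-n+2$ nonadjacencies, and this exceeds $n-1-\dd(Y)$ only when $\dd(X)+2\dd(Y)\ge 2n-2$, essentially the blue region.

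The cut-vertex subcase in Step 4 is also unsupported. The sequence in \Cref{lem:Ra} uses that $c',d'$ are adjacent to \emph{both} $a'$ and $b'$, whereas no vertex of $\mc{Q}_a^Y$ is adjacent to $b'$. Finally, framing the lemma as ``either $|\mc{Q}_b|\ge 2$ or $(ab)$ can be swapped cheaply'' proves less than the statement, which holds outright.

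The missing idea is to count at $b'$; then no structural lemma or swap sequence is needed. As you note yourself in Step 3, $b'$ has no neighbour in $\mc{Q}_a^{*Y}$. A neighbour in $\mc{Q}_a^Y$ would give a pair satisfying all four adjacency conditions, which was excluded, and $\{a',b'\}\notin E(Y)$.

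Suppose $|\mc{Q}_b|\le 1$. Then \eqref{eq:Qa+Qb} gives $|\mc{Q}_a^*|\ge 2\dd(X)+2\dd(Y)-2n\ge n-\dd(Y)$. So $b'$ has at least $n-\dd(Y)$ non-neighbours among the other $n-1$ vertices, contradicting $\deg_Y(b')\ge \dd(Y)$. The doubled bound on $|\mc{Q}_a|$ you identified is exactly what is needed, applied at $b'$ instead of $a'$.

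This is the paper's argument. It is phrased there as $|N_Y(b')\cap(\mc{Q}_a^{*Y}\sqcup \mc{Q}_b^{*Y})|\ge 2\dd(X)+3\dd(Y)-3n+2\ge 2$, with all these neighbours forced into $\mc{Q}_b^Y$. It does not depend on the choice of $U$ or on the cases $(C1)$--$(C3)$.
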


\begin{proof}
    The lower bound from \eqref{eq:Qa+Qb} implies that $|\mc{Q}_a^*|+|\mc{Q}_b^*|\ge  2\dd(X)+2\dd(Y)-2n+2$. Observe that $|N_Y(b')|\ge \dd(Y)$, so inclusion-exclusion implies
    \begin{align*}
        |N_Y(b')\cap (\mc{Q}_a^{*Y}\sqcup \mc{Q}_b^{*Y})| &\ge |N_Y(b')|+|\mc{Q}_a^{*Y}|+|\mc{Q}_b^{*Y}|-n\ge 2\dd(X)+3\dd(Y)-3n+2\ge 2.
    \end{align*}
    Recall that $\mc{Q}_a^{*Y}$ and $\mc{Q}_b^{*Y}$ are nonadjacent, and that $b'\in \mc{Q}_b^{*Y}$. Thus, the two elements of this intersection belong to $\mc{Q}_b^Y$ (since they cannot be $b'$), and thus $|\mc{Q}_b|\ge 2$.
\end{proof}

\textbf{We henceforth assume that $|\mc{Q}_a|\ge|\mc{Q}_b|\ge 2$.} We now show that $\mc{Q}_a$ is highly connected, and in particular, we would like to guarantee an internal edge in $Y$. 

\begin{lemma}\label{lem:Qa internal edge}
In the present setting, any vertex $c'\in \mc{Q}_a^Y$ is adjacent to another vertex in $\mc{Q}_a^Y$. Similarly, any vertex $d'\in \mc{Q}_b^Y$ is adjacent to another vertex in $\mc{Q}_b^Y$.
\end{lemma}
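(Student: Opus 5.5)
The plan is a direct counting argument on non-neighbors of $c'$ in $Y$, using the standing reductions. Throughout we use that $\mc{Q}_a^{*Y}$ and $\mc{Q}_b^{*Y}$ are nonadjacent, which follows from part (1) of \Cref{lem:ban to S} together with the facts that $a'$ has no road to $\mc{Q}_b^{*Y}$ and $b'$ has none to $\mc{Q}_a^{*Y}$. We also use the bound \eqref{eq:Qa+Qb}, namely $|\mc{Q}_a|+|\mc{Q}_b|\ge 2\dd(X)+2\dd(Y)-2n$.

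Fix $c'\in\mc{Q}_a^Y$ and suppose, for contradiction, that $c'$ has no neighbor in $\mc{Q}_a^Y\sm\{c'\}$. Consider the set
\[T=\mc{Q}_a^Y\sqcup\mc{Q}_b^{*Y}.\]
We claim $T$ consists of $c'$ together with vertices not adjacent to $c'$. Indeed, $c'$ has no neighbor in $\mc{Q}_a^Y\sm\{c'\}$ by assumption, and $c'$ has no neighbor in $\mc{Q}_b^{*Y}$ by the nonadjacency of $\mc{Q}_a^{*Y}$ and $\mc{Q}_b^{*Y}$. This set includes $b'$, consistent with the standing assumption $\{a',b'\}\notin E(Y)$ and the definition of $\mc{Q}_a$. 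Note that $a'$ is excluded from $T$, and it is indeed a neighbor of $c'$, so there is no conflict.

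The size of $T$ is
\[|T|=|\mc{Q}_a|+|\mc{Q}_b|+1\ge 2\dd(X)+2\dd(Y)-2n+1.\]
On the other hand, the closed non-neighborhood of $c'$, meaning $c'$ together with its non-neighbors, has size at most $n-\dd(Y)$. Hence we need
\[2\dd(X)+2\dd(Y)-2n+1\le n-\dd(Y),\]
that is, $2\dd(X)+3\dd(Y)\le 3n-1$. This contradicts the hypothesis $2\dd(X)+3\dd(Y)\ge 3n$. Therefore $c'$ has a neighbor in $\mc{Q}_a^Y\sm\{c'\}$.

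The statement for $d'\in\mc{Q}_b^Y$ is proved by the symmetric count with the roles swapped: use $T=\mc{Q}_b^Y\sqcup\mc{Q}_a^{*Y}$, where now $a'\in\mc{Q}_a^{*Y}$ is a non-neighbor of $d'$ and $b'$ is a neighbor. The same inequality results, since only the sum $|\mc{Q}_a|+|\mc{Q}_b|$ enters. The assumption $|\mc{Q}_a|\ge|\mc{Q}_b|$ is therefore not needed.

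The only delicate point is bookkeeping: making sure every element of $T$ other than $c'$ is genuinely a non-neighbor, and that the starred sets contribute exactly the extra $+1$ needed for strictness. Once this is checked, the argument is a one-line inclusion–exclusion.
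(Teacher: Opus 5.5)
Your proof is correct and is essentially the paper's argument: the paper also intersects $N_Y(c')$ with $\mathcal{Q}_a^Y\sqcup\mathcal{Q}_b^{*Y}$ (and, for $d'$, $N_Y(d')$ with $\mathcal{Q}_a^{*Y}\sqcup\mathcal{Q}_b^{Y}$), gets size at least $2\delta(X)+3\delta(Y)-3n+1\ge 1$ by inclusion--exclusion, and uses the nonadjacency of $\mathcal{Q}_a^{*Y}$ and $\mathcal{Q}_b^{*Y}$ from \Cref{lem:ban to S} to place the neighbor in $\mathcal{Q}_a^Y$ (resp.\ $\mathcal{Q}_b^Y$). Your contradiction phrasing is just the contrapositive of that direct count. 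One small point: $b'$ is a non-neighbor of $c'$ because no pair satisfies all four conditions (equivalently, by the starred nonadjacency you already invoked), not because of $\{a',b'\}\notin E(Y)$.
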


\begin{proof}
    Recall that the bound \eqref{eq:Qa+Qb} yields $|\mc{Q}_a\sqcup \mc{Q}_b|\ge 2\dd(X)+2\dd(Y)-2n$. We lower bound the size of the following sets by inclusion-exclusion:
    \begin{align*}
        |N_Y(c')\cap (\mc{Q}_a^Y\cup \mc{Q}_b^{*Y})|&\ge 2\dd(X)+3\dd(Y)-3n+1\ge1.\\
        |N_Y(d')\cap (\mc{Q}_a^{*Y}\cup \mc{Q}_b^{Y})|&\ge 2\dd(X)+3\dd(Y)-3n+1\ge1.
    \end{align*}
    By \Cref{lem:ban to S}, $\mc{Q}_a^{*Y}\ni c'$ is nonadjacent to $\mc{Q}_b^{*Y}\ni d'$. Thus, $c'$ is adjacent to a vertex in $\mc{Q}_a^Y$, and $d'$ is adjacent to another vertex in $\mc{Q}_b^Y$.
\end{proof}

Recall the definition of $\mc{S}$ in \eqref{eq:def of S}. Our next goal is to lower bound the size of $\mc{S}$, for which we need the following definition.

\begin{definition}
    We define $N(\mc{Q}_a^{*Y})_2$ to be the set of vertices $c'\in V(Y)$ such that there exist two distinct $d',e'\in \mc{Q}_a^{*Y}$ adjacent to $c'$. 
\end{definition}

Recall the definition of $\mc{S}$ from \eqref{eq:def of S}. Note that $(c,c')\in \mc{S}$ if and only if $c'\in N(\mc{Q}_a^{*Y})_2\cap N(\mc{Q}_b^{*Y})$. By inclusion-exclusion, we establish lower bounds on the sizes of $\mc{S}$ and $N_Y(\mc{Q}_a^{*Y})_2$.
\begin{lemma}\label{lem:size of S}
    $|\mc{S}|\ge 2\dd(Y)-n+1$ and $|N_Y(\mc{Q}_a^{*Y})_2|\ge \dd(Y)$.
\end{lemma}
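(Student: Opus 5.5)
The plan is to prove the bound on $N(\mc{Q}_a^{*Y})_2$ first and then deduce the bound on $\mc{S}$ from it. The deduction is a single inclusion–exclusion. By definition, $c'\in\mc{S}^Y$ exactly when $c'\in N(\mc{Q}_a^{*Y})_2\cap N(\mc{Q}_b^{*Y})$, and $N(\mc{Q}_b^{*Y})\supseteq N_Y(b')$, which has size at least $\dd(Y)$. The vertex $b'$ lies in neither set:
\begin{itemize}
    \item $b'\notin N_Y(b')$ trivially;
    \item $b'\notin N(\mc{Q}_a^{*Y})_2$, because after \Cref{lem:ban to S}(1) we assume $\mc{Q}_a^{*Y}$ and $\mc{Q}_b^{*Y}$ are nonadjacent.
\end{itemize}
So both sets sit inside $V(Y)\sm\{b'\}$, a set of size $n-1$. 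Hence
\[|\mc{S}|\ge |N(\mc{Q}_a^{*Y})_2|+|N_Y(b')|-(n-1)\ge 2\dd(Y)-n+1,\]
provided $|N(\mc{Q}_a^{*Y})_2|\ge\dd(Y)$.

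For the second bound, I would first note that $\mc{Q}_a^{*Y}\subseteq N(\mc{Q}_a^{*Y})_2$.
\begin{itemize}
    \item By the definition of $\mc{Q}_a$, the vertex $a'$ is adjacent to every vertex of $\mc{Q}_a^Y$, and $|\mc{Q}_a|\ge 2$ by \Cref{lem:Qb nonempty}. So $a'$ has two neighbours in $\mc{Q}_a^{*Y}$.
    \item Each $c'\in\mc{Q}_a^Y$ is adjacent to $a'$ and, by \Cref{lem:Qa internal edge}, to some other vertex of $\mc{Q}_a^Y$.
\end{itemize}
It then remains to find roughly $\dd(Y)-|\mc{Q}_a^*|$ further vertices outside $\mc{Q}_a^{*Y}$ with two neighbours in $\mc{Q}_a^{*Y}$. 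The natural pool is $N_Y(a')\sm\mc{Q}_a^{*Y}$, which has size at least $\dd(Y)-|\mc{Q}_a|$. Any vertex of $N_Y(a')$ that also has a neighbour in $\mc{Q}_a^Y$ is automatically in $N(\mc{Q}_a^{*Y})_2$, since $a'$ supplies the second neighbour. So it suffices to show that the vertices of $N_Y(a')$ with no neighbour in $\mc{Q}_a^Y$ can be traded one-for-one against vertices outside $N_Y(a')$ that do lie in $N(\mc{Q}_a^{*Y})_2$.

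This trade-off is the main obstacle. Pure counting does not suffice. Double counting edges between $\mc{Q}_a^{*Y}$ and $V(Y)$ only gives $|N(\mc{Q}_a^{*Y})_2|\ge (k\dd(Y)-n)/(k-1)$ with $k=|\mc{Q}_a^*|$, which falls short of $\dd(Y)$. Similarly, a pointwise inclusion–exclusion for a single vertex $c'\in N_Y(a')$ gives $|N_Y(c')\cap\mc{Q}_a^{*Y}|\ge 2\dd(Y)+\dd(X)-2n+2$, and this can be nonpositive near the boundary $2\dd(X)+3\dd(Y)=3n$.

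What I would try first is to exploit the structure of $U=\si(N_X[a])$, which contains $a'$, all of $\mc{Q}_a^Y$, and much of $N_Y(a')$. \Cref{lem:bangachev structure} and \Cref{lem:ddG|U} give $\dd(Y|_U)>|U|/3$, and $Y|_U$ has at most two components. These facts should force every vertex of $N_Y(a')\cap U$ lying in the component of $a'$ to have many neighbours in the component containing $\mc{Q}_a^Y$. In case (C1) I would simply apply \Cref{lem:structure} and conclude $d_{FS}(\si,\si\circ(ab))\le 3\al$ directly, so that this case is excluded. For the remaining vertices of $N_Y(a')$ lying outside $U$, I would use the bound $|\mc{Q}_a^*|\ge \dd(X)+\dd(Y)-n+1$ together with the fact that each such vertex has at most $n-1-\dd(Y)$ non-neighbours. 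If this still leaves a deficit, I would weaken the target to $|N(\mc{Q}_a^{*Y})_2|\ge \dd(Y)$ only after the reductions already made. Any configuration where the bound fails should then produce one of the swap patterns of \Cref{lem:ban to S}.
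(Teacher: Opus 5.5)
Your reduction of the bound on $|\mc{S}|$ to the bound on $N(\mc{Q}_a^{*Y})_2$ is correct. It is the paper's inclusion--exclusion, except that you remove $b'$ from the ground set instead of counting it in $N(\mc{Q}_b^{*Y})$. Your check that $\mc{Q}_a^{*Y}\subseteq N(\mc{Q}_a^{*Y})_2$ is also correct.

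The gap is the inequality $|N(\mc{Q}_a^{*Y})_2|\ge\dd(Y)$ itself, which you never establish.
\begin{itemize}
\item The structural plan via $U=\si(N_X[a])$ is only a hope. Nothing in \Cref{lem:ddG|U} forces a vertex of $N_Y(a')\cap U$ to have a neighbour in the comparatively small set $\mc{Q}_a^Y$.
\item Disposing of (C1) by swapping $a$ and $b$ would replace the lemma by a reduction rather than prove it.
\item The fallback (``any configuration where the bound fails should produce a swap pattern'') is not an argument.
\end{itemize}

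The missing idea is that you dismissed counting too early. The naive double count fails, but it succeeds once you feed in two facts you already have: there are no edges between $\mc{Q}_a^{*Y}$ and $\mc{Q}_b^{*Y}$, and $\mc{Q}_a^{*Y}\subseteq N(\mc{Q}_a^{*Y})_2$. Put $k=|\mc{Q}_a^*|$, $m=|\mc{Q}_b^*|$, $\mu(v)=|N_Y(v)\cap\mc{Q}_a^{*Y}|$, and $R=V(Y)\sm(\mc{Q}_a^{*Y}\cup\mc{Q}_b^{*Y})$, so that $\sum_v\mu(v)\ge k\dd(Y)$. Then:
\begin{itemize}
\item $\mu=0$ on $\mc{Q}_b^{*Y}$;
\item $\mu\le k-1$ on $\mc{Q}_a^{*Y}$;
\item on $R$, $\mu\le 1$ off $N(\mc{Q}_a^{*Y})_2$ and $\mu\le k$ on it.
\end{itemize}
Hence
\[
k\dd(Y)\le k(k-1)+(n-k-m)+(k-1)\,\bigl|R\cap N(\mc{Q}_a^{*Y})_2\bigr|.
\]
Adding back the $k$ vertices of $\mc{Q}_a^{*Y}$ gives
\[
|N(\mc{Q}_a^{*Y})_2|\ge k+\bigl|R\cap N(\mc{Q}_a^{*Y})_2\bigr|\ge \dd(Y)+\frac{\dd(Y)+k+m-n}{k-1}.
\]
By \eqref{eq:Qa+Qb}, $k+m\ge 2\dd(X)+2\dd(Y)-2n+2$, so the numerator is at least $2\dd(X)+3\dd(Y)-3n+2\ge 2$, and $k\ge 3$ by \Cref{lem:Qb nonempty}.

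This is exactly the paper's argument, phrased there as discarding ``first occurrences''. Your bound $(k\dd(Y)-n)/(k-1)$ misses two corrections, and you need both:
\begin{itemize}
\item the $+m$, from the absence of edges to $\mc{Q}_b^{*Y}$;
\item the $+k$, because the vertices of $\mc{Q}_a^{*Y}$ already lie in $N(\mc{Q}_a^{*Y})_2$ yet carry multiplicity at most $k-1$.
\end{itemize}
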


\begin{proof}
    For each $c'\in \mc{Q}_a^{Y}$, $c'$ and $a'
    $ are adjacent by construction of $\mc{Q}_a^*$, and by \Cref{lem:Qa internal edge}, there exists some $d'\in\mc{Q}_a^Y$ adjacent to $c'$. Since $a'\notin \mc{Q}_a^Y$, we have $a'\neq d'$, and thus $c'\in N_Y(\mc{Q}_a^{*Y})_2$. Since $a'$ is adjacent to at least two vertices in $\mc{Q}_a^{Y}$, we find $\mc{Q}_a^{*Y}\subset N_Y(\mc{Q}_a^{*Y})_2$. 
    
    We now consider the neighbors of $\mc{Q}_a^{*Y}$. Each $c'\in \mc{Q}_a^{*Y}$ has at least $\dd(Y)$ neighbors, for a total of at least $|\mc{Q}_a^*|\dd(Y)$ neighbors of $\mc{Q}_a^{*Y}$, counted with multiplicity. Order the neighbors of $\mc{Q}_a^{*Y}$, so we may imagine placing them one by one. Recall that by \Cref{lem:ban to S}, we may assume $\mc{Q}_a^{*Y}$ and $\mc{Q}_b^{*Y}$ are nonadjacent, so there are at most $n-|\mc{Q}_b^*|$ neighbors that are the first occurrence of their vertex in the ordering. We have seen that $\mc{Q}_a^{*Y}\subset N_Y(\mc{Q}_a^{*Y})_2$. Since every vertex in $\mc{Q}_a^{*Y}$ is nonadjacent to itself and we are not counting the first occurrence, the set $\mc{Q}_a^{*Y}$ contains at most $(|\mc{Q}_a^*|-2)|\mc{Q}_a^*|$ of the neighbors that are not first occurrences.
    
    Removing first occurrences and vertices in $\mc{Q}_a^{*Y}$, the remaining neighbors of $\mc{Q}_a^{*Y}$ repeat any vertex with multiplicity at most $|\mc{Q}_a^*|-1$. By removing the first adjacencies and the adjacencies to $|\mc{Q}_a^{*Y}|$, we obtain the following bound:
    \begin{align}\label{eq:NQa*Y2}
        |N(\mc{Q}_a^{*Y})_2|&\ge |\mc{Q}_a^*|+\frac{|\mc{Q}_a^*|\dd(Y)-n+|\mc{Q}_b^*|-(|\mc{Q}_a^*|-2)|\mc{Q}_a^*|}{|\mc{Q}_a^*|-1}\\
        \nonumber&=  \frac{\dd(Y)|\mc{Q}_a^*|-n+|\mc{Q}_b^*|+|\mc{Q}_a^*|}{|\mc{Q}_a^*|-1}\\
        \nonumber&= \dd(Y)+\frac{\dd(Y)-n+|\mc{Q}_b^*|+|\mc{Q}_a^*|}{|\mc{Q}_a^*|-1}.
    \end{align}
    Applying the lower bound on $|\mc{Q}_a^*|+|\mc{Q}_b^*|$ from \eqref{eq:Qa+Qb}, we find that
    \[\dd(Y)+|\mc{Q}_a^*|+|\mc{Q}_b^*|\ge 2\dd(X)+3\dd(Y)-2n \ge n.\]
    Applying this bound to the numerator of \eqref{eq:NQa*Y2}, we obtain $|N(\mc{Q}_a^{*Y})_2| \ge  \dd(Y)$. Recall that $\mc{Q}_b$ is nonempty by standing assumption and that all elements of $\mc{Q}_b$ are adjacent to $b'$, so $\{b'\}\cup N_Y(b')\subset |N(\mc{Q}_b^{*Y})|$ implies 
    \begin{equation}\label{eq:NQb*Y}
        |N(\mc{Q}_b^{*Y})|\ge \dd(Y)+1.
    \end{equation}
    Using the principle of inclusion-exclusion, we calculate:
    \begin{align}\label{eq:lower bound S}
        |\mc{S}| &\ge |N(\mc{Q}_a^{*Y})_2|+|N(\mc{Q}_b^{*Y})|-n
    \ge 2\dd(Y)-n+1.
    \end{align}
\end{proof}

We will next establish a simple tool about swaps in $\mc{Q}_a$. Observe that $\mc{Q}_a$ is defined relative to vertices $a$ and $b$, and to the current configuration $\si$. To indicate $\mc{Q}_a$ defined with respect to a different configuration, we define the following notation.

\begin{definition}\label{def:sigma tau}
    Let $\tau\in S_n$. Then $\mc{Q}_a^{\tau}$ denotes the set $\mc{Q}_a$ defined from permutation $\si\circ \tau$. Explicitly:
    \[\mc{Q}_a^{\tau} = \{(c,\si\circ\tau(c)):c\in N_X(a),c\in N_X(b),\si\circ\tau(c)\in N_Y(\si\circ \tau(a))\}.\]
\end{definition}

If we specify $\tau$ on a subset of $S\subset [n]$, then we assume $\tau$ is the identity on $S^C$. The following lemma will allow us to swap friends within $\mc{Q}_a$, or people in $\mc{Q}_a$ whose houses share a road.

\begin{lemma}\label{lem:Qa swap}
    In the present setting, suppose $(c,c'),(d,d')\in \mc{Q}_a$ such that there exists a path from $c$ to $d$ in $X|_{\mc{Q}_a^X}$ or from $c'$ to $d'$ in $Y|_{\mc{Q}_a^Y}$ of length at most $\ell$. Then
    \[d_{FS}(\si,\si\circ(cd))\le 3(2\ell-1).\] 
    Moreover, $(\mc{Q}_a^{(cd)})^X = \mc{Q}_a^X$ and $(\mc{Q}_a^{(cd)})^Y = \mc{Q}_a^Y$. The analogous statement holds for pairs in $\mc{Q}_b$ connected by a path in $X|_{\mc{Q}_b^X}$ or $Y|_{\mc{Q}_b^Y}$.
\end{lemma}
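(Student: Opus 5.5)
We first treat the case of a single edge, $\ell=1$, and show that two adjacent pairs can be swapped in $3$ moves. Suppose $(c,c'),(d,d')\in\mc{Q}_a$.

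\emph{The $Y$-edge case.} Suppose $\{c',d'\}\in E(Y)$. Since $c,d\in N_X(a)$ and $c',d'\in N_Y(a')$, the triangle $a',c',d'$ lies in $Y$ and $a$ is friends with both $c$ and $d$. The sequence $(ac)(ad)(ac)$ then realizes $(cd)$. We check it move by move, left to right. First $a$ and $c$ swap across the road $\{a',c'\}$, which puts $a$ in $c'$. Next $a$ (now in $c'$) swaps with $d$ (in $d'$) across $\{c',d'\}$, and these two are friends. Finally $a$ (now in $d'$) swaps with $c$ (now in $a'$) across $\{a',d'\}$. The net effect is that $c$ ends in $d'$, $d$ ends in $c'$, and $a$ returns to $a'$. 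So $d_{FS}(\si,\si\circ(cd))\le 3$.

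\emph{The $X$-edge case.} Suppose $\{c,d\}\in E(X)$. Consider the $Y$-star at $a'$: $a'$ has roads to $c'$ and $d'$, and $a$ is friends with both $c$ and $d$. The sequence $(ac)(cd)(ad)$ realizes $(cd)$:
\begin{itemize}
\item $(ac)$ uses the road $a'c'$, so $c$ moves to $a'$ and $a$ moves to $c'$;
\item $(cd)$ swaps $c$ (in $a'$) with $d$ (in $d'$), which are friends joined by the road $a'd'$, so now $d$ is in $a'$ and $c$ is in $d'$;
\item $(ad)$ swaps $a$ (in $c'$) with $d$ (in $a'$), which are friends joined by the road $a'c'$.
\end{itemize}
The final positions are $a\to a'$, $c\to d'$, $d\to c'$, so again the cost is $3$.

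\emph{Longer paths.} For a path $c=c_0,c_1,\dots,c_\ell=d$ in $X|_{\mc{Q}_a^X}$, write
\[(cd)=(c_0c_1)(c_1c_2)\cdots(c_{\ell-1}c_\ell)(c_{\ell-1}c_{\ell-2})\cdots(c_1c_0),\]
which uses $2\ell-1$ transpositions along the path, as in \Cref{lem:friend reduction}. The $Y$-path version is analogous.

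The one point to verify is that every intermediate configuration still meets the single-edge hypothesis. All the moves permute people only within $\mc{Q}_a^X$ and houses only within $\mc{Q}_a^Y$, and $a$ is returned to $a'$ each time. So under any intermediate $\si\circ\tau$, each person in $\mc{Q}_a^X$ is still a common friend of $a$ and $b$ and sits in a house of $\mc{Q}_a^Y\subset N_Y(a')$. This gives $(\mc{Q}_a^\tau)^X=\mc{Q}_a^X$ and $(\mc{Q}_a^\tau)^Y=\mc{Q}_a^Y$.

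This invariance is also what makes the path argument go through. For a $Y$-path, the roads between the current occupants' houses are fixed, so each step $(c_ic_{i+1})$ is applied to the people currently sitting at consecutive houses of the path. For an $X$-path, friendships travel with the people. Each elementary swap costs at most $3$, giving the total bound $3(2\ell-1)$.

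The $\mc{Q}_b$ statement follows by the same argument with $b,b'$ in place of $a,a'$. The main care point is the bookkeeping in the $Y$-path case: the conjugation formula should be read as swaps of the \emph{occupants} of houses $c_i',c_{i+1}'$ rather than of fixed people. Either reading produces the transposition exchanging the contents of $c'$ and $d'$, which is $\si\circ(cd)$.
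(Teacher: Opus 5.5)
Your proposal is correct and matches the paper's proof. You use the same two three-move gadgets $(ac)(cd)(ad)$ and $(ac)(ad)(ac)$ at the star around $a'$, the same $2\ell-1$ conjugation decomposition along the path, and the same invariance of $\mc{Q}_a^X$ and $\mc{Q}_a^Y$ that lets each elementary step be done by a gadget. Your ``occupant'' reading of the $Y$-path formula is exactly the paper's sequence $(cq_1)(cq_2)\cdots(cd)(dq_{\ell-1})\cdots(dq_1)$, in which $c$ walks along the path to $d'$ and $d$ walks back to $c'$.
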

\begin{proof}
    We show this first when $\ell = 1$, and then generalize to all $\ell$. If $\{c,d\}\in E(X)$, then we have the setup on the left side of \Cref{fig:cd swap Qa}, and $(ac)(cd)(ad)$ swaps $(cd)$ in 3 moves.
    \begin{figure}[h!]
        \centering
        \begin{tikzpicture}[
    every node/.style={circle, draw, inner sep=1.5pt},
    scale=0.9
]

\node (a) at (0,2) {$a$};
\node (ci) at ( -1.5,0) {$c$};
\node (ci+1) at ( 1.5,0) {$d$};

\draw[thick] (a) -- (ci);
\draw[thick] (a) -- (ci+1);

\draw[dashed, thick] (a) to[out=-145,in=60] (ci);
\draw[dashed, thick] (a) to[out=-35,in=120] (ci+1);
\draw[dashed, thick] (ci) -- (ci+1);

\node (a') at (5,2) {$a$};
\node (ci') at ( 3.5,0) {$c$};
\node (ci+1') at ( 6.5,0) {$d$};

\draw[thick] (a') -- (ci');
\draw[thick] (a') -- (ci+1');

\draw[dashed, thick] (a') to[out=-145,in=60] (ci');
\draw[dashed, thick] (a') to[out=-35,in=120] (ci+1');
\draw[thick] (ci') -- (ci+1');

\end{tikzpicture}
\caption{Transposition $(cd)$ in $\mc{Q}_a$ with $\{c,d\}\in E(X)$ shown on the left, and $\{c',d'\}\in E(Y)$ on the right.}
        \label{fig:cd swap Qa}
\end{figure}
Similarly, if $\{c',d'\}\in E(Y)$, then we have the setup on the right side of \Cref{fig:cd swap Qa}, and $(ac)(ad)(ac)$ swaps $(cd)$ in 3 moves. The permutation $\si$ only alters the pairs $(c,c')$ and $(d,d')$ into $(c,d')$ and $(d,c')$. Since $c$ and $d$ are adjacent to both $a$ and $b$, and $c'$ and $d'$ are adjacent to $a'$, we find $(c,d'),(d,c')\in \mc{Q}_a^{(cd)}$. Thus, even though the pairs are not the same, the $X$ and $Y$ projections of $\mc{Q}_a^{(cd)}$ are the same as their $\mc{Q}_a$ counterparts.

Now let $\ell$ be some positive integer. If $c=p_0,p_1,\dots,p_\ell = d$ is a path in $X|_{\mc{Q}_a^X}$, then use the sequence of $2\ell-1$ transpositions
\[(cp_1)(p_1p_2)\cdots (p_{\ell-1}d) \cdot (p_{\ell-1}p_{\ell-2})(p_{\ell-2}p_{\ell-3})\cdots (p_1c) = (cd).\]
Since $\mc{Q}_a^X$ and $\mc{Q}_a^Y$ are preserved under transpositions in $\mc{Q}_a$, the sets $\mc{Q}_a^X$ and $\mc{Q}_a^Y$ are invariant under this sequence. Each transposition uses elements from $\mc{Q}_a^X$ that are adjacent in $X$, so they can each be performed in at most $3$ moves. Thus, $d_{FS}(\si,\si\circ(cd))\le 3(2\ell-1)$.

Similarly, if $c' = q_0',q_1',\dots,q_{\ell-1}',q_\ell' = d'$ is a path in $Y|_{\mc{Q}_a^Y}$, then use the sequence of $2\ell-1$ transpositions
\[(cq_1)(cq_2)\cdots (cd) \cdot (dq_{\ell-1})(dq_{\ell-2})\cdots (dq_1) = (cd).\]
All transpositions use elements from $\mc{Q}_a^X$ that are adjacent in $Y$ when they are swapped, so by similar logic, we have $d_{FS}(\si,\si\circ(cd))\le 3(2\ell-1)$ and $\mc{Q}_a^X$ and $\mc{Q}_a^Y$ are invariant. An identical argument follows for pairs in $\mc{Q}_b$ connected by a path in $X|_{\mc{Q}_b^X}$ or $Y|_{\mc{Q}_b^Y}$.
\end{proof}

We will reduce to the case where the friends and strangers graph of $\mc{Q}_a^*$ is highly connected. The proof is technical and will examine closely both the graphs $X$ and $Y$. See that if the case $d_{FS}(\si,\si\circ(ab))\le \max\{\al,21\}$ does not hold, then we may reduce to the present setting.

\begin{lemma}\label{lem:swap cd in qa}
    For all adjacent pairs $a$ and $b$ and all $\si\in V(\FS)$, at least one of the following holds:
    \begin{itemize}
        \item $d_{FS}(\si,\si\circ(ab))\le \max\{\al,21\}$.
        \item For all pairs $(c,c'),(d,d')\in \mc{Q}_a$, we have 
        \[d_{FS}(\si,\si\circ (cd))\le 33 \quad\text{ and }\quad d_{FS}(\si\circ(ab),\si\circ(ab)\circ (cd))\le 33.\]
    \end{itemize}
\end{lemma}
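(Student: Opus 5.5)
The plan is to reduce the claim to \Cref{lem:Qa swap}. Since $3(2\ell-1)=33$ exactly when $\ell=6$, it suffices to show that, unless the first bullet holds, any two $c',d'\in\mc{Q}_a^Y$ are joined by a path of length at most $6$ inside $Y|_{\mc{Q}_a^Y}$. The proof of \Cref{lem:Qa swap} also allows a path mixing $X$-edges of $\mc{Q}_a^X$ and $Y$-edges of $\mc{Q}_a^Y$, since each elementary transposition costs $3$ moves either way. I would keep that as a fallback.

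\textbf{Step 1 (structure of $U=\si(N_X[a])$).} Since $N_X(a)\cap N_X(b)\neq\emptyset$, we have $X|_{N_X[a]}\supset\starp$, and $\mc{Q}_a^{*Y}\subset U$. Apply \Cref{lem:structure} with $v=a$. In case $(C1)$, or whenever $a'$ and $b'$ lie in a common component of $Y|_U$ (resp.\ of $Y|_{U\sm\{z'\}}$), we get $d_{FS}(\si,\si\circ(ab))\le\max\{\al,15\}$, which is the first bullet. Otherwise $(C2)$ or $(C3)$ holds with $a'$ and $b'$ separated. Every vertex of $\mc{Q}_a^Y$ is adjacent to $a'$, so $\mc{Q}_a^Y$ lies in the part $F$ containing $a'$, or $a'$ itself is the cut vertex. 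The latter subcase needs separate handling, similar to the cut-vertex cases in the proof of \Cref{lem:Ra}.

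\textbf{Step 2 (short paths inside $\mc{Q}_a^Y$).} I would lower bound degrees inside $\mc{Q}_a^{Y}$. By \Cref{lem:ban to S}, every $c'\in\mc{Q}_a^Y$ has no neighbours in $\mc{Q}_b^{*Y}$. Combining this with \eqref{eq:Qa+Qb} and inclusion--exclusion gives
\[|N_Y(c')\cap\mc{Q}_a^{*Y}|\ge \dd(Y)-\bigl(n-|\mc{Q}_a^*|-|\mc{Q}_b^*|\bigr)\ge 2\dd(X)+3\dd(Y)-3n+2\ge 2,\]
and more precisely at least $\dd(Y)+|\mc{Q}_a^*|+|\mc{Q}_b^*|-n$. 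The aim is to turn this into a bound of the form $\dd(Y|_{\mc{Q}_a^Y})>|\mc{Q}_a^Y|/k$ on each component, with $k=2$ or $3$. Then \Cref{lem:diam from delta(X)>n/k} bounds the diameter of each component by $3k-4\le 5\le 6$.

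\textbf{Step 3 (disconnected $Y|_{\mc{Q}_a^Y}$).} If $c'$ and $d'$ lie in different components of $Y|_{\mc{Q}_a^Y}$, I would use $X$: all of $\mc{Q}_a^X\subset N_X(a)\cap N_X(b)$. I would look for an $X$-edge between the two components, or a short mixed path. Failing that, I would produce a configuration from \Cref{lem:bangachev with P=1} or from the reductions above that swaps $a$ and $b$ cheaply, landing in the first bullet.

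\textbf{Step 4 (the configuration $\si\circ(ab)$).} After swapping, $a$ occupies $b'$ and $b$ occupies $a'$. The same pairs $(c,c')$ now have $c'$ adjacent to the house of $b$. The argument of \Cref{lem:Qa swap} therefore runs with $b$ in the role of $a$; for instance, $(bc)(bd)(bc)$ performs $(cd)$ when $\{c',d'\}\in E(Y)$. The path-length bound from Step 2 depends only on $Y$ and the sets $\mc{Q}_a^X,\mc{Q}_a^Y$, so it transfers verbatim and gives the same bound $33$.

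The main obstacle is Step 2. The degree inside $\mc{Q}_a^Y$ is only guaranteed to be about $2\dd(X)+3\dd(Y)-3n+2$, which may be as small as $2$. To reach a diameter of at most $6$ I expect to need the finer count $\dd(Y)+|\mc{Q}_a^*|+|\mc{Q}_b^*|-n$ together with an upper bound on $|\mc{Q}_a|$, so that this count is a constant fraction of $|\mc{Q}_a|$. If that fails, I would try routing paths through $N(\mc{Q}_a^{*Y})_2$ using \Cref{lem:size of S}.
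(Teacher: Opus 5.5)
Your reduction to \Cref{lem:Qa swap} with $\ell\le 6$ is a sensible target, but neither of your routes to the short path works.

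\textbf{The $Y$-only route.} A $Y$-only bound cannot come out of Step~2. Your count $\dd(Y)+|\mc{Q}_a^*|+|\mc{Q}_b^*|-n$ really can equal $2$. On the line $2\dd(X)+3\dd(Y)=3n$, with \eqref{eq:Qa+Qb} tight, there are only $\dd(Y)-2$ houses outside $\mc{Q}_a^{*Y}\cup\mc{Q}_b^{*Y}$. They can absorb every road of $c'$ except those to $a'$ and one partner. Meanwhile $|\mc{Q}_a|\ge\dd(X)+\dd(Y)-n$, which is about $\dd(X)/3$ there and hence unbounded. So nothing in your count excludes $Y|_{\mc{Q}_a^Y}$ being a matching. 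No upper bound on $|\mc{Q}_a|$ can turn the count into a constant fraction of $|\mc{Q}_a|$. The paper indeed has to treat the case $(Y_{c'}\cup Y_{e'})\cap(Y_{d'}\cup Y_{f'})=\emptyset$. Routing through $N(\mc{Q}_a^{*Y})_2$ does not help either. It leaves $\mc{Q}_a^Y$, whose outside occupants need not be friends of $a$, so \Cref{lem:Qa swap} no longer applies.

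\textbf{The mixed-path fallback.} This is false. The telescoping in \Cref{lem:Qa swap} works for a pure $X$-path because friendships travel with the people. It works for a pure $Y$-path because roads stay with the houses. The two cannot be mixed. Suppose $c,u\in\mc{Q}_a^X$ are friends and $u',d'$ are joined by a road. The two elementary swaps are right multiplication by $(cu)$ and left multiplication by the house transposition $(u'd')$. These commute, so they only reach $\si\circ\pi$ with $\pi\in\{\mathrm{id},(cu),(ud)\}$ or $\pi$ a $3$-cycle on $\{c,u,d\}$, never $\si\circ(cd)$. This is why the paper insists on pure paths: it needs $X_c\cap X_e\neq\emptyset$ to join $c$ to $e$ in $X$, even though $c'e'$ is a road. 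Consequently Step~3, which rests on mixed paths or an unspecified appeal to \Cref{lem:bangachev with P=1}, has no content. Step~1 never feeds into Steps~2 and~3.

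\textbf{The missing idea.} What is missing is a joint count of friendships and roads that also exploits $\mc{Q}_b$. The paper proceeds as follows.
\begin{enumerate}
\item It pairs $c$ with a road-neighbour $e'\in\mc{Q}_a^Y$ and $d$ with $f'$ (\Cref{lem:Qa internal edge}). Together, $(c,c')$ and $(e,e')$ have at least $2\dd(X)+2\dd(Y)$ friendships and roads.
\item After a reduction, a pair housed in $N(\mc{Q}_b^{*Y})$ is adjacent to at most two of $c,c',e,e'$. Its house is not in $\mc{S}$ by \Cref{lem:ban to S}, so it has at most one road into $\mc{Q}_a^{*Y}$. An extra friendship gives an explicit $11$-move swap of $a$ and $b$ through $\mc{Q}_b$.
\item The leftover set $\mc{T}$ of pairs housed outside $\mc{Q}_a^{*Y}\cup N(\mc{Q}_b^{*Y})$ has at most $|\mc{Q}_a^*|-3$ elements.
\item Together these force $|X_c\cup X_e|+|Y_{c'}\cup Y_{e'}|>|\mc{Q}_a|$, and likewise for $d,f$. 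So the two clusters share a friend or a road inside $\mc{Q}_a$.
\item This gives a pure $Y$-path of length at most $4$ or a pure $X$-path of length at most $6$, except when one of $X_c\cap X_e$, $X_d\cap X_f$ is empty. That case is closed either by a pair $(t,t')\in\mc{T}$ adjacent in both graphs to $c$, $d$ and $f$, which gives $(cd)$ in $7$ moves, or by a counting contradiction.
\end{enumerate}
Without this, or an equivalent argument that genuinely uses friendships inside $\mc{Q}_a^X$ and the structure around $\mc{Q}_b$, your proposal does not establish the $33$-move bound.
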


\begin{proof}
    We choose $(c,c'),(d,d')\in \mc{Q}_a$, and show that in the case where we cannot obtain a bound on $d_{FS}(\si,\si\circ(ab))$, we can instead bound $d_{FS}(\si,\si\circ(cd))$ and $d_{FS}(\si\circ(ab),\si\circ(ab)\circ(cd))$. The latter is the case where $a$ and $b$ have already been swapped. We are interested in the graphs $X|_{\mc{Q}_a^X},Y|_{\mc{Q}_a^Y}$. For the remainder of this proof, we will define the neighborhoods intersecting $\mc{Q}_a$ for $c,d,c'$, and $d'$; see \Cref{fig:XcYc}.
    \begin{align*}
        X_c &= N_X(c)\cap \mc{Q}_a^X,\quad  X_d = N_X(d)\cap \mc{Q}_a^X;\\
        Y_{c'} &= N_Y(c')\cap \mc{Q}_a^Y,\quad 
        Y_{d'} = N_Y(d')\cap \mc{Q}_a^Y.
    \end{align*}

\begin{figure}[ht]
\centering
\begin{tikzpicture}[
    scale=0.8,
    every node/.style={font=\small},
    v/.style={circle, fill=black, inner sep=2pt}
]


\fill[green!20]
    (-0.25,1.45) ellipse (3.05 and 1.45);

\fill[blue!20]
    (0,-1.15) ellipse (2.85 and 1.55);

\fill[red!20]
    (7.35,1.45) ellipse (3.1 and 1.45);

\fill[yellow!30]
    (7.15,-1.15) ellipse (2.9 and 1.55);


\node[v] (c) at (0,0) {};

\node[v] (cx1) at (-2.2,1.4) {};
\node[v] (cx2) at (-1.3,2.0) {};
\node[v] (cx3) at (-0.4,2.4) {};
\node[v] (cx4) at (0.8,2.4) {};
\node[v] (cx5) at (1.9,1.9) {};

\draw[dashed, thick] (c)--(cx1);
\draw[dashed, thick] (c)--(cx2);
\draw[dashed, thick] (c)--(cx3);
\draw[dashed, thick] (c)--(cx4);
\draw[dashed, thick] (c)--(cx5);

\node[v] (cy1) at (-1.7,-1.2) {};
\node[v] (cy2) at (-0.7,-1.8) {};
\node[v] (cy3) at (1.2,-1.9) {};
\node[v] (cy4) at (2.2,-1.25) {};

\draw[thick] (c)--(cy1);
\draw[thick] (c)--(cy2);
\draw[thick] (c)--(cy3);
\draw[thick] (c)--(cy4);

\node at (0.05,-0.6) {c};
\node at (-0.4,-2.1) {e};


\node[v] (d) at (7.3,0) {};

\node[v] (dx1) at (5.0,1.4) {};
\node[v] (dx2) at (6.1,2.0) {};
\node[v] (dx3) at (7.0,2.4) {};
\node[v] (dx4) at (8.1,2.4) {};
\node[v] (dx5) at (9.2,1.9) {};
\node[v] (dx6) at (10.1,1.4) {};

\draw[dashed, thick] (d)--(dx1);
\draw[dashed, thick] (d)--(dx2);
\draw[dashed, thick] (d)--(dx3);
\draw[dashed, thick] (d)--(dx4);
\draw[dashed, thick] (d)--(dx5);
\draw[dashed, thick] (d)--(dx6);

\node[v] (dy1) at (5.1,-1.3) {};
\node[v] (dy2) at (6.6,-1.8) {};
\node[v] (dy3) at (8.2,-1.8) {};
\node[v] (dy4) at (9.5,-1.25) {};

\draw[thick] (d)--(dy1);
\draw[thick] (d)--(dy2);
\draw[thick] (d)--(dy3);
\draw[thick] (d)--(dy4);

\node at (7.3,-0.6) {d};
\node at (8.5,-2.1) {f};


\draw[dashed, thick]
    (-0.25,1.45) ellipse (3.05 and 1.45);
\node at (-0.25,3.2) {$X_c$};

\draw[thick]
    (0,-1.15) ellipse (2.85 and 1.55);
\node at (0,-3.0) {$Y_{c'}$};

\draw[dashed, thick]
    (7.35,1.45) ellipse (3.1 and 1.45);
\node at (7.35,3.2) {$X_d$};

\draw[thick]
    (7.15,-1.15) ellipse (2.9 and 1.55);
\node at (7.15,-3.0) {$Y_{d'}$};


\draw[thick]
    (3.55,0.05) ellipse (7.7 and 4.35);

\node at (3.55,4.05) {$\mathcal{Q}_a$};

\end{tikzpicture}
\caption{The sets $X_c,X_d,Y_{c'}$, and $Y_{d'}$. Only edges internal to $\mc{Q}_a^X$ or $\mc{Q}_a^Y$ are pictured, and the graphs $X$ (dashed) and $Y$ (solid) are overlaid.}
\label{fig:XcYc}
\end{figure}
If either of the intersections $X_c \cap X_d$ or $Y_{c'} \cap Y_{d'}$ is nonempty, then there exists a path of length at most $2$ from $c$ to $d$ through $X|_{\mc{Q}_a^X}$ or from $c'$ to $d'$ in $Y|_{\mc{Q}_a^Y}$. By \Cref{lem:Qa swap}, $d_{FS}(\si,\si\circ(cd))\le 9$ here. \textbf{We may assume $X_c\cap X_d = \emptyset$ and $Y_{c'}\cap Y_{d'} = \emptyset$ for the remainder of the lemma.} By applying \Cref{lem:Qa internal edge} to $(c,c'),(d,d')\in \mc{Q}_a$, we find $e',f'\in \mc{Q}_a^Y$ such that $\{c',e'\},\{d',f'\}\in E(Y)$. Because $Y_{c'}$ and $Y_{d'}$ are disjoint, $e'\neq f'$. Recall from \Cref{lem:ban to S} that we may assume that no pair in $\mc{Q}_a$ is adjacent in both $X$ and $Y$ to a pair in $\mc{S}$. In particular, this holds for $(c,c'),(d,d'),(e,e'),(f,f')\in \mc{Q}_a$. 
\begin{claim}\label{claim:2 adjacencies of g}
    In the present setting, we may reduce to the case where any pair $(g,g')$ with $g'\in N(\mc{Q}_b^{*Y})$ is adjacent to at most two of $c,c',e,$ and $e'$.
\end{claim}
\begin{proof}
    Suppose that some pair $(g,g')$ with $g'\in N(\mc{Q}_b^{*Y})$ is adjacent in both $X$ and $Y$ to  $(c,c')$ (a symmetric argument holds for $(e,e')$ instead of $(c,c')$). Since $g'\in N(\mc{Q}_b^{*Y})$, there exists $(h,h')\in \mc{Q}_b^{*Y}$ such that $h'$ is adjacent to $g'$. Moreover, since $(g,g')\notin \mc{S}$ by \Cref{lem:ban to S}, $g'$ cannot have more than $1$ adjacency in $\mc{Q}_a^{*Y}$, so it must be that $N_Y(g')\cap \mc{Q}_a^{*Y} = \{c'\}$. In particular, $g'$ and $e'$ are nonadjacent. If $g$ and $e$ are adjacent, we have the setup in \Cref{fig:g in NXe}, and the swaps 
\[(cg)(eg)(ae)(ac)(bh)(ab)(ah)(bc)(be)(eg)(cg) =(ab)\]
demonstrate that $d_{FS}(\si,\si\circ(ab))\le 11$ in this case. If $b=h$, omit $(bh)$ and $(ah)$ to swap $a$ and $b$ in $9$ moves.
\begin{figure}[h!]
        \centering
        \begin{tikzpicture}[
    every node/.style={circle, draw, inner sep=1.5pt},
    scale=1
]

\node (a) at (-2,2) {$a$};
\node (b) at ( 2,2) {$b$};
\node (c) at ( -2,0) {$c$};
\node (e) at ( -1.2,0) {$e$};
\node (h) at ( 2,0) {$h$};
\node (g) at ( 0,-2) {$g$};

\draw[thick] (a) -- (c);
\draw[thick] (a) -- (e);
\draw[thick] (b) -- (h);
\draw[thick] (c) -- (g);
\draw[thick] (c) -- (e);
\draw[thick] (h) -- (g);

\draw[dashed, thick] (a) -- (b);
\draw[dashed, thick] (a) -- (h);
\draw[dashed, thick] (a) to[out=-100,in=100] (c);
\draw[dashed, thick] (a)  to[out=-80,in=120]  (e);
\draw[dashed, thick] (b) -- (c);
\draw[dashed, thick] (b) to[out=-75,in=75] (h);
\draw[dashed, thick] (b) -- (e);
\draw[dashed, thick] (c) to[out=-55,in=145] (g);
\draw[dashed, thick] (g) -- (e);

\end{tikzpicture}
\caption{The case $g\in N_X(e)$.}
        \label{fig:g in NXe}
\end{figure}
Thus, we reduce to the case where $e$ and $g$ are nonadjacent. We have shown that any pair $(g,g')$ with $g'\in N(\mc{Q}_b^{*Y})$ is adjacent in $X$ and $Y$ to $(c,c')$, then $(g,g')$ is nonadjacent in both $X$ and $Y$ to $(e,e')$. By a symmetric argument, if $(g,g')$ is adjacent to $e$ and $e'$, then we reduce to the case where $(g,g')$ is nonadjacent to both $c$ and $c'$. If $(g,g')$ is not adjacent to both $c$ and $c'$ nor adjacent to both $e$ and $e'$, then it is adjacent to at most $2$ of $c,c',e$, and $e'$. Thus, we may reduce to the case where all $(g,g')$ with $g'\in N(\mc{Q}_b^{*Y})$ are adjacent to at most two of $c,c',e$, and $e'$, proving the claim.
\end{proof}

We use this claim to limit the number of adjacencies (in $X$ or $Y$) between pairs in $\mc{Q}_a$ and pairs whose $Y$ coordinate lies in $N(\mc{Q}_b^{*Y})$. Note that  $\mc{Q}_a^Y$ and $N(\mc{Q}_b^{*Y})$ are disjoint by \Cref{lem:ban to S}. We define $\mc{T}$ to be the set of pairs whose $Y$ coordinate lies in the complement of $\mc{Q}_a^{*Y}\cup N(\mc{Q}_b^{*Y})$:
\begin{equation}\label{eq:def of T}
    \mc{T}:=\{(t,t'): t'\notin  \mc{Q}_a^{*Y}\cup N(\mc{Q}_b^{*Y})\}.
\end{equation}
Recall from \eqref{eq:bound on Qa, Qa*} that $|\mc{Q}_a^*|\ge \dd(X)+\dd(Y)-n+1$. Since $\mc{Q}_a^{*Y}$ and $N(\mc{Q}_b^{*Y})$ are disjoint, we bound the size of $\mc{T}$.  Thus,
\begin{align}\label{eq:upper bound T}
    |\mc{T}|&\le n-|\mc{Q}_a^*|-|N(\mc{Q}_b^{*Y})| \\
    \nonumber&\le  2n-\dd(X)-2\dd(Y)-2.\\
    \nonumber&\le |\mc{Q}_a^*|-3.
\end{align}
By construction, $N(\mc{Q}_b^{*Y})^C = \mc{Q}_a^{*Y}\sqcup \mc{T}^Y$. Recall that $|N(\mc{Q}_b^{*Y})|\ge \dd(Y)+1$ by \eqref{eq:NQb*Y}, so we may obtain
\begin{equation}\label{eq:Qa*+T}
    |\mc{Q}_a^*|+|\mc{T}| \le n-|N(\mc{Q}_b^{*Y})|\le n-\dd(Y)-1 \le 2\dd(X)+2\dd(Y)-2n-1.
\end{equation}
We let $(\# c)$ and $(\# c)|_{\mc{Q}_a}$ denote the number of adjacencies of $(c,c')$ to pairs in $\mc{Q}_a^*\sqcup \mc{T}$ and $\mc{Q}_a$, respectively:
\begin{align}\label{eq:num c}
    (\# c) &:= |\si(N_X(c))\cap (\mc{Q}_a^{*Y}\sqcup \mc{T}^{Y})|+|N_Y(c')\cap (\mc{Q}_a^{*Y}\sqcup \mc{T}^{Y})|.\\
    \nonumber(\# c)|_{\mc{Q}_a} &:= |\si(N_X(c))\cap \mc{Q}_a^Y|+|N_Y(c')\cap \mc{Q}_a^Y|=|X_c|+|Y_{c'}|.
\end{align}
We define $(\# d),(\# e),(\# f)$ and their counterparts restricted to $\mc{Q}_a$ similarly. 
Recall that by \Cref{claim:2 adjacencies of g} that pairs in $N(\mc{Q}_b^{*Y})$ have at most two adjacencies to $c,c',e$ and $e'$. Combined, $(c,c')$ and $(e,e')$ have at least $2\dd(X)+2\dd(Y)$ adjacencies. Subtracting at most two adjacencies for each element of $N(\mc{Q}_b^{*Y})$, we obtain:
\begin{align}
\label{eq:c+e}(\#c)+(\#e) &\ge 2\dd(X)+2\dd(Y)-2|N(\mc{Q}_b^{*Y})|\\
\nonumber&=2\dd(X)+2\dd(Y)-2n+2|\mc{Q}_a^*|+2|\mc{T}|.
\end{align}
Moreover, elements of $\mc{T}$ may (trivially) be adjacent to at most four vertices of $c,c',e,$ and $e'$. Since we are restricting to $\mc{Q}_a$ which does not include $(a,a')$, we subtract another at most four adjacencies from $(a,a')$.  This yields the following bound on $(\# c)|_{\mc{Q}_a}+(\#e)|_{\mc{Q}_a}$:
\begin{align}\label{eq:cQa+eQa}
      (\# c)|_{\mc{Q}_a}+(\#e)|_{\mc{Q}_a}&\ge  2\dd(X)+2\dd(Y)-2|N(\mc{Q}_b^{*Y})|-4|\mc{T}|-4\\
    \nonumber&\ge 2|\mc{Q}_a|+2\dd(X)+2\dd(Y)-2n-2|\mc{T}|-2\\
    \nonumber &\ge 2|\mc{Q}_a|+2,
\end{align}
where the second line follows because $|N(\mc{Q}_b^{*Y})|+|\mc{T}|+|\mc{Q}_a|+1 = n$, and the last line follows by the bound on $|\mc{T}|$ from the second line of \eqref{eq:upper bound T}. In particular, $(\# c)|_{\mc{Q}_a}+(\#e)|_{\mc{Q}_a}>2|\mc{Q}_a|$. Analogous inequalities to \eqref{eq:c+e} and \eqref{eq:cQa+eQa} hold for $(\#d)+(\#f)$ and $(\#d)|_{\mc{Q}_a}+(\#f)|_{\mc{Q}_a}$. Recall the definition of $(\#c)|_{\mc{Q}_a}$ from \eqref{eq:num c}. We use the trivial bound on subsets $|X_c\cup X_e|\ge \frac12|X_c|+\frac12|X_e|$ to produce the following inequalities:
\begin{align}\label{eq:Xc cup Xe + Yc cup Ye}
    |X_c\cup X_e|+|Y_{c'}\cup Y_{e'}|&\ge \frac12\Big[(\# c)|_{\mc{Q}_a}+(\#e)|_{\mc{Q}_a}\Big]>|\mc{Q}_a|;\\
    \nonumber|X_d\cup X_f|+|Y_{d'}\cup Y_{f'}|&\ge \frac12\Big[(\# d)|_{\mc{Q}_a}+(\#f)|_{\mc{Q}_a}\Big]>|\mc{Q}_a|.
\end{align}
This implies that either $(X_c\cup X_e)\cap (X_d\cup X_f)$ or $(Y_{c'}\cup Y_{e'})\cap (Y_{d'}\cup Y_{f'})$ is nonempty, since they are subsets of $\mc{Q}_a^X$ and $\mc{Q}_a^Y$, respectively. If $(Y_{c'}\cup Y_{e'})\cap (Y_{d'}\cup Y_{f'})$ is nonempty, then there exists vertex $v'$ in their intersection. Since $c'$ and $e'$ are adjacent and $d'$ and $f'$ are adjacent, there exists a path of length at most $4$ in $Y|_{\mc{Q}_a^Y}$ from $c'$ to $d'$ that has $v'$ as an intermediate vertex. By \Cref{lem:Qa swap}, $d_{FS}(\si,\si\circ(cd))\le 21$. \textbf{Thus, we may assume $(Y_{c'}\cup Y_{e'})\cap (Y_{d'}\cup Y_{f'}) = \emptyset$ and $(X_c\cup X_e)\cap (X_d\cup X_f)\neq \emptyset$ for the remainder of the lemma.}

We consider cases by which of the sets $X_c\cap X_e$ and $X_d\cap X_f$ are empty, and we first rule out the case where they are both empty. Since we now assume that $(Y_{c'}\cup Y_{e'})\cap (Y_{d'}\cup Y_{f'}) = \emptyset$, it must be that
\begin{equation}\label{eq:Yc+Yd+Ye+Yf}
    |Y_{c'}|+|Y_{d'}|+|Y_{e'}|+|Y_{f'}|\le 2|Y_{c'}\cup Y_{e'}|+2|Y_{d'}\cup Y_{f'}|\le 2|\mc{Q}_a|.
\end{equation}
However, adding the inequalities in \eqref{eq:cQa+eQa} for $(\# c)|_{\mc{Q}_a}+(\#e)|_{\mc{Q}_a}$ and $(\# d)|_{\mc{Q}_a}+(\#f)|_{\mc{Q}_a}$ and bounding the $Y$ contribution by \eqref{eq:Yc+Yd+Ye+Yf}, we find that:
\begin{align}\label{eq:Xc+Xd+Xe+Xf}
    &|X_c|+|X_d|+|X_e|+|X_f| \\
    \nonumber&= (\# c)|_{\mc{Q}_a}+(\#e)|_{\mc{Q}_a}+(\# d)|_{\mc{Q}_a}+(\#f)|_{\mc{Q}_a}-|Y_{c'}|-|Y_{d'}|-|Y_{e'}|-|Y_{f'}|\\
    \nonumber&\ge 2|\mc{Q}_a|+4.
\end{align}
This implies that either $|X_c|+|X_e|>|\mc{Q}_a|$ or $|X_d|+|X_f|>|\mc{Q}_a|$, so at least one of $X_c\cap X_e$ and $X_d\cap X_f$ is nonempty.

Moreover, if both $X_c\cap X_e$ and $X_d\cap X_f$ are nonempty, then there exist paths of length at most $2$ from $c$ to $e$ and from $d$ to $f$ in $X|_{\mc{Q}_a^X}$. Recall that we may assume $(X_c\cup X_e)\cap (X_d\cup X_f)\neq \emptyset$. Thus, there exists some vertex $v$ in their intersection, and there exists a path in $X|_{\mc{Q}_a^X}$ from $c$ to $d$ through $v$ of length at most $6$. By \Cref{lem:Qa swap}, $d_{FS}(\si,\si\circ(cd))\le 33$.

Thus, we reduce to the case where exactly one of $X_c\cap X_e,X_d\cap X_f$ is empty. \textbf{We may assume $X_c \cap X_e = \emptyset$ and $X_d\cap X_f\neq\emptyset$ without loss of generality.} By the lower bound \eqref{eq:Xc+Xd+Xe+Xf}, there exists $(g,g')\in \mc{Q}_a$ such that $g$ is in at least $3$ of $X_c,X_d,X_e$, and $X_f$. If $g\in X_c$, then because $g\in X_d$ or $g\in X_f$ as well (and since $X_d\cap X_f\neq\emptyset$), there exists a path of length at most $4$ in $X|_{\mc{Q}_a^X}$ from $c$ to $d$. By \Cref{lem:Qa swap}, $d_{FS}(\si,\si\circ(cd))\le 21$. Thus, we may assume $g\notin X_c$ and $X_d\cap X_e\cap X_f$ is nonempty.

Let us now suppose that there exists $(t,t')\in \mc{T}$ adjacent to $c,c',d,d',f,$ and $f'$. Recall that $d'$ and $f'$ are adjacent, so the setup is given by \Cref{fig:T to swap}. The sequence
\[(ad)(af)(tf)(af)(ac)(ad)(tc)=(cd)\]
shows that $d_{FS}(\si,\si\circ(cd))\le 7$.
\begin{figure}[h!]
        \centering
        \begin{tikzpicture}[
    every node/.style={circle, draw, inner sep=1.5pt},
    scale=1
]

\node (a) at (0,2) {$a$};
\node (c) at ( -2,0) {$c$};
\node (d) at ( 0,0) {$d$};
\node (f) at ( 2,0) {$f$};
\node (t) at ( 0,-2) {$t$};

\draw[thick] (a) -- (c);
\draw[thick] (a) -- (d);
\draw[thick] (a) -- (f);
\draw[thick] (t) -- (c);
\draw[thick] (t) -- (d);
\draw[thick] (t) -- (f);
\draw[thick] (f) -- (d);

\draw[dashed, thick] (a) to[out=-100,in=100] (d);
\draw[dashed, thick] (a) to[out=-30,in=120] (f);
\draw[dashed, thick] (a) to[out=-150,in=70] (c);
\draw[dashed, thick] (t) to[out=100,in=-100] (d);
\draw[dashed, thick] (t) to[out=30,in=-120] (f);
\draw[dashed, thick] (t) to[out=150,in=-70] (c);

\end{tikzpicture}
\caption{Swapping $c$ and $d$ using a common friend $(t,T)\in \mc{T}$.}
    \label{fig:T to swap}
\end{figure}
We may assume no such pair $(t,t')$ exists, so every pair in $\mc{T}$ is nonadjacent to at least one of $c,c',d,d',f$, or $f'$. Let 
\[c_{\mc{T}} = |\{(t,t')\in \mc{T}:\{c,t\}\notin E(X) \text{ or }\{c',t'\}\notin E(Y)\}|.\]
Let $d_{\mc{T}},f_{\mc{T}}$ be defined analogously for $d,d'$ and $f,f'$ respectively. Since we may assume each pair in $\mc{T}$ has a nonadjacency to at least one of $c,c',d,d',f$, and $f'$, every pair contributes to at least one of $c_{\mc{T}},d_{\mc{T}}$, or $f_{\mc{T}}$. Thus,
\begin{equation}\label{eq:c_T}
    c_{\mc{T}}+d_{\mc{T}}+f_{\mc{T}}\ge |\mc{T}|.
\end{equation}
Note that $(c,c')$ has at least $c_{\mc{T}}$ nonadjacencies (in $X$ and $Y$ combined) to $\mc{T}$, and analogous statements hold for $(d,d')$ and $(f,f')$. The bound in \eqref{eq:cQa+eQa} is obtained by subtracting $4|\mc{T}|$ from \eqref{eq:c+e}, which assumes that every pair $(t,t')\in \mc{T}$ is adjacent to $c,c',e$, and $e'$. However, we now assume that $c$ and $c'$ are nonadjacent to a total of $c_\mc{T}$ pairs in $\mc{T}$ and we have analogous statements for $(d,d')$ and $(f,f')$, so we obtain the following bounds by restricting \eqref{eq:c+e} to $\mc{Q}_a$:
\begin{align}\label{eq:cQa and eQa after T}
    (\# c)|_{\mc{Q}_a}+(\#e)|_{\mc{Q}_a}&\ge 2|\mc{Q}_a|+2\dd(X)+2\dd(Y)-2n-2|\mc{T}|+c_{\mc{T}}-2,\\
    \nonumber(\# d)|_{\mc{Q}_a}+(\#f)|_{\mc{Q}_a}&\ge 2|\mc{Q}_a|+2\dd(X)+2\dd(Y)-2n-2|\mc{T}|+d_{\mc{T}}+f_{\mc{T}}-2.
\end{align}
Since $X_c,X_e$ are disjoint,  $|X_c|+|X_e|\le |\mc{Q}_a|$. Trivially, $|X_d|+|X_f|\le 2|\mc{Q}_a|$ since they are subsets of $\mc{Q}_a^X$. Subtracting these upper bounds on $|X_c|+|X_e|$ and $|X_d|+|X_f|$ from \eqref{eq:cQa and eQa after T}, we obtain:
\begin{align}\label{eq:second to last Yc bounds}
    |Y_{c'}|+|Y_{e'}|&\ge |\mc{Q}_a|+2\dd(X)+2\dd(Y)-2n-2|\mc{T}|+c_{\mc{T}}-2,\\
    \nonumber|Y_{d'}|+|Y_{f'}|&\ge 2\dd(X)+2\dd(Y)-2n-2|\mc{T}|+d_{\mc{T}}+f_{\mc{T}}-2.
\end{align}
We appeal to the bounds of the form $|Y_{c'}\cup Y_{e'}|\ge \frac12|Y_{c'}|+\frac12|Y_{e'}|$ to make use of \eqref{eq:second to last Yc bounds}. We make use of upper bounds on $|\mc{T}|$ from \eqref{eq:upper bound T} and on $|\mc{Q}_a|+|\mc{T}|$ from \eqref{eq:Qa*+T} (with $\mc{Q}_a$ instead of $\mc{Q}_a^*$), and the lower bound on $c_{\mc{T}}+d_{\mc{T}}+f_{\mc{T}}$ in \eqref{eq:c_T} to show:
\begin{align}\label{eq:final Yc}
    |Y_{c'}\cup Y_{e'}|+ |Y_{d'}\cup Y_{f'}|&\ge \frac12\Big(|Y_{c'}|+|Y_{d'}|+|Y_{e'}|+|Y_{f'}|\Big)\\
    &\ge \frac12|\mc{Q}_a|+2\dd(X)+2\dd(Y)-2n-\nonumber 2|\mc{T}|+\frac12(c_{\mc{T}}+d_{\mc{T}}+f_{\mc{T}})-2\\
    &\ge \frac12|\mc{Q}_a|+|\mc{Q}_a|+|\mc{T}|+2-\nonumber 2|\mc{T}|+\frac12|\mc{T}|-2\\
    \nonumber&=|\mc{Q}_a| +\frac12|\mc{Q}_a|-\frac12|\mc{T}|\\
    \nonumber&>|\mc{Q}_a|.
\end{align}
It follows from \eqref{eq:final Yc} that $Y_{c'}\cup Y_{e'},Y_{d'}\cup Y_{f'}$ intersect, which contradicts our reduction that $(Y_{c'}\cup Y_{e'})\cap(Y_{d'}\cup Y_{f'}) = \emptyset$. Thus, after our reductions, there always exists a pair in $\mc{T}$ adjacent to $c,c',d,d',f$, and $f'$. 

We now justify the lemma for $d_{FS}(\si\circ(ab),\si\circ(ab)\circ (cd))$. Recall that $a$ and $b$ are both adjacent to $\mc{Q}_a^X\cup \mc{Q}_b^X$, and these are the only adjacencies of $a$ and $b$ that are required from the proof. In particular, the only elements of the proof that require structure outside $\mc{Q}_a$ are the bounds on $(\#c)+(\# e)$ and the use of $|\mc{T}|$. In both cases, there are no adjacencies required of $a$ or $b$ outside of $\mc{Q}_a^X\cup \mc{Q}_b^X$. Thus, $d_{FS}(\si\circ(ab),\si\circ(ab)\circ (cd))\le 33$, concluding the proof.
\end{proof}

We leverage this lemma to allow us to perform a sequence of swaps between elements in $\mc{Q}_a^X$. If any of these swaps cannot be performed in $33$ moves, then by \Cref{lem:swap cd in qa} we may swap $a$ and $b$ in $\max\{\al,21\}$ moves from this configuration. After swapping $a$ and $b$, we may undo each of the swaps in $\mc{Q}_a$ in $33$ moves each by case 2 of \Cref{lem:swap cd in qa}.

We now introduce the framing that will finish the argument. Recall the definition of $\mc{S}$ from \eqref{eq:def of S} as consisting of houses with at least two adjacencies in $\mc{Q}_a^{*Y}$ and at least one adjacency in $\mc{Q}_b^{*Y}$. We will examine the shortest path through $X$ from $\mc{Q}_a^X$ to $\mc{S}^X$, and we will denote the length of this path as $d_X(\mc{Q}_a^X,\mc{S}^X)$. Since $\mc{Q}_a^{*X}\cup \mc{S}^X$ is large, we prove that vertices outside this set have at least three adjacencies in $\mc{Q}_a^{*X}\cup \mc{S}^X$.

\begin{lemma}\label{lem:c at least 3 neighbors}
    In the present setting, let $c\in V(X)\sm (\mc{Q}_a^{*X}\sqcup \mc{S}^X)$. Then $c$ is adjacent to at least three vertices in $\mc{Q}_a^{*X}\sqcup \mc{S}^X$. Moreover, $d_X(\mc{Q}_a^X,\mc{S}^X)\le 3$.
\end{lemma}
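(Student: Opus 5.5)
The plan is to bound the size of the complement $W := V(X)\sm(\mc{Q}_a^{*X}\sqcup \mc{S}^X)$ from above and then apply inclusion--exclusion to $N_X(c)$. Note that $\mc{S}^Y\subset N(\mc{Q}_a^{*Y})_2$. Every vertex of $\mc{Q}_a^{*Y}$ also lies in $N(\mc{Q}_a^{*Y})_2$, as shown in the proof of \Cref{lem:size of S}, but a vertex of $\mc{Q}_a^{*Y}$ has no neighbour in $\mc{Q}_b^{*Y}$ by \Cref{lem:ban to S}(1). Hence $\mc{Q}_a^{*Y}$ and $\mc{S}^Y$ are disjoint, and
\[|\mc{Q}_a^*|+|\mc{S}|\ge (\dd(X)+\dd(Y)-n+1)+(2\dd(Y)-n+1)\]
by \eqref{eq:bound on Qa, Qa*} and \Cref{lem:size of S}. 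Therefore
\[|W|\le n-|\mc{Q}_a^*|-|\mc{S}|\le 3n-\dd(X)-3\dd(Y)-2\le \dd(X)-2,\]
where the last step uses $3\dd(Y)\ge 3n-2\dd(X)$.

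For $c\in W$ we have $c\notin N_X(c)$. So at most $|W|-1\le \dd(X)-3$ of the at least $\dd(X)$ neighbours of $c$ lie in $W$, and at least $3$ lie in $\mc{Q}_a^{*X}\sqcup\mc{S}^X$. This gives the first claim. The bound is only just tight enough, so I would double-check the constant in \Cref{lem:size of S} together with $c\notin N_X(c)$; these are exactly what produce the count of three.

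For $d_X(\mc{Q}_a^X,\mc{S}^X)\le 3$, fix any $c\in \mc{Q}_a^X$ (nonempty by \Cref{lem:Qb nonempty}) and any $s\in\mc{S}^X$ (nonempty since $|\mc{S}|\ge 2\dd(Y)-n+1\ge 1$). If $N_X(c)$ meets $\mc{S}^X$, the distance is $1$. Otherwise, look at $N_X[c]$ and $N_X[s]$. Each has size at least $\dd(X)+1$, and both are contained in the set $V(X)$ of size $n$.

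Since $X$ need not have $\dd(X)>n/2$, I would instead use the set $W$. If some neighbour $w$ of $c$ lies in $W$, then by the first part $w$ has at least three neighbours in $\mc{Q}_a^{*X}\sqcup\mc{S}^X$. Two of them may be $a$ or $b$, which forces a careful choice. The cleaner route is to bound the quantity directly: if $N_X(c)\subset \mc{Q}_a^{*X}\cup W$ and no vertex of $N_X(c)\cup N_X(N_X(c))$ touches $\mc{S}^X$, derive a contradiction by counting. Concretely, take a vertex $x\in N_X(c)$ and a vertex $y\in N_X(s)$. If $x\in \mc{Q}_a^{*X}\sqcup W$ and $y\in W$, I want $x$ and $y$ adjacent or equal. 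The argument is that each vertex of $W$ sends at least $3$ edges into $\mc{Q}_a^{*X}\sqcup\mc{S}^X$, while each vertex of $\mc{S}^X$ has at least $\dd(X)-|W|\ge 2$ neighbours outside $W$. If those outside neighbours lie in $\mc{Q}_a^{*X}$ other than $a,b$, the distance is $1$; otherwise the path goes through $W$, giving a path $c$--$w$--$s$ or $c$--$q$--$w$--$s$ of length at most $3$.

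The main obstacle is handling paths that must pass through $a$ or $b$. Since $a,b\in\mc{Q}_a^{*X}$ but $b\notin\mc{Q}_a^X$, I would treat $a$ as a legitimate intermediate vertex (all of $\mc{Q}_a^X$ is adjacent to $a$). So if $a$ has a neighbour in $\mc{S}^X$, or a neighbour in $W$ adjacent to $\mc{S}^X$, we get distance at most $3$ via $c$--$a$--$\cdots$. The first claim applied to the vertices of $W$ adjacent to $\mc{S}^X$ should close the remaining case.
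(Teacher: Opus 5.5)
Your first part is correct and is essentially the paper's argument. $\mc{Q}_a^{*}$ and $\mc{S}$ are disjoint by \Cref{lem:ban to S}(1), the size bounds give $|W|\le \dd(X)-2$, and inclusion--exclusion on $N_X(c)$ leaves at least three neighbours in $\mc{Q}_a^{*X}\sqcup\mc{S}^X$.

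The distance bound, however, is not proved, and your sketch cannot be completed as written.
\begin{itemize}
\item Fixing a particular $c\in\mc{Q}_a^X$ and $s\in\mc{S}^X$ aims at a stronger statement (that this specific $s$ is within distance $3$ of $c$), which nothing in the setting guarantees.
\item The local counting never forces a short path. The at least $\dd(X)-|W|$ neighbours of $s$ outside $W$ may all lie in $\mc{S}^X$. Likewise, a vertex $w\in W$ adjacent to $s$ may have all three of its neighbours from the first part in $\mc{S}^X$. So neither ``distance $1$'' nor a path $c$--$q$--$w$--$s$ follows, and there is no reason for $x\in N_X(c)$ and $y\in N_X(s)$ to be adjacent or equal.
\item A small slip: $b\notin\mc{Q}_a^{*X}$. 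In fact $b\in W$, since $b'$ has no neighbour in $\mc{Q}_a^{*Y}$, so only $a$ can be among those three neighbours.
\end{itemize}

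The real issue is that you never use the hypothesis that $X$ is connected, and no minimum-degree count can replace it. The set $\mc{S}^X$ is determined by $Y$ and $\si$, not by $X$. All of your local counts are consistent with $X$ splitting into one part containing $a$, $b$, $\mc{Q}_a^X$ and another part containing $\mc{S}^X$.

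The missing step goes as follows.
\begin{itemize}
\item Each $w\in W$ has a neighbour in $\mc{Q}_a^X\sqcup\mc{S}^X$, because at most one of its three neighbours is $a$.
\item The vertex $a$ is adjacent to every vertex of $\mc{Q}_a^X\neq\emptyset$.
\end{itemize}
Hence $V(X)=N_X[\mc{Q}_a^X]\cup N_X[\mc{S}^X]$. If these closed neighbourhoods intersect, the distance is at most $2$. Otherwise they partition $V(X)$ into two nonempty parts. Connectivity of $X$ then gives an edge between them, and hence a path of length at most $3$.
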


\begin{proof}
    By \Cref{lem:ban to S} we may assume $\mc{Q}_a^{*Y}$ and$\mc{Q}_b^{*Y}$ are nonadjacent, so $\mc{Q}_a^*$ and $\mc{S}$ are disjoint. We lower bound the sizes of $|\mc{Q}_a^*|$ and $|\mc{S}|$ using \eqref{eq:bound on Qa, Qa*} and \eqref{eq:lower bound S}, respectively: 
    \begin{equation}\label{eq:QaX and SX}
        |\mc{Q}_a^{*X}\sqcup \mc{S}^X|\ge \dd(X)+3\dd(Y)-2n+2.
    \end{equation}
    Note that $|N_X[c]|\ge \dd(X)+1$, so by inclusion-exclusion using \eqref{eq:QaX and SX}:
    \begin{align}\label{eq:c neighbors in QaX or SX}
        |N_X[c]\cap (\mc{Q}_a^{*X}\sqcup \mc{S}^X)|&\ge  2\dd(X)+3\dd(Y)+3-3n\ge 3.
    \end{align}
    Since $c\notin \mc{Q}_a^{*X}\sqcup \mc{S}^X$, \eqref{eq:c neighbors in QaX or SX} implies that $c$ is adjacent to at least three elements of $\mc{Q}_a^{*X}\sqcup \mc{S}^X$.

    Since every vertex outside $\mc{Q}_a^{X}\sqcup \mc{S}^X$ is adjacent to $\mc{Q}_a^{X}\sqcup \mc{S}^X$, we have $N[\mc{Q}_a^X]\cup N[\mc{S}^X] = V(X)$. If $N[\mc{Q}_a^X]\cap N[\mc{S}^X]\neq\emptyset$, then $d_X(\mc{Q}_a^X,\mc{S}^X)\le 2$. If $N[\mc{Q}_a^X]\cap N[\mc{S}^X]=\emptyset$, then because $X$ is connected there exists an edge between $N[\mc{Q}_a^X]$ and $N[\mc{S}^X]$. This edge yields a path of length $3$, so $d_X(\mc{Q}_a^X,\mc{S}^X)\le 3$; see \Cref{fig:Layout of X}.
\begin{figure}[h!]
    \centering
    
\begin{tikzpicture}[scale=1, every node/.style={font=\large}]
  \draw (-5.5,0) circle (1.3);
  \node at (-5.5,0) {$\mc{Q}_a^X$};

  \draw (-2,0) circle (1.3);
  \node[font=\small] at (-2,0) {$N(\mc{Q}_a^{X})\back \mc{Q}_a^{X}$};

  \draw (0,-2) -- (0,2);

  \draw (2,0) circle (1.3);
  \node[font=\small] at (2,0) {$N(\mc{S}^X)\back \mc{S}^X$};

  \draw (5.5,0) circle (1.3);
  \node[align=center] at (5.5,0)
    {$\mc{S}^X$};

  \draw[dashed] (-4.2,0) -- (-3.3,0);
  \draw[dashed] (3.3,0) -- (4.2,0);
\end{tikzpicture}
\caption{Finding a path of length $\le 3$ between $\mc{Q}_a^{X}$ and $\mc{S}^X$.}
    \label{fig:Layout of X}
\end{figure}
\end{proof}

A path of small length between $\mc{Q}_a^X$ and $\mc{S}^X$ is key to swapping $a$ and $b$. However, the nature of our solution will be different if either $a$ or $b$ is included in this path. We first consider the more general case where there exists a minimal length path from $\mc{Q}_a^X$ to $\mc{S}^X$ that does not include $a$ or $b$. The case $d_X(\mc{Q}_a^X,\mc{S}^X)=3$ is most difficult, so we first resolve the cases $d_X(\mc{Q}_a^X,\mc{S}^X)=1$ and $d_X(\mc{Q}_a^X,\mc{S}^X)=2$. Recall that $\al$ is the constant given by \Cref{lem:starp bound}.

\begin{lemma}\label{lem:dX=1 or 2}
    In the present setting, suppose there exists a path from $\mc{Q}_a^X$ to $\mc{S}^X$ of minimal length that does not include $a$ or $b$. If $d_X(\mc{Q}_a^X,\mc{S}^X)\le 2$, then 
    \[d_{FS}(\si,\si\circ(ab))\le 76+\max\{3\al,219\}.\]
\end{lemma}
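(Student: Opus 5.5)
The plan is to move people into a small configuration from which $a$ and $b$ can be exchanged by an earlier lemma, and then undo the preliminary moves. The target is the setup of \Cref{lem:bangachev with P=1}, or in the length-$1$ case the simpler five-move pattern from its opening reduction. Fix a minimal path from $\mc{Q}_a^X$ to $\mc{S}^X$ avoiding $a,b$: either $c,d$ with $\{c,d\}\in E(X)$, or $c,e,d$, where $(c,c')\in\mc{Q}_a$ and $(d,d')\in\mc{S}$. By minimality, $e\notin \mc{Q}_a^{*X}\cup\mc{S}^X$ and $e\neq a,b$. By definition of $\mc{S}$, the house $d'$ has two distinct neighbours $g',h'\in\mc{Q}_a^{*Y}$ and a neighbour $f'\in\mc{Q}_b^{*Y}$. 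We may relabel so that if $a'\in\{g',h'\}$ then $g'=a'$.

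\textbf{Step 1 (reposition $c$).} Note $h'\neq a'$. If $c'\neq h'$, let $h$ be the occupant of $h'$, so $(h,h')\in\mc{Q}_a$, and perform $(ch)$. By \Cref{lem:swap cd in qa} this costs at most $33$ moves; otherwise $d_{FS}(\si,\si\circ(ab))\le\max\{\al,21\}$ and we are done. By \Cref{lem:Qa swap}, the projections $\mc{Q}_a^X$ and $\mc{Q}_a^Y$ are unchanged. The people $d$ and $e$ lie outside $\mc{Q}_a^X$ and are untouched. Now $c$ occupies $h'$, which is a road-neighbour of $d'$.

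\textbf{Step 2 (reposition $a$ and $b$).} If $g'\neq a'$, the current occupant $g$ of $g'$ is in $\mc{Q}_a$. Hence $g$ is a friend of $a$ and $g'$ is adjacent to $a'$, so the single swap $(ag)$ is legal. Similarly, if $f'\neq b'$, the occupant $f$ of $f'$ lies in $\mc{Q}_b$ and $(bf)$ is one legal move. These swaps do not involve $c$, $d$ or $e$, and $(ag)$ does not disturb $c$ because $g'\neq h'$. Call the resulting configuration $\tau$. Under $\tau$:
\begin{itemize}
\item $a$, $b$ and $c$ are pairwise friends, since $c\in N_X(a)\cap N_X(b)$;
\item $d$'s house $d'$ has roads to the houses of $a$, $b$ and $c$;
\item either $\{c,d\}\in E(X)$, or $e$ is a common friend of $c$ and $d$.
\end{itemize}

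\textbf{Step 3 (swap and undo).} In the length-$1$ case, the sequence $(cd)(ac)(ab)(bc)(cd)$ performs $(ab)$ from $\tau$ in $5$ moves. In the length-$2$ case, $\tau$ satisfies the hypotheses of \Cref{lem:bangachev with P=1}, with the five distinct pairs being the current pairs of $a,b,c,d,e$. That lemma gives $d_{FS}(\tau,\tau\circ(ab))\le\max\{3\al,219\}$. We then undo Step 2 in at most $2$ moves. Next we undo Step 1, now from a configuration in which $a$ and $b$ have been exchanged; the second alternative of \Cref{lem:swap cd in qa} bounds this by $33$ moves. Since the preliminary swaps commute appropriately with $(ab)$, this yields $\si\circ(ab)$. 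The total is at most $33+2+\max\{3\al,219\}+2+33\le 76+\max\{3\al,219\}$.

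\textbf{Main obstacle.} The delicate point is bookkeeping under the changing configuration. We must check that after the preliminary swaps the sets $\mc{Q}_a^{X},\mc{Q}_a^{Y}$ and the roads used are as claimed, and that the undo step really applies in the post-$(ab)$ configuration. We must also handle degenerate coincidences, such as $f'=b'$, $g'=a'$, $c'$ already equal to $h'$, or $g'$ equal to $c$'s original house; in each of these the corresponding swap is simply omitted or reordered. The first thing to verify is that $h'$ and $g'$ can always be chosen so that Step 2 never moves $c$ again.
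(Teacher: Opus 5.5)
Your argument is correct and is essentially the paper's proof. You move $c$ via \Cref{lem:swap cd in qa} into a $\mc{Q}_a$-house adjacent to $d'$, move $a$ and $b$ into the other neighbours of $d'$, finish with the five-move pattern (the paper cites \Cref{lem:ban to S}(2), which uses the same moves) or \Cref{lem:bangachev with P=1}, and undo. The one difference is that the paper spends an extra $(hi)^3$ on the coincidence where the middle vertex occupies the $\mc{Q}_b^{*Y}$-neighbour of $d'$ (your $e=f$); in your version this case is harmless because \Cref{lem:bangachev with P=1} places no condition on $e$'s house, though your remark that Step~2 never involves $e$ is then not literally true.
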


\begin{proof}
We let $(c,c')\in \mc{Q}_a$ and $(d,d')\in \mc{S}$ be the endpoints of the minimum length path from $\mc{Q}_a^X$ to $\mc{S}^X$ throughout this lemma. Similarly, we let $e',f'\in \mc{Q}_a^Y$ and $g'\in \mc{Q}_b^Y$ denote the neighbors of $d'$ guaranteed by $\mc{S}$. We assume $e\neq a$ and $f\neq c$.\\
\textbf{Case 1: $d_X(\mc{Q}_a^X,\mc{S}^X)=1$.}\\
    Since $d_X(\mc{Q}_a^X,\mc{S}^X)=1$, $c$ and $d$ are adjacent. We could have $e=c$, $f=a$, or $g=b$. We will demonstrate that each of these cases follow from the general case. If all vertices are distinct, we obtain the setup in \Cref{fig:dX=1}. 
    \begin{figure}[h!]
        \centering
        
        \begin{tikzpicture}[
    every node/.style={circle, draw, inner sep=1.5pt},
    scale=1.2
]

\node (a) at (-2,2) {$a$};
\node (b) at ( 2,2) {$b$};
\node (qa) at (-2.8,0) {$c$};
\node (qa') at ( -2,0) {$e$};
\node (qa'') at ( -1.2,0) {$f$};
\node (qb) at ( 2,0) {$g$};
\node (c) at ( 0,-2) {$d$};

\draw[thick] (a) -- (qa);
\draw[thick] (a) -- (qa');
\draw[thick] (a) -- (qa'');
\draw[thick] (b) -- (qb);
\draw[thick] (c) -- (qa'');
\draw[thick] (c) -- (qa');
\draw[thick] (c) -- (qb);

\draw[dashed, thick] (a) -- (b);
\draw[dashed, thick] (a) to[out=-120,in=80] (qa);
\draw[dashed, thick] (a)  to[out=-80,in=80]  (qa');
\draw[dashed, thick] (a) to[out=-60,in=100] (qa'');
\draw[dashed, thick] (b) -- (qa);
\draw[dashed, thick] (b) -- (qa');
\draw[dashed, thick] (b) -- (qa'');
\draw[dashed, thick] (a) -- (qb);
\draw[dashed, thick] (b) to[out=-80,in=80]  (qb);
\draw[dashed, thick] (c) -- (qa);

\end{tikzpicture}
\caption{The case $d_X = 1$. We have $c,e,f\in \mc{Q}_a^X$, $g\in \mc{Q}_b^X$, and $d\in \mc{S}^X$.}
        \label{fig:dX=1}
\end{figure}
By \Cref{lem:swap cd in qa} we can perform $(ce)^{33}$ in $33$ moves. After swapping $c$ and $e$, see that removing $(e,c')$ from \Cref{fig:dX=1} yields a setup identical to the right side of \Cref{fig:QaQbY}. Thus, we can swap $(ab)$ from $\si\circ(ce)$ in $9$ moves. By \Cref{lem:swap cd in qa}, we may swap $(ce)$ from configuration $\si\circ(ce)\circ(ab)$ in $33$ moves. Thus, $d_{FS}(\si,\si\circ(ab))\le 33+9+33 = 75$. If $c=e$ and $a,f,b,$ and $g$ are distinct, then we immediately have the setup in the right side of \Cref{fig:QaQbY}, and $d_{FS}(\si,\si\circ(ab))\le 9$. 

If either $f=a$ or $g=b$ or both, our sequence of swaps only simplifies. In all cases our goal is to move $c$ to $d'$ by using $e'$ as an intermediate step if necessary. Once $c$ is in $d'$, we leverage the fact that $c\in \mc{Q}_a^X$ implies that $c$ is adjacent to $a$ and $b$ to swap $a$ and $b$. In particular, we can imagine moving $a$ through $f$, then $c$, then $g$, and finally to $b$. Then we may move $b$ back through $g$, then $c$, and finally $f$. If either $f=a$ or $g=b$, then this path is shortened, but the strategy is unchanged. Thus, $d_{FS}(\si,\si\circ(ab))\le 33+9+33 = 75$ when $d_X = 1$.

\textbf{Case 2: $d_X(\mc{Q}_a^X,\mc{S}^X)=2$.}\\
Since $d_X(\mc{Q}_a^X,\mc{S}^X)=2$, there exists $(h,h')\notin \mc{Q}_a^*\cup \mc{S}$ such that $c,h,d$ is a path in $X$. Since there exists a minimal length path not containing $a$ or $b$, let $h\neq a,b$. Thus, $h$ is distinct from every vertex in our setup, except possibly $g$. If $h=g\in\mc{Q}_b^X$, the setup is depicted on the left side of \Cref{fig:dX=2}. By \Cref{lem:Qa internal edge} let $i'\in \mc{Q}_b^Y$ be adjacent to $h'$, and by \Cref{lem:Qa swap}, we may swap $(hi)^{3}$. For notational convenience, relabel $i=g$ to match the general case where $g$ and $h$ are distinct. We have obtained the setup depicted in the middle of \Cref{fig:dX=2}, and the rest of the proof is identical, except that we undo the swap $(hi)^3$ (or $(hg)^3$ after relabeling) at the end.
\begin{figure}[h!]
        \centering
        
        \begin{tikzpicture}[
    every node/.style={circle, draw, inner sep=1.5pt},
    scale=.9
]
\node (a1) at (-8.8,2) {$a$};
\node (b1) at ( -4.8,2) {$b$};
\node (qa1) at (-9.6,0) {$c$};
\node (qa'1) at ( -8.8,0) {$e$};
\node (qa''1) at ( -8,0) {$f$};
\node (qb1) at ( -4.8,0) {$h$};
\node (qb2) at ( -6,0) {$i$};
\node (c1) at ( -6.8,-2) {$d$};

\draw[thick] (a1) -- (qa1);
\draw[thick] (a1) -- (qa'1);
\draw[thick] (a1) -- (qa''1);
\draw[thick] (b1) -- (qb1);
\draw[thick] (c1) -- (qa''1);
\draw[thick] (c1) -- (qa'1);
\draw[thick] (c1) -- (qb1);
\draw[thick] (b1) -- (qb2);
\draw[thick] (qb1) -- (qb2);

\draw[dashed, thick] (a1) -- (b1);
\draw[dashed, thick] (a1) -- (qb2);
\draw[dashed, thick] (qb2) to[out=50,in=-110] (b1);
\draw[dashed, thick] (a1) to[out=-120,in=80] (qa1);
\draw[dashed, thick] (a1)  to[out=-80,in=80]  (qa'1);
\draw[dashed, thick] (a1) to[out=-60,in=100] (qa''1);
\draw[dashed, thick] (b1) -- (qa1);
\draw[dashed, thick] (b1) -- (qa'1);
\draw[dashed, thick] (b1) -- (qa''1);
\draw[dashed, thick] (a1) -- (qb1);
\draw[dashed, thick] (b1) to[out=-80,in=80]  (qb1);
\draw[dashed, thick] (qa1) to[out=-40,in=-140] (qb1);
\draw[dashed, thick] (c1) to[out=35,in=-120] (qb1);

\node (a) at (-2,2) {$a$};
\node (b) at ( 2,2) {$b$};
\node (qa) at (-2.8,0) {$c$};
\node (qa') at ( -2,0) {$e$};
\node (qa'') at ( -1.2,0) {$f$};
\node (qb) at ( 2,0) {$g$};
\node (c) at ( 0,-2) {$d$};
\node (e) at ( -2,-2) {$h$};

\draw[thick] (a) -- (qa);
\draw[thick] (a) -- (qa');
\draw[thick] (a) -- (qa'');
\draw[thick] (b) -- (qb);
\draw[thick] (c) -- (qa'');
\draw[thick] (c) -- (qa');
\draw[thick] (c) -- (qb);

\draw[dashed, thick] (a) -- (b);
\draw[dashed, thick] (a) to[out=-120,in=80] (qa);
\draw[dashed, thick] (a)  to[out=-80,in=80]  (qa');
\draw[dashed, thick] (a) to[out=-60,in=100] (qa'');
\draw[dashed, thick] (b) -- (qa);
\draw[dashed, thick] (b) -- (qa');
\draw[dashed, thick] (b) -- (qa'');
\draw[dashed, thick] (a) -- (qb);
\draw[dashed, thick] (b) to[out=-80,in=80]  (qb);
\draw[dashed, thick] (c) -- (e);
\draw[dashed, thick] (qa) -- (e);

\node (a') at (3.8,2) {$a$};
\node (b') at (6.2,2) {$b$};
\node (c') at (3.8,0) {$c$};
\node (d') at (6.2,0) {$d$};
\node (e') at (5,-1.4) {$h$};

\draw[thick] (a') -- (d');
\draw[thick] (b') -- (d');
\draw[thick] (c') -- (d');

\draw[dashed, thick] (a') -- (b');
\draw[dashed, thick] (a') -- (c');
\draw[dashed, thick] (c') -- (b');
\draw[dashed, thick] (d') -- (e');
\draw[dashed, thick] (c') -- (e');

\node[draw=none, circle=none] at (-4,-1.4)
{$\xrightarrow[]{(hi)^{3},\text{ let }i=g}$};
\node[draw=none, circle=none] at (3,-1.4)
{$\xrightarrow[]{(ce)^{33}(af)(bg)}$};

\end{tikzpicture}
\caption{The case $d_X = 2$. Left: The case $g=h$. We have $c,e,f\in \mc{Q}_a^X$, $h,i\in \mc{Q}_b^X$, and $d\in \mc{S}$. We swap $(hi)^{3}$ to obtain an identical setup to the middle. From the middle, after performing $(ce)^{33}(af)(bg)$ then restricting to $a,b,c,d,h$ we obtain the setup on the right.}
        \label{fig:dX=2}
\end{figure}

We now address the general case where $g\neq h$. By \Cref{lem:swap cd in qa}, we perform $(ce)^{33}$. Next, swap $(af)$ and $(bg)$, each in one move. Restricting our setup to the five vertices $(a,f'),(c,e'),(b,g'),(d,d'),$ and $(h,h')$, we find the setup on the right of \Cref{fig:dX=2}. This setup is identical to \Cref{fig:bangachev extension}, so by \Cref{lem:bangachev with P=1}, we may swap $(ab)$ in at most $\max\{3\al,219\}$ moves. By \Cref{lem:swap cd in qa}, we may then swap $(bf)(ag)(ce)^{33}$. This entire sequence would swap $a$ and $b$ in at most $70+\max\{3\al,219\}$ moves. If it was the case that $g=h$ to begin with, we now have a setup given by the left side of \Cref{fig:dX=2} with the swaps $(ab)(hi)$. We swap $(hi)^3$ using that $a$ is adjacent to $h$ and $i$, and thus $d_{FS}(\si,\si\circ(ab))\le 76+\max\{3\al,219\}$.

If any combination of $c=e$, $f=a$, or $g=b$ hold, then we simply remove the corresponding swaps from the beginning and the end of our sequence. For example, if $f=a$ and $g=b$, then we would omit $(af)$ and $(bg)$ from our preliminary swaps and only perform $(ce)^{33}$. We then have the setup on the right of \Cref{fig:dX=2}, so we swap $a$ and $b$ by \Cref{lem:bangachev with P=1}. Afterwards, we undo our sequence of preliminary swaps, again excluding $(af)$ and $(bg)$, so we only perform $(ce)^{33}$. It follows that any of the equalities $c=e$, $f=a$, or $g=b$ decreases the number of swaps required, so $d_{FS}(\si,\si\circ(ab))\le 76+\max\{3\al,219\}$.
\end{proof} 

Before addressing the case $d_X(\mc{Q}_a^X,\mc{S}^X) = 3$, we stress that the main result is the existence of an absolute constant $C$ such that friends $a$ and $b$ can always be swapped in $C$ moves. The constant
\begin{equation}\label{eq:C}
    C := 228+8D_3^2+\max\{3\al,219\}
\end{equation}
will suffice in the following lemma, where $D_3$ is the constant obtained from \Cref{lem:finite menger}, and $\al$ is the constant from \Cref{lem:starp bound}. 

\begin{lemma}\label{lem:dX=3}
    In the present setting, suppose there exists a path from $\mc{Q}_a^X$ to $\mc{S}^X$ of minimal length that does not include $a$ or $b$. If $d_X(\mc{Q}_a^X,\mc{S}^X)= 3$, then $d_{FS}(\si,\si\circ(ab))\le C$.
\end{lemma}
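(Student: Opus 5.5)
Let $c=p_0,h,k,p_3=d$ be a minimal length path in $X$ from $(c,c')\in\mc{Q}_a$ to $(d,d')\in\mc{S}$ avoiding $a$ and $b$, so $h,k\notin \mc{Q}_a^{*X}\cup\mc{S}^X$. As in \Cref{lem:dX=1 or 2}, let $e',f'\in\mc{Q}_a^{*Y}$ and $g'\in\mc{Q}_b^{*Y}$ be neighbours of $d'$. The plan is to reduce to the case $d_X(\mc{Q}_a^X,\mc{S}^X)=2$. We will move people (keeping $a$ and $b$ fixed) until some person of $\mc{Q}_a^X$ is a friend of someone living in a house of $\mc{S}^Y$. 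We then apply the argument of \Cref{lem:dX=1 or 2}, essentially \Cref{lem:bangachev with P=1} with $h$ or $k$ playing the role of the mutual friend, and finally undo the preliminary moves. Every preliminary swap involving $\mc{Q}_a$ costs at most $33$ moves by \Cref{lem:swap cd in qa}, either before or after $(ab)$ is performed. If some such swap fails, that lemma already gives $d_{FS}(\si,\si\circ(ab))\le\max\{\al,21\}$.

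The key step is to move $h$ or $k$ in $Y$ while $\mc{Q}_a^Y$ and $\mc{S}^Y$, which depend only on houses and on the people adjacent to $a,b$, stay essentially unchanged. By \Cref{lem:c at least 3 neighbors}, $h$ has at least three friends in $\mc{Q}_a^{*X}\sqcup\mc{S}^X$. Minimality of the path forces these friends into $\mc{Q}_a^{*X}$: a friend in $\mc{S}^X$ would give distance $1$ from $c$. Likewise $k$ has at least three friends in $\mc{S}^X$. Hence $h$ has at least two friends in $\mc{Q}_a^X$. Take $U=\si(N_X[h])$; it satisfies the hypotheses of \Cref{lem:bangachev structure}, and by \Cref{lem:ddG|U} each component of $Y|_U$ has diameter at most $5$. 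I would like to slide the person $k$ along a short path of houses into a house adjacent to $d'$, or slide $h$ toward $\mc{S}^Y$. The cleaner route is to find a short path in $Y$ from $k'$ to a house $s'\in\mc{S}^Y$ all of whose intermediate residents are friends of $k$. Once $k$ sits in a house of $\mc{S}^Y$, the person $h$ is at $X$-distance $1$ from a resident of $\mc{S}^Y$ and at distance $1$ from $c\in\mc{Q}_a^X$, which is the $d_X=2$ situation. The existence of such a path is where \Cref{lem:finite menger} enters. Since $\dd(Y)>n/3$, $Y$ contains two internally disjoint paths of length at most $D_3$ between $k'$ and any target house. Moving a star-like person $k$ along a cycle of length at most $2D_3$ and returning it costs $O(D_3)$ swaps per step. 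Correcting the displacement of the other residents costs another factor $D_3$, which should explain the $8D_3^2$ term in $C$.

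After this preparation the configuration matches the middle of \Cref{fig:dX=2}, possibly after the relabeling trick with $(hi)^3$ used there. We then spend at most $70+\max\{3\al,219\}$ moves as in \Cref{lem:dX=1 or 2}, undo the preliminary moves, and check that the total stays within $C=228+8D_3^2+\max\{3\al,219\}$.

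The main obstacle is ensuring the preliminary moves do not destroy the structure. Moving $k$ through houses must not evict people in $\mc{Q}_a,\mc{Q}_b$ or alter $a',b'$. It must also keep the needed friendships, since $k$ is only guaranteed friends near $\mc{S}^X$, not along an arbitrary $Y$-path. I would first try to route $k$ only through houses whose residents are friends of $k$. A counting argument should make this possible: $k$ has at least $\dd(X)$ friends and $Y$ is dense, so $\si(N_X[k])$ with the structure cases $(C1)$–$(C3)$ of \Cref{lem:structure} gives enough room. Failing that, I would fall back on \Cref{lem:structure} applied to $\si(N_X[k])$ to transport $k$ in at most $45$ moves per transposition.
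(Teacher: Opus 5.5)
Your opening move is the right one, and it is exactly the paper's \Cref{claim:ef reduction}. You walk the third path vertex $k$ into an $\mc{S}^Y$-house, swapping it only with residents who are its friends and avoiding $\mc{Q}_a^{*X}$, $b$, and the $\mc{Q}_b^*$-neighbour $g$ of the target. Then $c,h,k$ becomes a length-$2$ path and Case~2 of \Cref{lem:dX=1 or 2} applies. The gap is that you never show such a walk exists, and in general it need not.

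\begin{itemize}
\item Star moves by $k$ keep $k$ inside $Y|_{\si(N_X[k]\setminus\{a,b,g\})}$. Nothing forces the component of $k'$ there to contain an $\mc{S}^Y$-house, and the paper explicitly has to treat the case where $k'$ and $d'$ lie in different components.
\item Your fallback to \Cref{lem:structure} fails for the same reason: in cases (C2) and (C3) it only swaps within one component.
\item Sliding $h$ instead is hopeless, because by minimality $\si(N_X[h])$ contains no $\mc{S}^Y$-house at all.
\item The hoped-for counting argument is not available. $k$ is only guaranteed two friends in $\mc{S}^X$, not three, since $k$ may be adjacent to $a$. Nothing places them in $k'$'s component, and the paper derives no contradiction here.
\item Applying \Cref{lem:finite menger} to all of $Y$ is misplaced, because $k$ cannot travel along a $Y$-cycle whose residents are not its friends. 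In the paper, Menger is used inside $Y|_{\si(N_X[d])}$ in case (C1), with the $\mc{S}$-resident $d$ as the star that rotates $k$ into $d'$. That is where the $8D_3^2$ comes from.
\end{itemize}

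What is missing is the case where both transports fail, around $k$ and around $d$. The paper converts that failure, by inclusion--exclusion, into three structural facts (in your notation):
\begin{itemize}
\item $h'$ has a neighbour in $\mc{Q}_a^{*Y}$;
\item some house of $\mc{Q}_a^{*Y}$ is adjacent to both $k'$ and $d'$;
\item $d'$ has at least three neighbours in $\mc{Q}_a^{*Y}$.
\end{itemize}
It then runs an explicit $108$-move sequence that uses the \emph{middle} vertex $h$ as a carrier:
\begin{itemize}
\item two of $h$'s $\mc{Q}_a$-friends are repositioned with \Cref{lem:swap cd in qa};
\item $a$ is temporarily swapped off $a'$ with a third one;
\item $h$ is ferried through $\mc{Q}_a^{*Y}$-houses, including $a'$, into the house adjacent to both $k'$ and $d'$;
\item $k$ then steps through that house into $d'$, producing the configuration of \Cref{lem:bangachev with P=1}.
\end{itemize}
These moves displace $a$ and route through $\mc{Q}_a^{*Y}$. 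They therefore lie outside your framework of keeping $a,b$ fixed and moving $k$ only through its own friends.

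You also do not handle the coincidence $g\in\{h,k\}$. If $g=k$, moving $k$ destroys the $\mc{Q}_b^*$-adjacency that puts $d'$ in $\mc{S}^Y$. The paper spends a preliminary reduction of up to $12$ moves on exactly this.
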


\begin{proof}
    Let $c,d,e,f$ be a path from $\mc{Q}_a^X$ to $\mc{S}^X$ with $d,e\neq a,b$. Let $g',h'\in \mc{Q}_a^{*Y}$ and $i'\in \mc{Q}_b^{*Y}$ be adjacent to $f'$, as guaranteed by the fact that $f'\in\mc{S}^Y$. Since $c,d,e,f$ is a path of minimal length, $d,e\notin \mc{Q}_a^X$, so $d,e\neq g,h$. Thus, the only duplicate we could have is $i=d$ or $i=e$. 
    
    We will perform a preliminary sequence of swaps that will allow us to assume $i\neq d,e$ going forward. The argument will be symmetric in $d$ and $e$, so let $i=d$ without loss of generality. By \Cref{lem:Qa internal edge}, $i'$ is adjacent to some vertex in $\mc{Q}_b^Y$. If there exists such $i_1'\in \mc{Q}_b^Y$ with $i_1'\neq e'$, then we may swap $(ii_1)^3$. This implies $i_1\neq d,e$, and $i_1$ is in $i'$ in the current setup, so we may relabel $i=i_1$ to match the general case given by the left side of \Cref{fig:length 3 to 2}. At the end, we simply undo the swap $(ii_1)^3$ since $a$ will be in $b'$ and $a$ is adjacent to $i$ and $i_1$. 

    If there does not exist such $i_1'$, then $N_Y(d')\cap \mc{Q}_b^Y = \{e'\}$. Recall from \eqref{eq:Qa+Qb} that  $|\mc{Q}_a^*|+|\mc{Q}_b^*|\ge 2\dd(X)+2\dd(Y)-2n+2$, and that $d'\in \mc{Q}_b^Y$ implies $d'$ is nonadjacent to $\mc{Q}_a^{*Y}$ by \Cref{lem:ban to S}. Thus, of the $\ge 2\dd(X)+2\dd(Y)-2n+1$ other vertices in $\mc{Q}_a^{*Y}\cup \mc{Q}_b^{*Y}$, we have that $d'$ is only adjacent to $b'$ and $e'$. Therefore $d'$ is nonadjacent to at least $2\dd(X)+2\dd(Y)-2n-1\ge n-\dd(Y)-1$ vertices in $\mc{Q}_a^{*Y}\cup \mc{Q}_b^{*Y}$, so $d'$ is adjacent to all vertices in $V(Y)\back(\mc{Q}_a^{*Y}\cup \mc{Q}_b^{*Y})$. Similarly, suppose $e'$ is adjacent to $i_2'\in \mc{Q}_b^Y$ with $i_2'\neq d'$. Leveraging that $e'$ is adjacent to $d'$ and $i_2'$ and that all three vertices lie in $\mc{Q}_b^Y$, we may perform the preliminary sequence $(i_2e)^3(i_2d)^3$. Afterwards, $i_2$ is in 
    $i'$, and the relabeling $i=i_2$ matches the general case. Again, we only need to undo these $6$ preliminary swaps after swapping $a$ and $b$. 

    Suppose there does not exist such $i_1'$ or $i_2'$. Then $d'$ and $e'$ are only adjacent to each other and $b'$ in $\mc{Q}_a^{*Y}\cup \mc{Q}_b^{*Y}$, so they are both adjacent to all vertices in $V(Y)\back(\mc{Q}_a^{*Y}\cup \mc{Q}_b^{*Y})$. In particular, $f'\in \mc{S}^Y$ is adjacent to $d'$ and $e'$. The sequence $(ef)(ag)(ad)(ae)(ab)(bd)(be)(bg)(ef) = (ab)$ shows that $d_{FS}(\si,\si\circ(ab))\le 9$ here. \textbf{Thus, we reduce to the case where $i\neq e,d$, though we add $12$ moves.}
    
    Because $d_X(\mc{Q}_a^X,\mc{S}^X)= 3$, we have $e\notin N_X(\mc{Q}_a^X)$. If we could move $e$ to $f'$ in a sufficiently nice way, then $c,d,e$ is a path of length $2$ to $f'\in \mc{S}^Y$. From here, we use the setup on the left of \Cref{fig:dX=2} to swap $(ab)$, and then return $e$ to $e'$. 

    \begin{claim}\label{claim:ef reduction}
        In the present setting and hypotheses of \Cref{lem:dX=3}, suppose there exists $\tau\in V(\FS)$ such that $\tau(e) = \si(f)$ and such that $\tau$ can be obtained from $\si$ using $L$ swaps in $V(X)\sm\{\mc{Q}_a^{*X}\cup\{b,i\}\}$. Then
        \begin{equation}\label{eq:2si to tau}
            d_{FS}(\si,\si\circ(ab))\le 2L+70+\max\{3\al,219\}.
        \end{equation}
    \end{claim}
     \begin{figure}[h!]
        \centering
        \begin{tikzpicture}[
    every node/.style={circle, draw, inner sep=1.5pt},
    scale=1
]

\node (a) at (-2,2) {$a$};
\node (b) at ( 2,2) {$b$};
\node (c) at (-2.8,0) {$c$};
\node (g) at ( -2,0) {$g$};
\node (h) at ( -1.2,0) {$h$};
\node (m) at ( 2,0) {$i$};
\node (d) at ( -2.3,-.9) {$d$};
\node (e) at ( -1.4,-1.7) {$e$};
\node (f) at ( 0,-2) {$f$};

\draw[thick] (a) -- (c);
\draw[thick] (a) -- (g);
\draw[thick] (a) -- (h);
\draw[thick] (a) -- (h);
\draw[thick] (b) -- (m);
\draw[thick] (f) -- (g);
\draw[thick] (f) -- (h);
\draw[thick] (f) -- (m);

\draw[dashed, thick] (a) -- (b);
\draw[dashed, thick] (a) to[out=-120,in=80] (c);
\draw[dashed, thick] (a) to[out=-100,in=100] (g);
\draw[dashed, thick] (a) to[out=-60,in=100] (h);
\draw[dashed, thick] (b) to[out=-75,in=75] (m);
\draw[dashed, thick] (d) -- (c);
\draw[dashed, thick] (f) -- (e);
\draw[dashed, thick] (e) -- (d);

\node (a') at (6,2) {$a$};
\node (b') at ( 10,2) {$b$};
\node (c') at (5.2,0) {$c$};
\node (g') at ( 6,0) {$g$};
\node (h') at ( 6.8,0) {$h$};
\node (m') at ( 10,0) {$i$};
\node (e') at ( 6.2,-1.2) {$d$};
\node (f') at ( 8,-2) {$e$};

\draw[thick] (a') -- (c');
\draw[thick] (a') -- (g');
\draw[thick] (a') -- (h');
\draw[thick] (a') -- (h');
\draw[thick] (b') -- (m');
\draw[thick] (f') -- (g');
\draw[thick] (f') -- (h');
\draw[thick] (f') -- (m');

\draw[dashed, thick] (a') -- (b');
\draw[dashed, thick] (a') to[out=-120,in=80] (c');
\draw[dashed, thick] (a') to[out=-100,in=100] (g');
\draw[dashed, thick] (a') to[out=-60,in=100] (h');
\draw[dashed, thick] (b') to[out=-75,in=75] (m');
\draw[dashed, thick] (e') -- (c');
\draw[dashed, thick] (f') -- (e');

\node[draw=none, circle=none] at (3.6,0)
{$\xrightarrow[]{\text{move }e\text{ to }f'}$};

\end{tikzpicture}
\caption{The path of length $3$ becomes length $2$ when we move $e$ to $f'$.}
        \label{fig:length 3 to 2}
\end{figure}
\begin{proof}
    The initial setup is pictured on the left of \Cref{fig:length 3 to 2}, and the setup from configuration $\tau$ is pictured on the right. Observe that the right of \Cref{fig:length 3 to 2} is identical to the left side of \Cref{fig:dX=2}, and that $a,b,c,g,h,$ and $i$ are all unchanged by the swaps which obtain $\tau$ from $\si$. Thus, we may apply case $2$ of \Cref{lem:dX=1 or 2} to $\tau$ to swap $(ab)$ from the configuration on the left of \Cref{fig:dX=2} in $\le 70+\max\{3\al,219\}$ moves. Since the sequence of $L$ moves to get from $\si$ to $\tau$ does not involve $a$ or $b$, we may undo this sequence to travel from $\tau\circ(ab)$ to $\si\circ (ab)$ in $\FS$. Using the triangle inequality, \eqref{eq:2si to tau} follows.
\end{proof}
To swap $a$ and $b$, we first reduce to the case where the following structural properties hold:
    \begin{itemize}
        \item[\emph{$(S1)$}] $d'$ is adjacent to a vertex in $ \mc{Q}_a^{*Y}$.
        \item[\emph{$(S2)$}] There exists some $h'\in \mc{Q}_a^{*Y}$ adjacent to $e'$ and $f'$.
        \item[\emph{$(S3)$}] $f'$ is adjacent to at least three vertices in $\mc{Q}_a^{*Y}$. 
    \end{itemize}
    
    \textbf{We reduce to the case where $(S1)$ and $(S2)$ hold.\\} We examine $Y|_W$ where $W = \si(N_X[e]\back\{a,b,i\})$, which has size $|W|\ge \dd(X)-2$. Moreover, $|W| = \dd(X)-2$ only if $e$ is adjacent to $a,b,$ and $i$. Observe that $W$ is not large enough for \Cref{lem:structure} to apply directly, but we use techniques analogous to those in \Cref{lem:structure} to bound $\dd(Y|_W)$ and the structure of the components of $Y|_W$. Since $d_X(\mc{Q}_a^X,e) =2$, we have that $Y|_W$ and $\mc{Q}_a$ are disjoint. Moreover, $a',b',i'\notin Y|_W$ by construction, so swaps internal to $\fs{X|_{\si^{-1}(W)}}{Y|_{W}}$ do not use $\mc{Q}_a^X\cup \{b,i\}$ and thus satisfy the hypotheses of \Cref{claim:ef reduction}. By inclusion-exclusion, $\dd(Y|_W)\ge |W|+\dd(Y)-n$. This implies
    \begin{align}\label{eq:Y|W}
        3\dd(Y|_W) &\ge 3|W|+3\dd(Y)-3n \ge |W|-4.
    \end{align}
    Moreover, by \Cref{lem:c at least 3 neighbors} $e$ is adjacent to at least two vertices in $\mc{S}^X$ since $e$ is nonadjacent to $\mc{Q}_a^X$ but may be adjacent to $a$. Combining this with the fact that $d\neq a,b,i$ and $d$ is adjacent to $e$ implies $|W| = |N_X[e]\back\{a,b,i\}|\ge 4$. If $|W|\ge 5$, then \eqref{eq:Y|W} implies $\dd(Y|_W)\ge 1$, so
    \begin{equation}\label{eq:6Y|W}
        8\dd(Y|_W)\ge 3\dd(Y|_W)+5\ge |W|+1.
    \end{equation}
    Thus, $\dd(Y|_W)>|W|/8$, so by \Cref{lem:diam from delta(X)>n/k}, the diameter of any connected component of $Y|_W$ cannot exceed $20$. If $|W| = 4$, then the diameter of any connected component of $Y|_W$ cannot exceed $4$ since $|V(Y|_W)| = 4$, so $20$ is a valid upper bound on the diameter of a component of $Y|_W$.
    
    \textbf{Suppose $e'$ and $f'$ are in the same connected component of $Y|_W$.} Since the diameter of any connected component is at most $20$, we may move $e$ to $f'$ in at most $20$ moves. By \Cref{claim:ef reduction}, $d_{FS}(\si,\si\circ(ab))\le 110+\max\{3\al,219\}$.

    \textbf{Suppose $e'$ and $f'$ are in different connected components of $Y|_W$.} Each component of $Y|_W$ has at least $\dd(Y|_W)+1 \ge |W|+\dd(Y)-n+1$ vertices. Thus, $d'$ is nonadjacent to all other connected components of $Y|_W$, of which there is at least one. As a result, $d'$ is nonadjacent to at least $|W|+\dd(Y)-n+1\ge \dd(X)+\dd(Y)-n-1$ vertices in some component $C_e$ of $V(Y|_W)$. \eqref{eq:bound on Qa, Qa*} implies $|\mc{Q}_a^*|\ge \dd(X)+\dd(Y)-n+1$, and note that $\mc{Q}_a^Y$ and $Y|_W$ are disjoint since $e$ is nonadjacent to $\mc{Q}_a^X$. We may rearrange $2\dd(X)+3\dd(Y)\ge 3n$ to obtain
    \begin{equation*}
        |\mc{Q}_a^{*Y}\sqcup V(C_e)|\ge 2\dd(X)+2\dd(Y)-2n \ge n-\dd(Y).
    \end{equation*}
    Since $d'$ is nonadjacent to a maximum of $n-1-\dd(Y)$ vertices but $d'$ is nonadjacent to $V(C_e)$, we have that $d'$ is adjacent to a vertex in $\mc{Q}_a^{*Y}$. This proves the reduction to $(S1)$. Since $e'$ and $f'$ are in different connected components, $N_Y[e']\cap N_Y[f']\cap W = \emptyset$. Recall that $\mc{Q}_a^Y$ and $W$ are disjoint and that $|\mc{Q}_a^*|$ is bounded by \eqref{eq:bound on Qa, Qa*}. We use inclusion-exclusion to lower bound $|N_Y[e']\cap N_Y[f']|$. We use inclusion-exclusion on the following expression:
    \begin{align}\label{eq:e' and f' common adjacency}
        &|(N_Y[e']]\cap N_Y[f'])\cap (W\cup \mc{Q}_a^{*Y})|\\
        &\ge \nonumber|N_Y[e']\cap N_Y[f']|+|W\cup \mc{Q}_a^{*Y}|-n\\
        \nonumber&\ge 2\dd(Y)+2-n+\dd(X)-2+\dd(X)+\dd(Y)-n+1-n\\
        \nonumber&\ge 1.
    \end{align}
    Since the above intersection is nonempty and no element of the intersection is in $W$, there exists $h'\in \mc{Q}_a^{*Y}$ adjacent to $e'$ and $f'$. This proves the reduction to $(S2)$. Since by \Cref{lem:ban to S} $\mc{Q}_a^{*Y}$ and $\mc{Q}_b^{*Y}$ are nonadjacent, conditions $(S1)$ and $(S2)$ imply $d',e'\notin \mc{Q}_b^{*Y}$, respectively. Moreover, we may assume that $f'$ is nonadjacent to the component of $Y|_W$ containing $e'$, which has at least $\dd(X)+\dd(Y)-n-1$ vertices.

    \textbf{We now show the reduction to $(S3)$.\\} 
    Since $f'$ is nonadjacent to $e'\notin \mc{Q}_a^{*Y}\cup \mc{Q}_b^{*Y}$, $f'$ is nonadjacent to at most $n-2-\dd(Y)$ vertices in $\mc{Q}_a^{*Y}\cup \mc{Q}_b^{*Y}$. \eqref{eq:Qa+Qb} implies $|\mc{Q}_a^*|+|\mc{Q}_b^*|\ge 2\dd(X)+2\dd(Y)-2n+2$, so $f'$ is adjacent to at least four vertices in $\mc{Q}_a^{*Y}\cup \mc{Q}_b^{*Y}$, and thus either $(S3)$ holds, or $f'$ is adjacent to at least two vertices in $\mc{Q}_b^{*Y}$. 
    
    Since $d_X(f,\mc{Q}_a^X) = 3$, we have $d_X(N_X[f],\mc{Q}_a^X) = 2$, and in particular, $\mc{Q}_a^{*X}$ and $b$ are disjoint from $N_X[f]$. To satisfy the hypotheses of \Cref{claim:ef reduction} with $U=\si(N_X[f])$, we require $i\notin N_X[f]$. If $i=b$, then $i\notin N_X[f]$, and if $i\neq b$, then we show the following claim holds.
    \begin{claim}\label{claim:fi friends}
        In the present setting of \Cref{lem:dX=3}, if $f$ is adjacent to $i\neq b$ and $f'$ is adjacent to some $i_3'\in \mc{Q}_b^{*Y}$ with $i_3'\neq i'$, then $d_{FS}(\si,\si\circ(ab))\le 9$.
    \end{claim}
    \begin{proof}
        We use the labeling introduced at the beginning of \Cref{lem:dX=3}, and let $i_3'\in\mc{Q}_b^{*Y}$ be adjacent to $f'$. The setup is pictured in \Cref{fig:fi friends}.
\begin{figure}[h!]
        \centering
        \begin{tikzpicture}[
    every node/.style={circle, draw, inner sep=1.5pt},
    scale=1
]

\node (a) at (-2,2) {$a$};
\node (b) at ( 2,2) {$b$};
\node (c) at (-2.8,0) {$c$};
\node (g) at ( -2,0) {$g$};
\node (h) at ( -1.2,0) {$h$};
\node (m) at ( 2,0) {$i$};
\node (d) at ( -2.3,-.9) {$d$};
\node (e) at ( -1.4,-1.7) {$e$};
\node (f) at ( 0,-2) {$f$};
\node (i3) at ( 1.2,0) {$i_3$};

\draw[thick] (a) -- (c);
\draw[thick] (a) -- (g);
\draw[thick] (a) -- (h);
\draw[thick] (a) -- (h);
\draw[thick] (b) -- (m);
\draw[thick] (f) -- (g);
\draw[thick] (f) -- (h);
\draw[thick] (f) -- (m);
\draw[thick] (b) -- (i3);
\draw[thick] (f) -- (i3);

\draw[dashed, thick] (a) -- (b);
\draw[dashed, thick] (a) -- (m);
\draw[dashed, thick] (a) -- (i3);
\draw[dashed, thick] (c) -- (b);
\draw[dashed, thick] (g) -- (b);
\draw[dashed, thick] (h) -- (b);
\draw[dashed, thick] (a) to[out=-120,in=80] (c);
\draw[dashed, thick] (a) to[out=-100,in=100] (g);
\draw[dashed, thick] (a) to[out=-60,in=100] (h);
\draw[dashed, thick] (b) to[out=-75,in=75] (m);
\draw[dashed, thick] (b) to[out=-130,in=85] (i3);
\draw[dashed, thick] (d) -- (c);
\draw[dashed, thick] (f) -- (e);
\draw[dashed, thick] (f) to[out=35,in=-115] (m);
\draw[dashed, thick] (e) -- (d);
\end{tikzpicture}
\caption{The case where $(f,f')$ is adjacent to two vertices in $i',i_3'\in \mc{Q}_b^{*Y}$.}
\label{fig:fi friends}
\end{figure}
If $i_3\neq b$, we swap 
\[(if)(ah)(ai)(ai_3)(ab)(bi_3)(bi)(bh)(if) =(ab).\] 
If $i_3 = b$, we swap $(if)(ah)(ai)(ab)(bi)(bh)(if) =(ab)$. 
\end{proof}

Thus, if $i$ and $f$ are adjacent, \Cref{claim:fi friends} shows $d_{FS}(\si,\si\circ(ab))\le 9$, since we have assumed $f'$ is adjacent to two vertices in $\mc{Q}_b^{*Y}$.

\textbf{Assume $i\notin N_X[f]$ for the rest of the reduction to $(S3)$.} We use \Cref{lem:structure} with $U=\si(N_X[f])$. We may assume $U$ is disjoint from $\mc{Q}_a^{*Y}\cup \{b,i\}$, and thus the hypothesis of \Cref{claim:ef reduction} is satisfied. We split into the cases $(C1)$, $(C2)$, and $(C3)$.

\textbf{If $(C1)$ holds}, then by \Cref{lem:finite menger} we may find a cycle of length $\le 2D_3$ containing $e'$ and $f'$. By swapping $f$ around in the cycle, we may move $e$ to $f'$ in at most $4D_3^2$ moves. Explicitly, by moving $f$ around the cycle, we may cyclically shift all elements in the cycle 1 time. We repeat this until $e$ is at position $f'$. By \Cref{claim:ef reduction}, we can swap $(ab)$ in at most $70+8D_3^2+\max\{3\al,219\} \le C$ moves.

\textbf{If $(C2)$ holds}, let $F_1$ and $F_2$ be the two connected components of $Y|_U$. By \Cref{lem:structure}, if $e',f'$ were in the same component, we could swap $(ef)$ in at most $15$ moves, and \Cref{claim:ef reduction} implies $d_{FS}(\si,\si\circ(ab))\le 100+\max\{3\al,219\}$.

If $e',f'$ are in separate components, $f'$ is nonadjacent to the $\ge \dd(X)+\dd(Y)-n+2$ vertices in the component $C_e\ni e'$ of $Y|_U$. Using \eqref{eq:bound on Qa, Qa*}, inclusion-exclusion implies 
\begin{align}\label{eq:f' and S3}
    |N_Y[f']\cap (C_e\cup \mc{Q}_a^{*Y})| &\ge 2\dd(X)+3\dd(Y)-3n+4\ge 4.
\end{align}
Thus, $|N_Y[f']\cap \mc{Q}_a^{*Y}|\ge 4$, and $(S3)$ holds.

\textbf{If $(C3)$ holds}, then $Y|_U$ has a single component with a cut vertex $v'$. If $e',f'$ are in the same component, \Cref{lem:structure} implies we can swap $e$ and $f$ in at most $15$ moves, and \Cref{claim:ef reduction} implies $d_{FS}(\si,\si\circ(ab))\le 105+\max\{3\al,219\}.$

If $f'$ is not the cut vertex of $Y|_U$, then $f'$ is nonadjacent to the other component, which contains at least $\dd(X)+\dd(Y)-n+1$ vertices. This bound is smaller than \eqref{eq:f' and S3} by $1$, so $|N_Y[f']\cap \mc{Q}_a^{*Y}|\ge 3$, and $(S3)$ holds.

If $f'$ is the cut vertex of $Y|_U$, let $e'\in V(F_1')$ without loss of generality. We may assume $e'$ and $f'$ are nonadjacent, so $|V(F_1')|\ge |N_Y[e']\cap U|\ge \dd(X)+\dd(Y)-n+2$. If $f'$ is adjacent to only one vertex in $V(F_1')$, then it is nonadjacent to at least $\dd(X)+\dd(Y)-n+1$ vertices in $V(F_1')$. This bound is smaller than \eqref{eq:f' and S3} by $1$, so $|N_Y[f']\cap \mc{Q}_a^{*Y}|\ge 3$, and $(S3)$ holds.

Suppose instead that $f'$ is adjacent to at least two vertices $g_1',g_2'\in V(F_1')$; see \Cref{fig:E and F and cut vertex}. Note $g_1',g_2'\neq e'$ since $e'$ is nonadjacent to $f'$. By \Cref{lem:structure}, we may swap $(fg_1)(fe)^{15}(fg_2)^{15}(fg_1)(fe)$ to move $e$ to $f'$ in $33$ moves. By \Cref{claim:ef reduction}, $d_{FS}(\si,\si\circ(ab))\le 136+\max\{3\al,219\}\le C$. Thus, we reduce to the case where $(S3)$ holds.
 \begin{figure}[h!]
        \centering
        \begin{tikzpicture}[
    vertex/.style={circle, draw, inner sep=1.5pt},
    scale=1
]
\node[vertex] (a) at (0,2) {$f$};
\node[vertex] (c) at (-1,1.5) {$g_1$};
\node[vertex] (d) at (-1,0.5) {$g_2$};
\node[vertex] (l) at (1,1.5) {};
\node[vertex] (b) at (-2,0) {$e$};
\draw (-1.5,0.5) ellipse [x radius=1.3cm, y radius=1.6cm];
\draw (1.5,0.5) ellipse [x radius=1.3cm, y radius=1.6cm];

\node at (-2.2,1) {$F_1'$};
\node at (2.2,1) {$F_2'$};

\draw[thick] (a) -- (c);
\draw[thick] (a) -- (d);
\draw[thick] (a) -- (l);

\end{tikzpicture}
        \caption{The case where $f'$ is the cut vertex and is adjacent to $g_1',g_2'\in V(F_1')$.}
        \label{fig:E and F and cut vertex}
    \end{figure}

\textbf{We assume $(S1),(S2),$ and $(S3)$ hold for the remainder of the proof.} $(S1)$ implies $d'$ is adjacent to $j'\in \mc{Q}_a^{*Y}$, and $(S2)$ implies that $e'$ and $f'$ are both adjacent to $k'\in \mc{Q}_a^{*Y}$. Furthermore, $(S3)$ implies $f'$ is adjacent to two vertices in $\mc{Q}_a^{*Y}$ distinct from $k'$, which we label $g'$ and $h'$ for consistency with previous diagrams in this lemma. By \Cref{lem:c at least 3 neighbors}, $d$ is adjacent to some $\ell,m\in\mc{Q}_a^{*X}$ distinct from $c$. The setup is depicted on the left in \Cref{fig:dX=3}. We first solve the case where all vertices depicted in \Cref{fig:dX=3} are distinct, and then we will carefully handle the case where they are not all distinct.
 \begin{figure}[h!]
        \centering
        
        \begin{tikzpicture}[
    every node/.style={circle, draw, inner sep=1.5pt},
    scale=1.2
]

\node (a) at (-2,2) {$a$};
\node (b) at ( 3,2) {$b$};
\node (c) at (-2.8,0) {$c$};
\node (g) at ( -2,0) {$j$};
\node (h) at ( -1.2,0) {$k$};
\node (i) at ( -0.4,0) {$\ell$};
\node (j) at ( 0.4,0) {$m$};
\node (k) at ( 1.2,0) {$g$};
\node (l) at ( 2,0) {$h$};
\node (m) at ( 3,0) {$i$};
\node (d) at ( -2.5,-1.5) {$d$};
\node (e) at ( -1,-2) {$e$};
\node (f) at ( 0.5,-2) {$f$};

\draw[thick] (a) -- (c);
\draw[thick] (a) -- (g);
\draw[thick] (a) -- (h);
\draw[thick] (d) -- (g);
\draw[thick] (a) -- (h);
\draw[thick] (b) -- (m);
\draw[thick] (a) -- (i);
\draw[thick] (a) -- (j);
\draw[thick] (a) -- (k);
\draw[thick] (f) -- (m);
\draw[thick] (f) -- (k);
\draw[thick] (f) -- (h);
\draw[thick] (e) -- (h);
\draw[thick] (a) -- (l);
\draw[thick] (f) -- (l);

\draw[dashed, thick] (a) -- (b);
\draw[dashed, thick] (a) to[out=-120,in=80] (c);
\draw[dashed, thick] (a) to[out=-100,in=100] (g);
\draw[dashed, thick] (a) to[out=-60,in=100] (h);
\draw[dashed, thick] (a) to[out=-50,in=120] (i);
\draw[dashed, thick] (a) to[out=-35,in=128] (j);
\draw[dashed, thick] (b) to[out=-75,in=75] (m);
\draw[dashed, thick] (a) to[out=-27,in=138.5] (k);
\draw[dashed, thick] (a) to[out=-22,in=140] (l);
\draw[dashed, thick] (d) -- (c);
\draw[dashed, thick] (f) -- (e);
\draw[dashed, thick] (e) -- (d);
\draw[dashed, thick] (i) -- (d);
\draw[dashed, thick] (j) -- (d);

\node (a') at (5.8,2) {$a$};
\node (b') at ( 8.2,2) {$b$};
\node (c') at (5.8,0) {$\ell$};
\node (d') at ( 8.2,0) {$e$};
\node (e') at ( 7,-1.4) {$d$};

\draw[thick] (a') -- (d');
\draw[thick] (b') -- (d');
\draw[thick] (c') -- (d');

\draw[dashed, thick] (a') -- (b');
\draw[dashed, thick] (a') -- (c');
\draw[dashed, thick] (c') -- (b');
\draw[dashed, thick] (d') -- (e');
\draw[dashed, thick] (c') -- (e');

\node[draw=none, circle=none] at (4.4,0)
{$\xrightarrow[]{\text{after swaps}}$};

\end{tikzpicture}
\caption{The case $d_X = 3$. Note that $b$ is adjacent to $c,g,h,i,j,k,\ell$ and $a$ is adjacent to $m$, but we omit these edges for clarity. Left: initial setup. Right: the setup after the preliminary swaps, restricted to houses $g',h',i',f',$ and $e'$.}
        \label{fig:dX=3}
\end{figure}
We may now perform a sequence of swaps. We use \Cref{fig:cd swap Qa} to perform swaps inside $\mc{Q}_a^*$ when $a$ is in $a'$. We swap
\begin{equation}\label{eq:big swap}
    (cj)^{33}(k\ell)^{33}(am)(cd)(dm)(d\ell)(de)(ef)(a\ell)(\ell g)^{33}(ah)(bi).
\end{equation}
After these $108$ moves, we arrive at the setup on the right side of \Cref{fig:dX=3}. By \Cref{lem:bangachev with P=1} we may swap $a$ and $b$ in $\max\{3\al,219\}$ moves. After swapping, we undo the beginning sequence of moves by leveraging \Cref{lem:swap cd in qa} to swap vertices in $\mc{Q}_a$ in $33$ moves:
\[(bh)(ai)(\ell g)^{33}(b\ell)(ef)(de)(d\ell)(dm)(cd)(bm)(k\ell)^{33}(cj)^{33}.\]
In the configuration where we swap $(\ell g)$, both $\ell$ and $g$ are in $\mc{Q}_a^X$ defined in this configuration. By \Cref{lem:swap cd in qa}, either $(\ell g)^{33}$ is valid from this configuration and from this configuration with $a$ and $b$ swapped, or $a$ and $b$ can be swapped in $\max\{\al,21\}$ moves. If the latter is true, we swap $a$ and $b$ and proceed to undo all previous moves, which can be done since $a$ and $b$ are both adjacent to all of $\mc{Q}_a^X\cup \mc{Q}_b^X$. This would require strictly fewer moves, so we have swapped $(ab)$ in at most $ 216+\max\{3\al,219\}$ moves when all vertices in \Cref{fig:dX=3} are distinct.

Finally, we justify this construction if not all vertices on the left of \Cref{fig:dX=3} are distinct. If $h=a$, we omit $(ah)$ and $(bh)$. We may choose $g\neq a$ since $g$ and $h$ arise from the definition of $\mc{S}$. If $k\neq a$, then $k,g,h\in \mc{Q}_a$ are distinct. If $j= k$, we omit the swaps $(dm)(d\ell)$ in the initial sequence and $(d\ell)(dm)$ in the ending sequence. If $j=a$, then we use the following sequence
\begin{equation}\label{eq:j=a seq}
    (k\ell)^{33}(ac)(cd)(d\ell)(a\ell)(de)(ef)(\ell g)^{33}(ah)(bi),
\end{equation}
call on \Cref{lem:bangachev with P=1}, and use the corresponding concluding sequence
\[(bh)(ai)(\ell g)^{33}(ef)(de)(b\ell)(d\ell)(cd)(bc)(k\ell)^{33}.\]
This is a total of $148+\max\{3\al,219\}$ moves. If $a=m$, there is no change, and since $c,\ell,$ and $m$ were given by \Cref{lem:c at least 3 neighbors}, we may assume $c,\ell\neq a$. The only vertices which could be coincident (in our case $j=a$) are some combination of $c,\ell,m$ being the same as $k,g,h$. These cases require no new techniques, and the sequence \eqref{eq:j=a seq} works. If $j,k\neq a$, then swaps \eqref{eq:big swap} work.

The outstanding case is $k=a$. If all other vertices are distinct, we swap
\begin{equation}\label{eq:k=a}
    (cj)^{33}(\ell g)^{33}(mh)^{33}(a\ell)(cd)(d\ell)(de)(ef)(bi).
\end{equation}
This is $108$ moves, so we again require a total of at most $216+\max\{3\al,219\}$ moves after applying the corresponding ending sequence. If $k=j=a$, then we swap
\begin{equation}\label{eq:j=k=a}
    (\ell h)^{33}(cg)^{33}(ac)(cd)(de)(ef)
\end{equation}
to arrive at the setup required for \Cref{lem:bangachev with P=1}. Including the corresponding ending sequence, this is a total of $140+\max\{3\al,219\}$ moves. Other coincident vertices are subsumed in previous cases, or follow from \eqref{eq:k=a} and \eqref{eq:j=k=a}. In general, the coincidences of $c,\ell,$ or $m$ are minor as they only contribute to edges of $X$, so problems may be swapped with \Cref{lem:swap cd in qa} or ignored on a case by case basis.


Since at the beginning of this lemma we reduced to the case where $i\neq e,d$ in $12$ moves, we may need to undo these $12$ moves at the end of our setup. In all cases, $d_{FS}(\si,\si\circ(ab))\le 228+8D_3^2+\max\{3\al,219\} = C$, as desired.
\end{proof}

To conclude the proof, we tackle the only remaining case: when all paths of minimal length from $\mc{Q}_a^X$ to $\mc{S}^X$ include $a$ or $b$. We will split this lemma into cases based on whether the shortest path has length $2$ or $3$.

\begin{lemma}\label{lem:ab must be in the path}
    In the present setting, suppose all paths of minimal length from $\mc{Q}_a^X$ to $\mc{S}^X$ include either $a$ or $b$. Then
    \[d_{FS}(\si,\si\circ(ab))\le 16+\max\{\al,21\}.\]
\end{lemma}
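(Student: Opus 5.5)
The plan is to exploit the fact that $a$ and $b$ are adjacent in $X$ to every vertex of $\mc{Q}_a^X\cup\mc{Q}_b^X$, and then to finish with \Cref{lem:structure} applied to $U=\si(N_X[a])$ (or $\si(N_X[b])$), exactly as in \Cref{lem:Ra}.

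First I pin down the shape of a minimal path. A path of length $1$ has no interior vertices, and its endpoints lie in $\mc{Q}_a^X$ and $\mc{S}^X$. Neither endpoint is $a$ or $b$: $a\notin\mc{Q}_a^X$, and $a,b\notin\mc{S}^X$ because $\mc{Q}_a^{*Y}$ and $\mc{Q}_b^{*Y}$ are nonadjacent (\Cref{lem:ban to S}) and $\mc{S}$ is disjoint from $\mc{Q}_a^*$. So the hypothesis forces length $2$ or $3$, since \Cref{lem:c at least 3 neighbors} gives length $\le 3$. By the $a\leftrightarrow b$ symmetry I may assume $a$ lies on the path. If the length is $3$, say $c,x,y,d$, then $y=a$ is impossible: $a$ would be adjacent to $d$, and $c,a,d$ would be a shorter path. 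Hence the path is $c,a,y,d$ with $y\in N_X(a)\setminus\mc{Q}_a^X$ and $y\neq b$. If the length is $2$, the path is $c,a,d$ with $d\in N_X(a)\cap\mc{S}^X$.

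In the length-$2$ case, let $e',f'\in\mc{Q}_a^{*Y}$ and $g'\in\mc{Q}_b^{*Y}$ be neighbours of $d'$, chosen with $e'\neq a'$. Every occupant $e,d,g,b$ lies in $N_X[a]$, since the $\mc{Q}$-vertices are common friends of $a$ and $b$. Therefore $a'\sim e'\sim d'\sim g'\sim b'$ is a path inside $Y|_U$ with $U=\si(N_X[a])$, and $X|_{N_X[a]}\supset\starp$ because $N_X(a)\cap N_X(b)\neq\emptyset$. This places $a'$ and $b'$ in the same component of $Y|_U$, so I run the casework of \Cref{lem:Ra}. In $(C1)$ the bound is $\al$. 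In $(C2)$, and in $(C3)$ when $a',b'$ lie in the same side, the bound is $15$.

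The delicate subcase is $(C3)$ with $a'$ and $b'$ separated by the cut vertex $z'$, or with $z'\in\{a',b'\}$. There I would first slide $a$ (the star vertex of $N_X[a]$) along a path of length $\le 5$ in $Y|_U$, using \Cref{lem:ddG|U}. The aim is to reach a configuration where the two people to be exchanged sit on the same side of the cut vertex, and then mimic the $21$-move sequences of \Cref{lem:Ra}. Undoing the slide costs the same again, so I expect at most $16$ preliminary and closing moves in addition to $\max\{\al,21\}$.

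In the length-$3$ case I would reduce to the length-$2$ case. I want to make the $\mc{S}$-house $d'$ occupied by someone in $N_X(a)$ using at most $8$ swaps that avoid $a$, $b$ and $\mc{Q}_a^{*}$, and that are undone at the end. The natural move is to bring $y$ into $d'$. Both $y$ and $d$ lie in $N_X[y]$, so I would apply \Cref{lem:structure} to $\si(N_X[y])$, as in the proof of \Cref{claim:ef reduction}. The main obstacle is here: guaranteeing that $y'$ and $d'$ are close inside this neighbourhood, or handling the cut-vertex structure, while keeping the total move count within the $16$ budget and keeping $a,b$ fixed. If the structure lemma fails to give this directly, I would instead use an inclusion--exclusion count like the one behind (S2) in \Cref{lem:dX=3}. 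The goal of that count is a common $Y$-neighbour of $y'$ and $d'$ in $\mc{Q}_a^{*Y}$, which lets me route $y$ to $d'$ in a couple of moves.
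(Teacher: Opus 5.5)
\textbf{There is a genuine gap.} Your analysis of the path shape is fine, and so are the easy length-$2$ subcases: $(C1)$, $(C2)$, and same-side $(C3)$.

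\textbf{The separated $(C3)$ subcase in the length-$2$ case.} This is the subcase you call delicate, and nothing you have shown excludes it. From your routes $a'\sim e'\sim d'\sim g'\sim b'$ and $a'\sim f'\sim d'$, a cut vertex of $Y|_{\si(N_X[a])}$ separating $a'$ from $b'$ must be $d'$ or $g'$. Since $\mc{Q}_a^{*Y}$ and $\mc{Q}_b^{*Y}$ are nonadjacent, $d'$ could well be a cut vertex with $\mc{Q}_a^{*Y}$ on one side and $\mc{Q}_b^{*Y}$ on the other. Your remedy fails here for three reasons.
\begin{itemize}
\item In \Cref{lem:bangachev with P=1} the slid star vertex $c$ is a bystander, so slide, exchange, unslide conjugates to $(ab)$. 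Here the star vertex of $N_X[a]$ is $a$ itself. Slide, exchange, reversed slide is then the exchange conjugated by the slide. That equals $(ab)$ only if the middle exchange is between $b$ and whoever the slide left in house $a'$, which is the original obstruction again.
\item The $21$-move sequences of \Cref{lem:Ra}, and its argument that $a',b'$ cannot be separated, use two friends of $a$ housed in common $Y$-neighbours of $a'$ and $b'$. The standing assumption $|\mc{R}_a|\le 1$ removes exactly this.
\item When the cut vertex is $a'$ or $b'$, \Cref{lem:structure} gives no bound at all.
\end{itemize}
The missing idea is to transfer the problem to a different adjacent pair for which $|\mc{R}|\ge 2$ can be manufactured. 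The paper does this without using the structure of $\si(N_X[a])$:
\begin{itemize}
\item Take $e'\in\mc{Q}_a^Y$ and $g'\in\mc{Q}_b^{*Y}$ adjacent to $d'$. Inclusion--exclusion, using that $d$ has no friend in $\mc{Q}_a^X$, gives two friends $d_1,d_2$ of $d$ housed in $N_Y(e')\cap N_Y(g')$.
\item Perform $(ae)(bg)(ad)(ab)(dd_1)(ad)(dd_2)(dd_1)(ad)$. Now the pair $d,d_1$ has two $\mc{R}$-type helpers: $a$ in $e'$ and $d_2$ in $g'$.
\item \Cref{lem:Ra}, applied to $d,d_1$, swaps them in $\max\{\al,21\}$ moves, and $(dd_2)(ab)(ad)(be)(ag)$ finishes.
\end{itemize}

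\textbf{The length-$3$ case.} You name the obstacle but do not resolve it. Your fallback also conflicts with your own constraints. Since $y$ has no friend in $\mc{Q}_a^X$ (otherwise the path would be shorter), routing $y$ through a house of $\mc{Q}_a^{*Y}$ essentially requires swapping $y$ with $a$. The paper does not aim for the endpoint house $d'$ at all.
\begin{itemize}
\item Use $N[\mc{Q}_a^X]\cap N_X(y)\subseteq\{a,b\}$, $\mc{S}^X\cap N[\mc{Q}_a^X]=\emptyset$, and the bound of \Cref{lem:size of S}. Inclusion--exclusion then gives a friend $f$ of $y$ with $f'\in N_Y(y')\cap\mc{S}^Y$.
\item Separately dispose of the case where $y$ or $f$ lies in $N_X(a)\cap N_X(b)$, in at most $9$ moves.
\item The single swap $(yf)$ then leaves $\mc{Q}_a$, $\mc{Q}_b$, $\mc{S}^Y$ unchanged and puts $y$ in an $\mc{S}$-house with $c,a,y$ a path. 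The length-$2$ case applies at a cost of $2$ extra moves.
\end{itemize}
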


\begin{proof}
    \textbf{Case 1: The minimal path has length 2.}
    Either $c-a-d$ or $c-b-d$ is a path from $\mc{Q}_a^X$ to $\mc{S}^X$. We show that the following procedure works for both $a$ and $b$. Since $d'\in \mc{S}^Y$, there exist $e'\in\mc{Q}_a^{Y}$ and $f'\in\mc{Q}_b^{*Y}$ adjacent to $d'$. By \Cref{lem:ban to S}, $\mc{Q}_a^{*Y}\ni e'$ and $\mc{Q}_b^{*Y}\ni f'$ are nonadjacent, so $|N_Y(e')\cap N_Y(f')|\ge 2\dd(Y)-n+2$, and $N_Y(e')\cap N_Y(f')$ is disjoint from $\mc{Q}_a^{*Y}\cup \mc{Q}_b^{*Y}$. Recall that $d$ is nonadjacent to $\mc{Q}_a$ since $d_X(\mc{Q}_a^X,\mc{S}^X)=2$. By inclusion-exclusion and the lower bound \eqref{eq:bound on Qa, Qa*} on $|\mc{Q}_a|$, 
    \begin{align}\label{eq:last lemma, def}
        |\si(N_X(d))\cap N_Y(e')\cap N_Y(f')| &\ge \dd(X)+2\dd(Y)-2n+2+|\mc{Q}_a|\ge 2.
    \end{align}
    Let $d_1',d_2'$ be in this intersection, and let $c,a,d$ be the minimum length path. We arrive at the setup in the left side of \Cref{fig:a special case}. We perform the swaps:
\[(ae)(bf)(ad)(ab)(dd_1)(ad)(dd_2)(dd_1)(ad).\]
After performing these swaps, we find the setup on the right side of \Cref{fig:a special case}. 
     \begin{figure}[h!]
        \centering
        
        \begin{tikzpicture}[
    every node/.style={circle, draw, inner sep=1.5pt},
    scale=1
]

\node (a) at (-3,0) {$a$};
\node (b) at ( 3,0) {$b$};
\node (e) at (-1.5,0) {$e$};
\node (f) at ( 1.5,0) {$f$};
\node (d) at (0,0) {$d$};
\node (d1) at ( 0,1.5) {$d_1$};
\node (d2) at ( 0,-1.5) {$d_2$};

\draw[thick] (e) -- (d);
\draw[thick] (f) -- (d);
\draw[thick] (e) -- (d1);
\draw[thick] (f) -- (d1);
\draw[thick] (e) -- (d2);
\draw[thick] (f) -- (d2);
\draw[thick] (a) -- (e);
\draw[thick] (b) -- (f);

\draw[dashed, thick] (a) to[out=20, in=160] (b);
\draw[dashed, thick] (a) to[out=-10, in=-170] (e);
\draw[dashed, thick] (a) to[out=-20, in=-160] (d);
\draw[dashed, thick] (a) to[out=-25, in=-155] (f);
\draw[dashed, thick] (b) to[out=-160, in=-20] (f);
\draw[dashed, thick] (b) to[out=165, in=15] (e);
\draw[dashed, thick] (d) -- (d1);
\draw[dashed, thick] (d) -- (d2);

\node (a') at (6,0) {$e$};
\node (b') at (12,0) {$f$};
\node (e') at (7.5,0) {$a$};
\node (f') at ( 10.5,0) {$d_2$};
\node (d') at (9,0) {$b$};
\node (d1') at (9,1.5) {$d$};
\node (d2') at (9,-1.5) {$d_1$};

\draw[thick] (e') -- (d');
\draw[thick] (f') -- (d');
\draw[thick] (e') -- (d1');
\draw[thick] (f') -- (d1');
\draw[thick] (e') -- (d2');
\draw[thick] (f') -- (d2');
\draw[thick] (a') -- (e');
\draw[thick] (b') -- (f');

\draw[dashed, thick] (e') to[out=20, in=160] (d');
\draw[dashed, thick] (e') to[out=65, in=-155] (d1');
\draw[dashed, thick] (a') to[out=-20, in=-160] (d');
\draw[dashed, thick] (e') to[out=23, in=157] (b');
\draw[dashed, thick] (a') to[out=20, in=160] (e');
\draw[dashed, thick] (d') to[out=-30, in=-150] (b');
\draw[dashed, thick] (d1') to[out=-25, in=115] (f');
\draw[dashed, thick] (d1') to[out=-70, in=70] (d2');

\node[draw=none, circle=none] at (4.5,0)
{$\xrightarrow[]{\text{after swaps}}$};

\end{tikzpicture}
        \caption{The case where $\{a,d\}\in E(X)$ for $d\in \mc{S}^X$.}
        \label{fig:a special case}
\end{figure}
We wish to swap $d$ and $d_1$; they are currently in pairs $(d,d_1'),(d_1,d_2')$. Let $\si'$ denote the configuration on the right side of \Cref{fig:a special case}. Since $d$ and $d_1$ are adjacent, we call on our earlier reduction \Cref{lem:Ra} to swap them. As in \eqref{eq:QaQbRaRb}, we may define the analog of $\mc{R}_a$ for $(d,d_1')$ and $(d_1,d_2')$ in the setting $\si'$:
\begin{equation}\label{eq:Rd}
    \mc{R}_d = \{(r,\si'(r)):r\in N_X(d),r\notin N_X(d_1),\si'(r)\in N_Y(d_1'),\si'(r)\in N_Y(d_2')\}.
\end{equation}
We apply the same reductions from the beginning of this section in an attempt to swap $d$ and $d_1$ from $\si'$; in particular, we may assume no pair lies in all four neighborhoods (the analog of \eqref{eq:adjacencies of c,c'}), otherwise we could swap $d$ and $d_1$ in $3$ moves. After this reduction, $(a,e'),(d_2,f')\in\mc{R}_d$, so \Cref{lem:Ra} implies $d_{FS}(\si',\si'\circ(dd_1))\le \max\{\al,21\}$. Afterwards, we finish swapping $(ab)$ with the sequence:
\[(dd_2)(ab)(ad)(be)(af).\]
We demonstrate that in this case, $d_{FS}(\si,\si\circ(ab))\le 14+\max\{\al,21\}$. See that in the subcase where $f=b$, we may ignore $f$ and perform the swaps:
\[(ae)(ad)(ab)(dd_1)(ad)(dd_2)(dd_1)(ad)(dd_1)^{\max\{\al,21\}}(dd_2)(ab)(ad)(be) = (ab).\]
This swaps $a$ and $b$ in $12+\max\{\al,21\}$ moves. The case where $\{b,d\}\in E(X)$ is symmetric. \\
        




\textbf{Case 2: The minimal path has length 3.}
Note that $a,b\in N[\mc{Q}_a^X]$, so if the path begins at $\mc{Q}_a^X$ and ends at $\mc{S}^X$, then $a$ or $b$ may only be the second vertex in this path. Let the path be given by $c-a-d-e$ with $c\in \mc{Q}_a^X$ and $e\in \mc{S}^X$, as the case $c-b-d-e$ is symmetric. By \Cref{lem:size of S}, $|\mc{S}|\ge 2\dd(Y)-n+1$. See that $|\si(N_X(d))\cap N_Y(d')|\ge \dd(X)+\dd(Y)-n+1$ by inclusion-exclusion, and because neither set contains $d'$. Since all minimal length paths from $\mc{Q}_a^X$ to $\mc{S}^X$ use either $a$ or $b$, 
\begin{equation}\label{eq:NQaX cap NXf}
    N[\mc{Q}_a^X]\cap N_X(d)\subset \{a,b\}.
\end{equation}
Moreover, if $a',b'\in \si(N_X(d))\cap N_Y(d')$, then we have the setup in \Cref{fig:4 conditions}, and $d_{FS}(\si,\si\circ(ab))\le 3$. Thus, we may assume $|\si(N_X(d))\cap N_Y(d')\cap N[\mc{Q}_a^X]|\le 1$. Recall that $|\mc{S}^X\cap N[\mc{Q}_a^X]|=0$ since $d_X(\mc{Q}_a^X,S^X) = 3$, and that $|N[\mc{Q}_a^X]|\ge |N_X[c]|\ge\dd(X)+1$. We use these three inequalities and inclusion-exclusion to lower bound $|\si(N_X(d))\cap N_Y(d')\cap \mc{S}^Y|$:
\begin{align}\label{eq:f(dd') and S}
    |\si(N_X(d))\cap N_Y(d')\cap \mc{S}^Y| &\ge \dd(X)+3\dd(Y)-3n+1+|N[\mc{Q}_a^X]|\ge 2.
\end{align}
Since the above intersection is nonempty, let $f'\in \si(N_X(d))\cap N_Y(d')\cap \mc{S}^Y$. Note that we may swap $(df)$, and let $\si' = \si\circ(df)$.

We reduce to the case where $d,f\notin N_X(a)\cap N_X(b)$. Suppose that $f\in N_X(a)\cap N_X(b)$. Let $g'\in\mc{Q}_a^Y$ and $h'\in\mc{Q}_b^{*Y}$ be adjacent to $f'$, since $f'\in\mc{S}^Y$; the setup is pictured in \Cref{fig:f is a friend of both a and b}. The swaps $(ag)(af)(ah)(ab)(bh)(bf)(bg)$ demonstrate $d_{FS}(\si,\si\circ(ab))\le 7$ here. If $h=b$, then the swaps $(ag)(af)(ab)(bf)(bg)$ suffice, and show the same bound. 
    \begin{figure}[h!]
        \centering
        
        \begin{tikzpicture}[
    every node/.style={circle, draw, inner sep=1.5pt},
    scale=1.2
]

\node (a) at (-3,1) {$a$};
\node (b) at ( 3,1) {$b$};
\node (e) at (-1.5,0) {$g$};
\node (f) at ( 1.5,0) {$h$};
\node (d) at (0,-1) {$f$};

\draw[thick] (e) -- (d);
\draw[thick] (f) -- (d);
\draw[thick] (a) -- (e);
\draw[thick] (b) -- (f);

\draw[dashed, thick] (a) -- (b);
\draw[dashed, thick] (a) to[out=-55, in=170] (e);
\draw[dashed, thick] (a) to[out=-65, in=-180] (d);
\draw[dashed, thick] (b) to[out=-115, in=0] (d);
\draw[dashed, thick] (a) --(f);
\draw[dashed, thick] (b) to[out=-125, in=10] (f);
\draw[dashed, thick] (b) -- (e);

\end{tikzpicture}
        \caption{The case where $f\in N_X(a)\cap N_X(b)$.}
        \label{fig:f is a friend of both a and b}
\end{figure}
Similarly, if $d\in N_X(a)\cap N_X(b)$, then we first perform the swap $(df)$, apply the same procedure to swap $(ab)^7$, and then swap $(df)$ to conclude the sequence. Thus, $d_{FS}(\si,\si\circ(ab))\le 9$ here. 

\textbf{We may assume $d,f\notin N_X(a)\cap N_X(b)$.} Thus, $d,f\notin \mc{Q}_a^{*X}\cup\mc{Q}_b^{*X}$. Consequently, $\mc{Q}_a,\mc{Q}_b,$ and $\mc{S}^Y$ are invariant under $(df)$. Moreover, $(d,\si'(d)) = (d,f')\in \mc{S}_{\si'}$ (in words, $d$ is now in $\mc{S}_{\si'}^X$, whereas $f$ used to be in $\mc{S}^X$). Thus, we may apply Case 1 under this setup, since $c-a-d$ is a path from $\mc{Q}_{a,\si'}^X$ to $\mc{S}_{\si'}^X$. We may swap $(ab)$ in $\le 14+\max\{\al,21\}$ moves, and then perform $(df)$. Thus,
\[d_{FS}(\si,\si\circ(ab))\le 16+\max\{\al,21\}.\]
The case $c-b-d-e$ is symmetric and the lemma is proven.
\end{proof}

We may now prove that any two friends can be swapped in $C$ moves.

\begin{proof}[Proof of \Cref{thm:2x+3y in finite time}]
    We prove that $d_{FS}(\si,\si\circ(ab))\le C$. We use structural lemmas from \Cref{lem:d(X)le 3} to \Cref{lem:c at least 3 neighbors} reduce the problem. By \Cref{lem:c at least 3 neighbors}, $d_X(\mc{Q}_a^X,\mc{S}^X) \le 3$. If there exists a path from $\mc{Q}_a^X$ to $\mc{S}^X$ of minimal length that does not include $a$ or $b$, \Cref{lem:dX=1 or 2} and \Cref{lem:dX=3} resolve the cases $d_X(\mc{Q}_a^X,\mc{S}^X)\le2$ and $d_X(\mc{Q}_a^X,\mc{S}^X) = 3$, respectively. If instead all paths from $\mc{Q}_a^X$ to $\mc{S}^X$ of minimal length include $a$ or $b$, \Cref{lem:ab must be in the path} shows $d_{FS}(\si,\si\circ(ab))\le C$. Thus, $d_{FS}(\si,\si\circ(ab))\le C$ in all cases.
\end{proof}

The proof of \Cref{thm:main} follows immediately from \Cref{thm:2x+3y in finite time}:

\begin{proof}[Proof of \Cref{thm:main}]
    By \Cref{thm:2x+3y in finite time} and \Cref{lem:friend reduction}, $\FS$ is connected and 
    \[\diam(\FS)\le \frac{6n^2C}{\dd(X)}.\]
\end{proof}

This readily implies \Cref{cor:diam}.

\begin{proof}[Proof of \Cref{cor:diam}]
    For any connected friends and strangers graph, 
\[\diam(\FS)\ge \diam(\fs{K_n}{K_n}) = n-1\] 
because $\FS$ is a spanning subgraph of $\fs{K_n}{K_n}$. Thus, $\diam(\FS) = \Omega(n)$. When $\min\{\dd(X),\dd(Y)\} = \Omega(n)$, \Cref{thm:main} implies $\diam(\FS) = O(n)$, and \Cref{cor:diam} follows.
\end{proof}
\section{Diameter Bounds}

This section is devoted to proving \Cref{prop:diam lower bound} and \Cref{cor:diam of 2x+3y component is n2}. \Cref{prop:diam lower bound} is a direct consequence of the following proposition.

\begin{proposition}\label{prop:diam lower construction}
    Let $n$ and $k$ be positive integers with $n\ge 2k+1$. Let $X=(\mathrm{Cycle}_n)^k$ and $Y=K_n$. Then $\dd(X) = 2k,\dd(Y) = n-1$, and 
    \[\diam (\FS)\ge \frac{n^2-n}{2\min\{\dd(X),\dd(Y)\}}.\]
    If $n$ is even,
    \[\diam (\FS)\ge \frac{n^2}{2\min\{\dd(X),\dd(Y)\}}.\]
\end{proposition}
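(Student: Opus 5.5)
Since $Y=K_n$, the graph $\FS$ is just $\mathrm{Cay}_{T(X),\text{right}}(S_n)$: every transposition $(uv)$ with $\{u,v\}\in E(X)$ is always allowed. So the diameter is the maximum, over permutations, of the minimum number of $X$-edge transpositions needed to produce that permutation. The degree claims are immediate. In $(\mathrm{Cycle}_n)^k$ each vertex $i$ is adjacent exactly to $i\pm 1,\dots,i\pm k \pmod n$, and these $2k$ vertices are distinct because $n\ge 2k+1$. Hence $\dd(X)=2k$, $\dd(Y)=n-1$, and $\min\{\dd(X),\dd(Y)\}=2k$ since $2k\le n-1$. The degree condition $2\cdot 2k+3(n-1)\ge 3n$ holds for $k\ge 1$. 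It remains to prove a lower bound on the distance from the identity to a well-chosen bijection.

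I will use a potential argument based on cyclic displacement. Place the people on positions $\Z_n$ (the vertex labels of the cycle) and the houses likewise. A vertex $\si$ is viewed as recording, for each person $v$, the house $\si(v)$. An allowed move swaps two people $u,v$ with cyclic distance at most $k$, i.e. it exchanges the houses of two friends.

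Rather than track houses, I will track the inverse picture, which by \Cref{lem:XY=YX} is equivalent. Each edge of $\FS$ composes $\si$ with a transposition $(uv)$ where $d_{\mathrm{Cycle}_n}(u,v)\le k$. Consider the target $\tau$ with $\tau(v)=\si(v+\lfloor n/2\rfloor)$, the antipodal shift. Define the potential
\[\Phi(\pi)=\sum_{v} d_{\mathrm{Cycle}_n}\big(v,\pi(v)\big),\]
where $\pi=\si^{-1}\tau$ is the permutation of $V(X)$ still to be realized. Set $\si=\mathrm{id}$ and identify houses with $\Z_n$. A move right-multiplies the accumulated permutation by $(uv)$. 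This changes the positions of exactly two entries, each by a cyclic distance of at most $k$. By the triangle inequality, $\Phi$ changes by at most $2k$ per move.

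The antipodal shift has $\Phi=n\lfloor n/2\rfloor$, while the identity has $\Phi=0$. Therefore the distance is at least $n\lfloor n/2\rfloor/(2k)$. For $n$ even this equals $n^2/(2\cdot 2k)$, and for $n$ odd it equals $(n^2-n)/(2\cdot 2k)$.

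This gives the stated bounds, since the proposition has $2\min\{\dd\}$ in the denominator. Here $\min\{\dd\}=2k$, so the statement's bound is $(n^2-n)/(4k)$, which is exactly what the computation yields.

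The main point needing care is the claim that one move changes $\Phi$ by at most $2k$. I must be precise about which coordinate moves. A transposition $(uv)$ on the right of $\pi$ swaps the values at positions $u$ and $v$. So the term $d(u,\pi(u))$ becomes $d(u,\pi(v))$, and
\[|d(u,\pi(v))-d(u,\pi(u))|\le d(\pi(u),\pi(v)),\]
which is not obviously small. Instead I will bound it via the other triangle inequality: $d(u,\pi(v))\le d(v,\pi(v))+d(u,v)\le d(v,\pi(v))+k$. Summing the two changed terms gives
\[d(u,\pi(v))+d(v,\pi(u))\le d(v,\pi(v))+d(u,\pi(u))+2k,\]
and symmetrically for decreases. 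So $|\Delta\Phi|\le 2k$, as needed. I expect this to be the only subtle step; the rest is arithmetic.
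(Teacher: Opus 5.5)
Your proof is correct and follows the paper's argument: both compare the identity with the antipodal shift $v\mapsto v+\lfloor n/2\rfloor$ using a total-displacement potential, which gives $d_{FS}\ge n\lfloor n/2\rfloor/(2k)$. The only difference is cosmetic. You measure displacement in $\mathrm{Cycle}_n$, where each move changes $\Phi$ by at most $2k$, while the paper measures it in $X$, where each move changes it by at most $2$ and each person must travel $X$-distance at least $\lfloor n/2\rfloor/k$; your crosswise triangle-inequality pairing also justifies the per-move bound that the paper states without proof.
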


\begin{proof}
    Let $V(X)$ and $V(Y)$ be indexed by $[n]$. We examine the distance between the identity permutation $\si$ and the permutation $\tau(m) = m+\lfloor\frac{n}{2}\rfloor \mod n$. Define the total displacement between $\si,\tau$:
    \begin{align*}
        d_{\Sigma}(\si,\tau) &:= \sum_{v\in V(X)}d_X(v,\si^{-1}\circ\tau(v)).
    \end{align*}
    Each vertex in $V(X)$ moves a distance $\lfloor n/2\rfloor$ in $\mathrm{Cycle}_n$, it moves at least a distance $\lfloor n/2\rfloor/k$ in $X$. Total displacement is subadditive for a sequence of swaps, and each swap of two people in $X$ may contribute at most $2$ to the total displacement. Thus,
    \begin{align*}
        d_{FS}(\si,\tau) &\ge \frac{n}{2k}\cdot \left\lfloor \frac{n}{2}\right\rfloor \ge  \frac{n^2-n}{2\min\{\dd(X),\dd(Y)\}}.
    \end{align*}
    Moreover, $d_{FS}(\si,\tau)\ge n^2/4k = n^2/(2\min\{\dd(X),\dd(Y)\})$ for even $n$.
\end{proof}

The above proposition readily implies \Cref{prop:diam lower bound}. This yields a lower bound of $\Omega(n^2/\min\{\dd(X),\dd(Y)\})$ on the maximum diameter of a friends and strangers graph constructed from $X,Y$ on $n$ vertices with 
\[2\min\{\dd(X),\dd(Y)\}+3\max\{\dd(X),\dd(Y)\}\ge 3n.\]
Following the proof of \Cref{thm:main}, it is natural to ask if similar diameter bounds exist for graphs $X$ and $Y$ satisfying $2\dd(X)+3\dd(Y)\ge 3n$, but where $X$ and $Y$ are not necessarily connected. We prove \Cref{cor:diam of 2x+3y component is n2} by induction on $n$. Note that $X,Y$ need not be connected. We restate the theorem for convenience, and we let $C$ be the constant obtained in \Cref{thm:2x+3y in finite time}. Note that for all friends and strangers graphs in the \textcolor{dg}{green} region of \Cref{fig:connectivity diagram}, we have $\diam(\FS)\le Cn^2$ since \Cref{thm:main} holds with constant $C$.

\newtheorem*{cor:diam of 2x+3y component is n2}{Corollary \ref{cor:diam of 2x+3y component is n2}}
\begin{cor:diam of 2x+3y component is n2}
Let $X,Y$ be graphs on $n$ vertices such that: 
    \[2\min\{\dd(X),\dd(Y)\}+3\max\{\dd(X),\dd(Y)\}\ge 3n.\]
    Then the diameter of any connected component of $\FS$ is $O(n^2)$.
\end{cor:diam of 2x+3y component is n2}

\begin{proof}
    For $n=1$, the theorem holds trivially, as $|V(\FS)| = 1\le C$. Fix $n\in \N$, and suppose the theorem holds for all $m<n$. If $X,Y$ are connected, then we are done by \Cref{thm:main}. By \Cref{lem:XY=YX} let $\dd(X)\le \dd(Y)$, and thus $\dd(Y)\ge 3n/5$. Since $\dd(Y)>n/2$, we have that $Y$ is connected. Thus, we may suppose $X$ is not connected, and has components 
    \[X = X_1\sqcup \cdots \sqcup X_r.\]
Observe that $\si\in \FS$ must lie in the (possibly disconnected) subgraph of $\FS$ corresponding to the ordered set partition $(\si(X_1),\dots,\si(X_r))$. \Cref{prop:components} shows that this subgraph is isomorphic to $\fs{X_1}{Y|_{\si(X_1)}}\square \cdots \square \fs{X_r}{Y|_{\si(X_r)}}$. Thus, the diameter of any component of this subgraph is at most the sum of maximum diameters of connected components of $\fs{X_i}{Y|_{\si(X_i)}}$. We examine each subgraph $\fs{X_i}{Y|_{\si(X_i)}}$. Since each $X_i$ is a connected component of $X$, we have $\dd(X_i)\ge \dd(X)$ and $\dd(Y|_{\si(X_i)})\ge \dd(Y)-n+|V(X_i)|$ for all $i$. We find that:
    \begin{align*}
        2\dd(X_i)+ 3\dd(Y|_{\si(X_i)}) &\ge 2\dd(X)+3\dd(Y)-3n+3|V(X_i)|\ge 3|V(X_i)|.
    \end{align*}
    Regardless of which subgraph has the smaller minimum degree, we have the following:
    \begin{align*}
        &2\min\{\dd(X_i),\dd(Y|_{\si(X_i)})\}+3\max\{\dd(X_i),\dd(Y|_{\si(X_i)})\}\\
        &\ge 2\dd(X_i)+3\dd(Y|_{\si(X_i)})\\
        &\ge 3|V(X_i)|.
    \end{align*}
    It follows that the diameter of the component of $\FS$ containing $\si$ is bounded by the sum of the maximum diameter of connected components $\fs{X_i}{Y|_{\si(X_i)}}$. By induction, the diameter of any connected component of $\fs{X_i}{Y|_{\si(X_i)}}$ is at most $C |X_i|^2$, so the diameter of our connected component of $\FS$ is at most
    \begin{align*}
        C\sum_{i=1}^r|V(X_i)|^2\le C|V(X)|^2
    \end{align*}
    by superadditivity of $f(x) = x^2$. Thus, the theorem is proven by induction.
\end{proof}

\section{Insufficient Minimum Degrees for Connectivity}

This section will introduce weighted cliques, which we use to construct disconnected friends and strangers graphs outside of the \textcolor{red}{red} region of \Cref{fig:connectivity diagram}. We prove \Cref{prop:weighted clique pair}, which allows us to exhibit a family of disconnected $\FS$ with minimum degrees in the \textcolor{orange}{orange} region.
\begin{definition}\label{def:imply FS connected}
    We say a pair $(\al,\be)\in [0,1]^2$ \blue{implies $\FS$ is connected} if for any pair of connected graphs $X$ and $Y$ on $n$ vertices with $\dd(X)/n\ge \al$ and $\dd(Y)/n\ge \be$, $\FS$ is connected.
\end{definition}
One natural way to disconnect a friends and strangers graph is using  cliques, possibly of different sizes. Let $w:[n]\to [0,1]$ be a weight function with total weight $1$, so $\sum_{i\in [n]} w(i) = 1$. The weight of a subset $S\subset [n]$ is $w(S) = \sum_{i\in S} w(i)$. Let $G$ be a graph with vertex set $V(G) = [n]$ such that $G$ and its complement $G^C$ are connected. Define the \blue{closed weighted minimum degree} to be
\begin{equation*}
    \dd^+_w[G]:=\min_{v\in V(G)}w(N_G[v]).
\end{equation*}
We call $(\dd^+_w[G],\dd^+_w[G^C])$ a \blue{weighted clique pair} if it arises from a rational weight function $w:[k]\to (0,1]\cap \Q$ with total weight $1$ and a graph $G$ on $k$ vertices such that $G$ and $G^C$ are connected. We create cliques with sizes proportional to $w$, and let two cliques be adjacent in $X$ corresponding to edges in $G$, and be adjacent in $Y$ corresponding to edges in $G^C$. This yields a disconnected friends and strangers graph; see \Cref{fig:7 weights}.

\begin{proposition}\label{prop:construct disconnected}
    Let $k\ge 2$ be an integer, and let $w:[k]\to (0,1]\cap \Q$ be a rational weight function with total weight $1$. Let $n$ be a positive integer such that $nw(i)\in \Z$ for all $i\in[k]$, and let $G$ be a graph on $k$ vertices such that $G$ and $G^C$ are connected. Then for all positive integers $m$, there exist graphs $X$ and $Y$ on $mn$ vertices such that $\dd(X) = \dd^+_w[G]mn-1$ and $\dd(Y) = \dd^+_w[G^C]mn-1$, and such that $\FS$ is disconnected.
\end{proposition}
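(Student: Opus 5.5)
We build $X$ and $Y$ by blowing up $G$ and $G^C$ into cliques. Set $N = mn$ and partition $[N]$ into blocks $B_1,\dots,B_k$ with $|B_i| = w(i)mn$; this is an integer because $nw(i)\in\Z$. In $X$, make each $B_i$ a clique and join $B_i$ completely to $B_j$ whenever $\{i,j\}\in E(G)$. Build $Y$ on a copy of $[N]$ with the same blocks: each block is again a clique, and $B_i$ is joined completely to $B_j$ whenever $\{i,j\}\in E(G^C)$.

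The degree counts are then immediate. A vertex in $B_i$ is adjacent in $X$ to everything in $\bigcup_{j\in N_G[i]}B_j$ except itself, so its degree is $w(N_G[i])mn-1$. Minimizing over $i$ gives $\dd(X) = \dd^+_w[G]mn-1$, and the same computation gives $\dd(Y)=\dd^+_w[G^C]mn-1$. Connectivity of $X$ and $Y$ follows from connectivity of $G$ and $G^C$ together with the fact that each block is a clique. This matters for the application in \Cref{prop:weighted clique pair}, even though the statement only asks for disconnectedness.

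\textbf{Disconnectedness.} The key observation is that the identity bijection (block $B_i$ of $X$ onto block $B_i$ of $Y$) is isolated, or at least cannot leave its orbit. Let $\si_0$ map each $B_i\subset V(X)$ onto $B_i\subset V(Y)$. A swap of $x,y$ requires both $\{x,y\}\in E(X)$ and $\{\si(x),\si(y)\}\in E(Y)$.

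Under any $\si$ in which every block of people sits inside the same-indexed block of houses, take $x\in B_i$ and $y\in B_j$.
\begin{itemize}
\item If $i\ne j$, then $\{x,y\}\in E(X)$ forces $\{i,j\}\in E(G)$, while $\{\si(x),\si(y)\}\in E(Y)$ forces $\{i,j\}\in E(G^C)$. These cannot both hold.
\item If $i=j$, the swap stays within a block, so the property is preserved.
\end{itemize}
Hence the set $\mathcal{A}$ of bijections with $\si(B_i)=B_i$ for all $i$ is closed under moves, so it is a union of components of $\FS$.

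$\mathcal{A}$ is a proper subset: since $k\ge2$, there are at least two nonempty blocks, and exchanging one person from $B_1$ with one from $B_2$ gives a bijection outside $\mathcal{A}$. So $\FS$ is disconnected.

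The main (mild) obstacle is only the bookkeeping in the degree formula, where the $-1$ accounts for excluding the vertex itself from the closed neighborhood. Everything else is the invariant argument above, essentially the standard clique-blowup obstruction.
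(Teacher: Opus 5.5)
Your proposal is correct and follows the paper's proof essentially verbatim: it uses the same clique blow-up of $G$ and $G^C$, the same degree count $w(N_G[i])mn-1$, and the same block-preservation invariant starting from the identity bijection. You also check explicitly that the invariant set $\mathcal{A}$ is a proper subset of $V(\FS)$, using $k\ge 2$ and $w(i)>0$ so that every block is nonempty; the paper leaves this step implicit, and it is worth keeping.
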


\begin{proof}
    Let $\mc P = \{P_1,\dots,P_k\}$ be a partition of $[mn]$ such that $|P_i| = mnw(i)$ for all $i\in[k]$. Let $X$ and $Y$ both have vertex set $[mn]$, where $a\in P_i$ and $b\in P_j$ are adjacent in $X$ if $i=j$ or if $i$ and $j$ are adjacent in $G$, and $a$ and $b$ are adjacent in $Y$ if $i=j$ or if $i$ and $j$ are nonadjacent in $G$. The degree in $X$ of a vertex $v\in P_i$ is
    \begin{align*}
        \deg_X(v) = \sum_{\substack{i=j\text{ or}\\ \{i, j\}\in E(G)}} |P_j| -1 = w(N_G[v])mn-1.
    \end{align*}
    Thus, $\dd(X) = \dd^+_w[G]mn-1$, and $\dd(Y) = \dd^+_w[G^C]mn-1$ by a symmetric argument. Finally, we show $\FS$ is disconnected by considering the identity vertex, $\si:[mn]\to [mn]$. Any $a\in P_i\subset V(X)$ is adjacent to all other vertices in $P_i$, as well as those $P_j$ such that $\{i,j\}\in E(G)$. However, $\si(a)\in P_i\subset V(Y)$ is adjacent to $P_i$ and all $P_j$ with $\{i,j\}\notin E(G)$. Thus, only swaps with both vertices in the same partition block can occur, since the structure of $X$ and $Y$ is unchanged under swaps inside any partition block. Since no vertex can leave its partition block, $\FS$ is disconnected. Moreover, since $G$ and $G^C$ are connected graphs and $X$ and $Y$ are obtained from $G$ and $G^C$ respectively by blowing up vertices into cliques, $X$ and $Y$ are connected.
\end{proof}

\begin{corollary}\label{cor:insuf under pairs}
    If $(\dd^+_w[G],\dd^+_w[G^C])$ is a weighted clique pair, then all pairs $(\al,\be)$ with $\al<\dd^+_w[G]$ and $\be<\dd^+_w[G^C]$ do not imply $\FS$ is connected.
\end{corollary}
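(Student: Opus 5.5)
This follows directly from \Cref{prop:construct disconnected}: pass to a large multiple of the base clique sizes so that the $-1$ correction in the degree formula no longer matters. Fix a weighted clique pair $(\dd^+_w[G],\dd^+_w[G^C])$, coming from a rational weight function $w:[k]\to(0,1]\cap\Q$ and a graph $G$ on $k$ vertices with $G$ and $G^C$ connected. Fix $(\al,\be)$ with $\al<\dd^+_w[G]$ and $\be<\dd^+_w[G^C]$. We must exhibit connected graphs $X,Y$ on a common number $N$ of vertices with $\dd(X)/N\ge\al$, $\dd(Y)/N\ge \be$, and $\FS$ disconnected. By \Cref{def:imply FS connected}, this shows that $(\al,\be)$ does not imply $\FS$ is connected.

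Let $n$ be a common denominator of the values $w(i)$, so that $nw(i)\in\Z$ for every $i\in[k]$. For each positive integer $m$, \Cref{prop:construct disconnected} supplies graphs $X_m,Y_m$ on $N=mn$ vertices with $\FS[X_m,Y_m]$ disconnected; I will write this as $\fs{X_m}{Y_m}$. The final sentence of that proof shows $X_m$ and $Y_m$ are connected, since they are clique blow-ups of the connected graphs $G$ and $G^C$. Their normalized minimum degrees are
\[\frac{\dd(X_m)}{mn}=\dd^+_w[G]-\frac{1}{mn},\qquad \frac{\dd(Y_m)}{mn}=\dd^+_w[G^C]-\frac{1}{mn}.\]

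The strict inequalities give a slack $\varepsilon=\min\{\dd^+_w[G]-\al,\ \dd^+_w[G^C]-\be\}>0$. Choose $m$ large enough that $1/(mn)\le\varepsilon$. Then $\dd(X_m)/(mn)\ge\al$ and $\dd(Y_m)/(mn)\ge\be$, while $\fs{X_m}{Y_m}$ is disconnected. This is the required counterexample, which proves the first assertion.

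For the "in particular" statement, I would exhibit one explicit weighted clique pair whose dominated region contains the orange region, and apply the first part to it. By symmetry under \Cref{lem:XY=YX}, only one of the two orange triangles needs treating. I would look for a small graph $G$, say $k\le 5$ with $G$ and $G^C$ both connected, for instance $G=P_4$ (whose complement is again $P_4$) or a path or cycle on five vertices. Then I would solve for rational weights making the two closed weighted minimum degrees equal to $(1/2,5/8)$, or approach the segment from $(1/2,5/8)$ to $(7/13,7/13)$. Points on the boundary line itself are reached as limits, because the region in \Cref{cor:insuf under pairs} is open.

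The main obstacle is exactly this explicit choice of $(G,w)$: one must find weights whose closed neighborhood sums land on the stated endpoints. My first attempt is $G=P_5$ with weights chosen symmetric about the center, balancing the constraints $w(N_G[v])\ge\al$ and $w(N_{G^C}[v])\ge\be$ at the binding vertices. The first assertion itself is routine once \Cref{prop:construct disconnected} is available.
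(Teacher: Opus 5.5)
Your argument is correct and is the paper's: apply \Cref{prop:construct disconnected} on $mn$ vertices, read off the normalized minimum degrees $\dd^+_w[G]-\frac{1}{mn}$ and $\dd^+_w[G^C]-\frac{1}{mn}$, and take $m$ large compared with the slack $\min\{\dd^+_w[G]-\al,\dd^+_w[G^C]-\be\}$. Like the paper, you tacitly use $k\ge 2$ to invoke that proposition, so the degenerate one-vertex pair $(1,1)$, which would contradict \Cref{thm:main}, must be excluded from the definition of a weighted clique pair.

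Your closing ``in particular'' paragraph is not part of this statement; it belongs to \Cref{prop:weighted clique pair}, where it is settled by the seven-vertex \Cref{ex:GPT}. Your candidates could not work anyway. Labelling a path $1,2,\dots$ in order, $N_G[1]$ and $N_G[4]$ partition the vertex set of $P_4$ and of $P_5$, so the path's closed weighted minimum degree is at most $\frac12$. For $\mathrm{Cycle}_5$, averaging forces both closed weighted minimum degrees to be at most $\frac35$.
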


\begin{proof}
    By \Cref{prop:construct disconnected} there exist disconnected $\FS$ with $\dd(X)/|X| = \dd_w^+[G]-1/|X|$ and $\dd(Y)/|Y| = \dd_w^+[G^C]-1/|Y|$ for arbitrarily large $|X|=|Y|=n$. Pick $1/n< \min\{\dd_w^+[G]-\al,\dd_w^+[G^C]-\be\}$ to show $(\al,\be)$ is insufficient to imply $\FS$ is connected.
\end{proof}

\begin{figure}[h!]
\centering
\begin{tikzpicture}[
    scale=1,
    every node/.style={font=\large},
    smallv/.style={circle, draw, fill=blue!15, minimum size=12mm, inner sep=0pt},
    medv/.style={circle, draw, fill=blue!15, minimum size=14mm, inner sep=0pt},
    bigv/.style={circle, draw, fill=blue!15, minimum size=20mm, inner sep=0pt},
    gedge/.style={solid, thick},
    cedge/.style={dashed, thick, color = blue}
]


\node[bigv]   (v1) at ( 3.5, 0.0) {$1$};
\node[medv]   (v2) at ( 2.2, 2.4) {$2$};
\node[medv] (v3) at (-0.3, 3.3) {$3$};
\node[medv] (v4) at (-3.0, 1.9) {$4$};
\node[smallv]   (v5) at (-3.0,-1.3) {$5$};
\node[smallv] (v6) at (-0.7,-3.1) {$6$};
\node[smallv]   (v7) at ( 1.7,-2.6) {$7$};


\draw[gedge] (v1) -- (v5);
\draw[gedge] (v1) -- (v6);
\draw[gedge] (v1) -- (v7);

\draw[gedge] (v2) -- (v3);
\draw[gedge] (v2) -- (v4);
\draw[gedge] (v3) -- (v4);

\draw[gedge] (v2) -- (v7);

\draw[gedge] (v3) -- (v6);

\draw[gedge] (v4) -- (v5);


\draw[cedge] (v1) -- (v2);
\draw[cedge] (v1) -- (v3);
\draw[cedge] (v1) -- (v4);

\draw[cedge] (v2) -- (v5);
\draw[cedge] (v2) -- (v6);

\draw[cedge] (v3) -- (v7);
\draw[cedge] (v3) -- (v5);

\draw[cedge] (v4) -- (v6);
\draw[cedge] (v4) -- (v7);

\draw[cedge] (v5) -- (v6);
\draw[cedge] (v6) -- (v7);
\draw[cedge] (v5) -- (v7);


\node[draw=none,font=\tiny] at (5.5,0)
    {$w(1)=\dfrac{34}{130}$};

\node[draw=none,font=\tiny] at (3.85,2.5)
    {$w(2)=\dfrac{17}{130}$};

\node[draw=none,font=\tiny] at (1.3,3.6) 
    {$w(3)=\dfrac{17}{130}$};

\node[draw=none,font=\tiny] at (-4.6,2.0) 
    {$w(4)=\dfrac{17}{130}$};

\node[draw=none,font=\tiny] at (-4.5,-1.3)
    {$w(5)=\dfrac{15}{130}$};

\node[draw=none,font=\tiny] at (3.15,-2.6)
    {$w(7)=\dfrac{15}{130}$};

\node[draw=none,font=\tiny] at (-2.2,-3.1)
    {$w(6)=\dfrac{15}{130}$};



\end{tikzpicture}
\caption{A weighted vertex graph $G$ (solid edges) and its complement $G^C$ (dashed edges).}
\label{fig:7 weights}
\end{figure}

Surprisingly, we may use \Cref{prop:construct disconnected} to find pairs outside the \textcolor{red}{red} region that do not imply $\FS$ is connected, extending the results of \cite[Proposition 1.6]{Bangachev}. Computing large weighted clique pairs can be done with a linear program so long as $G$ is fixed. ChatGPT is responsible for the following example that serves as proof of concept, but many more examples can be found with a standard linear program.

\begin{example}\label{ex:GPT}
    Apply the construction in \Cref{prop:construct disconnected} with $k=7$ and $w(1) = 2A$, $w(2)=w(3)=w(4) = A$, and $w(5)=w(6)=w(7) = B$ for constants $A$ and $B$; see \Cref{fig:7 weights}. We find that 
    \begin{align*}
        \dd^+_w[G] = \min\{3A+B,2A+3B\},\quad \dd^+_w[G^C] = \min\{2A+3B,5A\}.
    \end{align*}
    The relevant regime is $B\le A\le 2B$, so $\dd^+_w[G] = 3A+B$ and $\dd^+_w[G^C] = 2A+3B$. This yields the lines between $(1/2,5/8)$ and $(7/13,7/13)$ and between $(5/8,1/2)$ and $(7/13,7/13)$, which define the \textcolor{orange}{orange} region of \Cref{fig:connectivity diagram}. These pairs only exit the \textcolor{red}{red} region when $B\le A\le 6B/5$.
    
    For an explicit example, take $n=130, A=17/130$, and $B=15/130$. Then $(\dd^+_w[G],\dd^+_w[G^C]) = (66/130,79/130)$. Since $66/130>1/2$ and $79/130>3/5$, by \Cref{cor:insuf under pairs}, this yields an insufficient pair above the \textcolor{red}{red} staircase, but in the \textcolor{orange}{orange} region. This example is depicted in \Cref{fig:7 weights}.
\end{example}

This family of examples implies that there exist pairs in the \textcolor{orange}{orange} region and outside the \textcolor{red}{red} region that do not imply $\FS$ is connected. Moreover, the previously known examples of Bangachev \cite[Proposition 1.6]{Bangachev} arise when $G = \mathrm{Cycle}_k$ and $w = 1/k$ for all vertices. See that the combination of \Cref{prop:construct disconnected}, \Cref{cor:insuf under pairs}, and \Cref{ex:GPT} readily imply \Cref{prop:weighted clique pair}.

\section{A Simple Proof in the Blue Regime}

Connectivity in the \textcolor{blue}{blue} region was first proven by Bangachev \cite[Theorem 1.5]{Bangachev} using Wilson's results on Star graphs \cite{Wilson}. The only notable previous diameter bound in the \textcolor{blue}{blue} region is $O(n^5)$ and is due to Akella and Li \cite[Theorem 1.3]{FSDiameter}. We give a simple proof that $\FS$ is connected and has $O(n^2/\min\{\dd(X),\dd(Y)\})$ diameter in the \textcolor{blue}{blue} regime of \Cref{fig:connectivity diagram}. Though the result of \Cref{thm:main} is much stronger and also holds in the \textcolor{dg}{green} region, we include this proof for its simplicity, and because it allows us to extract an explicit and small constant, $42$, as the coefficient of the $O(n^2/\min\{\dd(X),\dd(Y)\})$ bound. Notably, this proof does not require Wilson's results, and it shows that any two friends can be swapped in $7$ moves.  
\begin{proposition}\label{prop:x+2y diam}
    Let $X$ and $Y$ be connected graphs on $n$ vertices that satisfy:
    \[\min\{\dd(X),\dd(Y)\}+2\max\{\dd(X),\dd(Y)\}\ge 2n.\]
    Then
    \[\diam(\FS) \le  \frac{42n^2}{\min\{\dd(X),\dd(Y)\}}.\]
\end{proposition}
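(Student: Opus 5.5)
By \Cref{lem:XY=YX} we may assume $\dd(X)\le\dd(Y)$, so the hypothesis reads $\dd(X)+2\dd(Y)\ge 2n$. The plan is to show that for every $\{a,b\}\in E(X)$ and every $\si\in V(\FS)$ we have $d_{FS}(\si,\si\circ(ab))\le 7$. Then \Cref{lem:friend reduction} with $t=7$ gives
\[\diam(\FS)<\frac{6n^2\cdot 7}{\dd(X)}=\frac{42n^2}{\min\{\dd(X),\dd(Y)\}},\]
which is exactly the claimed constant. This explains the $42$.

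To prove the swap bound, fix $a,b$ adjacent in $X$ and $\si$, and write $a'=\si(a)$, $b'=\si(b)$. If $\{a',b'\}\in E(Y)$ we are done in one move, so assume not. As in the setup of \Cref{sec:notation and terminology}, count the four neighborhoods $N_X(a),N_X(b),N_Y(a'),N_Y(b')$ over the $n-2$ pairs $(c,c')$ with $c\ne a,b$. The next step is to locate a useful pair. First look for a pair lying in all four neighborhoods; then $(ac)(ab)(bc)$ swaps $a,b$ in $3$ moves. If no such pair exists, I aim for the configuration of \Cref{fig:star and c4}: two pairs $(c,c'),(d,d')$ with $c,d\in N_X(a)$, $c',d'\in N_Y(a')\cap N_Y(b')$, and $\{c',d'\}\in E(Y)$. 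From there the $7$-move sequence $(ac)(ab)(ad)(ac)(ab)(ac)(ad)=(ab)$ finishes.

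The counting step is the following. Since $\{a',b'\}\notin E(Y)$, the set $N_Y(a')\cap N_Y(b')$ has size at least $2\dd(Y)-n+2$. By inclusion--exclusion, $\si(N_X(a))$ meets it in at least $\dd(X)+2\dd(Y)-2n+1\ge 1$ vertices. The same holds with $b$ in place of $a$, so $\mc{R}_a$ and $\mc{R}_b$ (as in \eqref{eq:QaQbRaRb}, together with the four-condition pairs) are nonempty. The main obstacle is producing two such pairs whose houses are adjacent in $Y$, since the crude bound only guarantees one. I would first try the following argument. Take $(c,c')\in\mc{R}_a$. Then $c'$ has at least $\dd(Y)$ neighbors, and the set $T=\si(N_X(a))\cap N_Y(a')\cap N_Y(b')$ must be hit by $N_Y(c')$; the estimate $|N_Y(c')|+|T|$ against $n$ is tight, so I would combine the $X$-side and $Y$-side deficits more carefully.

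Concretely, if no pair is in all four neighborhoods, then \eqref{eq:C1+C2+C3} improves to $|\mc{R}_a|+|\mc{R}_b|+|\mc{Q}_a|+|\mc{Q}_b|\ge 2\dd(X)+2\dd(Y)-2n+2$. The blue hypothesis makes $\dd(Y)\ge 2n/3$, so $Y$ is very dense: any two vertices share more than $n/3$ common neighbors. I expect a short case analysis to close the gap. If $|\mc{R}_a|\ge2$ or $|\mc{R}_b|\ge2$, then $Y$-density gives $\mc{R}^Y$ an internal edge, or else a common neighbor $e'$ yielding a swap as in \Cref{lem:Ra}/\Cref{fig:k4le}. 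If instead some $\mc{Q}$ set is large, then the symmetric configuration with roles of $X$ and $Y$ exchanged (two friends of both $a,b$ whose houses neighbor $a'$, as in \Cref{lem:Qa swap}) gives a $\le 7$ move swap. Checking that every branch really stays within $7$ moves is the delicate part.
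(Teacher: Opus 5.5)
Your outer reduction is correct and is exactly the paper's. You assume $\dd(X)\le\dd(Y)$ by \Cref{lem:XY=YX}, prove $d_{FS}(\si,\si\circ(ab))\le 7$ for every friendship $\{a,b\}$, and feed $t=7$ into \Cref{lem:friend reduction} to get $42$. The gap is that this $7$-move swap bound is the entire content of the proposition, and you have not proved it.

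Your plan is to always reach the configuration of \Cref{fig:star and c4}, and the counting you set up cannot force it. The set $\si(N_X(a))\cap N_Y(a')\cap N_Y(b')$ is only guaranteed to have size $\dd(X)+2\dd(Y)-2n+3$, which equals $3$ on the boundary. Your inclusion--exclusion argument produces an internal $Y$-edge only once this set has size at least $n-\dd(Y)+1$.

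The fallback branches you list also do not stay within $7$ moves. When two elements of $\mc{R}_a$ have nonadjacent houses, the natural continuation is the $15$-move configuration of \Cref{fig:C4 and a bridge}, or \Cref{lem:Ra} with bound $\max\{\al,21\}$. Either would destroy the constant $42$. A large $\mc{Q}_a$ or $\mc{Q}_b$ gives no road into $b'$ once four-condition pairs are excluded, so \Cref{lem:Qa swap} only swaps elements of $\mc{Q}_a$, not $a$ and $b$.

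The missing idea is to split on whether $a$ and $b$ have a common friend.

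\textbf{Common friend $c$.} Use $c$ rather than $a$ as the pivot. Counting in $V(Y)\sm\{a',b',c'\}$ gives $|\si(N_X(c))\cap N_Y(a')\cap N_Y(b')|\ge \dd(X)+2\dd(Y)-2n+2\ge 2$; pick such a $d'$.
\begin{itemize}
\item If $\{c',d'\}\in E(Y)$, then $(cd)(ac)(ab)(bc)(cd)=(ab)$ takes $5$ moves.
\item Otherwise, a second count in $V(Y)\sm\{a',b',c',d'\}$ yields $e\in N_X(c)$ with $e'$ adjacent to both $c'$ and $d'$. Then $(ce)(cd)(ac)(ab)(bc)(cd)(ce)=(ab)$ takes $7$ moves.
\end{itemize}

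\textbf{No common friend.} Now $N_X(a)$ and $N_X(b)$ are disjoint, so $|(N_X(a)\cup N_X(b))\sm\{a,b\}|\ge 2\dd(X)-2$. This gives $|\mc{R}_a|+|\mc{R}_b|\ge\dd(X)+2$, so one of them, say $\mc{R}_a$, has size at least $\frac12\dd(X)+1$. Since $\dd(Y)+\frac12\dd(X)\ge n$, this forces an internal $Y$-edge of $\mc{R}_a^Y$. That is precisely \Cref{fig:star and c4}, and it gives your $7$-move sequence.

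It is the disjointness of the two $X$-neighborhoods that doubles the $X$-side count and makes your intended configuration reachable. When the neighborhoods overlap, the pivot through $c$ is what is needed.
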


We will prove first the following lemma, and then the proposition will follow easily.
\begin{lemma}\label{lem:x+2y finite}
    Let $X,Y$ be connected graphs on $n$ vertices with $\dd(X)\le \dd(Y)$ and
    \[\min\{\dd(X),\dd(Y)\}+2\max\{\dd(X),\dd(Y)\}\ge 2n.\]
    Then for all adjacent $a,b\in V(X)$ and all $\si\in V(\FS)$,
    \[d_{FS}(\si,\si\circ(ab))\le 7.\]
\end{lemma}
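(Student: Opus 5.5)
Fix adjacent $a,b\in V(X)$ and $\si\in V(\FS)$, and write $a'=\si(a)$, $b'=\si(b)$. If $\{a',b'\}\in E(Y)$ we swap directly, so assume $\{a',b'\}\notin E(Y)$. With $\dd(X)\le\dd(Y)$ the hypothesis reads $\dd(X)+2\dd(Y)\ge 2n$. We will count adjacencies exactly as in the setup preceding \Cref{lem:no mutual friends}, but the stronger hypothesis lets us find the small configurations directly, without any structural lemma.

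\emph{Step 1: a common neighbour of $a'$ and $b'$ that is a friend.} Since $a',b'$ are nonadjacent, $|N_Y(a')\cap N_Y(b')|\ge 2\dd(Y)-(n-2)$. Intersecting with $\si(N_X(a))\sqcup\si(N_X(b))$, which has size $\ge 2\dd(X)-2$ counted with multiplicity, gives at least $2\dd(Y)+2\dd(X)-2n \ge \dd(X)\ge 1$ incidences. Hence some $c$ has $c'$ adjacent to both $a'$ and $b'$ and $c$ adjacent to $a$ or to $b$; by symmetry take $c\in N_X(a)$. If moreover $c\in N_X(b)$, then $(ac)(ab)(bc)$ finishes in $3$ moves.

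\emph{Step 2: upgrade to the configuration of \Cref{fig:star and c4}.} We want a second pair $(d,d')$ with $d\in N_X(a)$, $d'\in N_Y(a')\cap N_Y(b')$, and $\{c',d'\}\in E(Y)$. Then the sequence $(ac)(ab)(ad)(ac)(ab)(ac)(ad)=(ab)$ gives $7$ moves. Let $R=\si(N_X(a))\cap N_Y(a')\cap N_Y(b')$, so $|R|\ge \dd(X)+2\dd(Y)-2n+1\ge 1$. If $|R|$ is small, the count in Step 1 forces the mass onto $\si(N_X(b))$ instead, and we should choose the side of larger intersection from the start. Averaging gives that the chosen side has $|R|\ge \dd(X)+\dd(Y)-n$, roughly. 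We then need $c'$ to have a neighbour in $R\setminus\{c'\}$, i.e.\ we need $|R|+\dd(Y)>n-1$.

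\emph{Main obstacle.} The inequality $|R|+\dd(Y)\ge n$ is the delicate point, since the averaged bound gives only $|R|+\dd(Y)\ge \dd(X)+2\dd(Y)-n\ge n$, which is exactly tight. I would handle it by counting more carefully, as in the proof of \Cref{lem:no mutual friends}. Pairs in all four neighbourhoods were already excluded in Step 1, which yields one extra unit in the common-neighbour count. Alternatively, any $c$ with the three adjacencies $c\in N_X(a)$, $c'\in N_Y(a')\cap N_Y(b')$ can play the role of $c$, so it suffices that $Y|_R$ contain one edge. An edgeless $R$ would force $N_Y(r)$ to avoid $R$ for every $r\in R$, contradicting $\dd(Y)\ge n-|R|$ once $|R|\ge 2$. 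The residual case $|R|=1$ on both sides would then be ruled out by the tight count together with $\dd(X)\ge 1$.

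\emph{Conclusion.} Given \Cref{lem:x+2y finite}, \Cref{prop:x+2y diam} follows from \Cref{lem:friend reduction} with $t=7$, since $6\cdot 7=42$.
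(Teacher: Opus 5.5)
There is a genuine gap, and it is not the off-by-one you flag under ``Main obstacle''.

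\textbf{Where the count breaks.} Your incidence bound in Step 1 (at least $2\dd(X)+2\dd(Y)-2n$) and the averaged bound $|R|\ge \dd(X)+\dd(Y)-n$ in Step 2 both treat $\si(N_X(a)\setminus\{b\})$ and $\si(N_X(b)\setminus\{a\})$ as disjoint. That holds only when $a$ and $b$ have no common friend. Once pairs with all four adjacencies are excluded, the house of a common friend of $a$ and $b$ never lies in $R$. So when $a,b$ share many friends, $\mc{R}_a^Y$ and $\mc{R}_b^Y$ can both have constant size and be independent in $Y$. Then the configuration you aim for in Step 2 simply does not exist.

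\textbf{A concrete counterexample.} Let $n=3m$ with $m\ge 4$.
\begin{itemize}
\item Split $V(Y)$ into $\{a',b'\}$, a set $S$ of size $m+2$ containing disjoint triples $T_a,T_b$, and sets $A_1,B_1$ of size $m-2$.
\item Let $Y$ be $K_n$ with the following edges removed: $a'b'$, all edges between $a'$ and $B_1$, all edges between $b'$ and $A_1$, and the edges inside $T_a$ and inside $T_b$.
\item Let $X$ be $K_n$ except that $a$ is adjacent only to $b$ and $\si^{-1}(A_1\cup B_1\cup T_a)$, and $b$ only to $a$ and $\si^{-1}(A_1\cup B_1\cup T_b)$.
\end{itemize}
Then $\dd(X)=\dd(Y)=2m$, so the hypothesis holds with equality, and both graphs are connected. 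No pair has all four adjacencies, and $\mc{R}_a^Y=T_a$, $\mc{R}_b^Y=T_b$ are edgeless. There are only $6$ incidences rather than $2m$, and $|R|=3$ rather than $\ge m$. The inequality $|R|\ge n-\dd(Y)+1$ that your edgeless-$R$ argument actually needs fails by a linear amount.

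Two smaller corrections: since $c'\in R$ but $c'\notin N_Y(c')$, the correct threshold is $|R|+\dd(Y)>n$, not $>n-1$. Likewise the edgeless case needs $\dd(Y)>n-|R|$, not $\ge$.

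\textbf{The missing idea.} Split, as the paper does, according to whether $a$ and $b$ have a common friend $c$. In that case, move $c$ rather than $a$.
\begin{itemize}
\item Count $\si(N_X(c)\setminus\{a,b\})$, $N_Y(a')$ and $N_Y(b')$ inside $V(Y)\setminus\{a',b',c'\}$. This gives at least $\dd(X)+2\dd(Y)-2n+2\ge 2$ houses $d'\in\si(N_X(c))\cap N_Y(a')\cap N_Y(b')$.
\item If $\{c',d'\}\in E(Y)$, then $(cd)(ac)(ab)(bc)(cd)=(ab)$ takes $5$ moves.
\item Otherwise, the same count inside $V(Y)\setminus\{a',b',c',d'\}$ produces $e'\in\si(N_X(c))\cap N_Y(c')\cap N_Y(d')$. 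Then $(ce)(cd)(ac)(ab)(bc)(cd)(ce)=(ab)$ takes $7$ moves.
\end{itemize}

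\textbf{The no-common-friend case.} When $N_X(a)\cap N_X(b)=\emptyset$, your approach is exactly the paper's. Now $|\si((N_X(a)\cup N_X(b))\setminus\{a,b\})|\ge 2\dd(X)-2$ genuinely holds, so $|\mc{R}_a|+|\mc{R}_b|\ge 2\dd(X)+2\dd(Y)-2n+2\ge \dd(X)+2$. One of them, say $\mc{R}_a$, has at least $\frac12\dd(X)+1$ elements. Then $|N_Y(c')\cap\mc{R}_a^Y|\ge \dd(Y)+\frac12\dd(X)+1-n\ge 1$, which supplies the edge for the star-plus-$C_4$ sequence.
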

\begin{proof}
    Let $a,b\in V(X)$ be adjacent, and let $\si\in V(\FS)$. If $a'$ and $b'$ are adjacent, the swap $(ab)$ is valid and $d_{FS}(\si,\si\circ(ab))=1$. Thus, we may assume $a'$ and $b'$ are nonadjacent.

    \textbf{Case 1: $a,b$ have a mutual friend.}\\
    Suppose there exists $c\in V(X)$ adjacent to $a$ and $b$. Since $a'$ and $b'$ are nonadjacent, $a'$ and $b'$ each have $\ge \dd(Y)-1$ neighbors in $V(Y)\back \{a',b',c'\}$. Moreover, $c$ has $\ge \dd(X)-2$ neighbors in $V(X)\back \{a,b,c\}$. By inclusion-exclusion,
    \begin{align*}
        \left|\big(\si(N_X(c))\cap N_Y(a')\cap N_Y(b')\big)\back\{a',b',c'\}\right| &\ge \dd(X)-2+2\dd(Y)-2-2(n-3)\ge 2.
    \end{align*}
    Since this intersection is nonempty, let $d'\in \si(N_X(c))\cap N_Y(a')\cap N_Y(b')$. If $c'$ and $d'$ are adjacent, we have the setup on the left of \Cref{fig:CD in EY}. The sequence $(cd)(ac)(ab)(bc)(cd)$ shows that $d_{FS}(\si,\si\circ(ab))\le 5$ in this case.
    \begin{figure}[h!]
        \centering
        \begin{tikzpicture}[
    every node/.style={circle, draw, inner sep=1.5pt},
    scale=1
]

\node (a) at (-1.5,1.5) {$a$};
\node (b) at ( 1.5,1.5) {$b$};
\node (c) at (-1.5,-1.5) {$c$};
\node (d) at ( 1.5,-1.5) {$d$};

\draw[thick] (a) -- (d);
\draw[thick] (b) -- (d);
\draw[thick] (c) -- (d);

\draw[dashed, thick] (a) -- (b);

\draw[dashed, thick] (a) -- (c);
\draw[dashed, thick] (c) to[out=-15,in=195] (d);
\draw[dashed, thick] (b) -- (c);

\node (a') at (4,1.9) {$a$};
\node (b') at (6.8,1.9) {$b$};
\node (c') at (4,-0.3) {$c$};
\node (d') at (6.8,-0.3) {$d$};
\node (e') at (5.4,-1.9) {$e$};

\draw[thick] (a') -- (d');
\draw[thick] (b') -- (d');
\draw[thick] (c') -- (e');
\draw[thick] (d') -- (e');

\draw[dashed, thick] (a') -- (b');
\draw[dashed, thick] (a') -- (c');
\draw[dashed, thick] (c') -- (d');
\draw[dashed, thick] (b') -- (c');
\draw[dashed, thick] (e') to[out=155,in=-60] (c');

\node (a'') at (9.3,1.5) {$a$};
\node (b'') at (12.3,1.5) {$b$};
\node (c'') at (9.3,-1.5) {$c$};
\node (d'') at (12.3,-1.5) {$d$};

\draw[thick] (a'') -- (c'');
\draw[thick] (b'') -- (c'');
\draw[thick] (a'') -- (d'');
\draw[thick] (b'') -- (d'');
\draw[thick] (c'') -- (d'');

\draw[dashed, thick] (a'') -- (b'');

\draw[dashed, thick] (a'') to[out=-105,in=105] (c'');
\draw[dashed, thick] (a'') to[out=-30,in=120] (d'');

\end{tikzpicture}
        \caption{Swapping $a$ and $b$ in various setups. Left: the case $\{c',d'\}\in E(Y)$. Middle: the construction of $d'$ and $e'$. Right: $d'\in N_Y(c')\cap \mc{R}_a^Y$.}
        \label{fig:CD in EY}
    \end{figure}
    We may assume $c'$ and $d'$ are nonadjacent, so $c$ has $\ge \dd(X)-3$ neighbors in $V(X)\back \{a,b,c,d\}$, and $c'$ and $d'$ have $\ge\dd(Y)-2$ neighbors in $V(Y)\back \{a',b',c',d'\}$. By inclusion-exclusion:
    \begin{align*}
        \left|\big(\si(N_X(c))\cap N_Y(c')\cap N_Y(d')\big)\back\{a',b',c',d'\}\right| &\ge \dd(X)-3+2\dd(Y)-4-2(n-4)\ge 1.
    \end{align*}
    Since this intersection is nonempty, let $e'\in \si(N_X(c))\cap N_Y(c')\cap N_Y(d')$. The current setup is pictured in the middle of \Cref{fig:CD in EY}, and the sequence $(ce)(cd)(ac)(ab)(bc)(cd)(ce)$ ensures $d_{FS}(\si,\si\circ(ab))\le 7$. 

    \textbf{Case 2: $a,b$ have no mutual friend.}
    We suppose instead that $N_X(a),N_X(b)$ are disjoint (recall that $a\notin N_X(a),b\notin N_X(b)$). Then $\left|\big(N_X(a)\cup N_X(b)\big)\back \{a,b\}\right|\ge 2\dd(X)-2$. Since $a'$ and $b'$ are nonadjacent, $a'$ and $b'$ each have $\ge \dd(Y)$ neighbors in $V(Y)\back \{a',b'\}$, so
    \begin{align*}
        \left|\big(\si(N_X(a)\cup N_X(b))\cap N_Y(a') \cap N_Y(b')\big)\back\{a',b'\}\right|&\ge 2\dd(X)-2+2\dd(Y)-2(n-2)\\
        &\ge \dd(X)+2.
    \end{align*}
    Passing to our notation from \eqref{eq:QaQbRaRb}, we find that the set is exactly $\mc{R}_a^Y\sqcup \mc{R}_b^Y$. We have just shown $|\mc{R}_a|+|\mc{R}_b|\ge \dd(X)+2$, so let $|\mc{R}_a|\ge \frac12\dd(X)+1$ without loss of generality. Let $(c,c')\in \mc{R}_a$. By inclusion-exclusion,
    \begin{align*}
        |N_Y(c')\cap \mc{R}_a^Y|&\ge \dd(Y)+\frac12\dd(X)+1-n \ge 1.
    \end{align*}
    Thus, we may pick $d'\in N_Y(c')\cap \mc{R}_a^Y$. This yields the setup on the right in \Cref{fig:CD in EY}, and the sequence $(ac)(ab)(ad)(ac)(ab)(ac)(ad)$ guarantees $d_{FS}(\si,\si\circ(ab))\le 7$.
\end{proof}

\begin{proof}[Proof of \Cref{prop:x+2y diam}]
    By \Cref{lem:x+2y finite}, we may apply \Cref{lem:friend reduction} with $t=7$. \Cref{prop:x+2y diam} follows immediately since $\FS\cong \fs{Y}{X}$ by \Cref{lem:XY=YX}. 
\end{proof}

\section{Future Directions}

The main result of this paper is \Cref{thm:main}, which shows that friends and strangers graphs are connected in the \textcolor{dg}{green} region of \Cref{fig:connectivity diagram 2} (we use the \textcolor{dg}{green} region to refer to all points above the line $2\min+3\max = 3n$; this includes the \textcolor{dg!60!black}{dark green} and \textcolor{blue}{blue} regions of \Cref{fig:connectivity diagram 2}). Previously, friends and strangers graphs were only known to be connected in the \textcolor{dg!60!black}{dark green} and \textcolor{blue}{blue} regions \cite{Bangachev}, so the existing work in the \textcolor{dg}{green} required the hypothesis $\min\{\dd(X),\dd(Y)\}>n/2$. We have removed this condition. 

\Cref{thm:main} establishes that the diameter of $\FS$ in the \textcolor{dg}{green} region is $O(n^2/\min)$. There were no previous diameter bounds anywhere in the \textcolor{dg}{green} region, and the only known diameter bound was a bound of $O(n^5)$ in the \textcolor{blue}{blue} region, due to Akella and Li \cite{FSDiameter}.

Moreover, the \textcolor{dg}{green} region is the complement of the convex hull of the \textcolor{red}{red} staircase, so \Cref{thm:main} is the best linear hypothesis on $\dd(X),\dd(Y)$ that implies $\FS$ is connected. One compelling future direction is to characterize the points of \Cref{fig:connectivity diagram 2} that imply $\FS$ is connected, which would resolve the unknown white region between the \textcolor{red}{red} and \textcolor{dg}{green} regions. \Cref{prop:weighted clique pair} demonstrates that pairs in the \textcolor{orange}{orange} region do not imply $\FS$ is connected. We provide \Cref{fig:connectivity diagram 2} here for convenience.

\begin{figure}
\centering
\begin{tikzpicture}[scale=8]

\draw[thick] (0,0) rectangle (1,1);


\def\RedPath{(1,0)}

\foreach \k in {34,33,...,5}{
  \pgfmathtruncatemacro{\kp}{\k+1}

  \xdef\RedPath{\RedPath
    -- ({(\k-2)/\k},{3/\kp})
    -- ({(\k-2)/\k},{3/\k})}
}

\foreach \k in {5,...,34}{
  \pgfmathtruncatemacro{\kp}{\k+1}

  \xdef\RedPath{\RedPath
    -- ({3/\kp},{(\k-2)/\k})
    -- ({3/\kp},{(\kp-2)/\kp})}
}

\fill[red!55,opacity=.55]
  (0,0) -- \RedPath -- (0,1) -- cycle;

\draw[ thick,red!80!black]
  \RedPath -- (0,1);

\fill[dg!30!black,opacity=.35]
  (1/2,2/3) -- (1/2,3/4) -- (2/3,2/3) -- (3/4,1/2) -- (2/3,1/2)
  -- (3/5,3/5)
  -- cycle;

\draw[ thick,dg!45!black]
  (1/2,2/3) -- (1/2,3/4) -- (2/3,2/3) -- (3/4,1/2) -- (2/3,1/2)
  -- (3/5,3/5)
  -- cycle;

\fill[db!60,opacity=.35]
  (0,1) -- (1,1) -- (1,0)
  -- (2/3,2/3)
  -- cycle;

\draw[ thick,db!70!black]
  (0,1) -- (2/3,2/3) -- (1,0);

\fill[green!60!black,opacity=.35]
  (0,1) -- (1/2,3/4) -- (1/2,2/3)
  -- cycle;

\fill[green!60!black,opacity=.35]
  (1,0) -- (3/4,1/2) -- (2/3,1/2)
  -- cycle;

\draw[ thick,green!45!black]
  (0,1) -- (1/2,3/4) -- (1/2,2/3)
  -- cycle;

\draw[ thick,green!45!black]
  (1,0) -- (3/4,1/2) -- (2/3,1/2)
  -- cycle;

\draw[thick,orange]
(1/2,5/8) -- (23/45,3/5);
\draw[thick,orange]
(5/8,1/2) -- (3/5,23/45);
\draw[dotted,thick,orange]
(7/13,7/13) -- (23/45,3/5);
\draw[dotted,thick,orange]
(7/13,7/13) -- (3/5,23/45);
\fill[orange, opacity = .35]
(1/2,5/8) -- (23/45,3/5) -- (1/2,3/5) -- cycle;
\fill[orange, opacity = .35]
(5/8,1/2) -- (3/5,23/45) -- (3/5,1/2) -- cycle;

\node[below] at (0.5,-0.1) {$\delta(X)/n$};
\node[rotate=90] at (-0.12,0.5) {$\delta(Y)/n$};

\node[below left] at (0,0) {$0$};
\node[below] at (1,0) {$1$};
\node[left] at (0,1) {$1$};

\node[below] at (3/5,0) {$\frac35$};
\node[left,font=\small] at (0,3/5-.01) {$\frac35$};

\node[below] at (2/3,0) {$\frac23$};
\node[left,font=\small] at (0,2/3+.01) {$\frac23$};

\node[below] at (1/2,0) {$\frac12$};
\node[left,font=\small] at (0,1/2) {$\frac12$};

\draw[dashed,gray] (3/5,0) -- (3/5,3/5) -- (0,3/5);
\draw[dashed,gray] (2/3,0) -- (2/3,2/3) -- (0,2/3);
\draw[dashed,gray] (1/2,0) -- (1/2,1);
\draw[dashed,gray] (0,1/2) -- (1,1/2);

\end{tikzpicture}
    \caption{Connectivity of $\FS$ for connected $X,Y$ in terms of the minimum degrees $\dd(X)$ and $\dd(Y)$. The graph $\FS$ is connected in the \textcolor{dg}{green}, \textcolor{dg!60!black}{dark green}, and \textcolor{blue}{blue} regions, however, $\FS$ may be disconnected in the \textcolor{red}{red} and \textcolor{orange}{orange} regions. It is unknown if $\FS$ in the white region are always connected.}
    \label{fig:connectivity diagram 2}
\end{figure}

\begin{question}\label{q:classify}
    Classify all pairs that imply $\FS$ is connected. 
\end{question}

Answering this question is equivalent to completely filling in \Cref{fig:connectivity diagram 2}. We conjecture that the diameter bound from \Cref{thm:main} holds for any pair $(\dd(X)/n,\dd(Y)/n)$ sufficient to imply $\FS$ is connected.

\begin{conjecture}\label{conj:conn implies O(n2/min)}
    If a pair implies $\FS$ is connected, then it also implies 
    \[\diam(\FS)=O(n^2/\min\{\dd(X),\dd(Y)\}).\]
\end{conjecture}

The diameter bound in \Cref{thm:main} gives a partial answer to the following question of Defant and Kravitz \cite{Defant}:

\begin{question}[{\cite[Subsection 7.3]{Defant}}]
    Does there exist an absolute constant $C$ such that if $\FS$ is connected, then $\diam(\FS) = O(n^C)$?
\end{question}

This is one of the biggest open problems regarding friends and strangers graphs. In a similar vein to \Cref{q:classify}, we ask if weighted cliques are the `best' way to disconnect friends and strangers graphs with large minimum degrees. Recall \Cref{def:imply FS connected}.

\begin{question}\label{q:weighted pairs exactly}
    Fix $(\al,\be)\in [0,1]^2$. If there does not exist a weighted clique pair such that $\al<\dd^+_w[G]$ and $\be<\dd^+_w[G^C]$, is it true that $(\al,\be)$ implies $\FS$ is connected?
\end{question}

If true, the answer to this question would imply that weighted cliques determine the true boundary for the connectivity of $\FS$ in terms of minimum degrees.

\subsection{Questions on the Green Regime.} It is natural to ask under what conditions the diameter bound of \Cref{thm:main} could be improved to $\Theta(n^2/\min\{\dd(X),\dd(Y)\})$. The lower bound is not $\Omega (n^2/\min\{\dd(X),\dd(Y)\})$ in general; for example, $\diam(\fs{\st_n}{K_n}) = O(n)$ despite the fact that $\dd(\st_n) = 1$. This can be seen because any transposition can be performed in $3$ moves. We ask the following question.

\begin{question}\label{q:regular}
    Let $X,Y$ be connected, regular graphs. What condition on $\dd(X),\dd(Y)$ will guarantee that $\FS$ is connected and
    \[\diam(\FS) = \Theta\left(\frac{n^2}{\min\{\dd(X),\dd(Y)\}}\right)?\]
\end{question}

We may examine the lack of a constant on the right hand side of the minimum degree condition in \Cref{thm:main}. Maximal examples arising from \Cref{prop:bangachev counterexamples} satisfy
\[2\min\{\dd(X),\dd(Y)\}+3\max\{\dd(X),\dd(Y)\}=3n-5.\]
Let $n\ge 4$ be even, and let $M$ be a perfect matching of $[n]$. If we let $X= \mathrm{Cycle}_n$ and $Y = K_n\sm M$, then $\dd(X) = 2$ and $\dd(Y) = n-2$, so $2\min\{\dd(X),\dd(Y)\}+3\max\{\dd(X),\dd(Y)\}=3n-2$. Moreover, by \cite[Corollary 4.12]{Defant}, since $Y^C$ is a forest of size $2$ trees, $\fs{\mathrm{Cycle}_n}{Y}$ has $2$ components, and is disconnected. We ask what constant is sharp in \Cref{thm:main}, which is equivalent to asking whether $c=0$ or $c=1$ in the following question.
\begin{question}\label{q:3n-4}
    What is the largest value of $c$ such that if $X$ and $Y$ are connected graphs on $n$ vertices and 
    \[2\min\{\dd(X),\dd(Y)\}+3\max\{\dd(X),\dd(Y)\}\ge 3n-c,\]
    then \Cref{thm:main} holds?
\end{question}
In light of \Cref{thm:2x+3y in finite time}, we ask for more specifics about the constant $C$, which is an upper bound on the number of moves needed to swap two friends (for $\FS$ in the \textcolor{dg}{green} region of \Cref{fig:connectivity diagram 2}).
\begin{question}
    Find an explicit constant $C$ such that \Cref{thm:2x+3y in finite time} holds. What value of $C$ is sharp?
\end{question}
Our last question regards the $k$-connectivity of friends and strangers graphs. This subject has been previously studied \cite{KConn}, but \Cref{thm:main} may yield improved results on $k$-connectivity. We ask the following question.
\begin{question}\label{q:kConn}
    Let $k$ be a positive integer. Suppose $X$ and $Y$ are connected graphs on $n$ vertices. For which functions $f:\Z\to \Z$ does the condition
    \[2\min\{\dd(X),\dd(Y)\}+3\max\{\dd(X),\dd(Y)\}\ge 3n+f(k)\]
    imply that $\FS$ is $k$-connected?
\end{question}
The author expects that there exists a polynomial function $f$ that satisfies this property, as this would match existing results on $k$-connectivity in smaller regimes \cite{KConn}. There are many other open questions on friends and strangers graphs, which can be found in \cite{Defant} or \cite{Jeong}.

\section*{Acknowledgements}
This work was conducted as part of Undergraduate Research at the University of Minnesota Duluth REU during the summer of 2026. This REU was supported by Jane Street Capital, Axiom Math AI, and a donation from Larry Penn. The author would like to thank the program for its generous support and for providing a good working environment. The author thanks Claire Kaneshiro, Carl Schildkraut, Glenn Bruda, Evan Chen, Noam Pasman, Colin Defant, Mark Saengrongkongka, Noah Kravitz, and Eliot Hodges for their helpful insights, comments, and conversations. 

\section*{AI Statement}
ChatGPT contributed very little; its only contributions were finding \Cref{ex:GPT}, which can also be found using a linear program, finding the example preceding \Cref{q:3n-4}, generating some of the diagrams, and suggesting minor edits. 

\printbibliography

\end{document}